\DeclareSymbolFont{EulerScriptc}{U}{eus}{m}{n}
\DeclareSymbolFontAlphabet\matheusm{EulerScriptc}
\newcommand\eusm{\matheusm}
\theoremstyle{plain}
\newtheorem{theo}{Theorem}[section]
\newtheorem{conj}[theo]{Conjecture}
\newtheorem{lemm}[theo]{Lemma}
\newtheorem{prop}[theo]{Proposition}
\newtheorem{coro}[theo]{Corollary}
\newtheorem{defi}[theo]{Definition}
\theoremstyle{definition}
\newtheorem{rema}[theo]{Remark}
\renewcommand{\llbracket}{[\kern-.08pc[}
\renewcommand{\rrbracket}{]\kern-.08pc]}
\newcommand{\AnDA}{\mathbf{AnDA}}
\newcommand{\rat}{\mathsf{rat}}
\newcommand{\op}{\mathrm{op}}
\newcommand{\ra}{\rightarrow}
\newcommand{\Id}{\mathrm{Id}}
\newcommand{\Hom}{\mathrm{Hom}}
\newcommand{\Osheaf}{\mathscr{O}}
\newcommand{\bbQ}{\mathbf{Q}}
\newcommand{\bbZ}{\mathbf{Z}}
\newcommand{\bbC}{\mathbf{C}}
\newcommand{\bbD}{\mathbb{D}}
\newcommand{\bbG}{\mathbf{G}}
\newcommand{\Spec}{\mathop{\mathrm{Spec}}}
\newcommand{\Perv}{\mathscr P}
\newcommand{\Cb}{\mathrm{C}^{\mathrm{b}}}
\newcommand{\Db}{\mathrm{D}^{\mathrm{b}}}
\newcommand{\Dbc}{\mathrm{D}^{\mathrm{b}}_{\mathrm{c}}}
\newcommand{\Dbh}{\mathrm{D}^{\mathrm{b}}_{\mathrm{h}}}
\newcommand{\Dbm}{\mathrm{D}^{\mathrm{b}}_{\mathrm{m}}}
\newcommand{\Hc}{\mathsf{H}}
\newcommand{\pHc}{{}^{\mathsf{p}}\Hc}
\newcommand{\Ker}{\mathop{\mathrm{Ker}}\nolimits}
\newcommand{\Img}{\mathop{\mathrm{Im}}\nolimits}
\newcommand{\colim}{\mathop{\mathrm{colim}}}
\newcommand{\anc}{\mathrm{an}}
\newcommand{\Sch}{\mathrm{Sch}}
\newcommand{\Sm}{\mathrm{Sm}}
\newcommand{\sM}{\mathscr{M}}
\newcommand{\sN}{\mathscr N}
\newcommand{\sA}{\mathscr A}
\newcommand{\sP}{\mathscr P}
\newcommand{\sPm}{{\mathscr P}_{\mathrm{m}}}
\newcommand{\Mod}{\mathbf{Mod}}
\newcommand{\bbA}{\mathbf{A}}
\newcommand{\pH}{{}^{p}H}
\newcommand{\MM}{\mathbf{MM}}
\newcommand{\ct}{\mathsf{ct}}
\newcommand{\bbN}{\mathbb{N}}
\newcommand{\MHS}{\mathbf{MHS}}
\newcommand{\xra}{\xrightarrow}
\newcommand{\D}{\mathrm{D}}
\newcommand{\Bti}{\mathrm{Bti}}
\newcommand{\real}{\mathrm{real}}
\newcommand{\eff}{\mathrm{eff}}
\newcommand{\SH}{\mathbf{SH}}
\newcommand{\dM}{\mathfrak{M}}
\newcommand{\HM}{\mathbf{HM}}
\newcommand{\Pairs}{\mathbf{Pairs}}
\newcommand{\smalllrcarre}{\pmb{\lrcorner}}
\newcommand{\smallucarre}{\pmb{\ulcorner}}
\newcommand{\smallllcarre}{\pmb{\llcorner}}
\newcommand{\doublecarre}{\pmb{\boxbar}}
\newcommand{\carre}{\pmb{\boxempty}}
\newcommand{\AR}{\mathbf{R}}
\newcommand{\AL}{\mathbf{L}}
\newcommand{\Ab}{\mathbf{A}}
\newcommand{\DA}{\mathbf{DA}}
\newcommand{\Path}{\mathbf{P}}
\newcommand{\ad}{\mathrm{ad}}
\newcommand{\tr}{\mathrm{tr}}
\newcommand{\qv}{\mathrm{qv}}
\newcommand{\Log}{{\mathscr L}og}
\newcommand{\un}{\mathrm{un}}
\newcommand{\Ind}{\mathrm{Ind}}
\newcommand{\eusmCb}{\matheusm{C}^{\mathrm{b}}_{\mathrm{dg}}}
\newcommand{\eusmDb}{\matheusm{D}^{\mathrm{b}}_{\mathrm{dg}}}
\newcommand{\sMct}{{}^{\ct}\kern-.3em\sM}
\newcommand\iso{\xra{\sim}}
\newcommand\Scat{\matheusm{S}}
\newcommand{\Ret}{\mathfrak{R}^{\mathrm{\acute{e}t}}}
\DeclareMathOperator{\Ob}{Ob}
\DeclareMathOperator{\Chow}{Chow}
\DeclareMathOperator\Ext{Ext}
\numberwithin{equation}{section}
\newenvironment{acknowledgements}{%
  \begin{abstract}
}{%
  \end{abstract}
}
\begin{document}

\title[Operations on perverse mixed motives]{The four operations on perverse motives}
\author{Florian Ivorra}
\address{Institut de recherche math\'ematique de Rennes\\ UMR 6625 du CNRS\\ Universit\'e de Rennes 1\\
Campus de Beaulieu\\
35042 Rennes cedex (France)\\
}
\email{florian.ivorra@univ-rennes1.fr}
\author{Sophie Morel}
\address{UMPA UMR CNRS 5669\\
ENS Lyon Site Monod\\
46 Allée d'Italie\\
69364 Lyon Cedex 07}
\email{sophie.morel@ens-lyon.fr}

\subjclass[]{14F42, 14F43, 18G55}

\keywords{Motives, Perverse Sheaves, Sheaf operations}

\begin{abstract}
Let $k$ be a field of characteristic zero with a fixed embedding $\sigma:k\hookrightarrow \bbC$ into the field of complex numbers. Given a $k$-variety $X$, we use the triangulated category of \'etale motives with rational coefficients on $X$  to construct an abelian category $\sM(X)$ of perverse mixed motives.  We show that over $\Spec(k)$ the category obtained is canonically equivalent to the usual category of Nori motives and that the derived categories $\Db(\sM(X))$ are equipped with the four operations of Grothendieck (for morphisms of quasi-projective $k$-varieties) as well as nearby and vanishing cycles functors and a formalism of weights.

In particular, as an application, we show that many classical constructions done with perverse sheaves, such as intersection cohomology groups or Leray spectral sequences, are motivic and therefore compatible with Hodge theory. This recovers and strengthens work by Zucker, Saito, Arapura and de Cataldo--Migliorini and provide an arithmetic proof of the pureness of intersection cohomology with coefficients in a geometric variation of Hodge structures.
\end{abstract}

\maketitle


\setcounter{tocdepth}{2}
\tableofcontents

\section*{Introduction}

Let $k$ be a field of characteristic zero with a fixed embedding $\sigma:k \hookrightarrow \bbC$ into the field of complex numbers. A $k$-variety is a separated $k$-scheme of finite type. Unless otherwise specified, we will only consider quasi-projective $k$-varieties.

In the present work, we construct the four operations on the derived categories of perverse Nori motives. In order to combine the tools provided by Ayoub in~\cite{AyoubI,AyoubII} and Beilinson 
in~\cite{MR923134,MR923133} in a most efficient way, we define the abelian category of perverse Nori motives on a given $k$-variety as a byproduct of the triangulated category of constructible \'etale motives on the same variety.  Over the base field the category obtained still coincides with the usual category of Nori motives but now, as we show, it is possible to equip the derived categories of these abelian categories with the four operations of Grothendieck as well as nearby and vanishing cycles functors. However, we leave the construction of the tensor product and internal $\Hom$ operations on these categories to a later paper.

In particular, as an application, we show that many classical constructions done with perverse sheaves, such as intersection cohomology groups or Leray spectral sequences, are motivic and therefore compatible with Hodge theory. This recovers and strengthens works by Zucker~\cite{MR534758}, Saito~\cite{MHMII}, Arapura~\cite{MR2178703} and de Cataldo--Migliorini~\cite{MR2680404}. Moreover it provides an arithmetic proof via reduction to positive characteristic and the Weil conjectures of the pureness of the Hodge structure on intersection cohomology with coefficients in a geometric variation of Hodge structures.

\subsubsection*{Conjectural picture and some earlier works}

Before going into more detail about the content of this paper, let us discuss perverse motives from the perspective of perverse sheaves and recall parts of the conjectural picture and related earlier works. 

For someone interested in perverse sheaves, perverse motives can be thought of as perverse sheaves of geometric origin. However, the classical definition of these perverse sheaves as a full subcategory of the category of all perverse sheaves is not entirely satisfactory. Indeed, this category contains too many morphisms and consequently, as we take kernels and cokernels of morphisms which shouldn't be considered, too many objects. For example, perverse sheaves of geometric origin should define mixed Hodge modules and therefore any morphism between them should also be a morphism of mixed Hodge modules. Therefore, one expects the category of perverse motives/perverse sheaves of geometric origin to be an abelian category endowed with a faithful -- but not full -- exact functor into the category of perverse sheaves.

According to Grothendieck, there should exist a $\bbQ$-linear abelian category $\MM(k)$ whose objects are called mixed motives. Given an embedding $\sigma:k \hookrightarrow \bbC$, the category $\MM(k)$ should come with a faithful exact functor
\[\MM(k)\ra\MHS\]
to the category of (polarizable) mixed $\bbQ$-Hodge structures $\MHS$, called the realization functor. The mixed Hodge structure on the $i$-th Betti cohomology group $H^i(X)$ of a given $k$-variety $X$ should come via the realization functor from a mixed motive $H_{\mathbf{M}}^i(X)$. 
The appealing beauty of this picture lies in the expected properties of this category, in particular, the conjectural relations between extension groups and algebraic cycles (see e.g. Jannsen's~\cite{MR1265533}), or the relation with period rings and motivic Galois groups (see e.g. for a survey Ayoub's~\cite{MR3202399}).

As part of Grothendieck's more general cohomological program, the category $\MM(k)$ should underlie a system of coefficients. For any $k$-variety $X$, there should exist an abelian category $\MM(X)$ of mixed motives along with a realization functor into the category of mixed Hodge modules (or simply of sheaves of $\bbQ$-vector spaces) on the associated analytic space $X^\anc$,
and their derived categories should satisfy a formalism of (adjoint) triangulated functors
\[\xymatrix{{\Db(\MM(X))}\ar@<-1ex>[r]_-{f^{{\mathbf M}}_*} & {\Db(\MM(Y))}\ar@<-1ex>[l]_-{f_{{\mathbf M}}^*}\ar@<1ex>[r]^-{f_{\mathbf{M}}^!} & {\Db(\MM(X)),}\ar@<1ex>[l]^-{f_!^{{\mathbf M}}} }\]
a formalism which has been at the heart of Grothendieck's approach to every cohomology theory. Then, for a $k$-variety $a:X\ra\Spec k$,
the motive $H^i_{\mathbf M}(X)$ would be given as the $i$-th cohomology of  the image under $a^{\mathbf M}_*$ of a complex of mixed motives $\bbQ^{\mathbf M}_X$ that should realize to the standard constant sheaf $\bbQ_X$ on $X^\anc$.
Grothendieck was looking for abelian categories modeled after the categories of constructible sheaves, but as pointed out by Be{\u\i}linson and Deligne one could/should also look for categories modeled after perverse sheaves (see e.g. Deligne's~\cite{MR1265528}).

Many attempts have been made to carry out  at least partially but unconditionally Grothendieck's program.

The most successful attempt in constructing the triangulated category of mixed motives (that is, conjecturally, the derived category of $\MM(X)$) stems from Morel--Voevodsky's stable homotopy theory of schemes. The best candidate so far is the triangulated category $\DA_\ct(X)$ of constructible \'etale motivic sheaves (with rational coefficients) extensively studied by Ayoub in~\cite{AyoubI,AyoubII,AyoubEtale}. The theory developed in \cite{AyoubI,AyoubII} provides these categories with the Grothendieck four operations and, as shown by Voevodsky in~\cite{MR1883180}, Chow groups of  smooth algebraic $k$-varieties can be computed as extension groups in the category $\DA_\ct(k)$.

On the abelian side, Nori has constructed a candidate for the abelian category of mixed motives over $k$. The construction of Nori's abelian category $\HM(k)$ is tannakian in essence and, since it is a category of comodules over some Hopf algebra, it comes with a built-in motivic Galois group. Moreover any Nori motive has a canonical weight filtration and Arapura has shown in~\cite[Theorem 6.4.1]{MR2995668} that the full subcategory of pure motives coincides with the semi-simple abelian category defined by Andr\'e in~\cite{MR1423019} using motivated algebraic cycles (see also Theorem~10.2.5 in the book~\cite{BookNori}
of Huber and M\"uller--Stach).
More generally, attempts have been made to define Nori motives over $k$-varieties. Arapura has defined a constructible variant in~\cite{MR2995668} and the first author a perverse variant in~\cite{IvorraPNM}. However, the Grothendieck four operations have not been constructed (at least in their full extent) in those contexts. For example in~\cite{MR2995668}, the direct image functor is only available for structural morphisms or projective morphisms and no extraordinary inverse image is defined.

Note that the two different attempts should not be unrelated. One expects the triangulated category $\DA_\ct(X)$ to possess a special $t$-structure (called the motivic $t$-structure) whose heart should be the abelian category of mixed motives. This is a very deep conjecture, even for $X=\Spec k$, which implies for example the Lefschetz and K\"unneth type standard conjectures  (see Beilinsons's~\cite{MR2953406}). As of now, the extension groups in Nori's abelian category of mixed motives are known to be related with algebraic cycles only very poorly.

However, striking unconditional relations between the two different approaches have still been obtained. In particular, in~\cite{MR3649230}, Gallauer--Choudhury have shown that the motivic Galois group constructed by Ayoub in~\cite{AyoubCrelleI,AyoubCrelleII} using the triangulated category of \'etale motives is isomorphic to the motivic Galois group obtained by Nori's construction.

\subsubsection*{Content of this paper}

Let us now describe more precisely the content of our paper. Given a $k$-variety $X$, consider the bounded derived category $\Dbc(X,\bbQ)$ of sheaves of $\bbQ$-vector spaces with algebraically constructible cohomology on the analytic space $X^\anc$ associated with the base change of $X$ along $\sigma$ and the category of perverse sheaves $\sP(X)$ which is the heart of the self-dual perverse $t$-structure on $\Dbc(X,\bbQ)$ introduced in the book~\cite{BBD} by Beilinson--Bernstein--Deligne. Let $\DA_\ct(X)$ be the triangulated category of constructible \'etale motivic sheaves (with rational coefficients) which is a full triangulated subcategory of the $\bbQ$-linear counterpart of the stable homotopy category of schemes $\SH(X)$ introduced by Morel and Voevodsky (see~\cite{MR1813224, MR1648048} and Jardine's~\cite{MR1787949}). 
This category has been extensively studied by Ayoub in~\cite{AyoubI,AyoubII,AyoubEtale} and comes with a realization functor 
\[\Bti_X^*:\DA_\ct(X)\ra\Dbc(X,\bbQ)\]
(see Ayoub's~\cite{AyoubBetti}) and thus, by composing with the perverse cohomology functor, with a homological functor $\pH^0_\sP$ with values in $\Perv(X)$.

 The category of perverse motives considered in the present paper is defined (see the first author's~\ref{sec:PerverseMotives}) as the universal factorization  
\[\DA_\ct(X)\xra{\pH^0_\sM} \sM(X)\xra{\rat^\sM_X}\Perv(X)\]
of $\pH^0_\sP$, where $\sM(X)$ is an abelian category, $\pH^0_\sM$ is a homological functor and $\rat^\sM_X$ is a faithful exact functor. This kind of universal construction goes back to Freyd and is recalled in  \ref{sec:CatPre}. As we see in \ref{sec:weights}, $\ell$-adic perverse sheaves can also be used to defined the category of perverse motives (see \ref{defi:perversel} and \ref{prop:independancel}).

 Given a morphism of $k$-varieties $f:X\ra Y$, the four functors
\begin{equation}\label{Eq:FourFunDbc}
\xymatrix{{\Dbc(X,\bbQ)}\ar@<-1ex>[r]_-{f^\sP_*} & {\Dbc(Y,\bbQ)}\ar@<-1ex>[l]_-{f^*_\sP}\ar@<-1ex>[r]_-{f_\sP^!} & {\Dbc(X,\bbQ)}\ar@<-1ex>[l]_-{f_!^\sP} }
\end{equation}
where developed by Verdier~\cite{MR1610971} (see also Kashiwara--Schapira's book~\cite{MR1299726}) on the model of the theory developed by Artin, Grothendieck et al. for \'etale and $\ell$-adic sheaves in~\cite{MR0354654}. The nearby and vanishing cycles functors 
\[
\Psi_g:\Dbc(X_\eta,\bbQ)\ra\Dbc(X_\sigma,\bbQ)\qquad \Phi_g:\Dbc(X,\bbQ)\ra\Dbc(X_\sigma,\bbQ)\]
associated with a morphism $g:X\ra\bbA^1_k$ were constructed  by Grothendieck in~\cite{MR0354657} (here $X_\eta$ denotes the generic fiber and $X_\sigma$ the special fiber). By a theorem of Gabber, the functors $\psi_g:=\Psi_g[-1]$ and $\phi_g:=\Phi_g[-1]$ are $t$-exact for the perverse $t$-structures and thus induce exact functors
\begin{equation}\label{Eq:PsiFunDbc}
\psi_g:\Perv(X_\eta)\ra\Perv(X_\sigma)\qquad \phi_g:\Perv(X)\ra\Perv(X_\sigma).
\end{equation}
 In this work, we prove that $\sM(k)$ is canonically equivalent to the abelian category $\HM(k)$ of Nori motives (see \ref{prop:CompNori}) and that the four operations \eqref{Eq:FourFunDbc} (for morphisms of quasi-projective $k$-varieties) and the functors \eqref{Eq:PsiFunDbc} can be canonically lifted along the functors 
 \[\Db(\sM(X))\xra{\rat^\sM_X}\Db(\Perv(X))\xra{\real}\Dbc(X,\bbQ),\]
where $\real$ is the realization functor of Section~3.1 of the book~\cite{BBD} by Beilinson--Bernstein--Deligne, which has been shown to be an equivalence in Beilinson's~\cite{MR923133}, to (adjoint) triangulated functors
\[\xymatrix{{\Db(\sM(X))}\ar@<-1ex>[r]_-{f^{\sM}_*} & {\Db(\sM(Y))}\ar@<-1ex>[l]_-{f_{\sM}^*}\ar@<-1ex>[r]_-{f_{\sM}^!} & {\Db(\sM(X))}\ar@<-1ex>[l]_-{f_!^{\sM}} }\]
and to exact functors
\[\psi_f^\sM:\sM(X_\eta)\ra\sM(X_\sigma)\qquad \phi_f^\sM:\sM(X)\ra\sM(X_\sigma).\]

Relying on Ayoub's~\cite{AyoubI,AyoubII} and on the compatibility of the Betti realization with the four operations, our strategy consists in establishing enough of the formalism to show that the categories $\Db(\sM(X))$ underlie a stable homotopical $2$-functor in the sense of~\cite{AyoubI} (see \ref{theo:maintheo}), so that the rest of the formalism is obtained from~\cite{AyoubI,AyoubII}. 
The existence of the direct image by a closed immersion or the inverse image by a smooth morphism are obtained immediately via the universal property (see \ref{sec:PerverseMotives}). However, to construct the inverse image by a closed immersion (see \ref{sec:pullback}), we need to develop analogues, for \'etale motives, of the functors and gluing exact sequences obtained by Be{\u\i}linson in~\cite{MR923134}. This is done in \ref{sec:nearbycycles} and uses derivators, and  the logarithmic specialization system of Ayoub, see~\cite{AyoubII,AyoubEtale}. The proof of the main theorem is carried out in \ref{sec:MainTheo} and the most important step is the proof of the existence of the direct image by the projection of the affine line $\bbA^1_X$ onto its base $X$ (see \ref{prop:rightadjoint}). We conclude this section by the aforementioned applications to intersection cohomology and Leray spectral sequences.

In  \ref{sec:weights}, we show that perverse motives can also be defined using $\ell$-adic perverse sheaves and that they admit a notion of weights. We deduce the existence
of the weight filtration from the properties of Bondarko's Chow weight structure and from
the Weil conjectures (cf. Th\'eor\`eme 2 of Deligne's paper~\cite{WeilII}).
Then, using the strict support decomposition of pure objects to reduce to the case of a point, we
show that the category of pure objects of a given weight is semi-simple. As
an application, we get the existence of a weight structure on the derived category of $\sM(X)$ and an arithmetic proof of Zucker's theorem~\cite[Theorem p.416]{MR534758} for geometric variations of Hodge structures (see \ref{theo:weightIC} and \ref{coro:weightIC}).

\bigskip

\begin{acknowledgements}
The authors are very grateful to the referees for their valuable comments and suggestions that helped improve this work. The first author also expresses his thanks to A. Huber and S. Kebekus for the wonderful semester spent in Freiburg and the numerous conversations on Nori motives and Hodge theory, and the
second author would like to thank J. Ayoub, A. Huber and M. A. de Cataldo for useful conversations, and D. Hansen for pointing out some mistakes in an earlier version of the manuscript.

The present paper was partly written while the first author was a Marie-Curie FRIAS COFUND fellow at the Freiburg Institute of Advanced Studies and the second author was a professor at Princeton University and an
invited professor at the \'Ecole Normale Sup\'erieure de Lyon and the Universit\'e Lyon 1. 
They would like to thank these institutions for their hospitality and support during the academic year 2017-2018.
Also, the work of the second author was supported by the ANR HQDIAG project
no ANR-21-CE40-0015.

\end{acknowledgements}

\section{Categorical preliminaries}\label{sec:CatPre}

Let us recall in this section a few universal constructions related to abelian and triangulated categories. They date back to Freyd's construction of the abelian hull of an additive category, see his paper~\cite{MR0209333} and have been considered in many different forms in various works (see e.g. Verdier's book~\cite{MR1453167}, Krause's~\cite{MR1487973}, Prest's~\cite{MR2791358} and the paper~\cite{BVP}
by Barbieri-Viale and Prest).

Let $\matheusm S$ be an additive category. Let $\Mod(\eusm S)$ be the category of right $\eusm S$-modules, that is, the category of additive functors from $\eusm S^\op$ to the category $\mathbf{Ab}$ of abelian groups. The category $\Mod(\eusm S)$ is abelian and a sequence of right $\eusm S$-modules
\[0\ra F'\ra F\ra F''\ra 0\]
is exact if and only if for every $s\in\eusm S$ the sequence of abelian groups
\[0\ra F'(s)\ra F(s)\ra F''(s)\ra 0\]
is exact.

 A right $\eusm S$-module $F$ is said to be of finite presentation if there exist objects $s,t$ in $\eusm S$ and an exact sequence 
\[\eusm S(-,s)\ra\eusm S(-,t)\ra F\ra 0\]
in $\Mod(\eusm S)$.

\begin{defi} Let $\eusm S$ be an additive category. We denote by $\AR(\eusm S)$ the full subcategory of $\Mod(\eusm S)$ consisting of right $\eusm S$-modules of finite presentation.
\end{defi}

The category $\AR(\eusm S) $ is an additive category with cokernels (the cokernel of a morphism of right $\eusm S$-modules of finite presentation is of finite presentation) and the Yoneda functor 
\[h_{\eusm S}:\eusm S\ra\AR(\eusm S) \]
is a fully faithful additive functor. Recall that, given a morphism $t\ra s$ in $\eusm S$, a morphism $r\ra t$ is called a pseudo-kernel if the sequence
\[\eusm S(-,r)\ra\eusm S(-,t)\ra\eusm S(-,s)\]
is exact in $\Mod(\eusm S)$. The category $\AR(\eusm S)$ is abelian if and only if $\eusm S$ has pseudo-kernels (see \cite[Theorem~1.4]{MR0209333} and
\cite[Lemma~2.2]{MR1487973}).
 It also satisfies the following universal property.

\begin{prop}(\cite[2.1 and Lemma~2.6]{MR1487973}).
Let $\eusm S$ be an additive category. Let $\eusm A$ be an additive category with cokernels and $F:\eusm S\ra\eusm A$ be an additive functor, then there exists, up to a natural isomorphism, a unique right exact functor $\AR(\eusm S)\ra \eusm A$ that extends $F$. Moreover, if $\eusm S$ and $\eusm A$ admit pseudo-kernels,
then this functor is exact if and only if $F$ preserves pseudo-kernels. \end{prop}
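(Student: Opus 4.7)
The approach is the standard one for extending a functor to a category of finitely presented objects: define the extension on objects via chosen presentations, then invoke projectivity of representables to extend it to morphisms in a well-defined way.

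For each $M\in\AR(\eusm S)$, choose a presentation $h_{\eusm S}(s)\ra h_{\eusm S}(t)\ra M\ra 0$; by the Yoneda lemma the first map is of the form $h_{\eusm S}(\alpha)$ for some $\alpha:s\ra t$ in $\eusm S$. Set $\widetilde{F}(M):=\Coker(F(\alpha):F(s)\ra F(t))$, which exists because $\eusm A$ has cokernels. To define $\widetilde{F}$ on a morphism $f:M\ra N$, I would use that each $h_{\eusm S}(t)$ is projective in $\AR(\eusm S)$ (a direct consequence of Yoneda, since $\Hom_{\AR(\eusm S)}(h_{\eusm S}(t),-)$ coincides with evaluation at $t$): this allows one to lift $f$ to a morphism of presentations induced by actual arrows of $\eusm S$, and applying $F$ followed by taking cokernels yields a candidate map $\widetilde{F}(f)$ in $\eusm A$. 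The key technical point is that any two such lifts of $f$ differ by a chain homotopy at the level of presentations; applying $F$ produces a homotopy in $\eusm A$, whence $\widetilde{F}(f)$ is independent of the chosen lift. Functoriality and additivity follow from the same mechanism applied to composites and sums of lifts, and the tautological presentation $0\ra h_{\eusm S}(t)\xra{\id}h_{\eusm S}(t)$ shows $\widetilde{F}\circ h_{\eusm S}\cong F$.

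Right exactness of $\widetilde{F}$ is then automatic: any cokernel sequence in $\AR(\eusm S)$ can be enlarged to a compatible three-row diagram of presentations coming from $\eusm S$, and applying $F$ termwise gives a diagram in $\eusm A$ whose cokernels form the required cokernel sequence, since cokernels commute with cokernels. Uniqueness up to natural isomorphism is immediate because any right exact extension of $F$ must send a chosen presentation to the corresponding cokernel diagram in $\eusm A$, fixing its value on each object. For the exactness criterion, one direction is direct: if $\widetilde{F}$ is exact, its restriction via $h_{\eusm S}$ must preserve the kernels of morphisms between representables, which are precisely pseudo-kernels of the corresponding arrows in $\eusm S$. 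Conversely, if $F$ preserves pseudo-kernels, a horseshoe-style construction yields compatible presentations of the three terms of a short exact sequence in $\AR(\eusm S)$, and a diagram chase in $\eusm A$ — using exactly the preservation of pseudo-kernels — shows that $\widetilde{F}$ sends the sequence to an exact one.

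The main obstacle is the well-definedness of $\widetilde{F}$ on morphisms: the homotopy argument showing independence of the chosen lift is the only genuinely non-formal input, and everything else (functoriality, additivity, right exactness, uniqueness, and the exactness criterion) falls out of it once the appropriate diagrams of presentations are in place.
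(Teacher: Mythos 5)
The paper itself gives no proof of this proposition: it is recalled as the classical universal property of Freyd's construction $\AR(\eusm S)$ (finitely presented right $\eusm S$-modules), with references, so there is no in-paper argument to compare against. Your sketch is the standard one — define the extension on objects via chosen presentations, lift morphisms to morphisms of presentations using projectivity of the representables $h_{\eusm S}(t)$ together with exactness of the target presentation at its middle term, and remove the ambiguity by the homotopy argument — and it is essentially correct, including the treatment of right exactness, uniqueness, and the exactness criterion. One phrase needs repair: a pseudo-kernel $r\ra t$ of $t\ra s$ is \emph{not} the kernel of $h_{\eusm S}(t)\ra h_{\eusm S}(s)$ (that kernel need not be finitely presented, let alone representable); the pseudo-kernel condition says only that the image of $h_{\eusm S}(r)\ra h_{\eusm S}(t)$ equals that kernel. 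Accordingly, for the "only if" direction you should argue, say, that $\Coker(h_{\eusm S}(r)\ra h_{\eusm S}(t))\ra h_{\eusm S}(s)$ is a monomorphism, so exactness of the extension forces $F(r)\ra F(t)\ra F(s)$ to be exact, i.e.\ $F$ preserves pseudo-kernels; with that adjustment the proof goes through.
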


Note that the construction can be dualized so that there is a universal way to add kernels to an additive category. One simply set $\AL(\eusm S):=\AR(\eusm S^\op)^\op$. 
The two constructions can be combined to add both cokernels and kernels at the same time. Let $\eusm S$ be an additive category and let 
\[\Ab^\ad(\eusm S):=\AL(\AR(\eusm S)).\]
Then the functor $h:\eusm S\ra \Ab^\ad(\eusm S)$ is a fully faithful additive functor and $\Ab^\ad(\eusm S)$ is an abelian category which enjoys the following universal property (this is Freyd's abelian hull).
\begin{prop}
Let $\eusm A$ be an abelian category and $F:\eusm S\ra\eusm A$ be an additive functor, then there exists, up to a natural isomorphism, a unique exact functor $\Ab^\ad(\eusm S)\ra \eusm A$ that extends $F$. 
\end{prop}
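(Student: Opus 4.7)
The plan is to derive the universal property of $\Ab^\ad(\eusm S) = \AL(\AR(\eusm S))$ by applying the previous proposition and its dual in succession, and then verify exactness using the fact that, in a category with cokernels, pseudo-cokernels are easily described.

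First, since $\eusm A$ is abelian it has cokernels, so the previous proposition applies and yields a right exact additive functor $\tilde F\colon \AR(\eusm S)\to \eusm A$, unique up to natural isomorphism, that extends $F$ along the Yoneda embedding $h_{\eusm S}\colon\eusm S\to\AR(\eusm S)$. Dualizing the previous proposition (by replacing $\eusm S$ with $\eusm S^{\op}$ and the target with its opposite, using $\AL(\eusm T)=\AR(\eusm T^{\op})^{\op}$), one obtains the statement: for any additive category $\eusm T$ and any additive functor $G\colon\eusm T\to\eusm B$ into an additive category $\eusm B$ with kernels, there exists, up to natural isomorphism, a unique left exact functor $\AL(\eusm T)\to\eusm B$ extending $G$, and this extension is exact if and only if $G$ preserves pseudo-cokernels. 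Applying this to $\tilde F$ (noting $\eusm A$ has kernels) produces a left exact functor $\bar F\colon\Ab^\ad(\eusm S)\to\eusm A$ extending $\tilde F$, hence extending $F$. Uniqueness up to natural isomorphism is inherited from the uniqueness at each step: any exact extension of $F$ is in particular right exact, so its restriction to $\AR(\eusm S)$ is naturally isomorphic to $\tilde F$; being also left exact, it is then naturally isomorphic to $\bar F$ on $\AL(\AR(\eusm S))$.

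The only non-formal point is to show that $\bar F$ is in fact exact, which by the dual universal property amounts to showing that the right exact functor $\tilde F$ preserves pseudo-cokernels. This is where I expect the only real work. In the category $\AR(\eusm S)$, which has cokernels, pseudo-cokernels admit a concrete description: if $f\colon a\to b$ has cokernel $p\colon b\to c$, then a morphism $p'\colon b\to c'$ is a pseudo-cokernel of $f$ if and only if it factors as $p' = \alpha \circ p$ for some split monomorphism $\alpha\colon c\to c'$. Indeed $p'\circ f=0$ forces the factorization $p'=\alpha\circ p$ by the universal property of $p$; and the requirement that $p$ itself factors through $p'$ (since $p$ is one of the maps that must factor through any pseudo-cokernel) produces a retraction of $\alpha$ after cancellation by the epimorphism $p$. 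Conversely such a datum is readily checked to be a pseudo-cokernel.

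It now suffices to observe that $\tilde F$ sends such a diagram to one of the same shape. Since $\tilde F$ is right exact, $\tilde F(p)\colon\tilde F(b)\to\tilde F(c)$ is the cokernel of $\tilde F(f)$, and since $\tilde F$ is additive it preserves split monomorphisms, so $\tilde F(\alpha)$ remains split mono. Therefore $\tilde F(p') = \tilde F(\alpha)\circ\tilde F(p)$ is again of the form (split mono)$\circ$(cokernel), hence is a pseudo-cokernel of $\tilde F(f)$ in $\eusm A$ by the same description. This verifies the criterion and concludes the proof.
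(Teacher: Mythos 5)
Your proof is correct. The paper does not actually prove this proposition (it is quoted as Freyd's classical abelian-hull result, with references), but your argument is a complete and standard one and all the key points check out: extending $F$ right-exactly to $\AR(\eusm S)$ and then left-exactly to $\AL(\AR(\eusm S))=\Ab^\ad(\eusm S)$ via the dual of the stated universal property reduces exactness of the final extension to $\tilde F$ preserving pseudo-cokernels; your characterization of pseudo-cokernels in a category with cokernels as $(\text{split mono})\circ(\text{cokernel})$ is right in both directions (from $p'f=0$ one gets $p'=\alpha p$, and factoring $p$ through $p'$ plus epimorphy of $p$ yields the retraction of $\alpha$), and since the right exact $\tilde F$ preserves cokernels while any functor preserves split monomorphisms, $\tilde F$ does preserve pseudo-cokernels. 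The only point you use tacitly is in the uniqueness step: to see that the restriction to $\AR(\eusm S)$ of an exact functor on $\Ab^\ad(\eusm S)$ is again right exact, you need the canonical embedding $\AR(\eusm S)\ra\Ab^\ad(\eusm S)$ to preserve cokernels; this is true (it is the dual of the fact that the Yoneda embedding preserves whatever kernels exist, since kernels in $\Mod$ are computed pointwise and a limit lying in a full subcategory is a limit there), but it merits an explicit sentence.
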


Note also that the category $\Ab^\ad(\eusm S)$ is canonically equivalent to $\AR(\AL(\eusm S))$.

This construction can be used to provide an alternative description of Nori's category (see the paper~\cite{BVP} by Barbieri-Viale and Prest). Let $\eusm Q$ be a quiver, $\eusm A$ be an abelian category and $T:\eusm Q\ra \eusm A$ be a representation. Let $\Path(\eusm Q)$ be the path category and $\Path(\eusm Q)^\oplus$ be its additive completion obtained by adding finite direct sums.
Then, up to natural isomorphisms, we have a commutative diagram
\[\xymatrix{{\eusm Q}\ar[r]\ar[rd]_-{T} & {\Path(\eusm Q)^\oplus}\ar[r]\ar[d]^-{\varrho_T} &{\Ab^{\ad}(\Path(\eusm Q)^\oplus)=:\Ab^{\qv}(\eusm Q)}\ar[ld]^-{\rho_T}\\
{} & {\eusm A} & {}}\]
where $\varrho_T $ is an additive functor and $\rho_T $ an exact functor. The kernel of $\rho_T$ is a thick subcategory of $\Ab^{\qv}(\eusm Q)$ and we define the abelian category $\Ab^{\qv}(\eusm Q,T)$ to be the quotient of $\Ab^{\qv}(\eusm Q)$ by this kernel. By construction, the functor $\rho_T$ has a canonical factorization 
\[\Ab^{\qv}(\eusm Q)\xra{\pi_T}\Ab^{\qv}(\eusm Q,H)\xra{r_T}\eusm A\]
where $\pi_T$ is an exact functor and $r_T$ is a faithful exact functor. If we denote by  $\overline{T}$ the composition of the representation $\eusm Q\ra\Ab^{\qv}(\eusm Q)$ and the functor $\pi_T:\Ab^{\qv}(\eusm Q)\ra\Ab^{\qv}(\eusm Q,T)$, it provides a canonical factorization of $T$:
\[\eusm Q\xra{\overline{T}}\Ab^{\qv}(\eusm Q,T)\xra{r_T}\eusm A\]
where $\overline{T}$ is a representation and $r_T$ is a faithful exact functor. It is easy to see that the above factorization is universal among all factorizations of $T$ of the form
\[\eusm Q\xra{R}\eusm B\xra{s}\eusm A\]
where $\eusm B$ is an abelian category, $R$ is a representation and $s$ is a faithful exact functor. In particular, whenever Nori's construction is available, e.g. if $\eusm A$ is Noetherian, Artinian and has finite dimensional $\Hom$-groups over $\bbQ$ (see the paper~\cite{IvorraPNM} by the first author), then the category $\Ab^{\qv}(\eusm Q, T)$ is equivalent to Nori's abelian category associated with the quiver representation $T$.

Let us consider the case when $\eusm Q$ is an additive category and $T$ is an additive functor. Then, up to natural isomorphisms, we have a commutative diagram
\[\xymatrix{{\eusm Q}\ar[r]\ar[rd]_-{T} & {\Ab^\ad(\eusm Q)}\ar[d]^-{T^*} \\
{} & {\eusm A} & {}}\]
where $T^*$ is an exact functor. The kernel of $T^*$ is a thick subcategory of $\Ab^\ad(\eusm Q)$ and we define the abelian category $\Ab^\ad(\eusm Q,T)$ to be the quotient of $\Ab^\ad(\eusm Q)$ by this kernel. 

\begin{lemm}\label{LemmCompA}
Let $\eusm Q$ and $\eusm A$ be additive categories. Then, for every additive functor $T:\eusm Q\ra \eusm A$, the categories $\Ab^{\qv}(\eusm Q,T)$ and $\Ab^\ad(\eusm Q,T)$ are canonically equivalent.
\end{lemm}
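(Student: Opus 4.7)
The plan is to identify both categories by showing they satisfy the same universal property, namely: they represent the universal factorization $\eusm Q \xra{R} \eusm B \xra{s} \eusm A$ of $T$ in which $R$ is an additive functor, $\eusm B$ is abelian, and $s$ is faithful exact.

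First, I would verify this universal property for $\Ab^\ad(\eusm Q,T)$. Given such a factorization, Freyd's universal property for $\Ab^\ad(\eusm Q)$ extends $R$, up to a natural isomorphism, to a unique exact functor $R^{*}: \Ab^\ad(\eusm Q) \to \eusm B$ with $s \circ R^{*} \simeq T^{*}$. Because a faithful exact functor between abelian categories is conservative (a morphism is killed if and only if its image is killed, and a faithful functor between abelian categories reflects the zero object), the relation $s \circ R^{*} \simeq T^{*}$ forces the equality of thick subcategories $\ker R^{*} = \ker T^{*}$, so that $R^{*}$ descends uniquely through the quotient $\pi_T : \Ab^\ad(\eusm Q) \to \Ab^\ad(\eusm Q,T)$.

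Second, to apply this property in order to obtain an exact functor $\Ab^\ad(\eusm Q,T) \to \Ab^{\qv}(\eusm Q,T)$, I need to check that the representation $\overline{T}: \eusm Q \to \Ab^{\qv}(\eusm Q,T)$ is in fact an additive functor (a priori it only preserves composition and identities). This is the heart of the argument. For morphisms $f,g: x \to y$ in $\eusm Q$, the two morphisms $\overline{T}(f +_{\eusm Q} g)$ and $\overline{T}(f) + \overline{T}(g)$ of $\Ab^{\qv}(\eusm Q,T)$ both map under the faithful exact functor $r_T$ to the common value $T(f) + T(g)$ in $\eusm A$; hence their difference is killed by $r_T$ and therefore vanishes in $\Ab^{\qv}(\eusm Q,T)$. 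The compatibility with zero morphisms is handled by the same argument.

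Conversely, the universal property of $\Ab^{\qv}(\eusm Q,T)$ as the universal factorization through a quiver representation, applied to the factorization $\eusm Q \to \Ab^\ad(\eusm Q,T) \xra{r_T} \eusm A$ (whose first leg is additive, a fortiori a representation), produces an exact functor $\Ab^{\qv}(\eusm Q,T) \to \Ab^\ad(\eusm Q,T)$. The uniqueness clauses in both universal properties then force the two exact functors constructed to be quasi-inverse to each other, producing the desired canonical equivalence. I do not expect any serious obstacle; the only nontrivial point is the additivity verification in the second step, which follows from the faithfulness of $r_T$.
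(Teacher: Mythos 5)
Your proof is correct and follows essentially the same route as the paper's, whose whole content is the observation you isolate in your second step: faithfulness of the exact functor to $\eusm A$ forces the first leg of any factorization to be additive, after which Freyd's universal property of $\Ab^{\ad}(\eusm Q)$ together with exactness and faithfulness lets the induced exact functor descend through the quotient, so that $\eusm Q\ra\Ab^{\ad}(\eusm Q,T)\ra\eusm A$ has the universal property defining $\Ab^{\qv}(\eusm Q,T)$. The only slip is your parenthetical claim that $\overline{T}$ preserves composition and identities \emph{a priori}: it is a priori only a morphism of quivers (the edge of a composite is not the composite of the edges in the path category), but the very same faithfulness-of-$r_T$ argument you use for sums and zero morphisms also yields preservation of identities and composites, so the proof is unaffected.
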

\begin{proof}
 To see this, it suffices to check that the factorization
\[\eusm Q\ra\Ab^\ad(\eusm Q,T)\ra\eusm A\]
satisfies the universal property that defines $\Ab^{\qv}(\eusm Q,T)$. Consider a factorization of the representation $T$ of the quiver $\eusm Q$
\[\eusm Q\xra{R}\eusm B\xra{s}\eusm A\]
where $\eusm B$ is an abelian category, $R$ is a representation and $s$ is a faithful exact functor. Since $s$ is faithful, $R$ must be an additive functor. Therefore, we get a commutative diagram (up to natural isomorphisms)
\[\xymatrix{{\eusm Q}\ar@/_2em/[rdd]_-{T}\ar[rd]_-{R}\ar[rr] & {} & {\Ab^{\ad}(\eusm Q)}\ar@{.>}[ld]_-{\textrm{exact}}\ar@/^2em/[ldd]^-{T^*}\\
{} & {\eusm B}\ar[d]^-{s} & {}\\
{} &{\eusm A.} &{}}\]
The exactness and the faithfulness of $s$ imply that the above diagram can be further completed into a 
commutative diagram (up to natural isomorphisms)
\[\xymatrix{{\eusm Q}\ar@/_2em/[rdd]_-{T}\ar[rd]_-{R}\ar[rr] & {} & {\Ab^\ad(\eusm Q)}\ar[ld]_-{\textrm{exact}}\ar@/^6em/[ldd]^-{T^*}\ar[d]\\
{} & {\eusm B}\ar[d]^-{s} & {\Ab^\ad(\eusm Q,T)}\ar@{.>}[l]\ar[ld]\\
{} &{\eusm A.} &{}}\]
This shows the desired universal property.
\end{proof}

Let us finally consider the special case when $\eusm S$ is a triangulated category. In that case the additive category $\eusm S$ has pseudo-kernels and pseudo-cokernels, in particular, 
the category $\Ab^\tr(\eusm S):=\AR(\eusm S)$
is an abelian category.\footnote{This is the abelian category denoted by  $A(\eusm S)$ in Chapter~V of Neeman's book~\cite{MR1812507}.} 
The Yoneda embedding $h_{\eusm S}:\eusm S\ra \Ab^\tr(\eusm S)$ is a homological functor and is universal for this property (see \cite[Theorem 5.1.18]{MR1812507}). In particular, if $\eusm A$ is an abelian category, any homological functor $H:\eusm S\ra\eusm A$ admits a canonical factorization
\[\xymatrix{{\eusm S}\ar[r]^-{h_{\eusm S}} & {\Ab^\tr(\eusm S)}\ar[r]^-{\rho_H} & {\eusm A}}\]
where $\rho_H$ is an exact functor. This factorization of $H$ is universal among all such factorizations.

The kernel of $\rho_H$ is a thick subcategory of $\Ab^\tr(\eusm S)$ and we define the abelian category $\Ab^\tr(\eusm S,H)$ to be the quotient of $\Ab^\tr(\eusm S)$ by this kernel. By construction, the functor $\rho_H$ has a canonical factorization 
\[\Ab^{\tr}(\eusm S)\xra{\pi_H}\Ab^{\tr}(\eusm S,H)\xra{r_H}\eusm A\]
where $\pi_H$ is an exact functor and $r_H$ is a faithful exact functor. Setting $H_{\eusm S}:=\pi_H\circ h_{\eusm S}$, it provides a canonical factorization of $H$:
\[\eusm S\xra{H_{\eusm S}}\Ab^\tr(\eusm S,H)\xra{r_H}\eusm A\]
where $H_{\eusm S}$ is a homological functor and $r_H$ a faithful exact functor. It is easy to see that the above factorization is universal among all factorizations of $H$ of the form
\[\eusm S\xra{L}\eusm B\xra{s}\eusm A\]
where $L$ is a homological functor and $s $ is a faithful exact functor.

We can also see the triangulated category $\eusm S$ simply as a quiver (resp. an additive category) and the homological functor $H:\eusm S\ra\eusm A$ simply as a representation (resp. an additive functor). In particular, we have at our disposal the universal factorizations of the representation $H$:
\[\eusm S\ra\Ab^{\qv}(\eusm S,H)\ra \eusm A\]
and
\[\eusm S\ra\Ab^{\ad}(\eusm S,H)\ra \eusm A\]
where the arrows on the right are exact and faithful functors. 

\begin{lemm}\label{LemmCompB}
Let $\eusm S$ be a triangulated category, $\eusm A$ be an abelian category and $H:\eusm S\ra\eusm A$ be a homological functor. Then, the three abelian categories $\Ab^{\qv}(\eusm S,H)$, $\Ab^\ad(\eusm S,H) $  and $\Ab^\tr(\eusm S,H)$ are canonically equivalent.
\end{lemm}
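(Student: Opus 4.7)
The plan is to show that all three quotient categories corepresent the same universal problem, namely: among all factorizations
\[\eusm S\xra{L}\eusm B\xra{s}\eusm A\]
of $H$ where $\eusm B$ is an abelian category, $L$ is a \emph{homological} functor and $s$ is a faithful exact functor, the one through $\Ab^\tr(\eusm S,H)$, respectively $\Ab^\ad(\eusm S,H)$, respectively $\Ab^\qv(\eusm S,H)$, is initial. By construction this is exactly how $\Ab^\tr(\eusm S,H)$ is defined, and thanks to \lemmaref{LemmCompA} (applied to $\eusm S$ viewed as an additive category or as a quiver) it suffices to compare $\Ab^\tr(\eusm S,H)$ with $\Ab^\ad(\eusm S,H)$.

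The key auxiliary observation, which I would isolate as the first step, is that a faithful exact functor $s:\eusm B\ra\eusm A$ between abelian categories reflects homologicality in the following sense: if $R:\eusm S\ra\eusm B$ is an additive functor and $s\circ R$ is homological, then $R$ is homological. To see this I would first recall that a faithful exact functor between abelian categories reflects isomorphisms, since for any morphism $f$ in $\eusm B$ the exactness of $s$ gives $s(\Ker f)=\Ker(s(f))$ and $s(\Coker f)=\Coker(s(f))$, while faithfulness forces any object sent to $0$ to be zero. Given a distinguished triangle $X\ra Y\ra Z\ra X[1]$, the composite $R(X)\ra R(Y)\ra R(Z)$ is zero because $s$ is faithful and $H$ annihilates it, and the canonical monomorphism $\Img(R(X)\ra R(Y))\hookrightarrow \Ker(R(Y)\ra R(Z))$ becomes an isomorphism after applying $s$ (using exactness of $s$ and the homologicality of $H=s\circ R$), hence is itself an isomorphism by the reflection property.

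With this at hand I would conclude as follows. The canonical factorization
\[\eusm S\xra{h}\Ab^\ad(\eusm S)\xra{\pi_H}\Ab^\ad(\eusm S,H)\xra{r_H}\eusm A\]
has $r_H$ faithful exact by construction, so the composite $\pi_H\circ h$ is homological by the auxiliary observation. In particular, $\Ab^\ad(\eusm S,H)$ equipped with this data is a factorization of $H$ of the type that defines $\Ab^\tr(\eusm S,H)$, so there is a canonical exact functor $\Ab^\tr(\eusm S,H)\ra\Ab^\ad(\eusm S,H)$ compatible with the given data. Conversely, the universal factorization $\eusm S\xra{H_{\eusm S}}\Ab^\tr(\eusm S,H)\xra{r_H}\eusm A$ is a factorization through a homological functor, in particular through an additive functor, hence yields a canonical exact functor $\Ab^\ad(\eusm S,H)\ra\Ab^\tr(\eusm S,H)$. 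The uniqueness parts of both universal properties force these two functors to be mutually inverse.

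The main obstacle is really the auxiliary lemma about faithful exact functors reflecting homologicality; once that is established the rest is a formal unwinding of the three universal properties and their obvious implications (homological $\Rightarrow$ additive $\Rightarrow$ representation), combined with \lemmaref{LemmCompA}.
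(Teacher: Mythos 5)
Your proposal is correct and follows essentially the same route as the paper: after reducing via \lemmaref{LemmCompA} to comparing $\Ab^\ad(\eusm S,H)$ with $\Ab^\tr(\eusm S,H)$, the whole argument turns on the observation that a factorization $\eusm S\xra{R}\eusm B\xra{s}\eusm A$ with $s$ faithful exact forces $R$ to be homological, which is exactly the pivot of the paper's proof. The only differences are cosmetic: you spell out the reflection-of-homologicality argument that the paper merely asserts, and you phrase the conclusion as constructing mutually inverse functors from the two universal properties rather than verifying directly that the $\Ab^\tr$ factorization satisfies the universal property defining $\Ab^\ad(\eusm S,H)$.
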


\begin{proof}
We have seen in \ref{LemmCompA} that $\Ab^\qv(\eusm S,H) $ and $\Ab^\ad(\eusm S,H)$ are canonically equivalent.  Let us prove that so do $\Ab^\ad(\eusm S,H) $ and $\Ab^\tr(\eusm S,H)$. It suffices to check that the factorization
\[\eusm S\ra\Ab^\tr(\eusm S,H)\ra\eusm A\]
satisfies the universal property that defines $\Ab^{\ad}(\eusm S,H)$. Consider a factorization of the additive functor $H$:
\[\eusm Q\xra{R}\eusm B\xra{s}\eusm A\]
where $\eusm B$ is an abelian category, $R$ is an additive functor and $s$ is a faithful exact functor. Since $s$ is faithful, $R$ must be homological. Therefore, we get a commutative diagram (up to natural isomorphisms)
\[\xymatrix{{\eusm S}\ar@/_2em/[rdd]_-{H}\ar[rd]_-{R}\ar[rr] & {} & {\Ab^\tr(\eusm S)}\ar@{.>}[ld]_-{\textrm{exact}}\ar@/^2em/[ldd]\\
{} & {\eusm B}\ar[d]^-{s} & {}\\
{} &{\eusm A.} &{}}\]
The exactness and the faithfulness of $s$ imply that the above diagram can be further completed into a 
commutative diagram (up to natural isomorphisms)
\[\xymatrix{{\eusm S}\ar@/_2em/[rdd]_-{H}\ar[rd]_-{R}\ar[rr] & {} & {\Ab^\tr(\eusm S)}\ar[ld]_-{\textrm{exact}}\ar@/^6em/[ldd]\ar[d]\\
{} & {\eusm B}\ar[d]^-{s} & {\Ab^\tr(\eusm S,T)}\ar@{.>}[l]\ar[ld]\\
{} &{\eusm A.} &{}}\]
This shows the desired universal property.
\end{proof}

\section{Perverse motives}\label{sec:PerverseMotives}

We fix a field $k$ that admits an embedding $\sigma:k\ra\bbC$.
Unless otherwise specified, we will only consider quasi-projective
$k$-varieties in this article.

\subsection{Definition}\label{subsec:defPerverseMotives}

Let $X$ be a quasi-projective $k$-variety. We denote by $X^\anc$ the complex
analytic space associated with the base change of $X$ along
$\sigma$, by $\Dbc(X,\bbQ)$ the category of complexes of sheaves of
$\bbQ$-vector spaces on $X^\anc$ with bounded algebraically
constructible cohomology,
by $\sP(X)$ the heart of the perverse t-structure on $\Dbc(X,\bbQ)$
introduced in Section~2 of the book~\cite{BBD} of Beilinson--Bernstein--Deligne for the self-dual perversity and
by $\DA_\ct(X)$ the triangulated category of constructible \'etale
motivic sheaves with rational coefficients (see for example
Section~3 of Ayoub's paper~\cite{AyoubEtale}).
By \cite[Theorem 16.2.18]{CD}, this last category is equivalent
to the category of constructible Be\u{\i}linson motives studied in Cisinski and
D\'eglise's book~\cite{CD}, and the equivalence commutes with the operations
we will consider later (direct and inverse images and tensor product).
So we will use reference to Ayoub's articles or to the book~\cite{CD},
as convenient.

To construct the abelian category of perverse motives $\sM(X)$ used in the present work, we take $\eusm S$ to be the triangulated category $\DA_{\ct}(X)$ and $H$ to be the homological functor $\pH^0_\sP$ obtained by composing of the Betti realization
\[\Bti^*_X:\DA_\ct(X)\ra \Dbc(X,\bbQ)\]
constructed by Ayoub in~\cite{AyoubBetti} and the perverse cohomology functor $\pH^0:\Dbc(X,\bbQ)\ra\sP(X)$. 

\begin{defi}
Let $X$ be a $k$-variety. The abelian category of perverse motives is the abelian category
\[\sM(X):=\Ab^\tr(\eusm S,H)=\Ab^\tr(\DA_\ct(X),\pH^0_\sP).\]
\end{defi}
By construction the functor $\pH^0_\sP$ has a factorization
\[\DA_\ct(X)\xra{\pH^0_\sM}\sM(X)\xra{\rat^\sM_X}\sP(X)\]
where $\rat^\sM_X$ is a faithful exact functor and $\pH^0_\sM$ is a homological functor. Let us recall the two consequences (denoted by \textbf{P1} and \textbf{P2} below) of the universal property of the factorization
\[\xymatrix{{\DA_\ct(X)}\ar[r]^-{\pH^0_\sM}\rrlowertwocell_{\pH^0_\sP}{\rho_X} &{\sM(X)}\ar[r]^-{\rat^\sM_X} &{\sP(X).}}\]
The property {\textbf{P1}} below is proved in~\cite[Proposition 6.6]{IvorraPNM}. A proof of property \textbf{P2} can be found in Proposition~2.5 of the paper~\cite{IvorraYamazaki} by
Ivorra--Yamazaki.

{\textbf{P1}} For every commutative diagram
\[\xymatrix@C=.5cm@R=.5cm{{\DA_\ct(X)}\ar[r]^-{\pH^0_\sP}\ar[d]^-{\mathsf{F}}\xtwocell[1,1]{}\omit{_\alpha} &{\sP(X)}\ar[d]^-{\mathsf{G}}\\
{\DA_\ct(Y)}\ar[r]_-{\pH^0_\sP} & {\sP(Y)}}\]
where $\mathsf{F} $ is a triangulated functor, $\mathsf{G}$ is an exact functor and $\alpha$ is an invertible natural transformation, there exists a commutative diagram
\[
\xymatrix@C=.5cm@R=.5cm{{\DA_\ct(X)}\xtwocell[1,1]{}\omit{_\beta}\ar[r]\ar[d]^-{\mathsf{F}} & {\sM(X)}\ar[r]\ar[d]^-{\mathsf{E}}\xtwocell[1,1]{}\omit{_\gamma} & {\sP(X)}\ar[d]^-{\mathsf{G}}\\
{\DA_\ct(Y)}\ar[r] & {\sM(Y)}\ar[r] & {\sP(Y)}}
\]
where $\mathsf{E}$ is an exact functor and $\beta,\gamma $ are invertible natural transformations such that the diagram
\[\xymatrix@R=.1cm@C=.1cm{{\DA_\ct(X)}\ar[ddd]^-{\mathsf{F}}\ar@{=}[rrrr]\ar[rd]^(.7){\pH^0_\sM}\xtwocell[4,1]{}\omit{_\beta} &{} & {} &{}\xtwocell[2,0]{}\omit{^\rho_X} &{\DA_\ct(X)}\ar[rrdd]^-{\pH^0_\sP}\ar@{.>}[ddd]^-{\mathsf{F}}\xtwocell[5,2]{}\omit{_\alpha} &{} &{}\\
{} &{\sM(X)}\ar[rd]^-{\rat^\sM_X}\ar[ddd]^-{\mathsf{E}}\xtwocell[4,1]{}\omit{_\gamma} &{} &{} & {}&{} &{}\\
{} &{} &{\sP(X)}\ar@{=}[rrrr]\ar[ddd]^-{\mathsf{G}} &{} & {}&{} &{\sP(X)}\ar[ddd]^-{\mathsf{G}}\\
{\DA_\ct(Y)}\ar@{:}[rrrr]\ar[rd]^(.6){\pH^0_\sM}  &{} &{} &{}\xtwocell[2,0]{}\omit{^\rho_Y} &{\DA_\ct(Y)}\ar@{.>}[rrdd]^-{\pH^0_\sP} &{} &{}\\
{} &{\sM(Y)}\ar[rd]^{\rat^\sM_Y} &{} &{} &{} &{} &{}\\
{} &{} &{\sP(Y)}\ar@{=}[rrrr] &{} & {}&{} &{\sP(Y)}
}\]
is commutative.

{\textbf{P2}}
Let 
\[\xymatrix@R=.3cm@C=.3cm{{\DA_\ct(X)}\ar[rr]^-{\pH^0_\sP}\ar@{=}[dd]\ar[rd]^-{\mathsf{F}_1}\xtwocell[1,3]{}\omit{_\alpha_1}  & {} & {\sP(X)}\ar[rd]^-{\mathsf{G}_1}\ar@{:}[dd] & {}\\
{}\xtwocell[0,1]{}\omit{<2>_\lambda}& {\DA_\ct(Y)}\ar[rr]^-{\pH^0_\sP}\ar@{=}[dd] & {}\xtwocell[0,1]{}\omit{<2>_\mu} & {\sP(Y)}\ar@{=}[dd]\\
{\DA_\ct(X)}\ar[rd]^-{\mathsf{F}_2}\ar@{.>}[rr]^(.7){\pH^0_\sP}\xtwocell[1,3]{}\omit{_\alpha_2} & {} & {\sP(X)}\ar@{.>}[rd]^-{\mathsf{G}_2} & {}\\
{} & {\DA_\ct(Y)}\ar[rr]^-{\pH^0_\sP} & {} & {\sP(Y)}}\]
be a commutative diagram in which $\mathsf{F}_1,\mathsf{F}_2$ are triangulated functors, $\mathsf{G}_1,\mathsf{G}_2$  are exact functors, $\alpha_1,\alpha_2 $ are invertible natural transformations and $\lambda,\mu$ are natural transformations.
Let
\[
\xymatrix@C=.5cm@R=.5cm{{\DA_\ct(X)}\xtwocell[1,1]{}\omit{_\beta_1}\ar[r]\ar[d]^-{\mathsf{F}_1} & {\sM(X)}\ar[r]\ar[d]^-{\mathsf{E}_1}\xtwocell[1,1]{}\omit{_\gamma_1} & {\sP(X)}\ar[d]^-{\mathsf{G}_1}\\
{\DA_\ct(Y)}\ar[r] & {\sM(Y)}\ar[r] & {\sP(Y)}}
\quad
\xymatrix@C=.5cm@R=.5cm{{\DA_\ct(X)}\xtwocell[1,1]{}\omit{_\beta_2}\ar[r]\ar[d]^-{\mathsf{F}_2} & {\sM(X)}\ar[r]\ar[d]^-{\mathsf{E}_2}\xtwocell[1,1]{}\omit{_\gamma_2} & {\sP(X)}\ar[d]^-{\mathsf{G}_2}\\
{\DA_\ct(Y)}\ar[r] & {\sM(Y)}\ar[r] & {\sP(Y)}}\]
be commutative diagrams given in the property {\textbf{P1}}, then there exists a unique natural transformation $\theta:\mathsf{E}_1\ra\mathsf{E}_2$ such that the diagram
\[\xymatrix@R=.3cm@C=.3cm{{\DA_\ct(X)}\ar[rr]^-{\pH^0_\sM}\ar@{=}[dd]\ar[rd]^-{\mathsf{F}_1}\xtwocell[1,3]{}\omit{_\beta_1}  & {} & {\sM(X)}\ar[rd]^-{\mathsf{E}_1}\ar@{:}[dd]\ar[rr]^-{\rat^\sM_X}\xtwocell[1,3]{}\omit{_\gamma_1} & {} & {\sP(X)}\ar@{:}[dd]\ar[rd]^-{\mathsf{G}_1} &{}\\
{}\xtwocell[0,1]{}\omit{<2>_\lambda}& {\DA_\ct(Y)}\ar[rr]^-{\pH^0_\sM}\ar@{=}[dd] & {}\xtwocell[0,1]{}\omit{<2>_\theta} & {\sM(Y)}\ar@{=}[dd]\ar[rr]^-{\rat^\sM_Y} & {}\xtwocell[0,1]{}\omit{<2>_\mu} &{\sP(Y)}\ar@{=}[dd]\\
{\DA_\ct(X)}\ar[rd]^-{\mathsf{F}_2}\ar@{.>}[rr]^(.65){\pH^0_\sM}\xtwocell[1,3]{}\omit{_\beta_2} & {} & {\sM(X)}\ar@{.>}[rd]^-{\mathsf{E}_2}\ar@{.>}[rr]^(.65){\rat^\sM_X}\xtwocell[1,3]{}\omit{_\gamma_2} & {} & {\sP(X)}\ar@{.>}[rd]^-{\mathsf{G}_2} &{}\\
{} & {\DA_\ct(Y)}\ar[rr]^-{\pH^0_\sM} & {} & {\sM(Y)}\ar[rr]^-{\rat^\sM_Y} & {} &{\sP(Y)}}\]
is commutative. 

\subsection{Lifting of $2$-functors}\label{subsec:2func}

As in~\cite[\S 1.1]{AyoubI}, in this work, we only consider strict $2$-categories. However, as in loc.cit., $2$-functors are not necessarily strict (see also Deligne's~\cite{DeligneCross}).

Let $(\Sch/k)$ be the category of quasi-projective $k$-varieties and $\eusm C$ be a subcategory of $(\Sch/k)$.
The properties {\textbf{P1}} and {\textbf{P2}} can be used to lift (covariant or contravariant) $2$-functors. Indeed, let $\mathsf{F}:\eusm C\ra\mathfrak{TR}$ be a $2$-functor (let's say covariant to fix the notation), where $\mathfrak{TR}$ is the
$2$-category of triangulated categories, 
such that $\mathsf{F}(X)=\DA_\ct(X)$ for every $k$-variety $X$ in $\eusm C$. Similarly, let $\mathfrak{Ab}$ be the $2$-category of abelian categories, and 
let $\mathsf{G}:\eusm C\ra\mathfrak{Ab}$ be a $2$-functor such that $\mathsf{G}(X)=\sP(X)$ for every $k$-variety $X$ in $\eusm C$
and that $G(f)$ is exact for every morphism $f$ in $\eusm C$.

We have forgetful functors from $\mathfrak{TR}$ and $\mathfrak{Ab}$ to the $2$-category of additive categories.
Assume that $(\Theta,\alpha):\mathsf{F}\ra\mathsf{G}$ is a $1$-morphism of $2$-functors, where we see $\mathsf{F}$ and $\mathsf{G}$ as $2$-functors into the
$2$-category of additive categories via these forgetful functors,
such that $\Theta_X=\pH^0_\sP$ for every $X\in\eusm C$ and that $\alpha_f$ is invertible for every morphism $f$ in $\eusm C$.

Let $f:X\ra Y$ be a morphism in $\eusm C$. By applying {\textbf{P1}} to the square
 \[\xymatrix@C=.5cm@R=.5cm{{\DA_\ct(X)}\ar[r]^-{\pH^0_\sP}\ar[d]^-{\mathsf{F}(f)}\xtwocell[1,1]{}\omit{_\alpha_f} &{\sP(X)}\ar[d]^-{\mathsf{G}(f)}\\
{\DA_\ct(Y)}\ar[r]_-{\pH^0_\sP} & {\sP(Y)}}\]
we get a commutative diagram
\[
\xymatrix@C=.5cm@R=.5cm{{\DA_\ct(X)}\xtwocell[1,1]{}\omit{_\beta_f}\ar[r]\ar[d]^-{\mathsf{F}} & {\sM(X)}\ar[r]\ar[d]^-{\mathsf{E}(f)}\xtwocell[1,1]{}\omit{_\gamma_f} & {\sP(X)}\ar[d]^-{\mathsf{G}(f)}\\
{\DA_\ct(Y)}\ar[r] & {\sM(Y)}\ar[r] & {\sP(Y)}}
\]
where $\mathsf{E}(f)$ is an exact functor and $\beta_f,\gamma_f$ are invertible natural transformations such that the diagram
\[\xymatrix@R=.1cm@C=.1cm{{\DA_\ct(X)}\ar[ddd]^-{\mathsf{F}(f)}\ar@{=}[rrrr]\ar[rd]^(.6){\pH^0_\sM}\xtwocell[4,1]{}\omit{_\beta_f} &{} & {} &{}\xtwocell[2,0]{}\omit{^\rho_X} &{\DA_\ct(X)}\ar[rrdd]^-{\pH^0_\sP}\ar@{.>}[ddd]^-{\mathsf{F}(f)}\xtwocell[5,2]{}\omit{_\alpha_f} &{} &{}\\
{} &{\sM(X)}\ar[rd]^-{\rat^\sM_X}\ar[ddd]^-{\mathsf{E}(f)}\xtwocell[4,1]{}\omit{_\gamma_f} &{} &{} & {}&{} &{}\\
{} &{} &{\sP(X)}\ar@{=}[rrrr]\ar[ddd]^-{\mathsf{G}(f)} &{} & {}&{} &{\sP(X)}\ar[ddd]^-{\mathsf{G}(f)}\\
{\DA_\ct(Y)}\ar@{:}[rrrr]\ar[rd]^{\pH^0_\sM}  &{} &{} &{}\xtwocell[2,0]{}\omit{^\rho_Y} &{\DA_\ct(Y)}\ar@{.>}[rrdd]^-{\pH^0_\sP} &{} &{}\\
{} &{\sM(Y)}\ar[rd]^{\rat^\sM_Y} &{} &{} &{} &{} &{}\\
{} &{} &{\sP(Y)}\ar@{=}[rrrr] &{} & {}&{} &{\sP(Y)}
}\]
is commutative. Let $X\xra{f}Y\xra{g}Z$ be morphisms in $\eusm C$. By applying {\textbf{P2}} to the commutative diagram
\[\xymatrix@R=.1cm@C=.1cm{{\DA_\ct(X)}\ar@{=}[ddd]\ar[rrrr]^-{\pH^0_\sP}\ar[rrdd]^-{\mathsf{F}(g\circ f)} &{} & {} &{}\xtwocell[2,0]{}\omit{_\alpha_{g\circ f}} &{\sP(X)}\ar[rrdd]^-{\mathsf{G}(g\circ f)}\ar@{:}[ddd] &{} &{}\\
{} &{} &{} &{} & {}&{} &{}\\
{}\xtwocell[0,2]{}\omit{<2>_{\kern2pc c_{\mathsf{F}}(f,g)}} &{} &{\DA_\ct(Z)}\ar[rrrr]^-{\pH^0_\sP}\ar@{=}[ddd] &{} & {} \xtwocell[0,2]{}\omit{<2>_{\kern2pc c_{\mathsf{G}}(f,g)}}&{} &{\sP(X)}\ar@{=}[ddd]\\
{\DA_\ct(X)}\ar@{.>}[rrrr]^-{\pH^0_\sP}\ar[rd]^{\mathsf{F}(f)}  &{} &{}\xtwocell[1,0]{}\omit{_\alpha_f}  &{}&{\sP(X)}\ar@{.>}[rd]^-{\mathsf{G}(f)} &{} &{}\\
{} &{\DA_\ct(Y)}\ar@{.>}[rrrr]^-{\pH^0_\sP}\ar[rd]^{\mathsf{F}(g)} &{} &{} &{}\xtwocell[1,0]{}\omit{_\alpha_g} &{\sP(Y)}\ar@{.>}[rd]^-{\mathsf{G}(g)} &{}\\
{} &{} &{\DA_\ct(Z)}\ar[rrrr]^-{\pH^0_\sP} &{} & {}&{} &{\sP(Z)}
}\]
there exists a unique natural transformation $c_\mathsf{E}(f,g):\mathsf{E}(g\circ f)\ra \mathsf{E}(g)\circ\mathsf{E}(f)$ that fits into the commutative diagram
\[\xymatrix@R=.1cm@C=.1cm{{\DA_\ct(X)}\ar@{=}[ddd]\ar[rrr]^-{\pH^0_\sM}\ar[rrdd]^-{\mathsf{F}(g\circ f)} &{} & {}\xtwocell[2,0]{}\omit{<-5>_{\beta_{g\circ f}}} &{\sM(X)}\ar@{:}[ddd]\ar[rrr]^-{\rat^\sM_X}\ar[rrdd]^-{\mathsf{E}(g\circ f)} & {}&{}\xtwocell[2,0]{}\omit{<-5>_{\gamma_{g\circ f}}} &{\sP(X)}\ar[rrdd]^-{\mathsf{G}(g\circ f)}\ar@{:}[ddd] &{} &{}\\
{} &{} &{} &{} & {} & {}&{}  & &{}\\
{}\xtwocell[0,2]{}\omit{<2>_{\kern2pc c_{\mathsf{F}}(f,g)}} &{} &{\DA_\ct(Z)}\ar[rrr]^-{\pH^0_\sM}\ar@{=}[ddd] &{}\xtwocell[0,2]{}\omit{<2>_{\kern2pc c_{\mathsf{E}}(f,g)}} & & {\sM(Z)}\ar@{=}[ddd]\ar[rrr]^-{\rat^\sM_Z} &{}\xtwocell[0,2]{}\omit{<2>_{\kern2pc c_{\mathsf{G}}(f,g)}} & &{\sP(X)}\ar@{=}[ddd]\\
{\DA_\ct(X)}\ar@{.>}[rrr]^-{\pH^0_\sM}\ar[rd]^{\mathsf{F}(f)}  &{} & {}\xtwocell[1,0]{}\omit{_{\beta_f}} & {\sM(X)}\ar@{.>}[rrr]^-{\rat^\sM_X}\ar@{.>}[rd]^-{\mathsf{E}(f)} &   &{}\xtwocell[1,0]{}\omit{_{\gamma_f}}&{\sP(X)}\ar@{.>}[rd]^-{\mathsf{G}(f)} &{} &{}\\
{} &{\DA_\ct(Y)}\ar@{.>}[rrr]^-{\pH^0_\sM}\ar[rd]^{\mathsf{F}(g)} &{} &{}\xtwocell[1,0]{}\omit{_{\beta_g}}&{\sM(Y)}\ar@{.>}[rrr]^-{\rat^\sM_Y}\ar@{.>}[rd]^-{\mathsf{E}(g)} & {} & {}\xtwocell[1,0]{}\omit{_{\gamma_g}} &{\sP(Y)}\ar@{.>}[rd]^-{\mathsf{G}(g)}  &{}\\
{} &{} &{\DA_\ct(Z)}\ar[rrr]^-{\pH^0_\sM} &{} & & {\sM(Z)}\ar[rrr]^{\rat^\sM_Z} & &{} &{\sP(Z).}
}\]
Using the uniqueness and the fact that the functors $\rat^\sM_X$, for $X$ in $\eusm C$, are faithful it is easy to see that the morphisms $c_{\mathsf{E}}$ satisfy the cocycle condition. Hence $\mathsf{E}:\eusm C\ra\mathfrak{Ab}$ is a $2$-functor and  $(\pH^0_\sM,\beta) $, $(\rat^\sM,\gamma)$ are $1$-morphisms of $2$-functors, where again we see the $2$-functors as functors into the $2$-category of
additive categories. As $1$- and $2$-morphisms in $\mathfrak{Ab}$ are the same as $1$- and $2$-morphisms in the $2$-category of additive categories,
the morphism $(\rat^\sM,\gamma)$ is also a $1$-morphism of $2$-functors $\eusm C\ra\mathfrak{Ab}$.

\subsection{Betti realization of \'etale motives}

Let $f:X\ra Y$ be a morphism of quasi-projective $k$-varieties.
Recall that the category $\Dbc(X,\bbQ)$ is equivalent to the derived category of the abelian category of perverse sheaves on $X$ via the realization functor constructed in \cite[3.1.9]{BBD} (it is known to be an equivalence by \cite[Theorem 1.3]{MR923133}). In particular, the four (adjoint) functors 
\begin{equation*}
\xymatrix{{\Dbc(X,\bbQ)}\ar@<-1ex>[r]_-{f^\sP_*} & {\Dbc(Y,\bbQ)}\ar@<-1ex>[l]_-{f^*_\sP}\ar@<-1ex>[r]_-{f_\sP^!} & {\Dbc(X,\bbQ)}\ar@<-1ex>[l]_-{f_!^\sP} }
\end{equation*}
can be seen as functors between the derived categories of perverse sheaves (for their construction in terms of perverse sheaves see \cite{MR923133}).

Let $\Bti^*_X:\DA_\ct(X)\ra\Dbc(X,\bbQ)$ be the realization functor of \cite{AyoubBetti}. If $f:X\ra Y$ is a morphism of  quasi-projective $k$-varieties, by construction, there is an invertible natural transformation
\[\theta_f:f^*_\sP\circ\Bti^*_Y\ra \Bti^*_X\circ f^*\]
(see \cite[Proposition 2.4]{AyoubBetti}). Let $\theta$ be the collection of these natural transformations, then $(\Bti^*,\theta)$ is a morphism of stable homotopical $2$-functors in the sense of \cite[Definition 3.1]{AyoubBetti}. Following the notation in \cite{AyoubBetti}, we denote by
\begin{align*}
\gamma_f &:\Bti^*_Y\circ f_*\ra f^\sP_*\circ\Bti^*_X\; ; \\
\rho_f &: f^\sP_!\circ\Bti^*_X\ra\Bti^*_Y\circ f_! \;;\\
\xi_f &: \Bti^*_X\circ f^!\ra f^!_\sP\circ \Bti^*_Y\;;
\end{align*}
the induced natural transformations. By \cite[Th\'eor\`eme 3.19]{AyoubBetti} these transformations are invertible.

\subsection{Direct images under affine and quasi-finite morphisms}\label{subsec:DirImageImm}

Let ${}^{\mathrm{QAff}}(\Sch/k)$ be the subcategory of $(\Sch/k)$ with the same objects but in which we only retain the morphisms that are quasi-finite and affine. By \cite[Corollaire 4.1.3]{BBD}, for such a morphism $f:X\ra Y$, the functors
\[f^\sP_*,f^\sP_!:\Dbc(X,\bbQ)\ra\Dbc(Y,\bbQ)\]
are $t$-exact for the perverse $t$-structures. In particular, they induce exact functors between categories of perverse sheaves and by applying the property \textbf{P1} to the canonical transformation $\gamma_f:\Bti^*_Y\circ f_*\ra f^\sP_*\circ\Bti^*_X$,  
we get a commutative diagram
\[
\xymatrix@R=.5cm@C=.5cm{{\DA_\ct(X)}\xtwocell[1,1]{}\omit{^\gamma^{\DA}_f}\ar[r]\ar[d]^-{f_*} & {\sM(X)}\ar[r]\ar[d]_-{f^\sM_*}\xtwocell[1,1]{}\omit{^\gamma^\sM_f} & {\sP(X)}\ar[d]^-{f^\sP_*}\\
{\DA_\ct(Y)}\ar[r] & {\sM(Y)}\ar[r] & {\sP(Y)}}
\]
where $f^\sM_*$ is an exact functor and $\gamma^\DA_f,\gamma^\sM_f$ are invertible natural transformations such that the diagram
\[\xymatrix@R=.1cm@C=.1cm{{\DA_\ct(X)}\ar[ddd]^-{f_*}\ar@{=}[rrrr]\ar[rd]^(.6){\pH^0_\sM}\xtwocell[4,1]{}\omit{^\gamma^\DA_f} &{} & {} &{}\xtwocell[2,0]{}\omit{^\rho_X} &{\DA_\ct(X)}\ar[rrdd]^-{\pH^0_\sP}\ar@{.>}[ddd]^-{f_*}\xtwocell[5,2]{}\omit{^\gamma_f} &{} &{}\\
{} &{\sM(X)}\ar[rd]^-{\rat^\sM_X}\ar[ddd]^-{f^\sM_*}\xtwocell[4,1]{}\omit{^\gamma^\sM_f} &{} &{} & {}&{} &{}\\
{} &{} &{\sP(X)}\ar@{=}[rrrr]\ar[ddd]^-{f^\sP_*} &{} & {}&{} &{\sP(X)}\ar[ddd]^-{f^\sP_*}\\
{\DA_\ct(Y)}\ar@{:}[rrrr]\ar[rd]^{\pH^0_\sM}  &{} &{} &{}\xtwocell[2,0]{}\omit{^\rho_Y} &{\DA_\ct(Y)}\ar@{.>}[rrdd]^-{\pH^0_\sP} &{} &{}\\
{} &{\sM(Y)}\ar[rd]^{\rat^\sM_Y} &{} &{} &{} &{} &{}\\
{} &{} &{\sP(Y)}\ar@{=}[rrrr] &{} & {}&{} &{\sP(Y)}
}\]
is commutative. Moreover, since the natural transformations $\gamma_f$ are compatible with the composition of morphisms (that is, with the connection $2$-isomorphisms),
\ref{subsec:2func} provides a $2$-functor 
\[{}^{\mathrm{QAff}}\mathsf{H}_*^\sM:{}^{\mathrm{QAff}}(\Sch/k)\ra\mathfrak{TR}\]
with ${}^{\mathrm{QAff}}\mathsf{H}_*^\sM(X)=\Db(\sM(X))$ and such that $(\pH^0_\sM,\gamma^\DA)$ and $(\rat^\sM,\gamma^\sM)$ are $1$-morphisms of $2$-functors. 
For every affine and quasi-finite morphism $f:X\ra Y$ we have a natural transformation
\[\gamma^\sM_f:\rat^\sM_Y f^\sM_*\ra f_*^\sP\rat^\sM_X\]
compatible with the composition of morphisms.

\subsection{Inverse image by a smooth morphism}\label{subsec:InvImageLiss}

Let $f:X\ra Y$ be a smooth morphism of $k$-varieties. Then, $\Omega_f$ is a locally free $\Osheaf_X$-module of finite rank. Let $d_f$ its rank (which is constant on each connected component of $X$). Then, $d_f$ is the relative dimension of $f$ (see Grothendieck's~EGA~IV, more precisely \cite[(17.10.2)]{MR0238860}) and if $g:Y\ra Z$ is a smooth morphism, then $d_{g\circ f}=d_g+d_f$ with the obvious abuse of notation (see~\cite[(17.10.3)]{MR0238860}). By~\cite[4.2.4]{BBD}, the functor
\[f^*_\sP[d_f]:\Dbc(Y,\bbQ)\ra\Dbc(X,\bbQ)\]
is $t$-exact for the perverse $t$-structures. In particular, it induces an exact functor between the categories of perverse sheaves and by applying the property \textbf{P1} to the canonical transformation $\theta_f:f^*_\sP\circ\Bti^*_Y\ra \Bti^*_X\circ f^*$,  
we get a commutative diagram
\[
\xymatrix@R=.5cm@C=.5cm{{\DA_\ct(Y)}\xtwocell[1,1]{}\omit{_\kern8pt\theta^{\DA}_f}\ar[r]\ar[d]^-{f^*[d_f]} & {\sM(Y)}\ar[r]\ar[d]\xtwocell[1,1]{}\omit{_\kern8pt\theta^\sM_f} & {\sP(Y)}\ar[d]^-{f^*_\sP[d_f]}\\
{\DA_\ct(X)}\ar[r] & {\sM(X)}\ar[r] & {\sP(X)}}
\]
where the functor in the middle $f_\sM^*[d_f]$ is an exact functor and $\theta^\DA_f,\theta^\sM_f$ are invertible natural transformations such that the diagram
\[\xymatrix@R=.1cm@C=.1cm{{\DA_\ct(Y)}\ar[ddd]^-{f^*[d_f]}\ar@{=}[rrrr]\ar[rd]^(.6){\pH^0_\sM}\xtwocell[4,1]{}\omit{_\theta^\DA_f} &{} & {} &{}\xtwocell[2,0]{}\omit{^\rho_Y} &{\DA_\ct(Y)}\ar[rrdd]^-{\pH^0_\sP}\ar@{.>}[ddd]^-{f^*[d_f]}\xtwocell[5,2]{}\omit{_\theta_f} &{} &{}\\
{} &{\sM(Y)}\ar[rd]^-{\rat^\sM_Y}\ar[ddd]^-{f_\sM^*[d_f]}\xtwocell[4,1]{}\omit{_\theta^\sM_f} &{} &{} & {}&{} &{}\\
{} &{} &{\sP(Y)}\ar@{=}[rrrr]\ar[ddd]^(.4){f_\sP^*[d_f]} &{} & {}&{} &{\sP(Y)}\ar[ddd]^-{f_\sP^*[d_f]}\\
{\DA_\ct(X)}\ar@{:}[rrrr]\ar[rd]^{\pH^0_\sM}  &{} &{} &{}\xtwocell[2,0]{}\omit{^\rho_X} &{\DA_\ct(X)}\ar@{.>}[rrdd]^-{\pH^0_\sP} &{} &{}\\
{} &{\sM(X)}\ar[rd]^{\rat^\sM_X} &{} &{} &{} &{} &{}\\
{} &{} &{\sP(X)}\ar@{=}[rrrr] &{} & {}&{} &{\sP(X)}
}\]
is commutative. 

\begin{rema}
Note that $f^*_\sM A$, given $A$ in $\sM(Y)$,  is not yet defined. 
We define the function $f^*_\sM$ by setting $f^*_\sM:=(f^*_\sM[d_f])[-d_f]$.
\end{rema}

Let ${}^{\mathrm{Liss}}(\Sch/k)$ be the subcategory of $(\Sch/k)$ with the same objects but having as morphisms only the smooth morphisms of $k$-varieties. 
Since the natural transformations $\theta_f$ are compatible with the composition of morphisms (that is, with the connection $2$-isomorphisms), \ref{subsec:2func} provides a contravariant $2$-functor 
\[{}^{\mathrm{Liss}}\mathsf{H}^*_\sM:{}^{\mathrm{Liss}}(\Sch/k)\ra\mathfrak{TR}\]
with ${}^{\mathrm{Liss}}\mathsf{H}^*_\sM(X)=\Db(\sM(X))$ and such that $(\pH^0_\sM,\theta^\DA)$ and $(\rat^\sM,\theta^\sM)$ are $1$-morphisms of $2$-functors. For every smooth morphism $f:X\ra Y$ we have a natural transformation
\[\theta^\sM_f:f^*_\sP\rat^\sM_Y\ra\rat^\sM_X f^*_\sM\]
compatible with the composition of morphisms.

\subsection{Exchange structure}

Let us denote by
\[{}^{\mathrm{Imm}}\mathsf{H}_*^\sM:{}^{\mathrm{Imm}}(\Sch/k)\ra\mathfrak{TR}\]
the restriction of the $2$-functor obtained in \ref{subsec:DirImageImm} to the subcategory ${}^{\mathrm{Imm}}(\Sch/k)$ of $(\Sch/k)$ with the same objects but having as morphisms only the closed immersions of $k$-varieties.
Exchange structures are defined in D\'efinition~1.2.1 of~\cite{AyoubI}.

\begin{prop}\label{prop:ExStructure}
The exchange structure $Ex^*_*$ on $({}^{\mathrm{Liss}}\mathsf{H}^*_\sP,{}^{\mathrm{Imm}}\mathsf{H}_*^\sP)$ with respect to cartesian squares can be lifted to an exchange structure on the pair $({}^{\mathrm{Liss}}\mathsf{H}^*_\sM,{}^{\mathrm{Imm}}\mathsf{H}_*^\sM)$. 
\end{prop}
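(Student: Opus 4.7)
The plan is to use the universal properties \textbf{P1} and \textbf{P2} of \sectionref{sec:PerverseMotives} to transport the exchange structure step by step from $\DA_\ct$ and $\sP$ up to $\sM$. Fix a cartesian square
\[\xymatrix{X'\ar[r]^{g'}\ar[d]_{f'} & X\ar[d]^{f}\\ Y'\ar[r]_{g} & Y}\]
with $g$ (hence $g'$) a closed immersion and $f$ (hence $f'$) smooth of relative dimension $d$. On the $\DA_\ct$ side, Ayoub's formalism provides an invertible exchange 2-isomorphism $Ex^*_*:f^*[d]\circ g_*\to g'_*\circ f'^*[d]$; on the perverse side, we are given the analogous $Ex^*_*:f_\sP^*[d]\circ g^\sP_*\to g'^\sP_*\circ f'^*_\sP[d]$. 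The compatibility of the Betti realization with the four operations (through the invertible natural transformations $\gamma_f$ and $\theta_f$ recalled above) implies that, after postcomposition with $\pH^0_\sP$ and reindexing by the $\theta^{\DA}$, $\gamma^{\DA}$ 2-cells, these two exchanges fit into a single commutative diagram of natural transformations between functors $\DA_\ct(Y')\to \sP(X)$.

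The next step is the actual construction of the lift. Sections \subsectionref{subsec:DirImageImm} and \subsectionref{subsec:InvImageLiss} already produce the exact functors $g^\sM_*$, $g'^\sM_*$, $f^*_\sM[d]$, $f'^*_\sM[d]$ together with the 2-cells $\beta$, $\gamma$ expressing their compatibility with the $\DA_\ct$ and $\sP$ functors. Composing these diagrams vertically yields two instances of the commutative square appearing in \textbf{P1}, one for $f^*[d]\circ g_*$ and one for $g'_*\circ f'^*[d]$. Feeding these, together with the compatible pair of exchanges from the previous paragraph, into \textbf{P2} produces a unique natural transformation
\[Ex^{*,\sM}_*:f^*_\sM[d]\circ g^\sM_*\longrightarrow g'^\sM_*\circ f'^*_\sM[d]\]
whose image under $\pH^0_\sM$ (via the $\beta$'s) is the motivic exchange and whose image under $\rat^\sM$ (via the $\gamma$'s) is the perverse exchange. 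This defines the lifted exchange 2-morphism for each cartesian square of the requisite type.

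It remains to check the coherence axioms of an exchange structure in the sense of \cite[D\'efinition~1.2.1]{AyoubI}: compatibility with horizontal and vertical pasting of cartesian squares, and triviality over squares with an identity side. Each such axiom asserts the equality of two natural transformations between functors on $\sM$. Both sides, by construction and by naturality of the lifting procedure, realize via $\pH^0_\sM$ to the same natural transformation on $\DA_\ct$ (which satisfies the coherence axioms, being part of a stable homotopical 2-functor) and via $\rat^\sM$ to the same natural transformation on $\sP$ (same reason). The uniqueness clause of \textbf{P2}, whose effectiveness rests on the faithfulness of $\rat^\sM_X$, then forces the two natural transformations on $\sM$ to coincide.

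The principal obstacle is diagrammatic bookkeeping: one must carefully paste the various $\beta$'s and $\gamma$'s coming from the individual liftings of $f^*_\sM[d]$, $g^\sM_*$, $g'^\sM_*$, $f'^*_\sM[d]$ into the large commutative cube that \textbf{P2} takes as input, and similarly for the composition and identity coherences. Once this assembly is done, the rest of the proof is formal and requires no new ideas beyond the universal property of $\sM$.
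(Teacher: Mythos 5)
Your proposal is correct and follows essentially the same route as the paper: shift by the relative dimension $d$ to work with exact functors on the abelian categories, feed the $\DA_\ct$-level and perverse-level exchange 2-isomorphisms (compatible via the Betti realization) into property \textbf{P2} to get the lift, and use the uniqueness clause together with the faithfulness of the functors $\rat^\sM$ to obtain compatibility with horizontal and vertical compositions of squares. This matches the paper's proof of \propositionref{prop:ExStructure}.
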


\begin{proof}
The proposition is a simple application of property \textbf{P2}. Consider a cartesian square 
\[\xymatrix{{X'}\ar[r]^-{i'}\ar[d]^-{f'} &{Y'}\ar[d]^-{f}\\
{X}\ar[r]^i &{Y}}\]
such that $i$ is a closed immersion and $f$ is a smooth morphism (more generally $i$ need not be a closed immersion and can be any quasi-finite affine morphism). Note that, since the morphism $i'^*\Omega^1_f\ra\Omega^1_{f'}$ is an isomorphism, $f$ and $f'$ have the same relative dimension $d$.  
Let $Ex_*^*:f^*i_*\ra i'_*f'^*$ and ${}^\sP Ex_*^*:f^*_\sP i^\sP_*\ra i'^\sP_*f'^*_\sP$ the exchange $2$-isomorphisms in $\DA(-)$ and $\Db(\sP(-))$.
We have to construct a $2$-isomorphism
\[\xymatrix@C=2.5cm{{\Db(\sM(X))}\rtwocell^{f^*_\sM i^\sM_*}_{i'^\sM_*f'^*_\sM}{\kern2pc{}^\sM Ex^*_*} & {\Db(\sM(Y'))} }\] 
which is compatible with ${}^\sP Ex_*^*$ via the $2$-isomorphisms $\gamma_g^\sM,\gamma^\sM_{g'}$ and $\theta^\sM_f,\theta^\sM_{f'}$.
This amounts to constructing a $2$-isomorphism
\[\xymatrix@C=2.5cm{{\sM(X)}\rtwocell^{f^*_\sM[d] i^\sM_*}_{i'^\sM_*f'^*_\sM[d]}{\kern2pc{}^\sM Ex^*_*[d]} & {\sM(Y')} }\] 
such that
\[\xymatrix@R=.2cm@C=.2cm{{\sM(X)}\ar@{=}[ddd]\ar[rrrr]^-{\rat^\sM_X}\ar[rd]^-{i^\sM_*} &{} & {}\xtwocell[1,0]{}\omit{_(\gamma^\sM_i)^{-1}} &{} &{\sP(X)}\ar[rd]^-{i^\sP_*}\ar@{:}[ddd] &{} &{}\\
{} &{\sM(Y)}\ar[rd]^-{f^*_\sM[d]}\ar[rrrr]^-{\rat^\sM_X} &{} &{}& {}\xtwocell[1,0]{}\omit{_\theta^\sM_f} &{\sP(Y)}\ar[rd]^-{f^*_\sP[d]} &{}\\
{}\xtwocell[0,2]{}\omit{<2>_{\kern-4pc {}^\sM Ex^*_*[d]}} &{} &{\sM(Y')}\ar[rrrr]^-{\rat^\sM_{X'}}\ar@{=}[ddd] &{} & {} \xtwocell[0,2]{}\omit{<2>_{\kern2pc {}^\sP Ex^*_*[d]}}&{} &{\sP(Y')}\ar@{=}[ddd]\\
{\sM(X)}\ar@{.>}[rrrr]^-{\rat^\sM_X}\ar[rd]^{f'^*_\sM[d]}  &{} &{}\xtwocell[1,0]{}\omit{_\theta^\sM_{f'}}  &{}&{\sP(X)}\ar@{.>}[rd]^-{f'^*_\sP[d]} &{} &{}\\
{} &{\sM(Y)}\ar@{.>}[rrrr]^-{\rat^\sM_{Y}}\ar[rd]^{i'^\sM_*} &{} &{} &{}\xtwocell[1,0]{}\omit{_{(\gamma^\sM_{i'})^{-1}}} &{\sP(Y)}\ar@{.>}[rd]^-{i'^\sP_*} &{}\\
{} &{} &{\sM(Y')}\ar[rrrr]^(.6){\rat^\sM_{Y'}} &{} & {}&{} &{\sP(Y')}
}\]
is commutative. Note that such a $2$-isomorphism is necessarily unique since the functors $\rat^\sM_S$, for $S$ a $k$-variety, are faithful.  For the same reason they will also be compatible with the horizontal and vertical compositions of mixed squares (see~\cite[D\'efinition 1.21]{AyoubI}). The proposition follows from property \textbf{P2} applied to the commutative diagram
\[\xymatrix@R=.2cm@C=.2cm{{\DA_\ct(X)}\ar@{=}[ddd]\ar[rrrr]^-{\pH^0_\sP}\ar[rd]^-{i_*} &{} & {}\xtwocell[1,0]{}\omit{_{(\gamma_i)^{-1}}} &{} &{\sP(X)}\ar[rd]^-{i^\sP_*}\ar@{:}[ddd] &{} &{}\\
{} &{\DA_\ct(Y)}\ar[rd]^-{f^*[d]}\ar[rrrr]^-{\pH^0_\sP} &{} &{}& {}\xtwocell[1,0]{}\omit{_\theta_f} &{\sP(Y)}\ar[rd]^-{f^*_\sP[d]} &{}\\
{}\xtwocell[0,2]{}\omit{<2>_{\kern-4pc Ex^*_*[d]}} &{} &{\DA_\ct(Y')}\ar[rrrr]^-{\pH^0_\sP}\ar@{=}[ddd] &{} & {} \xtwocell[0,2]{}\omit{<2>_{\kern2pc {}^\sP Ex^*_*[d]}}&{} &{\sP(Y')}\ar@{=}[ddd]\\
{\DA_\ct(X)}\ar@{.>}[rrrr]^-{\pH^0_\sP}\ar[rd]^{f'^*[d]}  &{} &{}\xtwocell[1,0]{}\omit{_\theta_{f'}}  &{}&{\sP(X)}\ar@{.>}[rd]^-{f'^*_\sP[d]} &{} &{}\\
{} &{\DA_\ct(X')}\ar@{.>}[rrrr]^-{\pH^0_\sP}\ar[rd]^{i'_*} &{} &{} &{}\xtwocell[1,0]{}\omit{_{(\gamma_{i'})^{-1}}} &{\sP(X')}\ar@{.>}[rd]^-{i'^\sP_*} &{}\\
{} &{} &{\DA_\ct(Y')}\ar[rrrr]^(.6){\pH^0_\sP} &{} & {}&{} &{\sP(Y').}
}\]
The commutativity of this diagram follows from the compatibility of the Betti realization with the exchange structures.
\end{proof}

\begin{rema}\label{rema:compthetaexchange}
The application of property \textbf{P2} ensures that the two exchange structures ${}^\sM Ex^*_*$ and ${}^\sP Ex^*_*$ are compatible with the canonical $2$-isomorphisms $\theta^\sM$. That is, the diagram
\[\xymatrix{{f^*_\sP\rat^\sM_Y i^\sM_*}\ar[r]^-{\gamma^\sM_i}\ar[d]^-{\theta^\sM_f} &{f^*_\sP i^\sP_*\rat^\sM_X}\ar[r]^-{{}^\sP Ex^*_*} & {i'^\sP_*f'^*_\sP\rat^\sM_X}\ar[d]^-{\theta^\sM_{f'}}\\
{\rat^\sM_{Y'}f^*_\sM i^\sM_*}\ar[r]^-{{}^\sM Ex^*_*} & {\rat^\sM_{Y'}i'^\sM_*f'^*_\sM } \ar[r]^-{\gamma^\sM_{i'}} & {i'^\sP_*\rat^\sM_{X'}f'^*_\sM}}\]
is commutative. This follows from the faithfulness of the functors $\rat^\sM$ after applying the shift functor $(-)[d]$.
\end{rema}

\subsection{Adjunction}

Let $f:X\ra Y$ be an affine and \'etale morphism. In that case the exact functors $f_*:\sP(X)\ra\sP(Y) $ and $f_!:\sP(X)\ra\sP(Y) $ are respectively right and left adjoint to the exact functor $f^*_\sP:\sP(Y)\ra\sP(X)$. We can use the property {\textbf{P2}} to lift these adjunctions to the functors $f^\sM_!,f^*_\sM, f^\sM_*$.

\begin{prop}\label{prop:AdjEtaleAffine}
Let $f:X\ra Y$ be an affine and \'etale morphism.
\begin{enumerate}
\item There exist unique natural transformations $\Id\ra f_*^\sM f^*_\sM $ and $f^*_\sM f^\sM_*\ra \Id $ such that the squares
\[\xymatrix{{\rat^\sM_Y}\ar[r]\ar[d] &{\rat^\sM_Yf_*^\sM f^*_\sM}\ar[d]\\
{f_*^\sP f^*_\sP\rat^\sM_Y}\ar[r] &{f^\sP_*\rat^\sM_X f^*_\sM}}
\qquad
\xymatrix{{\rat^\sM_X f^*_\sM f^\sM_*}\ar[r] &{\rat^\sM_X}\\
{f^*_\sP\rat^\sM_Y f^\sM_*}\ar[u]\ar[r] & {f^*_\sP f^\sP_*\rat^\sM_X}\ar[u]}\]
are commutative. With these natural transformations, the functors $(f^*_\sM,f_*^\sM)$ form a pair of adjoint functors.
\item There exist unique natural transformations $\Id\ra f^*_\sM f_!^\sM $ and $f_!^\sM f^*_\sM\ra \Id $ such that the squares
\[\xymatrix{{\rat^\sM_X}\ar[r]\ar[d] &{\rat^\sM_Xf^*_\sM f^\sM_!}\\
{f^*_\sP f_!^\sP\rat^\sM_X}\ar[r] &{f^*_\sP\rat^\sM_Y f_!^\sM}\ar[u]}
\qquad
\xymatrix{{\rat^\sM_Y f_!^\sM f^*_\sM}\ar[r] &{\rat^\sM_Y}\\
{f_!^\sP\rat^\sM_X f^*_\sM}\ar[u] & {f_!^\sP f^*_\sP\rat^\sM_X}\ar[u]\ar[l]}\]
are commutative. With these natural transformations, the functors $(f_!^\sM,f^*_\sM)$ form a pair of adjoint functors.\end{enumerate}
\end{prop}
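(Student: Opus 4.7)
My plan is to apply property \textbf{P2} twice — once to construct the unit and once for the counit of each adjunction — and then to derive the triangle identities from the faithfulness of the realization functors $\rat^\sM$. I would write out (1) in detail; part (2) is entirely analogous, replacing $f_*, f^\sP_*$ by $f_!, f^\sP_!$ and the compatibility 2-isomorphism $\gamma_f$ by $\rho_f$. The relevant $t$-exactness of $f^\sP_*$ and $f^\sP_!$ (needed to run the construction of \subsectionref{subsec:DirImageImm} for both direct image functors) is available because $f$ is affine and quasi-finite, by \cite[Corollaire 4.1.3]{BBD}; the invertibility of $\gamma_f, \rho_f, \theta_f$ follows from \cite[Th\'eor\`eme 3.19]{AyoubBetti}.

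\emph{Construction of unit and counit.} I would start from the adjunction units and counits
\[\eta^\DA:\Id\to f_* f^*,\quad \epsilon^\DA: f^*f_*\to \Id,\quad \eta^\sP:\Id\to f^\sP_* f^*_\sP,\quad \epsilon^\sP:f^*_\sP f^\sP_*\to \Id,\]
and use the fact that $(\Bti^*,\theta)$ is a morphism of stable homotopical 2-functors to assemble, from $\gamma_f$ and $\theta_f$, invertible compatibility 2-isomorphisms $\alpha_1,\alpha_2$ identifying $\pH^0_\sP\circ \Id$ with $\Id\circ \pH^0_\sP$ and $\pH^0_\sP\circ f_*f^*$ with $f^\sP_* f^*_\sP\circ \pH^0_\sP$. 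Taking $\lambda=\eta^\DA$ and $\mu=\eta^\sP$, the outer commutativity demanded by \textbf{P2} reduces to the compatibility of the Betti realization with adjunction units, which is part of its structure as a morphism of stable homotopical 2-functors. Property \textbf{P2} will then produce a unique natural transformation $\eta^\sM:\Id_{\sM(Y)}\to f^\sM_*f^*_\sM$ making the first displayed square commute. The symmetric setup with counits yields $\epsilon^\sM$.

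\emph{Triangle identities.} With $\eta^\sM,\epsilon^\sM$ in hand, I would then show that the two compositions
\[\tau: f^*_\sM\xrightarrow{\eta^\sM\star f^*_\sM} f^*_\sM f^\sM_* f^*_\sM\xrightarrow{f^*_\sM\star \epsilon^\sM} f^*_\sM,\quad \sigma: f^\sM_*\xrightarrow{f^\sM_*\star\eta^\sM} f^\sM_* f^*_\sM f^\sM_*\xrightarrow{\epsilon^\sM\star f^\sM_*} f^\sM_*\]
are the identities. The idea is to whisker $\tau$ on the left by $\rat^\sM_X$ and translate through $\theta^\sM_f$ and the unit/counit squares, identifying $\rat^\sM_X\star\tau$ with the triangle composition $\tau^\sP$ for $(f^*_\sP,f^\sP_*)$ whiskered with $\rat^\sM_Y$. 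Since $\tau^\sP=\Id$ by the triangle identity in $\sP$, the componentwise faithfulness of $\rat^\sM_X$ will force $\tau=\Id$; the argument for $\sigma$ is symmetric, using $\rat^\sM_Y$ and $\gamma^\sM_f$. Alternatively — and perhaps more cleanly — one can deduce this from the uniqueness clause of \textbf{P2}, comparing $\tau$ and $\Id_{f^*_\sM}$ as two compatible lifts of the common datum $(\Id_{f^*},\Id_{f^*_\sP})$.

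\emph{Expected main difficulty.} I expect the substantive work to be not conceptual but notational: one must carefully assemble the horizontal compatibility data $\alpha_1,\alpha_2$ that \textbf{P2} consumes out of the Betti exchange 2-isomorphisms $\gamma_f,\theta_f$ (respectively $\rho_f,\theta_f$ for part (2)), and verify the outer commutativity of the three-dimensional diagram in \textbf{P2}. Once that bookkeeping is done, both the existence of $\eta^\sM,\epsilon^\sM$ and the triangle identities are essentially formal consequences of the universal properties established in \sectionref{sec:CatPre}.
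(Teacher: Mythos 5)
Your proposal is correct and follows essentially the same route as the paper: the paper's own proof simply invokes property \textbf{P2} (as in \propositionref{prop:ExStructure}) and leaves the bookkeeping to the reader, which is exactly the construction you spell out — lifting the units and counits via \textbf{P2} using the compatibility data assembled from $\theta_f$, $\gamma_f$ (resp.\ $\rho_f$), and then obtaining the triangle identities from the faithfulness of the functors $\rat^\sM$ (equivalently, the uniqueness clause of \textbf{P2}).
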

\begin{proof}
As for \ref{prop:ExStructure}, the proof is a simple application of property \textbf{P2}. The details are left to the reader.
\end{proof}

\subsection{Duality}

The result in this subsection will be used in the proof of \ref{prop:leftadjoint}. Let $\bbD^\sP_X$ be the duality functor for perverse sheaves and $\varepsilon^\sP_X:\Id\ra\bbD^\sP_X\circ\bbD^\sP_X$ be the canonical  $2$-isomorphism. Recall that, given a smooth morphism  $f:X\ra Y$ of relative dimension $d$, there is a canonical $2$-isomorphism 
\[\varepsilon^\sP_f:\bbD^\sP_X \circ f^*_\sP(-)(d)[d]\ra f^*_\sP(-)[d]\circ \bbD^\sP_Y.\]

\begin{prop}\label{prop:duality}
Let $X,Y$ be $k$-varieties and $f:X\ra Y$ be a smooth morphism of relative dimension $d$.
\begin{enumerate}
\item  There exist a contravariant exact functor $\bbD^\sM_X:\sM(X)\ra\sM(X)$, a $2$-isomorphism $\nu^\sM_X:\bbD^\sP_X\circ \rat^\sM_X\ra\rat^\sM_X\circ\bbD^\sM_X$ and a $2$-isomorphism $\varepsilon^\sM_X:\Id\ra\bbD^\sM_X\circ\bbD^\sM_X $ such that the diagram
\[\xymatrix{{} &{\bbD^\sP_X\circ\bbD^\sP_X\circ\rat^\sM_X}\ar[d]^-{\nu_X^\sM}\\
{\rat^\sM_X}\ar[ru]^-{\varepsilon^\sP_X}\ar[rd]_-{\varepsilon^\sM_X} &{\bbD^\sP_X\circ \rat^\sM_X\circ\bbD^\sM_X}\ar[d]^-{\nu_X^\sM}\\
{} &{\rat^\sM_X\circ\bbD^\sM_X\circ\bbD^\sM_X}}\]
is commutative.
\item There exists a $2$-isomorphism 
\[\varepsilon^\sM_f:\bbD^\sM_X \circ f^*_\sM(-)(d)[d]\ra f^*_\sM(-)[d]\circ \bbD^\sM_Y\]
such that the diagram
\[\xymatrix{{\bbD^\sP_X \circ f^*_\sP(-)(d)[d]\circ \rat^\sM_Y}\ar[r]^-{\varepsilon^\sP_f} &{ f^*_\sP(-)[d]\circ \bbD^\sP_Y\circ\rat^\sM_Y}\ar[d]^-{\nu^\sM_Y}\\
{\bbD^\sP_X \circ\rat^\sM_X\circ f^*_\sM(-)(d)[d]}\ar[d]^-{\nu^\sM_X}\ar[u]_-{\theta^\sM_f} &{ f^*_\sP(-)[d]\circ\rat^\sM_Y \circ\bbD^\sM_Y}\ar[d]^-{\theta^\sM_f}\\
{\rat^\sM_X\circ\bbD^\sM_X \circ f^*_\sM(-)(d)[d]}\ar[r]^-{\varepsilon^\sM_f} &{\rat^\sM_X\circ f^*_\sM(-)[d]\circ \bbD^\sM_Y}}\]
is commutative.
\end{enumerate}
\end{prop}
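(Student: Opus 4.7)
Following the pattern of \subsectionref{subsec:DirImageImm}, \subsectionref{subsec:InvImageLiss} and \propositionref{prop:ExStructure}, the strategy is to extract $\bbD^\sM_X$, $\nu^\sM_X$, $\varepsilon^\sM_X$ and $\varepsilon^\sM_f$ from \textbf{P1} and \textbf{P2} by lifting to the motivic level the duality structures on $\DA_\ct(X)$ and $\sP(X)$, whose compatibility with the Betti realization is built into Ayoub's six operation formalism.

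\textbf{Step 1 (construction of $\bbD^\sM_X$ and $\nu^\sM_X$).} Let $\bbD_X$ denote the duality (a contravariant triangulated equivalence) on $\DA_\ct(X)$, and let $\nu_X:\bbD^\sP_X\circ\Bti^*_X\iso\Bti^*_X\circ\bbD_X$ be the canonical $2$-isomorphism expressing compatibility of the Betti realization with the two dualities. Since $\bbD^\sP_X$ is $t$-exact for the perverse $t$-structure, it induces a contravariant exact functor on $\sP(X)$ and $\nu_X$ descends to a $2$-isomorphism $\alpha_X:\pH^0_\sP\circ\bbD_X\iso\bbD^\sP_X\circ\pH^0_\sP$. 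This is precisely the input needed by the contravariant avatar of \textbf{P1} (legitimate since $\Ab^\tr$ is symmetric under opposition of triangulated and of abelian categories, as is visible from \lemmaref{LemmCompB}, so the universal factorization $\pH^0_\sM$ of $\pH^0_\sP$ is self-dual). It yields a contravariant exact functor $\bbD^\sM_X:\sM(X)\ra\sM(X)$ together with the $2$-isomorphism $\nu^\sM_X:\bbD^\sP_X\circ\rat^\sM_X\ra\rat^\sM_X\circ\bbD^\sM_X$ asserted in~(1).

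\textbf{Step 2 (construction of $\varepsilon^\sM_X$).} The biduality $2$-isomorphism $\varepsilon_X:\Id\ra\bbD_X\circ\bbD_X$ on $\DA_\ct(X)$ is part of Ayoub's formalism and is compatible with $\varepsilon^\sP_X$ under $\Bti^*_X$ via $\nu_X$. We apply property \textbf{P2} to the commutative diagram in which $\mathsf{F}_1=\mathsf{G}_1=\Id$, $\mathsf{F}_2=\bbD_X\circ\bbD_X$, $\mathsf{G}_2=\bbD^\sP_X\circ\bbD^\sP_X$, $\lambda=\varepsilon_X$ and $\mu=\varepsilon^\sP_X$, the outer $2$-cells being those built from $\nu_X$ in Step~1. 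The output is the unique $2$-morphism $\varepsilon^\sM_X:\Id\ra\bbD^\sM_X\circ\bbD^\sM_X$ making the pentagon in~(1) commute. Its invertibility follows either by applying \textbf{P2} to the inverse data and invoking uniqueness, or directly from the faithfulness of $\rat^\sM_X$ and the invertibility of $\varepsilon^\sP_X$.

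\textbf{Step 3 (proof of~(2)).} The six functor formalism of Ayoub provides a canonical $2$-isomorphism $\varepsilon_f:\bbD_X\circ f^*(-)(d)[d]\iso f^*(-)[d]\circ\bbD_Y$ whose image under the Betti realization coincides with $\varepsilon^\sP_f$ modulo the exchange $2$-isomorphisms $\theta_f$, $\nu_X$ and $\nu_Y$. Feeding this compatibility into property \textbf{P2} produces the desired $2$-morphism $\varepsilon^\sM_f$ making the square in~(2) commute, invertible again because its realization is~$\varepsilon^\sP_f$. The main obstacle throughout is bookkeeping: one must carefully organize the large pasting diagrams so that \textbf{P2} applies verbatim, but the underlying compatibilities among $\varepsilon$, $\varepsilon^\sP$, $\nu$ and $\theta$ at the level of $\DA_\ct$ and $\sP$ are already available from \cite{AyoubBetti} and the construction of the duality on $\DA_\ct$.
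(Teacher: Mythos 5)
Your proposal is correct and is essentially the paper's own argument: the paper simply says the proposition is "a simple application of property \textbf{P2}" with details left to the reader, and your steps (lifting the duality on $\DA_\ct(X)$ and its Betti compatibility via a contravariant variant of \textbf{P1}, then obtaining $\varepsilon^\sM_X$ and $\varepsilon^\sM_f$ by the uniqueness in \textbf{P2} and faithfulness of $\rat^\sM$) are precisely the details being left out. The only loose point is your justification of the contravariant \textbf{P1} via "self-duality of $\Ab^\tr$"; the cleaner reason is that the lifting mechanism works verbatim with target $\sM(X)^{\op}$, which is abelian with $(\rat^\sM_X)^{\op}$ faithful exact.
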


\begin{proof}
Again, the proof is a simple application of property \textbf{P2}, once we
know the existence and properties of the Verdier duality functor
on motives. We give references for these properties and
leave the rest of the details to the reader.

By~\cite[Th\'eor\`eme 2.3.75]{AyoubI} and~\cite[\S4.5]{AyoubII} (see also~\cite[Th\'eor\`eme~3 and Th\'eor\`eme~7 in the introduction]{CD}), 
the categories $\DA_\ct(X)$ are symmetric mono\"idal closed and we have
Verdier duality functors $\bbD_X$ such that, for
$f:X\ra Y$ a morphism of quasi-projective $k$-varieties, there is a
canonical isomorphism $f^*\circ\bbD_Y\simeq\bbD_X\circ f^!$. If $f$ is smooth of relative dimension
$d$, the functor $f^!$ is defined in~\cite[\S1.5.3.1]{AyoubI} as the composition $\mathsf{Th}(\Omega_f)\circ f^*$, where $\mathsf{Th}(\Omega_f)$ is the Thom equivalence associated with the locally free $\Osheaf_X$-module $\Omega_{f}$. As $\Omega_f$ has rank $d$, we get an isomorphism $f^!\simeq f^*(d)[2d]$ by~\cite[corollaire 2.14]{AyoubEtale} (see also property 4 of~\cite[A.5.1]{CD}). Hence, we get an isomorphism
$f^*[d]\circ\bbD_Y\simeq\bbD_X\circ f^*(d)[d]$. 
Moreover, the Betti realization functors
$\Bti_X$ are symmetric mono\"idal unital (see
\cite[Remarque~3.3]{AyoubBetti}), and they commute with
internal $\Hom$s on constructible objects by
\cite[Th\'eor\`eme~3.19]{AyoubBetti}; so it commutes with the
Verdier duality functor on constructible objects, 
as that functor is constructed using the
four operations and the internal $\Hom$ (see for example
\cite[A.5.2]{CD}). The last crucial observation is that Verdier duality
on $\Dbc(X,\bbQ)$ restricts to an exact contravariant functor on the
subcategory of perverse sheaves (see for example the beginning
of Section~4 of~\cite{BBD}).
\end{proof}

\subsection{Perverse motives as a stack}

Let $S$ be a quasi-projective $k$-variety. Let us denote by $\mathsf{AffEt}/S$ the category of affine \'etale schemes over $S$ endowed with the \'etale topology. As in \cite[Tag 02XU]{stacks-project}, the $2$-functor 
\begin{align*}
\mathsf{AffEt}/X&\ra\mathfrak{Ab}\\
U& \mapsto\sM(U)\\
u & \mapsto u^*_\sM
\end{align*}
can be turned into a fibered category $\sM\ra \mathsf{AffEt}/S$ such that the fiber over an object $U$ of $\mathsf{AffEt}/X$ is the category $\sM(U)$.

\begin{prop}\label{prop:stack}
The fibered category $\sM\ra \mathsf{AffEt}/S$ is a stack for the \'etale topology. 
\end{prop}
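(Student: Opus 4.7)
The plan is to verify the two stack axioms for $\sM\to\mathsf{AffEt}/S$: (1) for every $M,N\in\sM(U)$, the presheaf $V\mapsto\Hom_{\sM(V)}(u^*_\sM M,u^*_\sM N)$ on $\mathsf{AffEt}/U$ is an \'etale sheaf, and (2) every \'etale descent datum is effective. The key inputs are: (i) the fibered category $\sP\to\mathsf{AffEt}/S$ of perverse sheaves is a stack for the \'etale topology (a classical fact, since the perverse $t$-structure is defined by conditions that are \'etale-local); (ii) $\DA_\ct$ satisfies \'etale descent, by Ayoub \cite{AyoubEtale}; and (iii) the functors $\rat^\sM_V$ are faithful exact with pullback-compatibility isomorphisms $\theta^\sM_u$ for \'etale $u$, constructed in \subsectionref{subsec:InvImageLiss}.

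For descent of morphisms, fix an \'etale cover $\{u_i:U_i\to U\}_{i\in I}$ and $M,N\in\sM(U)$. Injectivity of the restriction map to the equalizer of $\prod_i\Hom_{\sM(U_i)}(u_i^*M,u_i^*N)\rightrightarrows\prod_{i,j}\Hom_{\sM(U_{ij})}(\cdots)$ follows from faithfulness of each $\rat^\sM_V$ combined with the corresponding injectivity statement in $\sP$, which holds by (i). For surjectivity, given a matching family $(\varphi_i)_i$, the images $(\rat^\sM\varphi_i)$ glue via (i) to a unique $\varphi:\rat^\sM_U M\to\rat^\sM_U N$ in $\sP(U)$; the content is that $\varphi$ lifts to $\sM(U)$. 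To see this, I would use the description of $\sM(U)=\Ab^\tr(\DA_\ct(U),\pH^0_\sP)$ via the Freyd construction: present $M$ and $N$ as cokernels of maps between representables in $\Ab^\tr(\DA_\ct(U))$, so that morphisms in $\sM(U)$ are detected by the existence of certain morphisms in $\DA_\ct(U)$ making a finite diagram commute modulo the kernel of $\rho_U$. Each such condition is \'etale-local by (ii) and (i), so the local lifts of $\varphi$ assemble into a global lift.

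For effective descent, take a descent datum $(M_i,\varphi_{ij})$ for the cover $\{u_i\}$. Applying $\rat^\sM$ gives a descent datum in $\sP$ that glues by (i) to some $P\in\sP(U)$ together with isomorphisms $u_i^*_\sP P\cong\rat^\sM_{U_i}M_i$ compatible with the $\varphi_{ij}$. To construct the motive, I would use that every object of $\sM(U_i)$ is a subquotient of $\pH^0_\sM(K_i)$ for a suitable $K_i\in\DA_\ct(U_i)$ (a standard feature of the Freyd construction $\AR(\DA_\ct(U_i))$ applied to a triangulated category). By passing to a cofinal system of such presentations and using \'etale descent for $\DA_\ct$ as in (ii), the $K_i$ can be arranged into a descent datum that glues to some $K\in\DA_\ct(U)$. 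The object $M\in\sM(U)$ is then singled out as the subquotient of $\pH^0_\sM(K)$ whose image under $\rat^\sM_U$ is $P$; the isomorphisms $u_i^*_\sM M\cong M_i$ compatibly with $\varphi_{ij}$ then follow by applying the Hom-descent of step~(1) to the identity on $P$.

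The main obstacle lies in the effectivity step. Individual presentations $K_i$ of the $M_i$ are not canonical, and the $\varphi_{ij}$ do not \emph{a priori} lift to maps in $\DA_\ct$; the technical work is to realize the full system of such presentations as a descent datum in $\DA_\ct$ that can actually be glued via (ii), using the 2-categorical machinery developed in \sectionref{subsec:2func} together with the faithfulness of $\rat^\sM$ to control morphisms. Hom-descent is by comparison straightforward, reducing directly to (i) through faithfulness.
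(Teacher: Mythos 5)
Your reduction to (1) Hom-descent and (2) effectivity of descent data is the right skeleton, and the injectivity half of (1) via faithfulness of $\rat^\sM$ is fine. But both of your substantive steps are carried out by trying to lift data to $\DA_\ct(-)$ and glue it there, and that is where the argument has a genuine gap. For effectivity you choose presentations $K_i\in\DA_\ct(U_i)$ with $M_i$ a subquotient of $\pH^0_\sM(K_i)$ and assert that, by passing to a cofinal system of presentations and invoking \'etale descent for $\DA_\ct$, the $K_i$ can be arranged into a descent datum gluing to some $K\in\DA_\ct(U)$. The isomorphisms $\varphi_{ij}$ live in $\sM(U_{ij})$ and do not lift to morphisms between the $K_i$ in $\DA_\ct(U_{ij})$; even if lifts existed, the cocycle condition would hold only up to unspecified homotopy, and one cannot glue objects or morphisms in the homotopy category $\DA_\ct$ from such non-coherent data --- \'etale descent for \'etale motives is a statement at the level of the underlying model structures, not something one can apply pointwise to a datum given only in the triangulated category. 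The same problem undermines the surjectivity half of your Hom-descent step, where the claim that ``each such condition is \'etale-local by (ii)'' is not justified. You flag this obstacle yourself, but it is precisely the hard point, and it is left unresolved.

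The paper's proof never leaves the abelian categories $\sM(-)$, and this is the idea you are missing. For an affine \'etale cover $\scr U=(u_i:U_i\ra U)$ the exact functors $(u_i)^\sM_*$, $(u_i)^*_\sM$ and the adjunctions of \propositionref{prop:AdjEtaleAffine} are already available, so one can form the \v{C}ech-type complex $C^k(A,\scr U)=\bigoplus_{|J|=k+1}(u_J)^\sM_*(u_J)^*_\sM A$; the unit map $A\ra C^\bullet(A,\scr U)$ is a quasi-isomorphism because it is one on underlying perverse sheaves and the forgetful functor is conservative, so in particular $A=\Ker\bigl(C^0(A,\scr U)\ra C^1(A,\scr U)\bigr)$. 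Hom-surjectivity then follows by converting a matching family, via adjunction, into a commutative square between the truncated \v{C}ech complexes of $A$ and $B$ and passing to kernels; and a descent datum $(A_i,\phi_{ij})$ is made effective by setting $A:=\Ker\bigl(\bigoplus_i (u_i)^\sM_*A_i\ra\bigoplus_{i,j}(u_{ij})^\sM_*(p_{ij})^*_\sM A_i\bigr)$, all verifications being done on underlying perverse sheaves using faithfulness of $\rat^\sM$ and the known stack property of $\sP$. No gluing in $\DA_\ct$ is needed, and the difficulties you correctly identified simply do not arise.
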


\begin{proof}
Let $U$ be a $k$-variety, $I$ be a finite set and $\mathscr U=(u_i:U_i\ra U)_{i\in I}$ be a covering of $U$ by affine and \'etale morphisms. If $J\subseteq I$ is a nonempty subset of $I$, we denote by $U_J$ the fiber product of the $U_j$, $j\in J$, over $U$ and by $u_J:U_J\ra U$ the induced morphism. Given an object $A\in\sM(U)$, and $k\in\bbZ$, we set
\[C^k(A,\mathscr U):=
\begin{cases}
0 & $\textrm{if $k<0$;}$\\
\bigoplus_{J\subseteq I, |J|=k+1} (u_J)^\sM_*(u_J)_\sM^*A & \textrm{if $k\geqslant 0$.}
 \end{cases}\]
 We make $C^\bullet(A,\mathscr U)$ into a complex using the alternating sum of the maps obtained from the unit of the adjunction in \ref{prop:AdjEtaleAffine}. The unit of this adjunction also provides a canonical morphism $A\ra C^\bullet(A,\mathscr U)$ in $\Cb(\sM(U))$. This morphism induces a quasi-isomorphism on the underlying complex of perverse sheaves and so is a quasi-isomorphism itself since the forgetful functor to the derived category of perverse sheaves is conservative.
 
 By \cite[Tag 0268]{stacks-project}, to prove the proposition we have to show the following:
 \begin{enumerate}
 \item if $U$ is an object in $\mathsf{AffEt}/S$ and $A,B$ are objects in $\sM(U)$, then the presheaf $(V\xra{v} U)\mapsto \Hom_{\sM(V)}(v_\sM^*A,v^*_\sM B)$  on $\mathsf{AffEt}/U$ is a sheaf for the \'etale topology;
 \item for any covering $\mathscr U=(u_i:U_i\ra U)_{i\in I}$ of the site $\mathsf{AffEt}/S$, any descent datum is effective.
 \end{enumerate}
We already now that the similar assertions are true for perverse sheaves by \cite[Proposition 3.2.2, Th\'eor\`eme 3.2.4]{BBD}.  Let $\mathscr U=(u_i:U_i\ra U)_{i\in I}$ be a covering in the site $\mathsf{AffEt}/S$. Given $i,j\in I$, we denote by $u_{ij}:U_{ij}:=U_i\times_UU_j\ra U$ the fiber product  and by $p_{ij}:U_{ij}\ra U_i$, $p_{ji}:U_{ij}\ra U_j$ the projections.

Let us first prove (1). Let $A,B$ be objects in $\sM(U)$ and $K,L$ be their underlying perverse sheaves. Consider the canonical commutative diagram
\[\xymatrix@C=.5cm{{\Hom(A,B)}\ar[r]\ar[d] &{\prod_{i\in I}\Hom((u_i)^*_\sM A,(u_i)^*_\sM B)}\ar@<1ex>[r]\ar@<-1ex>[r]\ar[d] &{\prod_{i,j\in I}\Hom((u_{ij})^*_\sM A,(u_{ij})^*_\sM B)}\ar[d]\\
{\Hom(K,L)}\ar[r] &{\prod_{i\in I}\Hom((u_i)^*_\sP K,(u_i)^*_\sP L)}\ar@<1ex>[r]\ar@<-1ex>[r] &{\prod_{i,j\in I}\Hom((u_{ij})^*_\sP K,(u_{ij})^*_\sP L).}}\]
The lower row is exact and the vertical arrows are injective. We only have to check that the upper row is exact at the middle term. Let $c$ be an element in $\prod_{i\in I}\Hom((u_i)^*_\sM A,(u_i)^*_\sM B)$ which belongs to the equalizer of the two maps on the right-hand side. Then, it defines (by adjunction) a morphism $c_0$ and a morphism $c_1$ such that the square
\begin{equation}\label{eq:squarestack}
\xymatrix{{C^0(A,\mathscr U)}\ar[r]^-{d^0}\ar[d]^{c_0} &{C^1(A,\mathscr  U)}\ar[d]^-{c_1}\\
{C^0(B,\mathscr U)}\ar[r]^-{d^0} &{C^1(B,\mathscr U)}}
\end{equation}
 is commutative. Since $A\ra C^\bullet(A,\mathscr U)$ and $B\ra C^{\bullet}(B,\mathscr U)$ are quasi-isomorphisms, $A$ is the kernel of the upper map in \eqref{eq:squarestack} and $B$ is the kernel of the lower map. Hence, $c_0$ and $c_1$ induce a morphism $A\ra B$ in $\sM(U)$ which maps to $c$.
 
 Now we prove (2). Consider a descent datum. In other words, consider, for every $i\in I$, an object $A_i$ in $\sM(U_i)$ and, for every $i,j\in I$, an isomorphism $\phi_{ij}:(p_{ij})^*_\sM A_i\ra (p_{ji})^*_\sM A_j$ in $\sM(U_{ij})$ satisfying  the usual cocycle condition. Let $A$ the kernel of the map
 \[\bigoplus_{i\in I}(u_i)^\sM_*A_i\ra\bigoplus_{i,j\in I}(u_i)^\sM_*(p_{ij})_*^\sM(p_{ij})^*_\sM A_i=\bigoplus_{i,j\in I}(u_{ij})^\sM_*(p_{ij})^*_\sM A_i\]
 given on $(u_i)^\sM_*A_i$ by the difference of the maps obtained by composing the morphism induced by adjunction
 \[(u_i)^\sM_*A_i\ra (u_i)^\sM_*(p_{ij})_*^\sM(p_{ij})^*_\sM A_i \]
 with either the identity or the isomorphism $\phi_{ij}$. Using the fact that descent data on perverse sheaves are effective, it is easy to see that $A$ makes the given descent datum effective.
 \end{proof}

\subsection{A simpler generating quiver}

Let $X$ be a $k$-variety. Consider the quiver $\Pairs^\eff_X$ defined as follows. A vertex in $\Pairs^\eff_X$ is a triple $(a:Y\ra X,Z,n)$ where $a:Y\ra X$ is morphism of $k$-varieties, $Z$ is a closed subscheme of $Y$ and $n\in\bbZ$ is an integer. 
\begin{itemize}
\item Let $(Y_1,Z_1,i)$ and $(Y_2,Z_2,i)$ be vertices in $\Pairs_X^\eff$. Then, every morphism of $X$-schemes $f:Y_1\ra Y_2$ such that $f(Z_1)\subseteq Z_2$ defines an edge
\begin{equation}\label{edge1}
f:(Y_1,Z_1,i)\ra (Y_2,Z_2,i).
\end{equation}
\item For every vertex $(a:Y\ra X,Z,i)$ in $\Pairs_X^\eff$  and every closed subscheme $W\subseteq Z$, we have an edge
\begin{equation}\label{edge2}
\partial:(a:Y\ra X,Z,i)\ra (az:Z\ra X,W,i-1)
\end{equation}
where $z:Z\hookrightarrow Y$ is the closed immersion.
\end{itemize}
The quiver $\Pairs^\eff_X$ admits a natural representation in $\Dbc(X,\bbQ)$.  If $c=(a:Y\ra X,Z,i)$ is a vertex in the quiver $\Pairs^\eff_X$ and $u:U\hookrightarrow Y$ is the inclusion of the complement of $Z$ in $Y$, then  we set 
\[B(c):=a^\sP_!u^\sP_*K_U[-i]\]
where $K_U$ is the dualizing complex of $U$.

\begin{rema}
There is a difference between the representation $\pH^0\circ \overline{B}$ used here and the representation used in \cite[7.2--7.4]{IvorraPNM} (see \cite[Remark 7.8]{IvorraPNM}). In loc.cit. the relative dualizing complex $u^!_\sP a^!_\sP\bbQ_X$ is used instead of the absolute dualizing complex $K_U$. If $X$ is smooth, then the two different choices lead to equivalent categories.
\end{rema}

On vertices the representation $B$ is defined as follows.  Let $c_1:=(a_1:Y_1\ra X,Z_1,i)$, $c_2:=(a_2:Y_2\ra X,Z_2,i)$ be vertices in $\Pairs^\eff_X$ and $f:c_1\ra c_2$ be an edge of type \eqref{edge1}. The morphism $f$ maps $Z_1$ to $Z_2$ and therefore $U:=f^{-1}(U_2)$ is contained in $U_1$. Let $u:U\hookrightarrow U_1$ be the open immersion. Then, we have a morphism
\[f^\sP_!u^\sP_{1*}K_{U_1}\xra{\textrm{adj.}}f^\sP_!(u_1u)^\sP_*K_{U}\longrightarrow u^\sP_{2*}f^\sP_!K_U=u^\sP_{2*}f^\sP_!f^!_\sP K_{U_2}\xra{\textrm{adj.}} u^\sP_{2*}K_{U_2}\]
where the arrow in the middle is given by the exchange morphism. By taking the image of this morphism under $a_{2!}[-i]$, we get a morphism
\[B(f):B(c_1):=a^\sP_{1!}u^\sP_{1*}K_{U_1}[-i]\ra B(c_2):=a^\sP_{2!}u^\sP_{2*}K_{U_2}[-i].\]
Let $c=(Y\xra{a}X,Z,i)$ be a vertex in $\Pairs^\eff_X$, and $W\subseteq Z$ be a closed subset. 
Consider the commutative diagram
\[\xymatrix{{U:=Y\setminus Z}\ar[r]^-j\ar@/^2em/[rr]^-{u} &  {Y\setminus W}\ar[r]^-{v_Y} & {Y}\ar[r]^a & {X}\\
{} & {V:=Z\setminus W}\ar@{}[ru]|\square\ar[r]^-v\ar[u]^-{z_V} & {Z}\ar[u]^z\ar[ru]_b}\]
where $v,v_Y,j$ are the open immersions, $z$ the closed immersion and $a,b$ the structural morphisms. The localization triangle
\[(z_V)^\sP_!(z_V)_\sP^!\ra \Id \ra j^\sP_*j^*_\sP\xrightarrow{+1},\]
applied to $K_{Y\setminus W}$,  provides a morphism 
\[j^\sP_*K_U\ra (z_V)^\sP_!K_V[1].\]
As $z$ and $z_V$ are closed immersions, applying $(v_Y)_*$, yields a morphism
\[u^\sP_*K_U\ra z^\sP_!v^\sP_*K_V[1].\]
Applying $a_![-i]$, we obtain a morphism
\[B(\partial):B(c):=a^\sP_!u^\sP_*K_U[-i]\ra B(az:Z\ra X,Z,i-1):=b^\sP_!v^\sP_*K_V[1-i].\]

The category of perverse Nori motives considered in \cite{IvorraPNM} is defined as follows.
\begin{defi}
Let $X$ be a $k$-variety. The category of effective perverse Nori motives is the abelian category
\[\sN^\eff(X):=\Ab^\qv(\Pairs^\eff_X,\pH^0\circ B).\]
\end{defi}

Recall that the category $\sM(X)$ can also be obtained by considering $\DA_\ct(X)$ simply as a quiver, that is it is canonically equivalent to the abelian category $\Ab^\qv(\DA_\ct(X),\pH^0\circ\Bti^*_X)$ (see \ref{LemmCompB}). The Grothendieck six operations formalism constructed in \cite{AyoubI,AyoubII} and its compatibility  with its topological counterpart on the triangulated categories $\Dbc(X,\bbQ)$ shown in \cite{AyoubBetti}, imply that the quiver representation $B$
can be lifted via the realization functor $\Bti^*_X$ to a quiver representation 
\[\overline{B}:\Pairs^\eff_X\ra\DA_\ct(X).\] 
 In particular, since the diagram 
\[\xymatrix{{\DA_\ct(X)}\ar[r]^-{\Bti^*_X} &{\Dbc(X,\bbQ)}\\
{\Pairs^\eff_X,}\ar[u]^-{\overline{B}}\ar[ru]_-{B} & {}}\]
is commutative (up to natural isomorphisms),
there exists a canonical faithful exact functor 
 \begin{equation}\label{FoncEHMM}
 \sN^\eff(X)\ra\sM(X).
 \end{equation}
 
 Let us explain now how Tate twists can be defined in the categories $\sN^\eff(X)$ and $\mathscr M(X)$. In the category $\DA_\ct(X)$, the Tate twist $(-)(1)$ is defined to be the endofunctor $\mathsf{Th}(\Osheaf_X)(-)[-2]$ where $\mathsf{Th}(\Osheaf_X)$ is the Thom equivalence associated with the trivial locally free sheaf $\Osheaf_X$ (see \cite[\S 1.5.3]{AyoubI}). This construction, being compatible with the usual Tate twist via the Betti realization, induces an exact functor $(-)(1)$ on the category $\mathscr M(X)$. Note that this functor is an equivalence by construction. 
 
 In the category $\sN^\eff(X)$ Tate twists can be defined using the following observation: if $S$ is a $k$-variety, $q:\bbG_{m,S}\ra S$ is the structural morphism and $v:V\hookrightarrow \bbG_{m,S}$ is the complement of the unit section, then  $q_!v_*v^*q^!K=K(1)[1]$ for every $K\in\Dbc(S,\bbQ)$. In particular, if $Q:\Pairs^\eff_X\ra\Pairs^\eff_X$ is the morphism of quivers which maps 
$(Y, Z,n)$ to $(\bbG_{m,Y}, \bbG_{m,Z}\cup Y, n + 1)$ (here $Y$ is embedded in $\bbG_{m,Y}$ via the unit section), then one has a natural isomorphism between $B(Q(Y,Z,n))$ and $B(Y,Z,n)(1)$. As a consequence, the Tate twist on the category of effective perverse Nori motives can be defined as the exact functor induced by the morphism of quivers $Q$ (and the usual Tate twist). 
 
 This last construction does not yield an equivalence and one defines the category $\sN(X)$ to be the category obtained from $\sN^\eff(X)$ by inverting the Tate twists (see \cite[7.6]{IvorraPNM} for details). By construction, the category of Nori motives $\HM(k)$ of Nori's work~\cite{Nori} coincides with $\sN(k)$.

 \begin{lemm}
 The functor \eqref{FoncEHMM} extends to a faithful exact 
 \begin{equation}\label{FoncHMM}\sN(X)\ra\sM(X).
 \end{equation}
 \end{lemm}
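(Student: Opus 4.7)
The plan is to use the universal property of the localization that defines $\sN(X)$ from $\sN^\eff(X)$ by inverting the Tate twist. The key observation is that the Tate twist already acts as an auto-equivalence on $\sM(X)$, so the extension is forced.

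First, I would verify that the Tate twist $(1)$ on $\DA_\ct(X)$ descends to an exact auto-equivalence $(1)_\sM$ on $\sM(X)$. The functor $(1):\DA_\ct(X)\to\DA_\ct(X)$ is a triangulated auto-equivalence, and under the Betti realization it corresponds to the identity on $\Dbc(X,\bbQ)$ (up to the canonical isomorphism $\Bti^*_X(M(1))\simeq\Bti^*_X(M)$). In particular, it commutes with the perverse cohomology functor $\pH^0_\sP$ up to a canonical invertible $2$-isomorphism. Applying property \textbf{P1} (and \textbf{P2} for the inverse and the unit/counit of the auto-equivalence) yields an exact auto-equivalence $(1)_\sM:\sM(X)\to\sM(X)$ together with compatible invertible natural transformations $\rat^\sM_X\circ(1)_\sM\simeq\rat^\sM_X$ and $(1)_\sM\circ\pH^0_\sM\simeq\pH^0_\sM\circ(1)$.

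Next I would check that the faithful exact functor $\sN^\eff(X)\to\sM(X)$ of \eqref{FoncEHMM} intertwines the Tate twist on $\sN^\eff(X)$ with the auto-equivalence $(1)_\sM$ on $\sM(X)$, up to canonical natural isomorphism. This follows from the fact that both constructions originate from the same Tate twist on $\DA_\ct(X)$, together with the commutativity of the diagram relating $B$, $\overline{B}$ and $\Bti^*_X$ used to produce the functor \eqref{FoncEHMM}.

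Now recall that $\sN(X)$ is by definition the localization of $\sN^\eff(X)$ obtained by formally inverting the Tate twist (see \cite[7.6]{IvorraPNM}). Since the Tate twist is already invertible in $\sM(X)$ via $(1)_\sM$, the universal property of this localization produces a unique exact functor $\sN(X)\to\sM(X)$ extending \eqref{FoncEHMM}. Exactness is automatic from the universal property, and for faithfulness I would argue that any object of $\sN(X)$ is of the form $A(n)$ for some $A\in\sN^\eff(X)$ and $n\in\bbZ$, and any morphism $A(n)\to B(n)$ in $\sN(X)$ is, after twisting by $(-n)$, a morphism in $\sN^\eff(X)$ (or more precisely becomes one after passing to a sufficiently large $n$); combined with the faithfulness of $\sN^\eff(X)\to\sM(X)$ and the fact that $(1)_\sM$ is an auto-equivalence on $\sM(X)$, this gives the desired faithfulness.

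The only genuinely subtle point is the passage between morphisms in $\sN(X)$ and morphisms in $\sN^\eff(X)$; this is a standard feature of the Tate-twist localization and I would simply appeal to the explicit description in \cite[7.6]{IvorraPNM}. Everything else is a formal consequence of the universal property P1--P2 and of the fact that $(1)$ is already an auto-equivalence of $\DA_\ct(X)$.
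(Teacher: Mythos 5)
Your overall route is the same as the paper's: observe that the Tate twist is already invertible on $\sM(X)$ (being induced, via the universal property {\bf P1}--{\bf P2}, by the twist on $\DA_\ct(X)$, which is compatible with the realization), check that \eqref{FoncEHMM} intertwines the twists, and then extend by the universal property of the localization inverting the twist, with faithfulness coming from the description of $\sN(X)$ as twists of effective objects together with the faithfulness of \eqref{FoncEHMM}. All of that is correct and is exactly what the paper's one-line proof leaves implicit.

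The one place where you gloss over the actual content is the intertwining step, and your stated reason for it is not accurate: the Tate twist on $\sN^\eff(X)$ does \emph{not} ``originate from the Tate twist on $\DA_\ct(X)$''; by construction (cf.\ \cite[7.6]{IvorraPNM}) it is the exact functor induced by the quiver endomorphism $Q:\Pairs^\eff_X\ra\Pairs^\eff_X$, $(Y,Z,n)\mapsto(\bbG_{m,Y},\bbG_{m,Z}\cup Y,n+1)$, with $Y$ embedded by the unit section. So the compatibility you need is not formal: one must actually produce a natural isomorphism $\overline{B}(Q(Y,Z,n))\simeq\overline{B}(Y,Z,n)(-1)$ in $\DA_\ct(X)$ (the splitting of the motive of the trivial $\bbG_m$-bundle along the unit section), and this isomorphism is precisely the observation to which the paper reduces the whole lemma. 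Once that is supplied, the rest of your argument (the lift $(1)_\sM$ via {\bf P1}/{\bf P2}, the extension to the colimit, and faithfulness stage by stage) goes through as you describe.
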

 \begin{proof}
  To prove the lemma it is enough to observe that
there is a natural isomorphism in $\DA_\ct(X)$  between $\overline{B}(Q(Y,Z,n))$ and $\overline{B}(Y,Z,n)(1)$.
 \end{proof}

\begin{prop}\label{prop:CompNori}
The category $\sM(k)$ is canonically equivalent to the abelian category of Nori motives $\HM(k)$. More precisely the functor \eqref{FoncHMM} is an equivalence when $X=\Spec(k)$
\end{prop}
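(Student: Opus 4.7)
The functor (\ref{FoncHMM}) is faithful and exact by construction, so it suffices to exhibit a two-sided inverse. The plan is to apply the universal property of $\sM(k)$ after producing a suitable factorization of $\pH^0_\sP$ through $\sN(k)$.

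By \lemmaref{LemmCompB}, $\sM(k)$ coincides with $\Ab^\qv(\DA_\ct(k),\pH^0_\sP)$: it is the universal abelian category $\sA$ equipped with a faithful exact functor $s:\sA\to \sP(k)$ and a representation of the underlying quiver $L:\DA_\ct(k)\to \sA$ such that $s\circ L=\pH^0_\sP$. The functor (\ref{FoncHMM}) itself arises from the analogous universal property of $\sN(k)$, applied to the representation $\Pairs^\eff_k\xra{\overline{B}}\DA_\ct(k)\xra{\pH^0_\sM}\sM(k)$ and the faithful exact functor $\rat^\sM_k$. To produce an inverse $\sM(k)\to \sN(k)$ it thus suffices to construct an additive (indeed homological) functor $\pH^0_N:\DA_\ct(k)\to \sN(k)$ together with an isomorphism $r\circ \pH^0_N\simeq \pH^0_\sP$, where $r:\sN(k)\to \sP(k)$ is the canonical faithful exact functor coming from Nori's construction.

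The construction of $\pH^0_N$ rests on two generation results. First, by resolution of singularities in characteristic zero together with Voevodsky's theorem, $\DA_\ct(k)$ is generated as a thick triangulated category by Tate twists of motives of smooth projective $k$-varieties, all of which are, up to shift and twist, pair motives of the form $\overline{B}(Y,\emptyset,n)$. Second, Nori's Basic Lemma permits an inductive resolution on dimension of any such motive as an iterated cone of pair motives $\overline{B}(Y_i,Z_i,n_i)$. Since each pair motive lifts by construction to an object of $\sN(k)$ and each edge of the quiver $\Pairs^\eff_k$ lifts to a morphism in $\sN(k)$, given a presentation of $M\in \DA_\ct(k)$ as an iterated cone of pair motives one can define $\pH^0_N(M)\in \sN(k)$ as the cohomology of the associated complex computed inside $\sN(k)$.

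The main obstacle is establishing well-definedness and functoriality of $\pH^0_N$: one must check that $\pH^0_N(M)$ is independent of the chosen resolution and that morphisms in $\DA_\ct(k)$ are sent to well-defined morphisms in $\sN(k)$, up to canonical isomorphism. The standard route is a double resolution argument: given two presentations of $M$ (or two liftings of a morphism), one finds a common refinement by pair motives and invokes the uniqueness clauses in the universal property of $\sN(k)$ applied to the diagrams in $\Pairs^\eff_k$ encoding the refinement. The homological property of $\pH^0_N$ is handled by the same machinery applied to Verdier's octahedron. Once $\pH^0_N$ is in hand, the universal property of $\sM(k)$ recalled above provides an exact functor $\sM(k)\to \sN(k)$ which, by uniqueness, is inverse to (\ref{FoncHMM}); this yields the claimed equivalence $\sM(k)\simeq \sN(k)=\HM(k)$.
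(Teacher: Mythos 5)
Your overall skeleton — produce a homological functor $\DA_\ct(k)\ra\HM(k)$ lifting $\pH^0_\sP$ together with the comparison of fibre functors, then play the universal properties of $\sM(k)$ and of Nori's category against each other to get mutually inverse functors — is exactly the strategy of the paper. The difference is where the lifting comes from: the paper does not construct it, but imports the triangulated realization $\mathcal R_{\mathrm N,s}:\DA_\ct(k)\ra\Db(\HM(k))$ of Gallauer--Choudhury \cite{MR3649230}, whose composition with the forgetful functor is $\Bti^*_k$; applying $H^0$ immediately gives the required factorization of $H^0\circ\Bti^*_k$ through $\HM(k)$, and the rest is formal.

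The part of your argument that replaces this citation is where the genuine gap lies. You propose to define $\pH^0_N(M)$ by choosing, via resolution of singularities and the Basic Lemma, a presentation of $M$ as an iterated cone of pair motives $\overline{B}(Y_i,Z_i,n_i)$, and then to settle well-definedness and functoriality by a ``double resolution'' argument and the uniqueness clause in the universal property of $\sN(k)$. As stated this does not work: cones in a triangulated category are not functorial, an ``iterated cone presentation'' is not a complex whose cohomology you can take in $\sN(k)$, and a morphism in $\DA_\ct(k)$ between two objects so presented need not be induced by any morphism of the underlying diagrams in $\Pairs^\eff_k$, so the universal property of $\sN(k)$ (which only speaks about factorizations of the fixed representation $\pH^0\circ\overline B$) gives you no handle on it. Making such an assignment well defined and homological is precisely the hard content of the comparison results of Nori, Harrer and Gallauer--Choudhury, and all known proofs work at the level of models (complexes of sheaves, Čech/cellular complexes attached to affine stratifications, or descent techniques), not by resolutions inside the triangulated category. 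So either quote $\mathcal R_{\mathrm N,s}$ (as the paper does), in which case your first and last paragraphs already give the proof, or be prepared to reprove that comparison theorem; the middle of your argument as written does not do so.
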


\begin{proof} (See also Proposition~4.12 of the paper~\cite{BVHP} by Barbieri-Viale, Huber and Prest.)
Consider the triangulated functor $\mathcal R_{\mathrm N,s}:\DA_\ct(X)\ra \Db(\HM(k))$ constructed in Proposition~7.12 of the paper~\cite{MR3649230} of Choudhury and Gallauer. Up to a natural isomorphism, the diagram
\[\xymatrix{{\DA_\ct(k)}\ar@/^2em/[rr]^{\Bti_k^*}\ar[r]_{\mathcal{R}_{\mathrm{N},s}} & {\Db(\HM(k))}\ar[r]_-{\textrm{forgetful}} & {\Db(\bbQ)}}\]
is commutative. In particular, it provides a factorization of the cohomological functor $H^0\circ \Bti_k^*$ 
\[\DA_\ct(k)\xra{H^0\circ \mathcal R_{\mathrm{N},s}}\HM(k)\xra{\textrm{forgetful}}\vec(\bbQ).\]
This implies the existence of a canonical faithful exact functor $\sM(k)\ra\HM(k)$ such that
\[\xymatrix{{\DA_\ct(k)}\ar[r]_-{\pH^0_\sM}\ar@/^2em/[rr]^-{H^0\circ \mathcal R_{\mathrm{N},s}} &{\sM(k)}\ar[r] & {\HM(k)}}\]
is commutative up to a natural isomorphism. Using the universal properties, it is easy to see that it is a quasi-inverse to \eqref{FoncHMM}.
\end{proof}

The following conjecture seems reasonable and reachable via our current technology. 

\begin{conj}\label{conj:compBetti}
Let $X$ be a smooth $k$-variety. Let $\sN(X)$ be the category of perverse motives constructed in \cite{IvorraPNM} and
\[\mathsf{RL}^\sN_X:\DA_\ct(X)\ra\Db(\sN(X))\]
be the triangulated functor constructed in \cite{IvorraHodge}. Then, the Betti realization $\Bti_X^*$ is isomorphic to the composition
\[\DA_\ct(X)\ra\Db(\sN(X))\xra{\mathrm{forgetful}}\Db(\Perv(X))\xra{\real}\Dbc(X,\bbQ).\]
\end{conj}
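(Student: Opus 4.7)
The strategy is to exhibit a natural isomorphism between the two triangulated functors
\[F_1 := \Bti^*_X, \qquad F_2 := \real \circ \mathrm{forget} \circ \mathsf{RL}^\sN_X\]
from $\DA_\ct(X)$ to $\Dbc(X,\bbQ)$. By the construction of $\mathsf{RL}^\sN_X$ in \cite{IvorraHodge}, $\mathsf{RL}^\sN_X$ is a triangulated lift of the perverse cohomological functor associated with the Betti realization; in particular $\pH^0 \circ F_1$ and $\pH^0 \circ F_2$ are both canonically isomorphic to $\pH^0_\sP$, so the two functors agree at the level of perverse cohomology sheaves. The task is to promote this cohomological agreement to an isomorphism of triangulated functors.

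The first step is to compare the two functors on a generating family. The triangulated category $\DA_\ct(X)$ is generated, under shifts, cones and Tate twists, by the pair motives $\overline{B}(c)$ indexed by vertices $c = (a : Y \to X, Z, i) \in \Pairs^\eff_X$. By Ayoub's comparison \cite[Th\'eor\`eme 3.19]{AyoubBetti} of the Betti realization with the six operations, there is a canonical isomorphism
\[F_1\bigl(\overline{B}(c)\bigr) \;\simeq\; a^\sP_! u^\sP_* K_U[-i] \;=\; B(c).\]
By the definition of $\sN(X)$ as the universal abelian category through which $\pH^0 \circ B$ factors, together with the construction of $\mathsf{RL}^\sN_X$ in \cite{IvorraHodge} via Be{\u\i}linson-type glueing, one likewise has a canonical isomorphism $F_2\bigl(\overline{B}(c)\bigr) \simeq B(c)$. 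Verifying that these isomorphisms are compatible with the edges \eqref{edge1} and \eqref{edge2} of $\Pairs^\eff_X$ yields a natural isomorphism of quiver representations $F_1 \circ \overline{B} \simeq F_2 \circ \overline{B}$.

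The second step, which I expect to be the main obstacle, is to extend this comparison from the generating system of pair motives to all of $\DA_\ct(X)$. A natural transformation between triangulated functors is not determined by its values on a generating family, since distinguished triangles are not genuine colimits. To bypass this classical obstruction, one should work in stable $\infty$-categorical enhancements of all three categories $\DA_\ct(X)$, $\Db(\sN(X))$ and $\Dbc(X,\bbQ)$ and check that both $\Bti^*_X$ and $\mathsf{RL}^\sN_X$ lift to $\infty$-functors: for $\Bti^*_X$ this follows from Ayoub's model-categorical construction in \cite{AyoubBetti}, while for $\mathsf{RL}^\sN_X$ one has to examine the Be{\u\i}linson-style construction of \cite{IvorraHodge} and verify that it is rigid enough to produce such a lift. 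Once both functors are promoted to $\infty$-functors between compactly generated stable $\infty$-categories, a natural isomorphism defined on a set of compact generators extends to the whole category, yielding the desired comparison.

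An alternative avoiding $\infty$-categorical enhancements would proceed by reduction to the base field. One would first have to show that $\mathsf{RL}^\sN_X$ is compatible with the direct image along the structural morphism $a_X : X \to \Spec(k)$ (at the level of the operations on $\Db(\sN(-))$ as constructed in \cite{IvorraHodge}), so that the comparison over $X$ reduces to the corresponding comparison over $\Spec(k)$; over the point, \propositionref{prop:CompNori} combined with the known agreement between the Nori and Betti realizations on $\HM(k)$, or with the motivic Galois group comparison of Choudhury-Gallauer \cite{MR3649230}, allows one to conclude.
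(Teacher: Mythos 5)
First, a point of comparison: the paper does not prove this statement at all — it is stated as \conjectureref{conj:compBetti} and explicitly left open ("seems reasonable and reachable via our current technology"), so your attempt has to stand entirely on its own. As written, it does not: both of your steps contain the essential difficulty rather than resolving it. In the first step, the isomorphism $F_2\bigl(\overline{B}(c)\bigr)\simeq B(c)$, compatibly with both types of edges of $\Pairs^\eff_X$, does not follow from "the definition of $\sN(X)$ as the universal abelian category": the universal property of $\sN(X)$ concerns only the abelian category and its faithful exact functor to $\Perv(X)$, and says nothing about the values of the triangulated lift $\mathsf{RL}^\sN_X$ of \cite{IvorraHodge} after composition with $\real\circ\mathrm{forgetful}$. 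Showing that this composite agrees with $\Bti^*_X$ on the pair motives and their structural morphisms already requires unwinding the Be{\u\i}linson-type construction of $\mathsf{RL}^\sN_X$ and comparing it with Ayoub's construction of $\Bti^*_X$ — that is, it is essentially the conjecture restricted to generators, and you give no argument for it.

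Second, your mechanism for passing from generators to all of $\DA_\ct(X)$ is also incomplete even on its own terms. You rightly observe that a natural transformation of triangulated functors is not determined by its values on generators, but the $\infty$-categorical remedy you invoke needs far more input than step one could supply: an equivalence of exact $\infty$-functors out of a compactly generated stable $\infty$-category is determined by a homotopy-coherent equivalence on the full stable subcategory of compact objects, whereas agreement object-by-object on the vertices of $\Pairs^\eff_X$ together with compatibility along a quiver of edges provides no such coherence data. Moreover, the existence of an $\infty$-enhancement of $\mathsf{RL}^\sN_X$ is precisely the point you flag as "to be verified" and never verify; this is the heart of the matter, not a routine check. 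The alternative route via $a_{X*}$ fares no better: the compatibility of $\mathsf{RL}^\sN_X$ with direct images is again only postulated (the needed operations on $\Db(\sN(-))$ are not available from \cite{IvorraPNM}), and in any case two functors can agree after pushforward to $\Spec(k)$ without being isomorphic over $X$, so some conservativity-type argument would be required and none is given. In short, you have a plausible strategy outline with the two essential verifications left open — which is consistent with the fact that the authors themselves state this as an open conjecture rather than a theorem.
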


If \ref{conj:compBetti} holds then the same proof as the one of \ref{prop:CompNori} implies the following

\begin{conj}
Let $X$ be a smooth $k$-variety. Then, the functor \eqref{FoncHMM} is an equivalence.
\end{conj}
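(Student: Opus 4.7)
The plan is to replicate the argument of \propositionref{prop:CompNori} verbatim, substituting the Choudhury--Gallauer functor $\mathcal R_{\mathrm{N},s}$ by the triangulated realization $\RL^\sN_X\colon \DA_\ct(X)\ra\Db(\sN(X))$ of \cite{IvorraHodge}. Given \conjectureref{conj:compBetti}, everything is then a formal consequence of the universal property defining $\sM(X)$ in \sectionref{sec:CatPre}.

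More precisely, the assumed isomorphism
\[
\Bti_X^*\simeq\real\circ\forget\circ\RL^\sN_X
\]
together with the $t$-exactness of $\forget\colon\Db(\sN(X))\ra\Db(\sP(X))$ for the tautological $t$-structure (a consequence of the fact that $\sN(X)\ra\sP(X)$ is faithful exact, see \cite{IvorraPNM}) shows that applying the perverse cohomology functor produces a canonical factorization
\[
\DA_\ct(X)\xra{H^0\circ\RL^\sN_X}\sN(X)\xra{\mathrm{forget}}\sP(X)
\]
of $\pH^0_\sP$ in which the first arrow is a homological functor (since $\RL^\sN_X$ is triangulated) and the second arrow is a faithful exact functor. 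By the universal property recalled in \lemmaref{LemmCompB}, this factorization yields a unique faithful exact functor $\Phi_X\colon \sM(X)\ra\sN(X)$ such that $\forget\circ\Phi_X=\rat^\sM_X$ and $\Phi_X\circ\pH^0_\sM\simeq H^0\circ\RL^\sN_X$.

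Next I would verify that $\Phi_X$ is a quasi-inverse to the functor \eqref{FoncHMM}, which I denote $\Psi_X\colon\sN(X)\ra\sM(X)$. Both endofunctors $\Psi_X\circ\Phi_X$ and $\Phi_X\circ\Psi_X$ are exact and, after composition with the respective faithful exact forgetful functors to $\sP(X)$, become naturally isomorphic to $\rat^\sM_X$ and to the forgetful $\sN(X)\ra\sP(X)$. The uniqueness clauses in the universal properties of $\sM(X)=\Ab^\tr(\DA_\ct(X),\pH^0_\sP)$ and of $\sN(X)=\Ab^\qv(\Pairs^\eff_X,\pH^0\circ\overline B)$ (extended by the Tate twist as in the lemma preceding \eqref{FoncHMM}) then force both composites to be canonically isomorphic to the identity. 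This completes the argument.

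The main obstacle is of course \conjectureref{conj:compBetti} itself: producing a canonical natural isomorphism between $\Bti_X^*$ and $\real\circ\forget\circ\RL^\sN_X$ requires a detailed compatibility check between the Hodge-theoretic realization used in \cite{IvorraHodge} and the Betti realization of \cite{AyoubBetti}, which is nontrivial at the level of derived categories. A secondary, milder point worth addressing is that $\RL^\sN_X$ in \cite{IvorraHodge} is only constructed for smooth varieties (which is why smoothness of $X$ appears in the hypothesis), and that one must keep track of Tate twists when passing between the generating quiver $\Pairs^\eff_X$ and the non-effective category $\sN(X)$; this last compatibility, however, is already ensured by the natural isomorphism $\overline{B}(Q(Y,Z,n))\simeq\overline{B}(Y,Z,n)(-1)$ used in the proof of the lemma surrounding \eqref{FoncHMM}.
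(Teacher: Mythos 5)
Your proposal is exactly the paper's intended argument: the paper states this conjecture only as a consequence of \conjectureref{conj:compBetti} ``by the same proof as \propositionref{prop:CompNori}'', and your spelled-out version --- using the assumed isomorphism $\Bti_X^*\simeq\real\circ\forget\circ\RL^\sN_X$ and $t$-exactness of $\forget$ to factor $\pH^0_\sP$ through $\sN(X)$, then invoking the universal properties of $\sM(X)$ and $\sN(X)$ to produce mutually quasi-inverse functors --- is precisely that route, with the conditionality on \conjectureref{conj:compBetti} correctly identified as the real open point.
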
 

\section{Unipotent nearby and vanishing cycles}\label{sec:nearbycycles}

In~\cite{MR923134}, Be{\u\i}linson has given an alternate construction of unipotent vanishing cycles functors for perverse sheaves and has used it to explain a gluing procedure for perverse sheaves (see \cite[Proposition 3.1]{MR923134}). In this section, our main goal is to obtain similar results for perverse Nori motives. Later on, the vanishing cycles functors for perverse Nori motives will play a crucial role in the construction of the inverse image functor (see \ref{sec:pullback}).

Given the way the abelian categories of perverse Nori motives are constructed from the triangulated categories of \'etale motives, our first step is to carry out Be{\u\i}linson's constructions 
for perverse sheaves within the categories of \'etale motives or analytic motives (the latter categories being equivalent to the classical unbounded derived categories of sheaves of $\bbQ$-vector spaces on the associated analytic spaces). This is done in \ref{subsec:Unmotshfun} and \ref{subsec:CompBetti}.  Our starting point is the logarithmic specialization system constructed by Ayoub in~\cite{AyoubII}. However, by working in triangulated categories instead of abelian categories as Be{\u\i}linson did, one has to face the classical functoriality issues, one of the major drawback of triangulated categories. To avoid these problems and ensure that all our constructions are functorial we will rely heavily on the fact that the triangulated categories of motives underlie a triangulated derivator.

Only then, using the compatibility with the Betti realization, will we be able to obtain in \ref{subsec:AppPerverseMotive} the desired functors for perverse Nori motives.

\subsection{Reminder on derivators}\label{subsec:derivator}

Let us recall some features of triangulated (a.k.a. stable) derivators $\bbD$ needed in the construction of the motivic unipotent vanishing cycles functor and the related exact triangles. For the general theory, originally introduced by Grothendieck~\cite{GrothDer}, we refer to the works~\cite{AyoubI,AyoubII} by Ayoub, \cite{MR2382732} by Cisinki--Neeman, \cite{GrothBook, GrothDeriv} by Groth and~\cite{MR2951712}
by Maltsiniotis.

We will assume that our derivator $\bbD$ is defined over all small categories. In our applications, the derivators considered will be of the form $\bbD:=\DA(S,-)$ for some $k$-variety $S$. Given a functor $\rho:A\ra B$, we denote by
\[\rho^*:\bbD(B)\ra\bbD(A),\quad \rho_*:\bbD(A)\ra\bbD(B),\quad \rho_\sharp:\bbD(A)\ra\bbD(B)\]
the structural functor and its right and left adjoint. Note that in the literature on derivators, the notation $\rho_!$ is used instead of $\rho_\sharp$. We follow here the notation used in \cite{AyoubI,AyoubII}.

{\itshape Notation:}  We let $\mathbf e$ be the punctual category reduced to one object and one morphism. Given a small category $A$, we denote by $p_A:A\ra {\mathbf e}$ the projection functor and, if $a$ is an object in $A$, we denote by $a:{\mathbf e}\ra A$ the functor that maps the unique object of $\mathbf{e}$ to $a$. Given $n\in\bbN$, we let $\underline{\mathbf n}$ be the category
\[n\leftarrow \cdots\leftarrow 1\leftarrow 0.\]
If one thinks of functors in $\mathsf{Hom}(A^\op,\bbD(\mathbf{e}))$ as diagrams, then an object in $\bbD(A)$ can be thought as a ``coherent diagram''. Indeed, every object $M$ in $\bbD(A)$ has an underlying diagram called its $A$-skeleton and defined to be the functor $A^{\op}\ra \mathbb{D}({\mathbf e})$ which maps an object $a$ in $A$ to the object $a^*M$ of $\bbD({\mathbf e})$. This construction gives the $A$-skeleton functor
\[\bbD(A)\ra\mathsf{Hom}(A^\op,\bbD(\mathbf{e}))\]
which is not an equivalence in general (coherent diagrams are richer than simple diagrams). We say that $M\in\bbD(A)$ is a coherent lifting of a given diagram of shape $A$ if its $A$-skeleton is isomorphic to the given diagram.

We will not give here the definition of a stable derivator (see e.g. \cite[Definition 2.1.34]{AyoubI} or \cite[Section 1 \& Definition 4.1]{GrothDeriv}), but instead recall a few properties which will be constantly used.
\par\smallskip
\textbf{(1)} Let $\rho:A\ra B$ be a functor and $b$ be an object in $B$. Denote by $j_{A/b}:A/b\ra A$ and $j_{b\backslash A}: b\backslash A\ra A$ be the canonical functors where $A/b $ and $b\backslash A$ are respectively the slice and coslice categories. The exchange $2$-morphisms (given by adjunction)
\[b^*\rho_*\ra (p_{A/b})_*j^*_{A/b}\quad;\quad (p_{b\backslash A})_\sharp j^*_{b\backslash A}\ra b^*\rho_\sharp\]
are invertible (see \cite[D\'efinition 2.1.34]{AyoubI} or the base change axiom {\textbf{Der 3}} of \cite[Definition 1.11]{MR2382732}).
\par\smallskip
\textbf{(2)} If a small category $A$ admits an initial object $o$ (resp. a final object $o$), then the $2$-morphism $o^*\ra (p_A)_\sharp$ (resp. the $2$-morphism $(p_A)_*\ra o^*$) is invertible too (see \cite[Corollaire 2.1.40]{AyoubI}).
\par\smallskip
 \textbf{(3)} Let $A$ and $B$ be small categories. Given an object $a\in A$ we denote by $a:B\ra A\times B$ the functor which maps $b\in B$ to the pair $(a,b)$. The $A$-skeleton of an object $M$ in $\bbD(A\times B)$ is defined to be the functor $A^{\op}\ra \mathbb{D}(B)$ which maps an object $a$ in $A$ to the object $a^*M$ of $\bbD(B)$. This construction gives the $A$-skeleton functor
\[\bbD(A\times B)\ra\mathsf{Hom}(A^\op,\bbD(B)).\]
This functor is conservative. Moreover if $A=\underline{\mathbf{1}}$, it is full and essentially surjective. (See the axioms {\textbf{Der 2}} and {\textbf{Der 5}} of \cite[Definition 1.11]{MR2382732}.)
\par\smallskip
 We denote by $\carre=\underline{\mathbf 1}\times \underline{\mathbf 1}$ the category 
\begin{equation}\label{eq:catsquare}
\xymatrix{{(1,1)} & {(0,1)}\ar[l]\\
{(1,0)}\ar[u] & {(0,0).}\ar[u]\ar[l]}
\end{equation}
We denote by $\smallucarre$ the full subcategory of $\carre$ that does not contain the object $(0,0)$ and by $i_{\smallucarre}:\smallucarre\ra\carre $ the inclusion functor. We denote by $(-,1):\underline{\mathbf{1}}\ra\smallucarre$ the fully faithful functor which maps $0$ to $(0,1)$ and $1$ to $(1,1)$. 
Similarly we denote by $\smalllrcarre$ the full subcategory of $\carre$ that does not contain the object $(1,1)$ and $i_{\smalllrcarre}:\smalllrcarre\ra\carre$ the inclusion functor. We denote by $(0,-):\underline{\mathbf 1}\ra\smalllrcarre$ the fully faithful functor that maps $0$ and $1$ respectively to $(0,0)$ and $(0,1)$

An object $M$ in $\bbD(\carre)$ is said to be cocartesian (resp. cartesian) if and only if the canonical morphism $(i_{\smallucarre})_\sharp (i_{\smallucarre})^*M\ra M$ (resp. $M\ra (i_{\smallucarre})_* (i_{\smallucarre})^* M$) is an isomorphism. Since $\bbD$ is stable, a square $M$ in $\bbD(\carre)$ is cartesian if and only if it is cocartesian.

Let $\doublecarre$ be the category
\begin{equation}\label{eq:catdoublesquare}
\xymatrix{{(2,1)} & {(1,1)}\ar[l] & {(0,1)}\ar[l]\\
{(2,0)}\ar[u] & {(1,0)}\ar[u]\ar[l] & {(0,0).}\ar[u]\ar[l]}
\end{equation}
There are three natural ways to embed $\carre$ in $\doublecarre$ and an object $M\in\bbD(\doublecarre)$ is said to be cocartesian  if the squares in $\bbD(\carre)$ obtained by pullback along those embeddings are cocar\-te\-sian.
A coherent triangle is a cocartesian object $M\in\bbD(\doublecarre)$ such that $(0,1)^*M$ and $(2,0)^*M$ are zero. For such an object, we have a canonical isomorphism $(0,0)^*M\simeq (2,1)^*M[1]$ and the induced sequence
\begin{equation}\label{eq:cohextr}
(2,1)^*M\ra (1,1)^*M\ra (1,0)^*M\ra (2,1)^*M[1]
\end{equation}
is an exact triangle in $\bbD(\mathbf{e})$.

One of the main advantages of working in a stable derivator is the possibility to associate with a coherent morphism $M\in\bbD(\underline{\mathbf 1})$ functorially a coherent triangle. Let us briefly recall the construction of this triangle. Let $U$ be the full subcategory of $\doublecarre$  that does not contain $(0,0)$ and $(1,0)$. Denote by $v:\underline{\mathbf 1}\ra U$ the functor that maps $0$ and $1$ respectively to $(1,1)$ and $(2,1)$ and by $u:U\ra\doublecarre$ the inclusion functor. The image under the functor 
\[u_\sharp v_*:\bbD(A\times\underline{\mathbf 1})\ra\bbD(A\times\doublecarre)\]
of a coherent morphism $M$ in $\bbD(A\times\underline{\mathbf 1})$ is a coherent triangle. 
Using the properties {\textbf{(1--2)}} recalled above, we see that \eqref{eq:cohextr} provides an exact triangle
\begin{equation}\label{eq:cofextr}
1^*M\ra 0^*M\ra \mathsf{Cof}(M)\ra 1^*M[1]
\end{equation}
where the cofiber functor $\mathsf{Cof}$ is defined by
\begin{equation}\label{eq:Cof}
\mathsf{Cof}:=(1,0)^*u_\sharp v_*:\bbD(\underline{\mathbf 1})\ra\bbD({\mathbf e}).
\end{equation}
Using the properties {\textbf{(1--3)}} recalled above, it is easy to see that this functor is also given by
\[\mathsf{Cof}=(0,0)^*(i_{\smallucarre})_\sharp (-,1)_*.\]
In the exact triangle \eqref{eq:cofextr}, the canonical morphism $0^*M\ra\mathsf{Cof}(M)$ is the $\underline{\mathbf{1}}$-skeleton of the coherent morphism $(1,-)^*u_\sharp v_*M$ where $(1,-):\underline{\mathbf 1}\ra \doublecarre$ is the fully faithful functor that maps $0$ and $1$ respectively to $(1,0)$ and $(1,1)$. Note that we have an isomorphism of functors
\[(1,-)^*u_\sharp v_*M\simeq(0,-)^*(i_{\smalllrcarre})^*(i_{\smallucarre})_\sharp (-,1)_*:\bbD(\underline{\mathbf 1})\ra\bbD(\underline{\mathbf 1}).\]
Similarly the boundary morphism $\mathsf{Cof}(M)\ra 1^*M[1]$ is the $\underline{\mathbf{1}}$-skeleton of the coherent morphism $(-,0)^*u_\sharp v_*M$ where $(-,0):\underline{\mathbf 1}\ra \doublecarre$ is the fully faithful functor that maps $0$ and $1$ respectively to $(0,0)$ and $(1,0)$.

The construction of the cofiber functor $\mathsf{Cof}$ and the cofiber triangle \eqref{eq:cofextr} can be dualized to get a fiber functor $\mathsf{Fib}$ and a fiber triangle. Let us recall the following lemma.
\begin{lemm}\label{lemm:FibCS}
Let $M\in\bbD(\carre)$. Then, we have a morphism of exact triangles
\[\xymatrix{{\mathsf{Fib}((-,1)^*M)}\ar[r]\ar[d]_-u &{(1,1)^*M}\ar[r]\ar[d] & {(0,1)^*M}\ar[r]^-{+1}\ar[d] & {}\\
{\mathsf{Fib}((-,0)^*M)}\ar[r] &{(1,0)^*M}\ar[r] &{(0,0)^*M}\ar[r]^-{+1} & {}}\]
which is functorial in $M$. Furthermore, if $M$ is cartesian if and only if the canonical morphism 
\[u:\mathsf{Fib}((-,1)^*M)\ra \mathsf{Fib}((-,0)^*M)\]
is an isomorphism.
\end{lemm}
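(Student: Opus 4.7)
The strategy is to reformulate the desired morphism of triangles as the skeleton of a single coherent diagram obtained by parameterizing the fiber-triangle construction over $\underline{\bf 1}$. Recall that $\mathsf{Fib}$ and its associated coherent triangle (dual to \eqref{eq:Cof}) are built using only structural functors of $\bbD$ between the categories $\mathbf{e}$, $\underline{\bf 1}$, $\smallucarre$, $\smalllrcarre$ and $\doublecarre$. The same construction applied to the shifted derivator $B \mapsto \bbD(B\times \underline{\bf 1})$ yields, for every small category $A$, a functor
\[\mathsf{Fib}^{(A)}:\bbD(A\times\underline{\bf 1})\ra\bbD(A)\]
together with a coherent fiber triangle, i.e.\ an object of $\bbD(A\times\doublecarre)$ extending it. By {\bf (Der1)} applied to the cartesian squares of functors obtained by crossing $-\times a:\mathbf{e}\to A$ with the structural morphisms used to define $\mathsf{Fib}$, there is a canonical isomorphism $a^*\mathsf{Fib}^{(A)}\simeq \mathsf{Fib}\circ(a\times \id_{\underline{\bf 1}})^*$ for every object $a$ in $A$; the same statement holds at the level of the coherent triangles.

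I would apply this with $A=\underline{\bf 1}$ and the second coordinate in $\carre=\underline{\bf 1}\times\underline{\bf 1}$ identified with the $A$-factor, so that $\mathsf{Fib}^{(\underline{\bf 1})}$ takes the horizontal fiber. The object $\mathsf{Fib}^{(\underline{\bf 1})}(M)$ then lives in $\bbD(\underline{\bf 1})$ and, by the base-change isomorphism above evaluated at the two objects of $\underline{\bf 1}$, has $\underline{\bf 1}$-skeleton given by the canonical morphism $\mathsf{Fib}((-,1)^*M)\to\mathsf{Fib}((-,0)^*M)$. The same functoriality applied to the coherent fiber triangle produces an object in $\bbD(\underline{\bf 1}\times\doublecarre)$ whose two slices over $\underline{\bf 1}$ are the coherent fiber triangles of $(-,1)^*M$ and $(-,0)^*M$; its skeleton in the $\underline{\bf 1}$-direction (which is full and essentially surjective by {\bf (Der3)}, and gives honest morphisms of triangles) is the desired commutative diagram. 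Functoriality in $M$ is automatic since everything has been obtained by applying $\bbD$-functors to $M$.

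For the last assertion, I would invoke the standard characterization of cartesian squares in a stable derivator as those for which the induced map on rowwise (equivalently columnwise) fibers is invertible. Concretely, applying {\bf (Der1)} together with {\bf (Der2)} one can compute the total fiber of $M$ --- defined either as the fiber of the top row followed by the induced vertical map, or by factoring the square through $\smalllrcarre$ --- as $\mathsf{Fib}(\mathsf{Fib}((-,1)^*M)\to\mathsf{Fib}((-,0)^*M))$. Since a stable square is cartesian if and only if this total fiber vanishes (which amounts to translating the condition $M\iso (i_{\smalllrcarre})_*(i_{\smalllrcarre})^*M$ into a fiber-sequence statement and using stability), the lemma follows.

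The main obstacle is the bookkeeping: one must carefully identify, via {\bf (Der1)}--{\bf (Der3)}, the parameterized structural functors with their pointwise incarnations and verify that the coherent constructions in $\bbD(\underline{\bf 1}\times\doublecarre)$ really restrict to the expected fiber triangles. Once this is set up, both the morphism-of-triangles statement and the cartesian criterion are essentially formal consequences of stability, and this is precisely the content of \cite[Proposition 15.1.10]{GrothBook}, which may be cited directly.
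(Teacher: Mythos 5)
Your proposal is correct and ultimately takes the same route as the paper, which offers no argument of its own for this lemma but simply cites \cite[Proposition 15.1.10]{GrothBook} — exactly the citation you arrive at after sketching the standard parameterized-fiber argument via the shifted derivator $\bbD(-\times\underline{\bf 1})$ and {\bf (Der1)}--{\bf (Der3)}. One minor slip: with the paper's conventions ($(0,0)$ is the initial vertex of $\carre$), cartesianness is the condition that $M\ra (i_{\smallucarre})_*(i_{\smallucarre})^*M$ be invertible, not the condition involving $\smalllrcarre$ that you wrote in passing; this does not affect your argument.
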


\begin{proof}
The first statement follows from the fact that we have functorially defined fibers, as we just recalled. The second statement is Proposition~15.1.10 of Moritz Groth's
unpublished book \emph{Introduction to the theory of derivators}. 
Let us recall its proof. We have a commutative cube
\[\xy
\POS(00,00)
\xymatrix"A"{{(1,0)^*M}\ar[r]\ar[d] & {(0,0)^*M}\ar[d] \\
{\mathsf{Fib}((-,0)^*M)}\ar[r] &{0}}
\POS(-13,7) 
\xymatrix@C=1.2cm{{(1,1)^*M}\ar["A"]\ar[r]\ar[d] & {(0,1)^*M}\ar["A"]\ar@{.>}[d] \\
{\mathsf{Fib}((-,1)^*M)}\ar["A"]^(.65)u\ar@{.>}[r] &{0}\ar@{.>}["A"]}
\endxy \]
The front and back squares are cartesian by definition of an exact triangle. By \cite[Proposition~4.6]{GrothDeriv}, the top square is cartesian if and only if the bottom square is
cartesian. But the top square is $M$ and the bottom square is cartesian if and only if $u$ is an isomorphism, hence the result.

\end{proof}

There is also a functorial version of the octahedron axiom in $\bbD$ (see e.g. \cite[Proof of Theorem 9.44]{GrothBook} or \cite[Proof of Theorem 4.15]{GrothDeriv}), that is, there is a functor $\bbD(\underline{\mathbf 2})\ra \bbD(O)$
which associates to a coherent sequence of morphisms a coherent octahedron diagram. Here the category $O\subseteq \underline{\mathbf 4}\times\underline{\mathbf 2}$ is the full subcategory that does not contain the objects $(4,0)$ and $(0,2)$. In other words, $O$ is the category

\[\xymatrix{{(4,2)} &{(3,2)}\ar[l] & {(2,2)}\ar[l] &{(1,2)}\ar[l] &{}\\
{(4,1)}\ar[u] &{(3,1)}\ar[l]\ar[u] &{(2,1)}\ar[l]\ar[u] &{(1,1)}\ar[l]\ar[u] &{(0,1)}\ar[l]\\
{} &{(3,0)}\ar[u] &{(2,0)}\ar[l]\ar[u] &{(1,0)}\ar[l]\ar[u] &{(0,0).}\ar[l]\ar[u] }\]

Let $W$ be the full subcategory of $O$ that does not contain the objects $(1,1)$, $(2,1)$, $(3,1)$, $(0,0)$, $(1,0)$ and $(2,0)$. Denote by $\omega:\underline{\mathbf 2}\ra W$ the fully faithful functor which maps $0$, $1$ and $2$ respectively on $(2,2)$, $(3,2)$ and $(4,2)$ and by $w:W\ra O$ the inclusion functor. The octahedron diagram functor is defined to be the functor
\[w_\sharp\omega_*:\bbD(\underline{\mathbf 2})\ra\bbD(O).\]

Denote by $\mathsf{sm}:\underline{\mathbf 1}\ra\underline{\mathbf 2}$ the fully faithful functor that maps $0$ and $1$ respectively to $0$ and $1$ and by $\mathsf{fm}:\underline{\mathbf 1}\ra\underline{\mathbf 2}$ the fully faithful functor that maps $0$ and $1$ respectively to $1$ and $2$. Denote also by $\mathsf{cm}:\underline{\mathbf 1}\ra\underline{\mathbf 2}$ the functor which maps $0$ and $1$ respectively to $0$ and $2$.  Consider the fully faithful functor $\mathsf{fsq}:\carre\ra O$ which maps the square \eqref{eq:catsquare} to the square
\[\xymatrix{{(4,2)} & {(3,2)}\ar[l]\\
{(4,1)}\ar[u] & {(3,1).}\ar[u]\ar[l]}\]
Similarly we denote by $\mathsf{ssq}:\carre\ra O$ (resp. $\mathsf{csq}:\carre\ra O$) the fully faithful functor which maps the square \eqref{eq:catsquare} to the square
\[\xymatrix{{(3,2)} & {(2,2)}\ar[l]\\
{(3,0)}\ar[u] & {(2,0)}\ar[u]\ar[l]}\qquad \textrm{ (resp.}
\xymatrix{{(4,2)} & {(2,2)}\ar[l]\\
{(4,1)}\ar[u] & {(2,1) \textrm{).}}\ar[u]\ar[l]}\]
We have the following lemma.
\begin{lemm}\label{lemm:basicoctahedron}
We have canonical isomorphisms
\[\mathsf{fsq}^*w_\sharp\omega_*\simeq (i_{\smallucarre})_\sharp(-,1)_*\mathsf{fm}^*,\quad \mathsf{ssq}^*w_\sharp\omega_*\simeq (i_{\smallucarre})_\sharp(-,1)_*\mathsf{sm}^*\]
and 
\[\mathsf{csq}^*w_\sharp\omega_*\simeq (i_{\smallucarre})_\sharp(-,1)_*\mathsf{cm}^*.\]
\end{lemm}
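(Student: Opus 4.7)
We prove the three isomorphisms in parallel. Write $s \in \{\mathsf{fsq}, \mathsf{ssq}, \mathsf{csq}\}$ and $d_s \in \{\mathsf{fm}, \mathsf{sm}, \mathsf{cm}\}$ for the corresponding pairs. The plan is to show that $s^* w_\sharp \omega_* M$ is a cocartesian square in $\bbD(\carre)$ whose restriction to $\smallucarre$ is canonically isomorphic to $(-,1)_* d_s^* M$; the lemma will then follow from the universal property of $(i_{\smallucarre})_\sharp$, since a cocartesian extension is uniquely determined by its restriction to $\smallucarre$.

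For the restriction to $\smallucarre$, the composite $s \circ i_{\smallucarre}$ factors through $W$ in each of the three cases, with image equal to the two points $w \omega d_s(0)$ and $w \omega d_s(1)$ (this is the tautological identity $w \circ \omega \circ d_s = s \circ i_{\smallucarre} \circ (-,1)$, checked directly on objects) together with one additional object $s(1,0)$ which lies outside the image of $\omega$ and receives no morphism from any $\omega(k)$. The fully-faithfulness of both $w$ and $\omega$ combined with (Der1) identifies the values at the first two points with $d_s^* M$; at the third point, both $\omega_* M$ and $(-,1)_* d_s^* M$ vanish because the respective slices appearing in the formula of (Der1) are empty. This yields the desired restriction isomorphism.

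The main content is to verify that $s^* w_\sharp \omega_* M$ is cocartesian. By (Der1) the value at the corner $s(0,0) \in O \setminus W$ is the derivator colimit $(p_{s(0,0) \backslash W})_\sharp$ of $\omega_* M$ restricted to the coslice. In the case $s = \mathsf{fsq}$, this coslice consists of exactly the three objects $\{(3,2), (4,1), (4,2)\} = s(i_{\smallucarre}(\smallucarre))$, and the match with the $\smallucarre$-pushout is immediate. For $s = \mathsf{ssq}$ and $s = \mathsf{csq}$, the coslice strictly contains $s(i_{\smallucarre}(\smallucarre))$: the extra objects (four and one respectively) carry nonzero values of $\omega_* M$, but each admits a morphism in $W$ to a zero-valued object of the coslice. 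In any cocone under the coslice-diagram, the contribution of such an object must therefore factor through zero, and a direct Yoneda check on maps into a test object of $\bbD(\mathbf{e})$ shows that the colimit over the full coslice coincides with the one over $s(i_{\smallucarre}(\smallucarre))$. This absorption of extra terms is the main technical obstacle, elementary but requiring careful bookkeeping of the arrows of $O$. Once cocartesian-ness is established, combining it with the restriction isomorphism via the universal property of $(i_{\smallucarre})_\sharp$ produces the canonical isomorphism of the lemma in each case.
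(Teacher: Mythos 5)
Your overall architecture (identify the restriction of $s^*w_\sharp\omega_*M$ to $\smallucarre$, prove the resulting square is cocartesian, conclude by the universal property of $(i_{\smallucarre})_\sharp$) is a reasonable reformulation, and your treatment of the $\mathsf{fsq}$ case is correct, since there the corner coslice is literally a copy of $\smallucarre$. But the step carrying all the content --- cocartesianness for $s=\mathsf{ssq}$ and $s=\mathsf{csq}$ --- is not actually proved. First, the combinatorics is off: for $\mathsf{ssq}$ the corner coslice $(2,0)\backslash W$ is $\{(2,2),(3,2),(4,2),(4,1),(3,0)\}$, so there are two extra objects, not four; for $\mathsf{csq}$ the coslice $(2,1)\backslash W$ is $\{(2,2),(3,2),(4,2),(4,1)\}$ with single extra object $(3,2)$, and $(3,2)$ admits no morphism to (nor from) the zero-valued vertex $(4,1)$, since $(3,2)\to(4,1)$ would require $2\leqslant 1$. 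So the proposed mechanism ``the extra object maps to a zero-valued object, hence its contribution factors through zero'' simply does not apply in the $\mathsf{csq}$ case; the real reason the extra vertex is harmless is that it is an intermediate vertex of the chain $(2,2)\to(3,2)\to(4,2)$, i.e.\ a finality/telescoping statement that must itself be proved inside the derivator. Second, and more seriously, your method of verification --- checking ``cocones under the coslice diagram'' against a test object of $\bbD(\mathbf{e})$ --- is not valid: the adjunction gives $\Hom(p_\sharp Y,T)\simeq\Hom_{\bbD(I)}(Y,p^*T)$, and the right-hand side is a Hom in $\bbD(I)$, not the set of cocones on the underlying incoherent diagram (the skeleton functor is not faithful). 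Homotopy colimits in a stable derivator are not characterized by such a $1$-categorical universal property, so agreement of the two colimits cannot be established this way.

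There is also a smaller gap: you identify $(s\circ i_{\smallucarre})^*w_\sharp\omega_*M$ with $(-,1)_*\mathsf{fm}^*M$ (resp.\ $\mathsf{sm}^*$, $\mathsf{cm}^*$) only value by value, whereas the lemma asserts canonical isomorphisms of functors; one must first exhibit a comparison morphism and then check pointwise invertibility using (Der3). This is precisely how the paper proceeds: it introduces the fully faithful functor $i:\smallucarre\to W$ sending $(0,1),(1,0),(1,1)$ to $(3,2),(4,1),(4,2)$, observes the identities $\omega\circ\mathsf{fm}=i\circ(-,1)$ and $w\circ i=\mathsf{fsq}\circ i_{\smallucarre}$ (with analogues for $\mathsf{ssq}$ and $\mathsf{csq}$), and forms the two exchange $2$-morphisms $i^*\omega_*\to(-,1)_*\mathsf{fm}^*$ and $(i_{\smallucarre})_\sharp i^*\to\mathsf{fsq}^*w_\sharp$, each shown to be invertible by pointwise (Der1)--(Der3) computations; composing them yields the canonical isomorphism. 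Your corner computation, once corrected, is exactly the pointwise check of the second exchange morphism at $s(0,0)$; to repair the proof, either carry out that check by an explicit (Der1)/(Der2) analysis of the corner coslices (collapsing the extra vertices by a genuine cofinality argument), or follow the paper's factorization through the auxiliary functor $i$.
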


\begin{proof}
Let $i:\smallucarre\ra W$ be the fully faithful functor that maps $(0,1)$, $(1,0)$ and $(1,1)$ respectively to $(3,2)$, $(4,1)$ and $(4,2)$. 
Since $\omega\circ \mathsf{fm}=i\circ (-,1)$, we get a natural transformation $ i^*\omega_*\ra (0,1)_*\mathsf{fm}^*$. Using the properties \textbf{(Der1--3)}, it is easy to see that this natural transformation is invertible. Similarly, since $w\circ i=\mathsf{fsq}\circ i_{\smallucarre}$, there is a natural transformation $(i_{\smallucarre})_\sharp i^* \ra\mathsf{fsq}^*w_\sharp$. Again, using  the properties \textbf{(Der1--3)}, we see that it is invertible. This provides invertible natural transformations
\[\xymatrix{{(i_{\smallucarre})_\sharp i^* \omega_*}\ar[d]\ar[r] & {(i_{\smallucarre})_\sharp (-,1)_*\mathsf{fm}^*}\\
{\mathsf{fsq}^*w_\sharp \omega_*.} & {}}\]
The other invertible natural transformations are constructed similarly.
\end{proof}
In particular, it follows from \ref{lemm:basicoctahedron} that $(3,1)^*w_\sharp\omega_*$ is isomorphic to $\mathsf{Cof}\circ\mathsf{fm}^*$, $(2,0)^*w_\sharp\omega_*$ is  isomorphic to $\mathsf{Cof}\circ\mathsf{sm}^*$ and $(2,1)^*w_\sharp\omega_*$ is isomorphic to $\mathsf{Cof}\circ\mathsf{cm}^*$.
Since the inverse image of $w_\sharp\omega_*$ along the fully faithful functor $\carre\ra O$ that maps the square \eqref{eq:catsquare} to the square
\[\xymatrix{{(3,1)} & {(2,1)}\ar[l]\\
{(3,0)}\ar[u] & {(2,0)}\ar[u]\ar[l]}\]
is a cocartesian square, by \ref{lemm:basicoctahedron} and \cite[D\'efinition 2.1.34]{AyoubI}, we get a natural exact triangle
\begin{equation}\label{eq:foncOct}
\mathsf{Cof}(\mathsf{fm}^*(-))\ra \mathsf{Cof}(\mathsf{cm}^*(-))\ra \mathsf{Cof}(\mathsf{sm}^*(-))\xra{+1}.
\end{equation}

Let us recall \cite[Lemma 1.4.8]{AyoubI}. Note that the functors $j^*:\DA(X,\eusm I)\ra\DA(U,\eusm I)$ and $j_*:\DA(U,\eusm I)\ra\DA(X,\eusm I)$ used below are induced by the functoriality of the categories of presheaves on diagrams of schemes (see \cite[\S 4.5]{AyoubII} for details).

\begin{lemm}
Let $\eusm I$ be a small category and $j:U\hookrightarrow X$ be an open immersion.
Assume that we have a exact triangle
\[M\ra j_*j^*M\ra C(M)\xra{+1}\]
for every given object $M\in\DA(X,\eusm I)$. Then, for every morphism $\alpha:M\ra N$ in $\DA(X,\eusm I)$ there exists one and only one morphism $C(M)\ra C(N)$ such that the square
\[\xymatrix{{C(M)}\ar[r]\ar[d]^-{C(\alpha)} & {M[1]}\ar[d]^{\alpha[1]}\\
{C(N)}\ar[r] &{N[1]}}\]
is commutative. Moreover the whole diagram
\[\xymatrix{{M}\ar[r]\ar[d]^-{\alpha} & {j_*j^*M}\ar[d]^-{j_*j^*\alpha}\ar[r] & {C(M)}\ar[r]\ar[d]^-{C(\alpha)} &{M[1]}\ar[d]^-{\alpha[1]}\\
{N}\ar[r] & {j_*j^*N}\ar[r] & {C(N)}\ar[r] &{N[1]}}\]
is commutative.
\end{lemm}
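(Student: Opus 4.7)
The plan is to split the claim into existence, uniqueness, and the commutativity of the whole diagram. Uniqueness will follow from a Hom-vanishing argument, and once it is available it reduces the other two assertions to a direct application of axiom \textbf{TR3}.

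For existence, naturality of the unit $\eta:\id\to j_*j^*$ produces a commutative square with vertical arrows $\alpha$ and $j_*j^*\alpha$ and horizontal arrows $\eta_M,\eta_N$. Applying axiom \textbf{TR3} to this square together with the two given exact triangles
\[M\ra j_*j^*M\ra C(M)\xra{+1},\quad N\ra j_*j^*N\ra C(N)\xra{+1}\]
yields a morphism $\phi:C(M)\to C(N)$ fitting into a morphism of triangles. In particular, $\phi$ makes the rightmost square of the lemma commute.

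For uniqueness, suppose $\phi_1,\phi_2:C(M)\to C(N)$ both make the rightmost square commute, and write $\psi:=\phi_1-\phi_2$ and $\partial_N:C(N)\to N[1]$ for the boundary map of the triangle for $N$. Then $\partial_N\circ\psi=0$, so applying $\Hom_{\DA(X,\eusm I)}(C(M),-)$ to the triangle $N\to j_*j^*N\to C(N)\xra{+1}$ shows that $\psi$ factors through $j_*j^*N\to C(N)$. It therefore suffices to check
\[\Hom_{\DA(X,\eusm I)}(C(M),j_*j^*N)=0.\]
Since $j$ is an open immersion, the counit $j^*j_*\to\id$ is invertible. Applying the triangulated functor $j^*$ to the triangle $M\to j_*j^*M\to C(M)\xra{+1}$ makes the first arrow $j^*\eta_M:j^*M\to j^*j_*j^*M$ an isomorphism, and hence $j^*C(M)\simeq 0$. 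The adjunction $j^*\dashv j_*$ then gives
\[\Hom_{\DA(X,\eusm I)}(C(M),j_*j^*N)\simeq\Hom_{\DA(U,\eusm I)}(j^*C(M),j^*N)=0,\]
so $\psi=0$ as desired.

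With uniqueness in hand, the morphism $\phi$ produced above via \textbf{TR3} must coincide with the unique $C(\alpha)$ determined by the rightmost square, and since $\phi$ was obtained as part of a morphism of triangles the whole diagram of the lemma commutes. The only conceptually delicate point is the vanishing $j^*C(M)\simeq 0$, which is a direct consequence of the fully faithfulness of $j_*$ for the open immersion $j$ (that is, of the invertibility of the counit $j^*j_*\to\id$); everything else is formal manipulation within the triangulated category.
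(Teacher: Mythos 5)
Your proof is correct and follows essentially the same route as the one the paper relies on (the paper does not reprove the statement but cites Ayoub's Lemme 1.4.8 and notes that the argument extends to $\DA(X,\eusm I)$): existence via TR3 applied to the naturality square of the unit, and uniqueness from $\Hom(C(M),j_*j^*N)=0$, obtained by adjunction from $j^*C(M)\simeq 0$. The only input beyond formal triangulated-category manipulation is that the adjunction $(j^*,j_*)$ and the invertibility of the counit $j^*j_*\ra\Id$ persist in the diagram categories $\DA(-,\eusm I)$, which is precisely what the paper's remark that the proof "works in the more general situation" amounts to.
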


Note that in loc.cit. the lemma is stated only in the case $\eusm I=\mathbf{e}$. However its proof works in the more general situation considered here.

We will need the following technical lemma.

\begin{lemm}\label{lemm:techlemma}
Let $\eusm I$ be a small category and $f:Y\ra X$ be a morphism of separated $k$-schemes of finite type.
There exists a functor $\Delta^*_f:\DA(X,\eusm I)\ra\DA(X,\underline{\mathbf 1}\times \eusm I)$ such that, for every $M\in\DA(X,\eusm I)$, the $\underline{\mathbf 1}$-skeleton of $\Delta^*_f(M)$ is $M\ra f_*f^*M$.
\end{lemm}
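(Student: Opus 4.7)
The strategy is to lift the construction to the underlying stable model category from which the derivator $\DA(X,-)$ is built, and only then to descend to the homotopy category. Recall from \cite{AyoubI} that $\DA(X,\eusm J) = \Ho(\mathcal M(X)^{\eusm J})$ for a suitable stable model category $\mathcal M(X)$ equipped with the projective (or Reedy) model structure on diagrams of shape $\eusm J$, and that the adjunction $(f^*,f_*)$ is the derived version of a Quillen adjunction $\mathcal M(Y)^{\eusm J} \rightleftarrows \mathcal M(X)^{\eusm J}$, compatible with pullback along functors $\eusm J \to \eusm J'$. Ayoub's treatment of the case $\eusm I = \mathbf e$ is entirely formal in this Quillen-adjunction framework, so it will carry over \emph{mutatis mutandis} to a general~$\eusm I$.

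The construction I would check is as follows. Fix functorial cofibrant replacement in $\mathcal M(X)^{\eusm I}$ and functorial fibrant replacement $Q$ in $\mathcal M(Y)^{\eusm I}$. For $M \in \mathcal M(X)^{\eusm I}$ (replaced cofibrantly), form the composite
\[
\tilde\eta_M \;:\; M \longrightarrow f_* f^* M \longrightarrow f_* Q f^* M
\]
of the unit of the underived adjunction with $f_*$ applied to the fibrant replacement $f^* M \to Q f^* M$. This is a morphism in $\mathcal M(X)^{\eusm I}$ that represents the derived unit $M \to f_*f^*M$ in $\DA(X,\eusm I)$. Viewing $\tilde\eta_M$ as an object of the arrow category
\[
(\mathcal M(X)^{\eusm I})^{\underline{\bf 1}} \;=\; \mathcal M(X)^{\underline{\bf 1} \times \eusm I},
\]
functorially in $M$, and passing to homotopy categories yields the desired functor $\Delta^*_f: \DA(X,\eusm I) \to \DA(X,\underline{\bf 1} \times \eusm I)$. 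Its $\underline{\bf 1}$-skeleton is then computed by pulling back along $1,0:\mathbf e \to \underline{\bf 1}$, which at the model-categorical level simply reads off the source and target of $\tilde\eta_M$, namely $M$ and (a model for) $f_*f^*M$, producing the prescribed coherent morphism.

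The main point of care, and the only place where extending from $\eusm I=\mathbf e$ to general $\eusm I$ is nontrivial, is to check that $\Delta^*_f$ is a well-defined functor between homotopy categories, independent of the auxiliary choices of replacements. This will be handled by the standard fact that any two functorial fibrant replacements on $\mathcal M(Y)^{\eusm I}$ are linked by a natural zigzag of weak equivalences, inducing a canonical natural isomorphism between the corresponding arrow objects in $\DA(X,\underline{\bf 1}\times\eusm I)$. What must be verified is that the projective (equivalently Reedy) model structure on $(\mathcal M(X)^{\eusm I})^{\underline{\bf 1}}$ does indeed present $\DA(X,\underline{\bf 1}\times\eusm I)$ and that the assignment $M \mapsto \tilde\eta_M$ preserves weak equivalences between cofibrant objects; both are routine consequences of the derivator axioms recalled in \subsectionref{subsec:derivator} together with the Quillen structure, but constitute the only genuine work in the proof.
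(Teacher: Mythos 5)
Your proposal is correct and follows essentially the same route as the paper: the paper also works in the model-categorical presentation (symmetric $T$-spectra of presheaves on $\mathsf{Sm}/(X,\eusm I)$ with the semi-projective $\bbA^1$-stable structure), uses the identification of spectra over $(X,\underline{\bf 1}\times\eusm I)$ with $\underline{\bf 1}^{\op}$-diagrams of spectra over $(X,\eusm I)$, and sends $H$ to the strict morphism $Q(H)\ra f_*Rf^*Q(H)$ (cofibrant replacement, unit, fibrant replacement) before passing to homotopy categories. The verifications you flag as the remaining work are exactly what the paper absorbs by invoking Ayoub's Quillen adjunction for $(f,\Id)^*\dashv(f,\Id)_*$ and the definition of $\DA(X,-)$ via these diagram model categories.
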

\begin{proof}
Consider the diagram of $k$-varieties $(\mathscr F,\underline{\mathbf 1}\times \eusm I):\underline{\mathbf{1}}\times \eusm I\ra (\Sch/k)$ that maps $(0,i)$ to $Y$ and $(1,i)$ to $X$ and the canonical morphisms of diagrams of $k$-varieties
\[\xymatrix{{(\mathscr F,\underline{\mathbf 1}\times \eusm I)}\ar[d]^-{\alpha}\ar[r]^-{\beta} & {(X,\underline{\mathbf 1}\times \eusm I)}\\
{(X,\eusm I).} & {}}\]
The functor $\Delta_f^*:=\beta_*\alpha^*$ satisfies the desired property.

\end{proof}

\begin{rema}\label{Rema:loc}
Assume $\eusm I=\mathbf{e}$. Given $M$ in $\DA(X)$, we have an exact triangle 
\[M\ra j_*j^*M\ra\mathsf{Cof}(\Delta^*_j(M))\xra{+1}\]
functorial in $M$. It follows from \cite[Lemme 1.4.8]{AyoubI} that the functor $i_!i^!(-)[1]$ is isomorphic to $\mathsf{Cof}\circ \Delta^*_j(-)$
\end{rema}

Similarly we will need the following lemma. Its proof is completely similar to the one of \ref{lemm:techlemma} and will be omitted.

\begin{lemm}
Let $\eusm I$ be a small category and $f:Y\rightarrow X$ be a smooth morphism of separated $k$-schemes of finite type.
There exists a functor $\Delta^!_f:\DA(X,\eusm I)\ra\DA(X,\eusm I\times\underline{\mathbf 1})$ such that for every $A\in\DA(X,\eusm I)$ the $\underline{\mathbf 1}$-skeleton of $\Delta^!_f(A)$ is $f_\sharp f^*A\ra A$.
\end{lemm}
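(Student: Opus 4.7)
The plan is to mirror the construction in the proof of \lemmaref{lemm:techlemma}, replacing the adjunction $((f,\Id)^*,(f,\Id)_*)$ by the adjunction $((f,\Id)_\sharp,(f,\Id)^*)$ which is available because $f$ is smooth. Concretely, for a smooth morphism $f:Y\to X$, one has, in addition to the Quillen adjunction used in \lemmaref{lemm:techlemma}, a Quillen adjunction
$$
(f,\Id)_\sharp:\mathbf{Spec}^\Sigma_T(\mathbf{PreShv}(\mathsf{Sm}/(Y,\eusm I)))\rightleftarrows\mathbf{Spec}^\Sigma_T(\mathbf{PreShv}(\mathsf{Sm}/(X,\eusm I))):(f,\Id)^*
$$
at the level of semi-projective $\bbA^1$-stable model structures (see \cite[Proposition 4.5.22]{AyoubII} or the analogue in \emph{loc. cit.}), inducing on the homotopy categories the adjoint pair $(f_\sharp,f^*)$ on $\DA(-,\eusm I)$.

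Next, using the same identification
$$
\mathbf{Spec}^\Sigma_T(\mathbf{PreShv}(\mathsf{Sm}/(X,\eusm I\times\underline{\bf 1})))=\mathsf{Hom}(\underline{\bf 1}^\op,\mathbf{Spec}^\Sigma_T(\mathbf{PreShv}(\mathsf{Sm}/(X,\eusm I))))
$$
as in the proof of \lemmaref{lemm:techlemma}, I would define a functor at the model category level that sends a spectrum $H$ to the counit morphism $(f,\Id)_\sharp Q((f,\Id)^* Q(H))\to Q(H)$, where $Q$ denotes the cofibrant replacement functor (iterated so as to ensure that $f^*QH$ is cofibrant before $f_\sharp$ is applied). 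Viewed as an object of $\mathsf{Hom}(\underline{\bf 1}^\op,\mathbf{Spec}^\Sigma_T(\mathbf{PreShv}(\mathsf{Sm}/(X,\eusm I))))$, this morphism descends to a functor $\Delta^!_f:\DA(X,\eusm I)\to\DA(X,\eusm I\times\underline{\bf 1})$.

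Finally, the correctness of the $\underline{\bf 1}$-skeleton follows from the fact that both cofibrant replacements compute the derived values of $f^*$ and $f_\sharp$ respectively, so the $\underline{\bf 1}$-skeleton of $\Delta^!_f(A)$ is (up to canonical isomorphism) the counit $f_\sharp f^*A\to A$ in $\DA(X,\eusm I)$. The main obstacle, if any, is only a bookkeeping one: making sure that at the model category level the composition $(f,\Id)_\sharp\circ Q\circ (f,\Id)^*\circ Q$ is a well-defined functor valued in morphisms (not just natural transformations between two separate functors), and that passing to the homotopy category identifies this with the counit of $(f_\sharp,f^*)$. These verifications are routine consequences of the standard properties of Quillen adjunctions and cofibrant replacements, and are entirely analogous to the corresponding verifications in \lemmaref{lemm:techlemma}, so the full details may be omitted.
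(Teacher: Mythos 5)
Your proposal is correct and is essentially the argument the paper intends: the paper omits the proof, declaring it "completely similar" to that of \lemmaref{lemm:techlemma}, and your construction is exactly that dualization, replacing the Quillen adjunction $((f,\Id)^*,(f,\Id)_*)$ by $((f,\Id)_\sharp,(f,\Id)^*)$ (available since $f$ is smooth) and modelling the counit $f_\sharp f^*A\ra A$ at the level of symmetric $T$-spectra via functorial cofibrant replacement. (Minor remark: the inner cofibrant replacement is harmless but not needed, since $f^*$ is also left Quillen for the adjunction $(f^*,f_*)$ and hence already preserves cofibrant objects.)
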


\begin{rema}\label{Rema:loc2}
Assume $\eusm I=\mathbf{e}$. Given $M$ in $\DA(X)$, as in \ref{Rema:loc}, we have an exact triangle 
\[j_!j^!M\ra M\ra\mathsf{Cof}(\Delta^!_j(M))\xra{+1}\]
functorial in $M$. It follows from (the dual statement of) \cite[Lemme 1.4.8]{AyoubI}, that the functor $i_*i^*(-)$ is isomorphic to $\mathsf{Cof}\circ \Delta^!_j(-)$.
\end{rema}

\subsection{Motivic unipotent vanishing cycles functor}\label{subsec:Unmotshfun}
Let $f:X\ra\bbA^1_k$ be a morphism of $k$-varieties. We consider the following diagram of $k$-varieties
\[\xymatrix{{X_\eta}\ar[d]^-{f_\eta}\ar[r]\ar@{}[rd]|{\square} & {X}\ar[d]^-{f} & {X_\sigma}\ar[d]^-{f_\sigma}\ar@{}[ld]|{\square}\ar[l]\\
{\bbG_{m,k}}\ar[r]^-{j} & {\bbA^1_k} & {\Spec(k)}\ar[l]_-{i}}\]
where $i$ denotes the zero section of $\bbA^1_k$ and $j$ the open immersion of the complement. We denote also by $i$ the closed immersion of the special fiber $X_\sigma$ in $X$ and by $j$ the open immersion of the generic fiber $X_\eta$ in $X$. Let $\mathsf{Log}_f$ be the logarithmic specialization system constructed in \cite[3.6]{AyoubII} (see also \cite[p.103--109]{AyoubEtale}). It is defined by
\[\mathsf{Log}_f:=\chi_f((-)\otimes f^*_\eta\Log^\vee)=:i^*j_*((-)\otimes f_\eta^*\Log^\vee)\]
where $\Log^\vee$ is the commutative associative unitary algebra in $\DA(\bbG_{m,k})$ constructed in \cite[D\'efinition 3.6.29]{AyoubII} (see also \cite[D\'efinition 11.6]{AyoubEtale}). The monodromy triangle
\begin{equation}\label{eq:monTr}
\bbQ(0)\ra\Log^\vee\xra{N}\Log^\vee(-1)\xra{+1}
\end{equation}
(see \cite[Corollaire 3.6.21]{AyoubII} or \cite[(116)]{AyoubEtale}) in the triangulated category $\DA(\bbG_{m,k})$
induces an exact triangle
\[\chi_f(-)\ra \mathsf{Log}_f(-)\ra\mathsf{Log}_f(-)(-1)\xra{+1}.\]
To construct the motivic unipotent vanishing cycles functor, we shall use the fact that the 
$\underline{\mathbf 1}$-skeleton functor 
\[\DA(\bbA^1_k,\underline{\mathbf 1})\ra \mathsf{Hom}(\underline{\mathbf 1}^\op,\DA(\bbA^1_k))\]
is full and essentially surjective. This allows to choose an object $\mathscr L$ in $\DA(\bbA^1_k,\underline{\mathbf 1})$ that lifts the morphism $\bbQ(0)\ra j_*\Log^\vee$ obtained as the composition of the adjunction morphism $\bbQ(0)\ra j_*\bbQ(0)$ and the image under $j_*$ of the unit $\bbQ(0)\ra\Log^\vee$ of the commutative associative unitary algebra $\Log^\vee$. Moreover, using the monodromy triangle \eqref{eq:monTr}, we can fix an isomorphism between $\Log^\vee(-1)$ and the cofiber of $j^*\mathscr L$ such that the diagram
\[\xymatrix{{\bbQ(0)}\ar@{=}[d]\ar[r] & {\Log^\vee}\ar@{=}[d]\ar[r] &{\Log^\vee(-1)}\ar[d]\ar[r]^-{+1} & {}\\
{\bbQ(0)}\ar[r] &{\Log^\vee}\ar[r] &{\mathsf{Cof}(j^*\mathscr L)}\ar[r]^-{+1} &{}}\]
is commutative.

Consider the object $\mathscr Q:=\Delta^*_j(\mathscr L)$ in $\DA(\bbA^1_k,\carre)$ obtained by applying the functor $\Delta^*_j$ of \ref{lemm:techlemma}. Its $\carre$-skeleton is the commutative square
\[\xymatrix{{\bbQ(0)}\ar[r]\ar[d] & {j_*\Log^\vee}\ar@{=}[d]\\
{j_*\bbQ(0)}\ar[r] &{j_*\Log^\vee.}}\]

Let $\smallllcarre$ be the full subcategory of $\carre$ that does not contain $(0,1)$. Denote by $i_{\smallllcarre}:\smallllcarre\ra\carre$ the inclusion and by $p_{\carre,\smallllcarre}:\carre\ra\smallllcarre$ the unique functor which is the identity on $\smallllcarre$ and maps $(0,1)$ to $(0,0)$. Consider the functor
\[\Theta_f(-):=(p_{\carre,\smallllcarre})^*(i_{\smallllcarre})^*\Delta^*_j((p_{\underline{\mathbf 1}})^*(-)\otimes f^*\mathscr L):\DA(X)\ra\DA(X,\carre).\]
By construction, $\Theta_f(-)$ is a coherent lifting of the commutative square
\[\xymatrix{{\Id(-)}\ar[r]\ar[d] & {j_*(j^*(-)\otimes f_\eta^*\Log^\vee)}\ar@{=}[d]\\
{j_*j^*(-)}\ar[r] & {j_*(j^*(-)\otimes f_\eta^*\Log^\vee).}}\]
By pulling back along the closed immersion $i:X_\sigma\hookrightarrow X$ we get the functor
\[i^*\Theta_f(-):\DA(X)\ra\DA(X_\sigma,\carre)\]
which is a coherent lifting of the commutative square
\[\xymatrix{{i^*(-)}\ar[r]\ar[d] & {\mathsf{Log}_f(j^*(-))}\ar@{=}[d]\\
{\chi_f(j^*(-))}\ar[r] & {\mathsf{Log}_f(j^*(-)).}}\]

Let $(-,1):\underline{\mathbf 1}\ra\carre$ be the fully faithful functor that maps $0$ and $1$ respectively to $(0,1)$ and $(1,1)$. In particular, the $\underline{\mathbf 1}$-skeleton of $(-,1)^*i^*\Theta_f(-)$ is the morphism
\[i^*(-)\ra \mathsf{Log}_f(j^*(-)).\]

\begin{defi}
The motivic unipotent vanishing cycles functor $\Phi_f:\DA(X)\ra\DA(X_\sigma)$ is defined as the composition of $(-,1)^*i^*\Theta_f(-)$ and the cofiber functor: 
\[\Phi_f:=\mathsf{Cof}\circ (-,1)^*i^*\Theta_f(-).\]
\end{defi}

By construction, we get
a natural transformation $can:\mathsf{Log}_f(-)\circ j^*\ra\Phi_f(-)$ and an exact triangle
\begin{equation}\label{eq:CanDTVanCycl}
i^*\ra\mathsf{Log}_f(-)\circ j^*\xra{can}\Phi_f\xra{+1}.
\end{equation}
We also get a natural transformation
 \[var:\Phi_f(-)\ra\mathsf{Log}_f(j^*(-))(-1)\] 
 such that $var\circ can=N$. Indeed, let $(-,0):\underline{\mathbf 1}\ra\carre$ be the fully faithful functor that maps $0$ and $1$ respectively to $(0,0)$ and $(1,0)$.
 
The chosen isomorphism between $\Log^\vee(-1)$ and the cofiber of $j^*\mathscr L$ induces an isomorphism between $\mathsf{Log}_f(j^*(-))(-1)$ and the cofiber of $(-,0)^*i^*\Theta_f(-)$  such that the diagram
\[\xymatrix{{\chi_f(j^*(-))}\ar@{=}[d]\ar[r] & {\mathsf{Log}_f(j^*(-))}\ar@{=}[d]\ar[r]^-N &{\mathsf{Log}_f(j^*(-))(-1)}\ar[r]^-{+1} & {}\\
{\chi_f(j^*(-))}\ar[r] &{\mathsf{Log}_f(j^*(-))}\ar[r] &{\mathsf{Cof}((-,0)^*i^*\Theta_f(-))}\ar[u]\ar[r]^-{+1} &{}}\]
is commutative. On the other hand, the canonical morphism $(-,1)^*\Theta_f(-)\ra (-,0)^*\Theta_f(-)$ in $\DA(X,\underline{\mathbf 1})$ induces a commutative diagram
\[\xymatrix{{\chi_f(j^*(-))}\ar[r] & {\mathsf{Log}_f(j^*(-))}\ar@{=}[d]\ar[r] &{\mathsf{Cof}((-,0)^*i^*\Theta_f(-))}\ar[r]^-{+1} & {}\\
{i^*(-)}\ar[r]\ar[u] &{\mathsf{Log}_f(j^*(-))}\ar[r]^-{can} &{\Phi_f(-)}\ar[u]\ar[r]^-{+1} &{.}}\]
By applying the coherent triangle functor $u_\sharp v_*$ to the object $i^*\Theta_f(-)$ of the category $\DA(X_\sigma,\carre)=\DA(X_\sigma,\underline{\mathbf 1}\times\underline{\mathbf 1})$, we get a functor
\[\DA(X)\ra\DA(X_\sigma,\underline{\mathbf 1}\times\doublecarre)\]
which is a coherent lifting of the commutative diagram
\[\xy
\POS(00,00)
\xymatrix"A"{{\chi_f(j^*(-))}\ar[r]\ar[d] & {\mathsf{Log}_f(j^*(-))}\ar[r]\ar[d]_(.28){N} &{0}\ar[d]\\
{0}\ar[r] &{\mathsf{Log}_f(j^*(-))(-1)}\ar[r] &{\chi_f(j^*(-))[1].}}
\POS(-13,7) 
\xymatrix@C=1.2cm{{i^*(-)}\ar["A"]\ar[r]\ar[d] & {\mathsf{Log}_f(j^*(-))}\ar@{=}["A"]\ar[r]\ar@{.>}[d]_(.3){can} &{0}\ar["A"]\ar@{.>}[d]\\
{0}\ar["A"]\ar@{.>}[r] &{\Phi_f(-)}\ar@{.>}["A"]^-{var}\ar@{.>}[r] &{i^*(-)[1]}\ar@{.>}["A"]}
\endxy \]
The category $\underline{\mathbf 1}\times\doublecarre$ is given by 

\[\xy
\POS(-13,8) 
\xymatrix"A"{{(1,2,1)} & {(1,1,1)}\ar[l] &{(1,0,1)}\ar[l]\\
{(1,2,0)}\ar[u] &{(1,0,1)}\ar@{.>}[u]\ar@{.>}[l] &{(1,0,0)}\ar@{.>}[u]\ar@{.>}[l]}
\POS(00,00)
\xymatrix{{(0,2,1)}\ar["A"] & {(0,1,1)}\ar[l]\ar["A"] &{(0,1,0)}\ar[l]\ar["A"]\\
{(0,2,0)}\ar[u]\ar["A"] &{(0,1,0)}\ar[u]\ar[l]\ar@{.>}["A"] &{(0,0,0)}\ar[u]\ar[l]\ar@{.>}["A"]}
\endxy \]
and we consider the functor $\mathsf{sq}:\carre\ra \underline{\mathbf 1}\times\doublecarre$ which maps \eqref{eq:catsquare} to the square
\[\xymatrix{{(1,0,1)} & {(1,0,0)\ar[l]}\\
{(0,1,0)}\ar[u] & {(0,0,0)}\ar[l]\ar[u]}\]
inside $\underline{\mathbf 1}\times\doublecarre$. In the next subsection, we will be mainly focusing on the functor
\[\mathsf{sq}^*u_\sharp v_*i^*\Theta_f:\DA(X)\ra\DA(X,\carre)\]
which is a coherent lifting of the commutative square
\[\xymatrix{{\Phi_f(-)}\ar[r]\ar[d]^-{var} & {i^*(-)[1]}\ar[d]\\
{\mathsf{Log}_f(j^*(-))(-1)}\ar[r] & {\chi_f(j^*(-))[1].}}\]

\begin{rema}\label{rema:cartsquarePhi}
The square $\mathsf{sq}^*u_\sharp v_*i^*\Theta_f$ is cartesian. This can be deduced from the basic properties of cartesian squares (for example from \cite[Proposition 4.6]{GrothDeriv}).
\end{rema}

\subsection{Maximal extension functor}\label{subsec:MaxExtFunc}

Let us now construct Be{\u\i}linson's maximal extension functor $\Xi_f$ (see~\cite{MR923134}) and the related exact triangles  in the  triangulated categories of \'etale motives. This will be essential to prove \ref{theo:support} and for gluing perverse motives. By applying the coherent triangle functor $u_\sharp v_*$ to the object $\Theta_f(-)$ in $\DA(X,\carre)=\DA(X,\underline{\mathbf 1}\times\underline{\mathbf 1})$, we get a functor
\[u_\sharp v_*\Theta_f:\DA(X)\ra\DA(X,\underline{\mathbf 1}\times\doublecarre)\]
which is a coherent lifting of the commutative diagram
\[\xy
\POS(00,00)
\xymatrix"A"{{j_*j^*(-))}\ar[r]\ar[d] & {j_*(j^*(-)\otimes f^*_\eta\Log^\vee)}\ar[r]\ar[d] &{0}\ar[d]\\
{0}\ar[r] &{j_*(j^*(-)\otimes f^*_\eta\Log^\vee(-1))}\ar[r] &{j_*j^*(-)[1].} }
\POS(-13,7) 
\xymatrix@C=1.2cm{{\Id(-)}\ar["A"]\ar[r]\ar[d] & {j_*(j^*(-)\otimes f^*_\eta\Log^\vee)}\ar@{=}["A"]\ar[r]\ar@{.>}[d] &{0}\ar["A"]\ar@{.>}[d]\\
{0}\ar["A"]\ar@{.>}[r] &{\bullet}\ar@{.>}["A"]\ar@{.>}[r] &{\Id(-)[1]}\ar@{.>}["A"]}
\endxy \]
(Here $\bullet$ is some motive which we do not need to specify).
The category $\underline{\mathbf 1}\times\doublecarre$ is given by 
\[\xy
\POS(-13,8) 
\xymatrix"A"{{(1,2,1)} & {(1,1,1)}\ar[l] &{(1,0,1)}\ar[l]\\
{(1,2,0)}\ar[u] &{(1,0,1)}\ar@{.>}[u]\ar@{.>}[l] &{(1,0,0)}\ar@{.>}[u]\ar@{.>}[l]}
\POS(00,00)
\xymatrix{{(0,2,1)}\ar["A"] & {(0,1,1)}\ar[l]\ar["A"] &{(0,1,0)}\ar[l]\ar["A"]\\
{(0,2,0)}\ar[u]\ar["A"] &{(0,1,0)}\ar[u]\ar[l]\ar@{.>}["A"] &{(0,0,0).}\ar[u]\ar[l]\ar@{.>}["A"]}
\endxy \]
Let $\smalllrcarre$ be the full subcategory of $\carre$ that does not contain $(1,1)$. 
Then $\underline{\mathbf 1}\times \smalllrcarre$ is the category
\begin{equation}\label{eq:catdoublecarreone}
\xymatrix@R=.5cm@C=.5cm{{} & {(1,0,1)} &{}\\
{(1,1,0)} &{(1,0,0)}\ar[l]\ar[u] &{(0,0,1)}\ar[lu]\\
{} &{(0,1,0)}\ar[lu] &{(0,0,0).}\ar[l]\ar[lu]\ar[u]}
\end{equation}
We denote by $\alpha:\underline{\mathbf 1}\times\smalllrcarre\ra\doublecarre\times\underline{\mathbf 1}$ the functor which maps \eqref{eq:catdoublecarreone} to
\[\xy
\POS(-2,8) 
\xymatrix"A"{{} & {} &{(1,0,1)}\\
{} &{} &{(1,0,0)}\ar@{.>}[u]}
\POS(00,00)
\xymatrix{{} & {(0,1,1)} &{(0,1,0)}\ar[l]\ar["A"]\\
{} &{(0,1,0)}\ar[u] &{(0,0,0).}\ar[u]\ar[l]\ar@{.>}["A"]}
\endxy \]
Then, $\alpha^*u_\sharp v_*\Theta_f(-):\DA(X)\ra\DA(X,\underline{\mathbf 1}\times\smalllrcarre)$ is a coherent lifting of the commutative diagram
\[\xymatrix@R=.25cm@C=.25cm{{} & {0}\ar[rd]\ar[d] &{}\\
{j_*(j^*(-)\otimes f^*_\eta\Log^\vee)}\ar[r]\ar[rd]_(.45)N &{0}\ar[rd] &{\Id(-)[1]}\ar[d]\\
{} &{j_*(j^*(-)\otimes f_\eta^*\Log^\vee(-1))}\ar[r] &{j_*j^*(-)[1]}}\]
and $(0\times\Id_{\smalllrcarre})^*\alpha^*u_\sharp v_*\Theta_f(-)=(i_{\smalllrcarre})^*\mathsf{sq}^*u_\sharp v_*\Theta_f(-)$. Let $\beta:\underline{\mathbf 1}\times\smalllrcarre\ra\underline{\mathbf 1}\times\underline{\mathbf 1}\times\smalllrcarre $ be the fully faithful functor defined by
\begin{align*}
(0,0,0)\mapsto (0,0,0,0) & (1,0,0)\mapsto (0,1,0,0)  \\ 
(0,0,1)\mapsto (0,0,0,1) & (1,0,1)\mapsto (0,1,0,1)\\
(0,1,0)\mapsto (0,0,1,0) & (1,1,0)\mapsto  (1,1,1,0).
\end{align*}
The $\underline{\mathbf 1}\times\smalllrcarre$-skeleton of the functor
 \[\Sigma_f(-):=\beta^*\circ\Delta^!_j\circ\alpha^*u_\sharp v_*\Theta_f(-):\DA(X)\ra\DA(X,\underline{\mathbf 1}\times \smalllrcarre)\]
is now the commutative diagram
\[\xymatrix@R=.25cm@C=.25cm{{} & {0}\ar[rd]\ar[d] &{}\\
{j_!\big(j^*(-)\otimes f^*_\eta\Log^\vee\big)}\ar[r]\ar[rd] &{0}\ar[rd] &{\Id(-)[1]}\ar[d]\\
{} &{j_*\big(j^*(-)\otimes f_\eta^*\Log^\vee(-1)\big)}\ar[r] &{j_*j^*(-)[1]}}\]
where the non-zero diagonal morphism 
is obtained via the canonical morphism $j_!\ra j_*$ and the monodromy operator. Note that we have \[(0\times \Id_{\smalllrcarre})^*\Sigma_f=(0\times \Id_{\smalllrcarre})^*\alpha^*u_\sharp v_*\Theta_f(-)=(i_{\smalllrcarre})^*\mathsf{sq}^*u_\sharp v_*\Theta_f(-).\]
In particular, we have canonical isomorphisms $(0,0,0)^*\Sigma_f(-)=j_*j^*(-)[1]$ and \\$(0,0,1)^*\Sigma_f(-)=\Id(-)[1]$.

\begin{defi}
Let $\Xi_f:\DA(X)\ra\DA(X)$ be the functor defined by
\[\Xi_f(-):=(1,0)^*\mathsf{Cof}(\Sigma_f(-)).\]
We also define $\Omega_f:\DA(X)\ra \DA(X) $ to be the functor
\[\Omega_f(-):=(1,1)^*(i_{\smalllrcarre})_*\mathsf{Cof}(\Sigma_f(-)).\]
\end{defi}

By construction, we have an exact triangle
\begin{equation}\label{eq:DTOmegaXi}
\Omega_f(-)\ra \Xi_f(-)\oplus (0,1)^*\mathsf{Cof}(\Sigma_f(-))\ra (0,0)^*\mathsf{Cof}(\Sigma_f(-))\xra{+1}.
\end{equation}
Since the canonical morphisms
\[\Id(-)[1]=(0,0,1)^*\Sigma_f(-)\ra (0,1)^*\mathsf{Cof}(\Sigma_f(-))\]
and 
\[j_*j^*(-)[1]=(0,0,0)^*\Sigma_f(-)\ra (0,0)^*\mathsf{Cof}(\Sigma_f(-))\]
are isomorphisms, the exact triangle \eqref{eq:DTOmegaXi} can be rewritten as
\[\Omega_f(-)\ra \Xi_f(-)\oplus \Id(-)[1]\ra j_*j^*(-)[1]\xra{+1}.\]
On the other hand, we have an exact triangle
\[(1,1,0)^*\Sigma_f(-)\ra (0,1,0)^*\Sigma_f(-)\ra\Xi_f(-)\xra{+1}\]
that is an exact triangle 
\begin{equation}\label{eq:DTXi}
j_!\big(j^*(-)\otimes f_\eta^*\Log^\vee\big)\ra j_*\big(j^*(-)\otimes f_\eta^*\Log^\vee(-1)\big)\ra \Xi_f(-)\xra{+1}.
\end{equation}

\begin{prop}
There are exact triangles
\begin{equation}\label{eq:XiA}
i_*\mathsf{Log}_f(j^*(-))\ra\Xi_f\ra j_*j^*(-)[1]\xra{+1}
\end{equation}
and
\begin{equation}\label{eq:XiB}
j_!j^*(-)[1]\ra\Xi_f\ra i_*\mathsf{Log}_f(j^*(-))(-1)\xra{+1}.
\end{equation}
\end{prop}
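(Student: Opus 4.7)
The plan is to deduce both triangles from the defining triangle \eqref{eq:DTXi} by applying the functorial octahedron \eqref{eq:foncOct} to two distinct factorizations of the morphism
\[\alpha_M:j_!(j^*M\otimes f_\eta^*\Log^\vee)\ra j_*(j^*M\otimes f_\eta^*\Log^\vee(-1)),\]
whose cofiber is $\Xi_f(M)$ by \eqref{eq:DTXi}.

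For \eqref{eq:XiA}, I would factor $\alpha_M$ through $j_*(j^*M\otimes f_\eta^*\Log^\vee)$:
\[j_!(j^*M\otimes f_\eta^*\Log^\vee)\xra{\mathrm{adj}}j_*(j^*M\otimes f_\eta^*\Log^\vee)\xra{j_*(\id\otimes N)}j_*(j^*M\otimes f_\eta^*\Log^\vee(-1)).\]
The cofiber of the first map is $i_*\mathsf{Log}_f(j^*M)$, by the localization triangle applied to $j_*(j^*M\otimes f_\eta^*\Log^\vee)$ together with the identity $\mathsf{Log}_f=i^*j_*((-)\otimes f_\eta^*\Log^\vee)$; the cofiber of the second is $j_*j^*M[1]$, obtained by applying $j_*(j^*M\otimes-)$ to the monodromy triangle \eqref{eq:monTr} and shifting. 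Substituting these identifications into \eqref{eq:foncOct} yields \eqref{eq:XiA}.

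For \eqref{eq:XiB}, I would factor $\alpha_M$ in the opposite order, through $j_!(j^*M\otimes f_\eta^*\Log^\vee(-1))$:
\[j_!(j^*M\otimes f_\eta^*\Log^\vee)\xra{j_!(\id\otimes N)}j_!(j^*M\otimes f_\eta^*\Log^\vee(-1))\xra{\mathrm{adj}}j_*(j^*M\otimes f_\eta^*\Log^\vee(-1)).\]
This time the cofiber of the first map is $j_!j^*M[1]$ (monodromy triangle under $j_!$), and the cofiber of the second is $i_*\mathsf{Log}_f(j^*M)(-1)$ (localization applied to $j_*(j^*M\otimes f_\eta^*\Log^\vee(-1))$). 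A second application of \eqref{eq:foncOct} then produces \eqref{eq:XiB}. That the two factorizations really coincide, both computing $\alpha_M$, is the naturality of the adjunction unit $j_!\ra j_*$ with respect to the morphism $\id\otimes N$.

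To carry the argument out functorially in $M$, each factorization must be realized as a coherent length-two sequence in $\DA(X,\underline{\bf 2})$ so that \eqref{eq:foncOct} delivers exact triangles of functors rather than isomorphism classes. This is where the derivator toolkit of \subsectionref{subsec:derivator} plays its role: the coherent diagrams $\Theta_f$ and $\Sigma_f$ built in \subsectionref{subsec:Unmotshfun}--\subsectionref{subsec:MaxExtFunc}, together with the auxiliary functors $\Delta^*_j,\Delta^!_j$ of \lemmaref{lemm:techlemma}, supply the coherent adjunction and monodromy morphisms in the right configurations. In fact $\Sigma_f$ already encodes a good deal of this structure, and the two required sequences should be extractable by pulling $\Sigma_f$ back along suitable fully faithful functors $\underline{\bf 2}\ra\underline{\bf 1}\times\smalllrcarre$. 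The expected main obstacle is not conceptual but bookkeeping: matching each face of $\Sigma_f$ with the correct factorization and verifying that the cofiber identifications are canonical, so that \eqref{eq:XiA} and \eqref{eq:XiB} emerge as natural transformations in $M$.
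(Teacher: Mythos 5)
Your argument is correct, and for \eqref{eq:XiA} it is essentially the paper's proof: the paper applies the functorial octahedron \eqref{eq:foncOct} to the coherent sequence $\gamma^*\Sigma'_f(-)$, which is precisely a coherent lifting of your first factorization $j_!(j^*M\otimes f_\eta^*\Log^\vee)\ra j_*(j^*M\otimes f_\eta^*\Log^\vee)\ra j_*(j^*M\otimes f_\eta^*\Log^\vee(-1))$, and identifies the two outer cofibers exactly as you do (via \remarkref{Rema:loc2} and the coherent triangle $u_\sharp v_*\Theta_f$, i.e.\ the monodromy triangle).

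For \eqref{eq:XiB} your route differs from the paper's. The paper does not invoke a second octahedron: it takes the localization triangle $j_!j^*\Xi_f(-)\ra\Xi_f(-)\ra i_*i^*\Xi_f(-)\xra{+1}$ and computes the two outer terms, getting $j^*\Xi_f(-)\simeq j^*(-)[1]$ by applying $j^*$ to \eqref{eq:XiA} and $i^*\Xi_f(-)\simeq\mathsf{Log}_f(j^*(-))(-1)$ by applying $i^*$ to \eqref{eq:DTXi}; this costs nothing beyond the coherent data already built. Your second octahedron, applied to the factorization through $j_!(j^*M\otimes f_\eta^*\Log^\vee(-1))$, is more symmetric and also works, but note one bookkeeping point: the intermediate object $j_!(j^*M\otimes f_\eta^*\Log^\vee(-1))$ does not occur in $\Sigma_f$ itself --- its entries at $(1,0,1)$ and $(1,0,0)$ are $0$ --- so the coherent $\underline{\bf 2}$-sequence you need must be extracted from $\Sigma'_f=\Delta^!_j\circ\alpha^*u_\sharp v_*\Theta_f(-)$, by pulling back along the functor $\underline{\bf 2}\ra\underline{\bf 1}\times\underline{\bf 1}\times\smalllrcarre$ sending $0,1,2$ to $(0,0,1,0)$, $(1,0,1,0)$, $(1,1,1,0)$ (the analogue of the paper's $\gamma$ through the other corner of the square spanned by the first two coordinates, which also gives coherently the agreement of the two factorizations). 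With that correction, the identification $\mathsf{Cof}(j_!N)\simeq j_!j^*(-)[1]$ is obtained from the level $a=1$ of $\Sigma'_f$, i.e.\ from $j_\sharp j^*$ applied levelwise to the coherent triangle $u_\sharp v_*\Theta_f$, and your proof goes through functorially in $M$; what the paper's localization argument buys is precisely avoiding this extra coherent extraction.
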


\begin{proof}
Let us first construct \eqref{eq:XiA} using the functorial version of the octahedron axiom (see \ref{subsec:derivator}). Recall that by definition
\[\Xi_f(-):=(1,0)^*\mathsf{Cof}(\Sigma_f(-))=\mathsf{Cof}((-,1,0)^*\Sigma_f(-)).\]
Let us set
\[\Sigma'_f(-):=\Delta^!_j\circ\alpha^*u_\sharp v_*\Theta_f(-):\DA(X)\ra\DA(X,\underline{\mathbf 1}\times\underline{\mathbf 1}\times \smalllrcarre)\]
so that $\Sigma_f(-)=\beta^*\Sigma'_f(-)$. Now let $\gamma:\underline{\mathbf 2}\ra \underline{\mathbf 1}\times\underline{\mathbf 1}\times \smalllrcarre$ be the fully faithful functor that maps $0$, $1$ and $2$ respectively to $(0,0,1,0)$, $(0,1,1,0)$ and  $(1,1,1,0)$. Recall that $\mathsf{cm}:\underline{\mathbf 1}\ra\underline{\mathbf 2}$ is the fully faithful functor that maps $0$ and $1$ respectively to $0$ and $2$. Then, $\beta\circ (-,1,0)=\gamma\circ \mathsf{cm}$. In particular, we get that
\[(-,1,0)^*\Sigma_f(-)=\mathsf{cm}^*\gamma^*\Sigma'_f(-).\]
Using the exact triangle \eqref{eq:foncOct} given by the functorial octahedron axiom, we get an exact triangle
\[\mathsf{Cof}(\mathsf{fm}^*\gamma^*\Sigma'_f(-))\ra \Xi_f(-)\ra\mathsf{Cof}(\mathsf{sm}^*\gamma^*\Sigma'_f(-))\xra{+1}.\]
However, by construction, we have an exact triangle
\[j_!(j^*(-)\otimes f^*_\eta\Log^\vee)\ra j_*(j^*(-)\otimes f^*_\eta\Log^\vee)\ra\mathsf{Cof}(\mathsf{fm}^*\gamma^*\Sigma'_f(-))\xra{+1}. \]
Using \ref{Rema:loc2}, we see that 
$\mathsf{Cof}(\mathsf{fm}^*\gamma^*\Sigma'_f(-))$ is isomorphic to 
\[i_*\mathsf{Log}_f(j^*(-)):=i_*i^*j_*(j^*(-)\otimes f^*_\eta\Log^\vee).\]
On the other hand, $\mathsf{sm}^*\gamma^*\Sigma'_f(-)=(0,1,-)^*u_\sharp v_*\Theta_f(-)$,
so that we get an isomorphism
\[\mathsf{Cof}(\mathsf{fm}^*\gamma^*\Sigma'_f(-))=(0,0,0)^*u_\sharp v_*\Theta_f(-)=j_*j^*(-)[1].\]
This constructs the exact triangle \eqref{eq:XiA}.
Consider now the localization triangle
\[j_!j^*\Xi_f(-)\ra\Xi_f(-)\ra i_*i^*\Xi_f(-)\xra{+1}.\]
To obtain \eqref{eq:XiB} it is enough to check that $j^*\Xi_f(-)$ is isomorphic to $j^*(-)[1]$ and that $i^*\Xi_f(-)$ is isomorphic to $\mathsf{Log}_f(j^*(-))$. The first isomorphism is obtained by applying $j^*$ to \eqref{eq:XiA} and the second isomorphism is obtained by applying $i^*$ to \eqref{eq:DTXi}.

\end{proof}

\begin{prop}
There are exact triangles
\begin{equation}\label{eq:TrOmegaId}
i_*\mathsf{Log}_f(j^*(-))\ra\Omega_f\ra \Id(-)[1]\xra{+1}
\end{equation}
and
\begin{equation}\label{eq:LocTrOmega}
j_!j^*(-)[1]\ra\Omega_f\ra i_*\Phi_f(-)\xra{+1}.
\end{equation}
\end{prop}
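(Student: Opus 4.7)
The plan is to derive both exact triangles from the defining cartesian square of $\Omega_f$ together with the already constructed triangles \eqref{eq:XiA} and \eqref{eq:XiB}.

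For \eqref{eq:TrOmegaId}, I will use that, by construction, $\Omega_f$ sits at the $(1,1)$-corner of the cartesian square in $\DA(X,\carre)$ completing the $\smalllrcarre$-diagram $\mathsf{Cof}(\Sigma_f(-))$, whose other three vertices are $\Xi_f(-)$, $\Id(-)[1]$ and $j_*j^*(-)[1]$. Applying \lemmaref{lemm:FibCS} to this cartesian square immediately yields an isomorphism
\[
\mathsf{Fib}\bigl(\Omega_f \to \Id(-)[1]\bigr) \simeq \mathsf{Fib}\bigl(\Xi_f(-) \to j_*j^*(-)[1]\bigr),
\]
and the exact triangle \eqref{eq:XiA} identifies the right-hand side with $i_*\mathsf{Log}_f(j^*(-))$, giving \eqref{eq:TrOmegaId}.

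For \eqref{eq:LocTrOmega}, I will apply the localization triangle $j_!j^*\to \Id \to i_*i^*\xra{+1}$ to $\Omega_f$, yielding
\[
j_!j^*\Omega_f \to \Omega_f \to i_*i^*\Omega_f \xra{+1}.
\]
Applying $j^*$ to the triangle \eqref{eq:TrOmegaId} already obtained and using $j^*i_*=0$ gives $j^*\Omega_f\simeq j^*(-)[1]$, which handles the leftmost term. The task then reduces to producing a canonical isomorphism $i^*\Omega_f\simeq \Phi_f(-)$.

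I will obtain this isomorphism by comparing two cartesian squares. Applying the triangulated functor $i^*$ to the cartesian square defining $\Omega_f$, and using that $i^*$ preserves cartesian squares, produces a cartesian square with corners $i^*\Omega_f$, $i^*(-)[1]$, $i^*\Xi_f$ and $\chi_f(j^*(-))[1]=i^*j_*j^*(-)[1]$. Applying $i^*$ to \eqref{eq:XiB} and using $i^*j_!=0$ identifies $i^*\Xi_f\simeq \mathsf{Log}_f(j^*(-))(-1)$. On the other hand, \remarkref{rema:cartsquarePhi} realizes $\Phi_f(-)$ as the homotopy pullback of the cospan
\[
\mathsf{Log}_f(j^*(-))(-1) \longrightarrow \chi_f(j^*(-))[1] \longleftarrow i^*(-)[1].
\]
The main obstacle is to verify that the two cartesian squares share this cospan, i.e., that the two boundary maps $\mathsf{Log}_f(j^*(-))(-1)\to \chi_f(j^*(-))[1]$ coincide. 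Both should descend from the connecting morphism in the triangle obtained by applying $i^*j_*$ to the monodromy triangle tensored with $j^*(-)$, but checking this rigorously requires tracing the constructions back through the derivator operations $u_\sharp$, $v_*$, $\Delta^!_j$, $\alpha^*$, $\beta^*$ entering the definition of $\Sigma_f$ and the octahedron argument behind \eqref{eq:XiA}, and exploiting the uniqueness of cartesian completions in the derivator $\DA(X_\sigma,-)$. Once this identification is made, the two cartesian squares are canonically isomorphic, so $i^*\Omega_f\simeq \Phi_f(-)$ and the localization triangle above becomes \eqref{eq:LocTrOmega}.
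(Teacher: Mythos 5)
Your treatment of \eqref{eq:TrOmegaId} is exactly the paper's argument: apply \lemmaref{lemm:FibCS} to the cartesian square $(i_{\smalllrcarre})_*\mathsf{Cof}(\Sigma_f(-))$ and identify the fiber of $\Xi_f(-)\ra j_*j^*(-)[1]$ by means of \eqref{eq:XiA}. The reduction of \eqref{eq:LocTrOmega} to the localization triangle together with the two identifications $j^*\Omega_f(-)\simeq j^*(-)[1]$ and $i^*\Omega_f(-)\simeq\Phi_f(-)$ also coincides with the paper.

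The gap is the step you yourself call ``the main obstacle'': you never actually construct the isomorphism $i^*\Omega_f(-)\simeq\Phi_f(-)$. You only assert that the two cartesian squares ``should'' restrict to the same cospan $\mathsf{Log}_f(j^*(-))(-1)\ra\chi_f(j^*(-))[1]\leftarrow i^*(-)[1]$ and defer the verification to an unspecified tracing of the constructions. Since this identification is precisely the content of \eqref{eq:LocTrOmega}, the proposition is not proved. Moreover, matching the two squares only at the level of underlying (incoherent) diagrams and then invoking uniqueness of cartesian completions would require choosing lifts objectwise, and the resulting isomorphism would not be visibly natural in the argument; naturality is exactly what is needed later, when these triangles are turned into exact sequences of functors on $\sM(X)$ in \subsectionref{subsec:AppPerverseMotive} and used in \lemmaref{lemm:support}.

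The point you are missing is that no comparison of boundary maps is needed, because the coincidence of the two squares holds by construction. Indeed, the definition of $\Sigma_f$ gives the equality $(0\times \Id_{\smalllrcarre})^*\Sigma_f(-)=(i_{\smalllrcarre})^*\mathsf{sq}^*u_\sharp v_*\Theta_f(-)$ on the nose. Since $i^*j_!=0$ kills the $1$-slice of $i^*\Sigma_f(-)$, the canonical morphism $(0\times \Id_{\smalllrcarre})^*i^*\Sigma_f(-)\ra\mathsf{Cof}(i^*\Sigma_f(-))$ is an isomorphism, whence
\[i^*\Omega_f(-)=(1,1)^*(i_{\smalllrcarre})_*\mathsf{Cof}(i^*\Sigma_f(-))\simeq (1,1)^*(i_{\smalllrcarre})_*(i_{\smalllrcarre})^*\mathsf{sq}^*u_\sharp v_*i^*\Theta_f(-),\]
and \remarkref{rema:cartsquarePhi} (the square $\mathsf{sq}^*u_\sharp v_*i^*\Theta_f$ is cartesian) says that the unit morphism $\Phi_f(-)=(1,1)^*\mathsf{sq}^*u_\sharp v_*i^*\Theta_f(-)\ra(1,1)^*(i_{\smalllrcarre})_*(i_{\smalllrcarre})^*\mathsf{sq}^*u_\sharp v_*i^*\Theta_f(-)$ is an isomorphism. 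This yields the identification coherently, hence functorially, and closes exactly the step you left open; your appeal to $i^*$ preserving cartesian squares and to \eqref{eq:XiB} then becomes unnecessary.
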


\begin{proof}
Using \eqref{eq:XiA}, the exact triangle \eqref{eq:TrOmegaId} is obtained by applying \ref{lemm:FibCS} to the cartesian square $(i_{\smalllrcarre})_*\mathsf{Cof}(\Sigma_f(-))$. 

Since $j^*i_*=0$, \eqref{eq:TrOmegaId} provides an isomorphism between $j^*\Omega_f(-)$ and $j^*(-)[1]$. Now, consider the localization triangle
\[j_!j^*\Omega_f\ra\Omega_f\ra i_*i^*\Omega_f(-)\xra{+1}.\]
To construct \eqref{eq:LocTrOmega}, it is enough to obtain an isomorphism between $i^*\Omega_f(-)$ and $\Phi_f(-)$.
By definition 
\[i^*\Omega_f(-)=(1,1)^*(i_{\smalllrcarre})_*\mathsf{Cof}(i^*\Sigma_f(-)).\]
However since $i^*j_!=0$, the canonical morphism
\[(0\times \Id_{\smalllrcarre})^*i^*\Sigma_f(-)\ra \mathsf{Cof}(i^*\Sigma_f(-))\]
is an isomorphism.
Given that
 \[(0\times \Id_{\smalllrcarre})^*\Sigma_f=(0\times \Id_{\smalllrcarre})^*\alpha^*u_\sharp v_*\Theta_f(-)=(i_{\smalllrcarre})^*\mathsf{sq}^*u_\sharp v_*\Theta_f(-),\]
 we get isomorphisms
\[ \xymatrix{{(1,1)^*(i_{\smalllrcarre})_*(0\times \Id_{\smalllrcarre})^*i^*\Sigma_f(-)}\ar[r]^-{\simeq}\ar@{=}[d] & { (1,1)^*(i_{\smalllrcarre})_*\mathsf{Cof}(i^*\Sigma_f(-))=i^*\Omega_f(-)}\\
 { (1,1)^*(i_{\smalllrcarre})_*(i_{\smalllrcarre})^*\mathsf{sq}^*u_\sharp v_*\Theta_f(-).} &{}}\]
 By \ref{rema:cartsquarePhi}, the canonical morphism 
 \[\Phi_f(-)=(1,1)^*\mathsf{sq}^*u_\sharp v_*\Theta_f(-)\ra (1,1)^*(i_{\smalllrcarre})_*(i_{\smalllrcarre})^*\mathsf{sq}^*u_\sharp v_*\Theta_f(-)\]
 is an isomorphism. This concludes the proof.
\end{proof}

\subsection{Betti realization}\label{subsec:CompBetti}

Let $X$ be a complex algebraic variety. Let $\AnDA(X)$ be the triangulated category of analytic motives. This category is obtained as the special case of the category $\SH^\anc_\dM(X)$ considered in \cite{AyoubBetti} when the stable model category $\dM$ is taken to be the category of unbounded complexes of $\bbQ$-vector spaces with its projective model structure. Recall that the canonical triangulated functor
\begin{equation}\label{eq:BettiA}
i^*_X:\mathbf{D}(X)\ra\AnDA(X)
\end{equation}
is an equivalence of categories (see \cite[Th\'eor\`eme 1.8]{AyoubBetti}). Here $\mathbf{D}(X)$ denotes the (unbounded) derived category of sheaves of $\bbQ$-vector spaces on the associated analytic space $X^\anc$. The functor
\[\mathsf{An}_X:(\Sm/X)\ra (\mathrm{AnSm}/X^\anc)\] 
which maps a smooth $X$-scheme $Y$ to the associated $X^\anc$-analytic space $Y^\anc$ induces a triangulated functor
\begin{equation}\label{eq:BettiB}
\mathsf{An}^*_X:\DA(X)\ra\AnDA(X).
\end{equation}
The Betti realization $\Bti^*_X$ of \cite{AyoubBetti} is obtained as the composition of \eqref{eq:BettiB} and a quasi-inverse to \eqref{eq:BettiA}.

Let $\Log^\vee_\sP$ be the image under the Betti realization of the motive $\Log^\vee$ and consider the specialization system it defines
\[\mathsf{Log}_f^\sP(-):=i^*_\sP j^\sP_*((-)\otimes (f_\eta)_\sP^*\Log^\vee_\sP):\mathbf{D}(X_\eta)\ra\mathbf{D}(X_\sigma).\]
Recall that in \ref{subsec:Unmotshfun} we fixed an object $\mathscr L$ in $\DA(\bbA^1_k,\underline{\mathbf 1})$ that lifts the morphism $\bbQ(0)\ra j_*\Log^\vee$ obtained as the composition of the adjunction morphism $\bbQ(0)\ra j_*\bbQ(0)$ and the image under $j_*$ of the unit $\bbQ(0)\ra\Log^\vee$ of the commutative associative unitary algebra $\Log^\vee$.

Let $\mathscr L_\sP$ the image in $\mathbf{D}(X,\underline{\mathbf 1})$ of $\mathscr L$. Using this object, we can perform the same constructions as in \ref{subsec:Unmotshfun} and \ref{subsec:MaxExtFunc} using the derivator $\mathbf{D}(X,-)$ to obtain functors
\[\Xi^\sP_f(-),\Omega^\sP_f(-):\mathbf{D}(X)\ra\mathbf{D}(X)\]
and 
\[\Phi^\sP_f(-):\mathbf{D}(X)\ra\mathbf{D}(X_\sigma)\]
and four exact triangles: the two triangles
\[
i^\sP_*\mathsf{Log}^\sP_fj_\sP^*\ra\Xi^\sP_f\ra j^\sP_*j_\sP^*[1]\xra{+1},\; j^\sP_!j^*_\sP[1]\ra\Xi^\sP_f\ra i^\sP_*\mathsf{Log}^\sP_f(-1)\xra{+1}
\]
and the two triangles
\[
i^\sP_*\mathsf{Log}^\sP_fj_\sP^*\ra\Omega^\sP_f\ra \Id[1]\xra{+1},\;j^\sP_!j^*_\sP[1]\ra\Omega^\sP_f\ra i^\sP_*\Phi^\sP_f\xra{+1}.
\]
Moreover, we have canonical natural transformations
\[\Bti^*\circ\mathsf{Log}_f\ra \mathsf{Log}^\sP_f\circ \Bti^*,\;\Bti^*\circ\Phi_f\ra \Phi^\sP_f\circ \Bti^*\]
and 
\[\Bti^*\circ\Xi_f\ra \Xi^\sP_f\circ \Bti^*,\;\Bti^*\circ\Omega_f\ra \Omega^\sP_f\circ \Bti^*\;\]
which are isomorphisms when applied to constructible motives (see \cite[Th\'eor\`eme 3.9]{AyoubBetti}) and are also compatible with the various exact triangles.

As proved in \cite[Th\'eor\`eme 4.9]{AyoubBetti}, the Betti realization is compatible with the (total) nearby cycles functors for constructible motives. In this subsection, we will need the compatibility of the Betti realization with the unipotent nearby cycles functors.  

\begin{lemm}\label{lemm:compNCBetti}
The functor $\mathsf{Log}_f^\sP(-)$ is isomorphic to the unipotent nearby cycles functor $\psi^\un_f(-)$.
\end{lemm}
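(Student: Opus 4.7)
The plan is to identify $\Log^\vee_\sP$ with the pro-unipotent logarithm local system on $\bbG_{m}^\anc$ used by Be{\u\i}linson, and then observe that the two formulas coincide by construction. More precisely, I would argue in three steps.

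First, I would analyze the Betti realization of $\Log^\vee$. By its construction in \cite[3.6]{AyoubII} (see also \cite[\S11]{AyoubEtale}), $\Log^\vee$ is a pro-object in $\DA(\bbG_{m,k})$ assembled from the universal unipotent extension, with its associative/coassociative unitary structure encoded by the monodromy triangle \eqref{eq:monTr}. Since $\Bti^*$ is a monoidal, triangulated functor commuting with the constructions involved (direct sums, Tate twists, suitable pro-limits), and since the rank-two extension generating $\Log^\vee$ realizes to the generator of the corresponding $\Ext^1$-group in $\mathbf{D}(\bbG_m^\anc,\bbQ)$, the pro-object $\Log^\vee_\sP = \Bti^*\Log^\vee$ is canonically identified with the pro-unipotent Be{\u\i}linson logarithm $\mathcal{L}$ on $\bbG_m^\anc$. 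The unit $\bbQ(0) \to \Log^\vee$ realizes to the canonical unit $\bbQ \to \mathcal{L}$, and the monodromy operator $N$ realizes to the Be{\u\i}linson monodromy operator.

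Second, I would invoke Be{\u\i}linson's construction in \cite[\S3]{MR923134}: the unipotent nearby cycles functor is defined (or may be computed, via the truncations of $\mathcal{L}$) precisely by the formula
\[\psi^\un_f(K) \;\simeq\; i^*_\sP j^\sP_*\bigl(K \otimes (f_\eta)^*_\sP \mathcal{L}\bigr),\]
functorially in $K \in \mathbf{D}(X_\eta,\bbQ)$, where the right-hand side is interpreted in the pro-system (the limit stabilizes on constructible objects because $K$ has finite-dimensional stalks and its monodromy around the generic point of $\bbA^1_k$ has unipotent part of bounded nilpotency). Combining this with Step~1, I obtain a canonical isomorphism $\mathsf{Log}^\sP_f(K) \simeq \psi^\un_f(K)$ which is compatible with the monodromy operator.

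Third, I would check that this isomorphism is natural and compatible with the further structure (the canonical and variation morphisms relating $\mathsf{Log}^\sP_f$ and $\chi_f^\sP$), which follows mechanically from the compatibility of Step~1 with the unit and the monodromy triangle \eqref{eq:monTr}. The main obstacle is Step~1: pinning down precisely that Ayoub's motivic $\Log^\vee$ realizes to Be{\u\i}linson's pro-unipotent logarithm, rather than to some twist or variant. This is handled either by checking the comparison on generators (using that $\Log^\vee$ is determined up to isomorphism, as a pointed associative unitary algebra, by its first graded piece together with the monodromy extension), or by invoking a characterization of $\mathcal{L}$ as the essentially unique pro-unipotent local system on $\bbG_m^\anc$ generated by its fiber at $1$ with a prescribed monodromy. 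Either route reduces the identification to the compatibility, already present in \cite{AyoubBetti}, of $\Bti^*$ with Tate twists and with the construction of extension classes in $\DA$.
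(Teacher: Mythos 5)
Your proposal follows essentially the same route as the paper: identify $\Bti^*\Log^\vee$ with Be{\u\i}linson's unipotent logarithm local system (the paper does this by realizing the motivic Kummer extension $\scr K$ to $\scr E_1$ via the compatibility of $\Bti^*$ with the Kummer transform from \cite[Th\'eor\`eme 3.19]{AyoubBetti}, and then using monoidality together with the explicit isomorphism $\mathrm{Sym}^n\scr E_1\simeq\scr E_n$), and then quote the formula $\psi^\un_f(K)=i^*_\sP j^\sP_*(K\otimes(f_\eta)^*_\sP\scr E)$ for the unipotent nearby cycles functor. Your "Step 1" is exactly the content the paper makes precise, so the argument is correct and matches the paper's proof.
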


Let $e:\bbC \ra \bbC^\times; z\mapsto \exp(z)$
be the universal cover of the punctured complex plane $\bbC^\times$. 
The group of deck transformations is identified with $\bbZ$ by mapping the integer $k\in\bbZ$ to the deck transformation $z\mapsto z+2i\pi k$.

Let $\mathscr E_n$ be the unipotent rational local system on $\bbC^\times$ of rank $n+1$ with (nilpotent) monodromy given by one Jordan block of maximal size. It underlies a variation of $\bbQ$-mixed Hodge structures described e.g. in \S1.1 of Morihiko Saito's~\cite{MR1047741}. 

Let us recall the description of this local system and relate it to Ayoub's logarithmic motive $\Log^\vee_n$. The following description is given in M. Saito, \cite[2.3. Remark]{MR1045997}. Let $\mathscr E_n$ be the subsheaf of $e_*\bbQ_{\bbC}$ annihilated by $(T-\Id)^{n+1}$ where $T$ is the automorphism of $e_*\bbQ_\bbC$ induced by the deck transformation corresponding to $1\in\bbZ$. The restriction of $T$ to $\mathscr E_n$ is unipotent and we denote by $N=\log T$ the associated nilpotent endomorphism.

The sheaf $\mathscr E_n$ is a local system on $\bbC^\times$ of rank $n+1$. Let $(\mathscr E_n)_1$ be its fiber over $1$. We have an inclusion
\[(\mathscr E_n)_1\subseteq(e_*\bbQ_\bbC)_1=\prod_{k\in\bbZ}(\bbQ_\bbC)_{2i\pi k}=\prod_{k\in\bbZ}\bbQ.\]
Note that the automorphism $T$ acts by mapping a sequence $(a_k)_{k\in\bbZ}$ to $(a_{k+1})_{k\in\bbZ}$. 
Let $\tau_n$ be the element in $(\mathscr E_n)_1$ given by $\tau_n=(k^n/n!)_{k\in\bbZ}$. The family $(1,\tau_1,\ldots,\tau_n)$ is a basis of $(\mathscr E_n)_1$ such that 
$T(\tau_r)=\sum_{k=0}^r\tau_k/(r-k)! $
for every $r\in \llbracket1,n\rrbracket$. The matrix with respect to the basis $(1,\tau_1,\ldots,\tau_n)$ of the unipotent endomorphism $T$ of $(\mathscr E_n)_1$ is thus given by $\sum_{k=0}^n (J_n)^k/k!$ 
where $J_n$ is the nilpotent Jordan block of size $n+1$ and therefore $N$ is given by the Jordan block $J_n$ in the basis $(1,\tau_1,\ldots,\tau_n)$.

The multiplication $e_*\bbQ_\bbC\otimes e_*\bbQ_\bbC\ra e_*\bbQ_\bbC$ induces a morphism of local systems $\mathscr E_k\otimes \mathscr E_\ell\ra \mathscr E_{k+\ell}$. In particular, for $n\in\bbN^*$, we have a canonical morphism $\mathscr E_1^{\otimes n}\ra \mathscr E_n$ which defines a morphism
\begin{equation}\label{eq:MorLocSys}
\mathrm{Sym}^n\mathscr E_1\ra \mathscr E_n.
\end{equation}
If $\tau:=\tau_1$, then $\tau_n=\tau^n/n!$ and the above description of $\mathscr E_n$ implies that \eqref{eq:MorLocSys} is an isomorphism.

Let us consider the Kummer natural transform 
$e_K:\Id(-)(-1)[-1]\ra\Id(-)$
in Betti cohomology (see \cite[D\'efinition 3.6.22]{AyoubII}).
By 5.1 Lemma in M. Saito's \cite{SaitoMixedSheaves}, the local system $\mathscr E_1$ fits into an exact triangle
\[\bbQ(-1)[-1]\xra{e_K}\bbQ\ra \mathscr E_1\xra{+1}.\]
By \cite[Th\'eor\`eme 3.19]{AyoubBetti} the Betti realization is compatible with the Kummer transform (for constructible motives). In particular, we have a natural isomorphism $\Bti^*\mathscr K\ra \mathscr E_1$ where $\mathscr K\in\DA(\bbG_{m})$ is the motivic Kummer extension, that is, the cone of the Kummer natural transform for \'etale motives (see \cite[Lemme 3.6.28]{AyoubII}). Since the Betti realization $\Bti^*$ is a symmetric monoidal functor, it induces an isomorphism 
\[\Bti^*\Log^\vee_n=\Bti^*\mathrm{Sym}^n\mathscr K\xra{\simeq}\mathrm{Sym}^n\Bti^*\mathscr K\xra{\simeq} \mathrm{Sym}^n\mathscr E_1\xra{\eqref{eq:MorLocSys}}\mathscr E_n\]
for every integer $n\in\bbN$. Therefore, we get an isomorphism 
\begin{equation}\label{eq:isoLogE}
\Log^\vee_\sP:=\Bti^*\Log^\vee\xra{\simeq}\mathscr E
\end{equation}
where $\mathscr E$ is the ind-local system given by $\mathscr E=\colim_{n\in\bbN^\times} \mathscr E_n$.

Let $K\in\Dbc(X,\bbQ)$, the unipotent nearby cycles functor $\psi_f^{\mathrm{un}}$ is given by 
\[\psi^\mathrm{un}_f(K)=i^*_\sP j^\sP_*(K\otimes (f_\eta)_\sP^*\mathscr E)\]
 (see \cite[(2.3.3)]{MR1045997}, Beilison's~\cite{MR923134} or Reich's~\cite{MR2671769}). With this description, \ref{lemm:compNCBetti} is an immediate consequence of \eqref{eq:isoLogE}.

\begin{coro}
The functors \[{}^p\mathsf{Log}_f^\sP(-):=\mathsf{Log}_f^\sP(-)[-1],\quad{}^p\Phi_f^\sP(-):=\Phi_f^\sP(-)[-1] ,\quad {}^p\Xi^\sP_f(-):= \Xi^\sP_f(-)[-1]\]
and
\[\quad {}^p\Omega^\sP_f(-):=\Omega^\sP_f(-)[-1] \] are $t$-exact for the perverse $t$-structure. 
\end{coro}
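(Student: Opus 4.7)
The plan is to dispose of the four $t$-exactness claims one at a time, starting with ${}^p\mathsf{Log}^\sP_f$ (which underpins the others) and then exploiting the exact triangles from \subsectionref{subsec:MaxExtFunc} and \subsectionref{subsec:CompBetti} to reduce the remaining three to known facts about $j_!^\sP$, $j_*^\sP$, $i_*^\sP$ and the classical nearby/vanishing cycles functors.

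First I would observe that by \lemmaref{lemm:compNCBetti} the functor $\mathsf{Log}^\sP_f$ is canonically isomorphic to the unipotent nearby cycles functor $\psi^\un_f$. Gabber's theorem (recalled just after \eqref{Eq:PsiFunDbc}) asserts that $\psi_f[-1]$ is perverse $t$-exact, and since $\psi^\un_f$ is a direct summand of $\psi_f$ on every perverse sheaf, extracted via the generalized eigenspace decomposition of the monodromy, $\psi^\un_f[-1]={}^p\mathsf{Log}^\sP_f$ is also $t$-exact. A key preliminary observation for the other three functors is that $j\colon X_\eta\hookrightarrow X$ is affine, being the base change along $f$ of $\bbG_{m,k}\hookrightarrow\bbA^1_k$; hence by \cite[Corollaire 4.1.3]{BBD} each of $j_!^\sP$, $j_*^\sP$, $j^*_\sP$ is perverse $t$-exact, and $i_*^\sP$ always is.

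With this in hand, ${}^p\Xi^\sP_f$ is treated by shifting \eqref{eq:XiA} and \eqref{eq:XiB} by $-1$: for $K\in{}^pD^{\leq 0}$ the shifted \eqref{eq:XiB} inserts ${}^p\Xi^\sP_f K$ between two objects of ${}^pD^{\leq 0}$, while for $K\in{}^pD^{\geq 0}$ the shifted \eqref{eq:XiA} does the analogous thing for ${}^pD^{\geq 0}$. For ${}^p\Omega^\sP_f$, the shift of \eqref{eq:TrOmegaId} by $-1$ reads $i_*^\sP{}^p\mathsf{Log}^\sP_f j^*_\sP\to {}^p\Omega^\sP_f\to \Id\to$; both outer functors are $t$-exact, so ${}^p\Omega^\sP_f$ is $t$-exact in both directions at once.

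Finally, for ${}^p\Phi^\sP_f$, the ${}^pD^{\leq 0}$ direction follows from the shift of \eqref{eq:LocTrOmega} by $-1$, namely $j_!^\sP j^*_\sP\to {}^p\Omega^\sP_f\to i_*^\sP{}^p\Phi^\sP_f\to$: for $K\in{}^pD^{\leq 0}$ both of the first two terms lie in ${}^pD^{\leq 0}$, hence so does $i_*^\sP{}^p\Phi^\sP_f K$, and therefore ${}^p\Phi^\sP_f K\in{}^pD^{\leq 0}$ by $t$-exactness and full faithfulness of $i_*^\sP$. For the opposite direction, the shifted \eqref{eq:CanDTVanCycl}, combined with \lemmaref{lemm:compNCBetti}, exhibits $\Phi^\sP_f$ as the cone of the canonical morphism $i^*\to\psi^\un_f\circ j^*$, i.e.\ as Beilinson's unipotent vanishing cycles functor $\phi^\un_f$; the $t$-exactness of ${}^p\Phi^\sP_f=\phi^\un_f[-1]$ is then the classical theorem of \cite{MR923134}. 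The one point requiring care is the identification of the canonical morphism $i^*\to\mathsf{Log}^\sP_f\circ j^*$ from \subsectionref{subsec:Unmotshfun} with the classical one, which comes down to tracing through the coherent lift $\scr L$ and the isomorphism \eqref{eq:isoLogE} relating $\Log^\vee_\sP$ to the classical ind-local system $\scr E$.
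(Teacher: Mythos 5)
Your proof is correct and follows essentially the same route as the paper: the paper's own argument is exactly the $t$-exactness of $\psi^{\mathrm{un}}_f(-)[-1]$ combined with \lemmaref{lemm:compNCBetti} and the exact triangles of \subsectionref{subsec:Unmotshfun}--\subsectionref{subsec:CompBetti}, which is what you carry out. You merely supply the details the paper leaves implicit (affineness of $j:X_\eta\hookrightarrow X$ for the $t$-exactness of $j^\sP_!$, $j^\sP_*$, and the identification of $\Phi^\sP_f$ with the classical unipotent vanishing cycles to get the bound in degrees $\geqslant 0$, where the triangles alone only give perverse amplitude $[-1,0]$), which is a welcome sharpening rather than a different method.
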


\begin{proof}
Since the functor $\psi_f^{\mathrm{un}}(-)[-1]$ is $t$-exact for the perverse $t$-structure, the corollary is an immediate consequence of \ref{lemm:compNCBetti} and the exact triangles relating the various functors.
\end{proof}

\subsection{Application to perverse motives}
\label{subsec:AppPerverseMotive}

Now, we can apply the universal property of the categories of perverse motives to obtain four exact functors
\[{}^p\mathsf{Log}_f^\sM(-):\sM(X_\eta)\ra\sM(X_\sigma),\;{}^p\Phi_f^\sM(-):\sM(X)\ra\sM(X_\sigma)\]
and 
\[{}^p\Xi_f^\sM(-):\sM(X)\ra\sM(X),\;{}^p\Omega_f^\sM(-):\sM(X)\ra\sM(X).\]
Moreover we have four canonical exact sequences obtained from the exact triangles relating the four functors used in the construction. Two exact sequences \[
0\ra i^\sM_*{}^p\mathsf{Log}^{\sM}_f(j_\sM^*(-))\ra{}^p\Xi^{\sM}_f\ra j^\sM_*j^*_\sM(-)\ra 0
\]
and 
\[0\ra j^\sM_!j^*_\sM(-)\ra{}^p\Xi^{\sM}_f\ra i^\sM_*{}^p\mathsf{Log}^{\sM}_f(-)(-1)\ra 0.\]
As well as two exact sequences
\begin{equation}\label{eq:shortexNC}
0\ra i^\sM_*{}^p\mathsf{Log}^{\sM}_f(j_\sM^*(-))\ra{}^p\Omega^{\sM}_f(-)\ra \Id(-)\ra 0
\end{equation}
and 
\[0\ra j^\sM_!j_\sM^*(-)\ra{}^p\Omega^{\sM}_f(-)\ra i^\sM_*{}^p\Phi^{\sM}_f(-)\ra 0.
\]
These four functors and the associated exact sequences are compatible with the various functors and exact triangles constructed in \ref{subsec:Unmotshfun}, \ref{subsec:MaxExtFunc}  and \ref{subsec:CompBetti}.

Now we can prove the following theorem.

\begin{theo}\label{theo:support}
Let $i:Z\hookrightarrow X$ be a closed immersion of $k$-varieties. Then, the functor \[i^\sM_*:\sM(Z)\ra\sM(X)\] is fully faithful and its essential image is the kernel, denoted by $\sM_Z(X)$, of the exact functor \[j^*_\sM:\sM(X)\ra\sM(U)\]
where $j:U\hookrightarrow X$ is the open immersion of the complement of $Z$ in $X$. 
\end{theo}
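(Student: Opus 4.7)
The plan is to reduce to the principal divisor case $Z = f^{-1}(0)$ for a regular function $f \colon X \to \bbA^1_k$, and there exploit the four short exact sequences from the preceding subsection.

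In the principal divisor case, write $j \colon U = X \setminus Z \hookrightarrow X$. First I would observe $j^*_\sM \circ i^\sM_* = 0$: by Proposition~\ref{prop:ExStructure} applied to the empty fiber product $U \times_X Z$, or more directly from the faithfulness of $\rat^\sM$ combined with $j^*_\sP \circ i^\sP_* = 0$. This puts the essential image of $i^\sM_*$ inside $\ker(j^*_\sM)$. For the reverse inclusion, take $A \in \sM(X)$ with $j^*_\sM A = 0$ and apply the two exact sequences
\[
0 \to i^\sM_* \, {}^p\mathsf{Log}^\sM_f(j^*_\sM A) \to {}^p\Omega^\sM_f(A) \to A \to 0, \qquad 0 \to j^\sM_! j^*_\sM A \to {}^p\Omega^\sM_f(A) \to i^\sM_* \, {}^p\Phi^\sM_f(A) \to 0;
\]
both degenerate, yielding ${}^p\Omega^\sM_f(A) \simeq A$ and ${}^p\Omega^\sM_f(A) \simeq i^\sM_* \, {}^p\Phi^\sM_f(A)$, hence a natural isomorphism $A \simeq i^\sM_* \, {}^p\Phi^\sM_f(A)$ on $\ker(j^*_\sM)$. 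So ${}^p\Phi^\sM_f$ is a one-sided inverse to $i^\sM_*$ on $\ker(j^*_\sM)$. To conclude that $i^\sM_* \colon \sM(Z) \to \ker(j^*_\sM)$ is an equivalence, I would further verify ${}^p\Phi^\sM_f \circ i^\sM_* \simeq \id_{\sM(Z)}$ naturally. The analogous identity ${}^p\Phi^\sP_f \circ i^\sP_* \simeq \id$ for perverse sheaves is classical; property~\textbf{P2} uniquely lifts it to a natural transformation on the $\sM$-functors, and the faithfulness of $\rat^\sM$ (which forces conservativity, being exact between abelian categories) promotes that transformation to an isomorphism.

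For a general closed immersion, I would invoke the stack property of Proposition~\ref{prop:stack} to work étale-locally on $X$, where one may assume that the ideal of $Z$ is generated by finitely many regular functions $f_1, \dots, f_n$. Induction on $n$ along the factorization $Z \hookrightarrow V(f_1) \hookrightarrow X$, combined with the 2-functoriality of $i^\sM_*$ (Subsection~\ref{subsec:DirImageImm}) and the base-change compatibility of Proposition~\ref{prop:ExStructure} (to transfer pullbacks along the complementary opens through the factorization), reduces the general claim to the principal divisor case. The main obstacle is the promotion step ${}^p\Phi^\sM_f \circ i^\sM_* \simeq \id$: it requires careful bookkeeping of the coherent liftings and of the compatibility 2-isomorphisms ($\gamma^\sM$, $\theta^\sM$, and the connecting data defining ${}^p\Phi^\sM_f$ through \textbf{P1}) when invoking the uniqueness clause of property~\textbf{P2}, before concluding by faithfulness of $\rat^\sM$.
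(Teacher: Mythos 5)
Your proposal is correct and follows essentially the same route as the paper: the principal-divisor case is handled exactly as in \lemmaref{lemm:support} (the two exact sequences from \subsectionref{subsec:AppPerverseMotive} degenerate on $\ker j^*_\sM$, giving $A\simeq i^\sM_*{}^p\Phi^\sM_f A$ there, while ${}^p\Phi^\sM_f\circ i^\sM_*\simeq\id$ is obtained by lifting the compatible $\DA$-level and perverse-level isomorphisms through \textbf{P2} and the faithfulness of $\rat^\sM$), and the general case is reduced to it via \propositionref{prop:stack} and a chain of principal closed immersions cut out by the functions $f_i$. The only point to spell out is the one you already flag: each principal-divisor step must be used in its ``with supports'' form $\sM_Z(X_\sigma)\simeq\sM_Z(X)$, which is exactly what the exchange isomorphism of \propositionref{prop:ExStructure} together with the conservativity of the closed pushforward provides, as in the opening reduction of the paper's proof of the lemma.
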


We first consider the case of the immersion of a special fiber.

\begin{lemm}\label{lemm:support}

Let $X$ be a $k$-variety and $f:X\ra\bbA^1_k$ be a morphism. Let $i:X_\sigma\hookrightarrow X$ be the closed immersion of the special fiber in $X$ and $Z$ be a closed subscheme of $X_\sigma$. Then, the exact functor
\[i^\sM_*:\sM_Z(X_\sigma)\ra\sM_Z(X)\]
is an equivalence of categories.
\end{lemm}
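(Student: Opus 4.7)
The plan is to establish the equivalence by exhibiting the unipotent vanishing cycles functor ${}^p\Phi_f^\sM$, restricted to $\sM_Z$, as a quasi-inverse to $i^\sM_*$. Write $j:X_\eta = X\setminus X_\sigma \hookrightarrow X$ for the open immersion of the generic fiber (the $j$ appearing in the exact sequences of \subsectionref{subsec:AppPerverseMotive}); since $Z \subseteq X_\sigma$, we have $X_\eta \subseteq X\setminus Z$, and consequently every $M \in \sM_Z(X)$ satisfies $j_\sM^* M = 0$.

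The first step is essential surjectivity. For $M \in \sM_Z(X)$, the vanishing of $j_\sM^* M$ collapses the two exact sequences
\begin{gather*}
0 \to i^\sM_* {}^p\mathsf{Log}_f^\sM(j_\sM^* M) \to {}^p\Omega_f^\sM(M) \to M \to 0, \\
0 \to j^\sM_! j_\sM^* M \to {}^p\Omega_f^\sM(M) \to i^\sM_* {}^p\Phi_f^\sM(M) \to 0
\end{gather*}
to isomorphisms ${}^p\Omega_f^\sM(M) \simeq M$ and ${}^p\Omega_f^\sM(M) \simeq i^\sM_* {}^p\Phi_f^\sM(M)$, which combine into a natural isomorphism $M \simeq i^\sM_* {}^p\Phi_f^\sM(M)$; in particular ${}^p\Phi_f^\sM$ is a natural retraction of $i^\sM_*$ on $\sM_Z(X)$. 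One must also check that $i^\sM_*$ carries $\sM_Z(X_\sigma)$ into $\sM_Z(X)$ and that ${}^p\Phi_f^\sM$ carries $\sM_Z(X)$ into $\sM_Z(X_\sigma)$; both assertions follow, via the $2$-isomorphisms $\gamma^\sM$ and $\theta^\sM$ of \subsectionref{subsec:DirImageImm} and \subsectionref{subsec:InvImageLiss} combined with the faithfulness of $\rat^\sM$, from the analogous statements for perverse sheaves.

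The main step is to upgrade this one-sided retraction to a genuine equivalence by producing a natural isomorphism ${}^p\Phi_f^\sM \circ i^\sM_* \simeq \Id_{\sM(X_\sigma)}$. At the level of $\DA_\ct$, the vanishing $j^* i_* = 0$ and the isomorphism $i^* i_* \simeq \Id$, combined with the construction of $\Phi_f$ as the cofiber of the canonical morphism $i^*(-) \to \mathsf{Log}_f(j^*(-))$, yield $\Phi_f(i_* N) \simeq N[1]$, hence a natural isomorphism ${}^p\Phi_f \circ i_* \simeq \Id_{\DA_\ct(X_\sigma)}$ of triangulated functors, which is matched by the corresponding isomorphism ${}^p\Phi_f^\sP \circ i^\sP_* \simeq \Id_{\sP(X_\sigma)}$ on perverse sheaves. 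Both ${}^p\Phi_f^\sM \circ i^\sM_*$ and $\Id_{\sM(X_\sigma)}$ are then exact lifts, in the sense of property \textbf{P1}, of the same pair of functors, and property \textbf{P2} furnishes a unique natural isomorphism between them compatible with the $\DA$- and $\sP$-level isomorphisms just produced. Restricting to $\sM_Z$ and combining with the first step yields the claimed equivalence.

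The main obstacle I anticipate is this last step: setting up ${}^p\Phi_f^\sM \circ i^\sM_*$ as an admissible input for \textbf{P2} requires unwinding the perverse shift implicit in the superscript ${}^p$, identifying the correct composites of the canonical $2$-isomorphisms $\beta$ and $\gamma$ arising from the universal property of $\sM$, and checking that the $\DA_\ct$- and $\sP$-level isomorphisms $\Phi_f \circ i_* \simeq \Id[1]$ and $\Phi_f^\sP \circ i^\sP_* \simeq \Id[1]$ correspond under the compatibility $2$-isomorphism required by \textbf{P2}.
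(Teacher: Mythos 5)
Your proof is correct and follows essentially the same route as the paper: you exhibit ${}^p\Phi^\sM_f$ as a quasi-inverse, obtaining ${}^p\Phi^\sM_f\circ i^\sM_*\simeq\Id$ from the triangle \eqref{eq:CanDTVanCycl} (and its perverse analogue) via property \textbf{P2}, and $\Id\simeq i^\sM_*\circ{}^p\Phi^\sM_f$ on objects killed by $j^*_\sM$ from the collapse of the two exact sequences of \subsectionref{subsec:AppPerverseMotive}. The only cosmetic difference is that the paper first reduces to $Z=X_\sigma$ using the exchange isomorphism of \propositionref{prop:ExStructure} and conservativity, whereas you treat general $Z$ directly by noting $\sM_Z(X)\subseteq\Ker j^*_\sM$ and checking that both functors preserve the $Z$-support condition.
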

\begin{proof}
We may assume $Z=X_\sigma$. Indeed, let $u:X\setminus Z\hookrightarrow X$ and $v:X_\sigma\setminus Z\hookrightarrow X_\sigma$ be the open immersion. By   \ref{prop:ExStructure} applied to cartesian square
\[\xymatrix{{X_\sigma\setminus Z}\ar[r]^-{i'}\ar@{}[rd]|{\square}\ar[d]^-{v} &{X\setminus Z}\ar[d]^-u\\
{X_\sigma}\ar[r]^-i &{X,}}\]
we get an isomorphism $u^*_\sM i_*^\sM\simeq i'^\sM_*v^*_\sM$. Since the functor $i'^\sM_*$ is conservative (it is faithful exact), we see that an object $A$ in $\sM(X_\sigma)$ belongs to $\Ker v^*_\sM$ if and only if $i^\sM_*A$ belongs to $\Ker u^*_\sM$. Hence, it is enough to show that 
\[i^\sM_*:\sM(X_\sigma)\ra\sM_{X_\sigma}(X)\] 
is an equivalence.

 Let us show that  the functor ${}^p\Phi^\sM_f$ is a quasi-inverse.  Let $X_\eta$ be the generic fiber and $j:X_\eta\hookrightarrow X$ be the open immersion. The exact triangle \eqref{eq:CanDTVanCycl}, provides an isomorphism of endomorphisms of $\DA(X_\sigma)$ between $i^*i_*$ and $\Phi_f[-1] i_*$. By composing with the isomorphism of functors $i^*i_*\ra \Id$, we get an isomorphism of functors between the identity of $\DA(X_\sigma)$ and  ${}^p\Phi_f[-1] i_*$.

Similarly, we get an isomorphism  between the identity  of $\D(X_\sigma,\bbQ)$ and the functor $\Phi_f^\sP[-1] i^\sP_*$. Since these isomorphisms are compatible with the Betti realization, the property \textbf{P2}, ensures that ${}^p\Phi^\sM_f i^\sM_*$ is isomorphic to the identity functor of the category $\sM(X_\sigma)$.

An isomorphism between the identity of $\sM_{X_\sigma}(X):=\Ker j^*_\sM$  and $i^*_\sM{}^p\Phi_f$ is provided by the exact sequences 
\[0\ra i^\sM_*{}^p\mathsf{Log}^{\sM}_f(j^*_\sM(-))\ra{}^p\Omega^{\sM}_f(-)\ra \Id(-)\ra 0\]
and 
\[0\ra j^\sM_!j^*_\sM(-)\ra{}^p\Omega^{\sM}_f(-)\ra i^\sM_*{}^p\Phi^{\sM}_f(-)\ra 0
\]
(the first terms vanish for objects in the kernel of $j^*_\sM$). This concludes the proof.
\end{proof}
\begin{proof}[Proof of \ref{theo:support}]
Using \ref{prop:stack}, we may assume that $X$ is an affine scheme. Let $U$ be the open complement of $Z$ in $X$ and let $f_1,\ldots,f_r$ be elements in $\Osheaf(X)$ such that $U=D(f_1)\cup\cdots D(f_r)$. Let $Z_{r+1}=X$ and set $Z_k=Z_{k+1}\setminus D(f_k)$ for $k\in\llbracket 1,r\rrbracket$. Let $i_k:Z_k\hookrightarrow Z_{k+1}$ be the closed immersion. We have $Z_1=Z$ and $i=i_r\circ i_{r-1}\circ\dots\circ i_1$, so that the functor $i^\sM_*:\sM(Z)\ra\sM_Z(X)$ is obtained as the composition
\[\sM(Z)\xra{(i_1)_*^\sM}\sM_Z(Z_2)\xra{(i_2)^\sM_*}\sM_Z(Z_3)\ra\cdots\ra\sM_Z(Z_k)\xra{(i_k)^\sM_*}\sM_Z(X).\]
By \ref{lemm:support}, all these functors are equivalences. This concludes the proof.
 \end{proof}

\section{Inverse images}\label{sec:pullback}
The purpose of this section is to extend the (contravariant) $2$-functor ${}^{\mathrm{Liss}}\mathsf{H}^*_\sM $ constructed in \ref{subsec:InvImageLiss} into a (contravariant) $2$-functor
\begin{align*}
\mathsf{H}^*_\sM:(\Sch/k)&\ra\mathfrak{TR}\\
X& \mapsto \Db(\sM(X))\\
f & \mapsto f^*_\sM.
\end{align*}
To do this, we first use the vanishing cycles functor to show that the (covariant) $2$-functor ${}^{\mathrm{Imm}}\mathsf{H}^\sM_*$ admits a global left adjoint ${}^{\mathrm{Imm}}\mathsf{H}^*_\sM$ (we recall that a global left adjoint is unique up to unique isomorphism and refer to \cite[D\'efinition 1.1.18]{AyoubI} for the definition). Then, we show that the $2$-functors ${}^{\mathrm{Liss}}\mathsf{H}^*_\sM $ and ${}^{\mathrm{Imm}}\mathsf{H}^*_\sM $ can be glued into a $2$-functor $\mathsf{H}^*_\sM$.

\subsection{Inverse image by a closed immersion}

By \cite[Proposition 1.1.17]{AyoubI}, to show that ${}^{\mathrm{Imm}}\mathsf{H}^\sM_*$ admits a global left adjoint ${}^{\mathrm{Imm}}\mathsf{H}^*_\sM$ it suffices to show that for every closed immersion $i:Z\hookrightarrow X$ the functor $i_*^\sM$ admits a left adjoint; this in turn is proved in
 \ref{prop:ImmExistenceAdjoint}.

\begin{theo}\label{theo:SupportDb}
Let $i:Z\hookrightarrow X$ be a closed immersion. Then, the functor 
 \[i^\sM_*:\Db(\sM(Z))\ra\Db(\sM(X))\]
 is fully faithful and its essential image is  is the kernel, denoted by  $\Db_Z(\sM(X))$, of the exact functor \[j^*_\sM:\Db(\sM(X))\ra \Db(\sM(U))\]
where $j:U\hookrightarrow X$ is the open immersion of the complement of $Z$ in $X$. 

\end{theo}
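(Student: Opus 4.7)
The strategy is to bootstrap from the abelian statement (\theoremref{theo:support}) to the derived one, by exploiting the fact that both $i^\sM_*$ and the unipotent vanishing cycles functor ${}^p\Phi^\sM_f$ from \subsectionref{subsec:AppPerverseMotive} are exact, and therefore extend canonically to triangulated functors on bounded derived categories. Following the reduction in the proof of \theoremref{theo:support}, I would first use \propositionref{prop:stack} to reduce to $X$ affine, and then, by iteratively decomposing $i$ through the intermediate closed subschemes $Z_1\subset\cdots\subset Z_{r+1}=X$ of \emph{loc.~cit.}, reduce further to the single-step case where $Z=X_\sigma$ is the special fiber of a morphism $f\colon X\to\bbA^1_k$. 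Compositions of fully faithful triangulated functors remain fully faithful, and a check on cohomology objects shows that the iterated essential image coincides with $\Db_Z(\sM(X))$.

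For this single-step case, set $F=i^\sM_*$ and $G={}^p\Phi^\sM_f$. The two natural isomorphisms established in the proof of \lemmaref{lemm:support},
\[
\eta\colon GF\xrightarrow{\sim}\Id_{\sM(X_\sigma)}
\qquad\text{and}\qquad
\alpha\colon FG\xrightarrow{\sim}\Id_{\sM_{X_\sigma}(X)},
\]
extend to natural isomorphisms between the corresponding derived functors since $F$ and $G$ are exact. By a standard dévissage using truncation triangles, full faithfulness of $F$ on $\Db$ reduces to showing that the induced maps
\[
\Ext^n_{\sM(X_\sigma)}(A_0,B_0)\longrightarrow \Ext^n_{\sM(X)}(FA_0,FB_0)
\]
are isomorphisms for all $A_0,B_0\in\sM(X_\sigma)$ and all $n\geqslant 0$. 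Here $\eta$ shows that $G$ is a left inverse to $F$; for the other direction, any Yoneda class in the target is represented by an extension whose intermediate terms all automatically lie in the Serre subcategory $\sM_{X_\sigma}(X)=\ker(j^*_\sM)$, on which $\alpha$ is a natural isomorphism, identifying $FG$ with the identity on the target.

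For the essential image, any $A\in\Db_Z(\sM(X))$ has each $H^n(A)\in\sM_{X_\sigma}(X)$. By exactness of $F$ and $G$, one has $H^n(\alpha_A)=\alpha_{H^n(A)}$, which is an isomorphism by \lemmaref{lemm:support}. Hence $\alpha_A\colon FG(A)\to A$ is an isomorphism in $\Db(\sM(X))$, so $A\simeq F(G(A))$ lies in the essential image of $i^\sM_*$. The main obstacle I anticipate is the rigorous justification of the affine reduction at the derived level, since \propositionref{prop:stack} gives descent only for the abelian category $\sM(-)$; this is bypassed by phrasing both assertions cohomology-pointwise, reducing them via truncation to statements on individual objects where \propositionref{prop:stack} applies directly.
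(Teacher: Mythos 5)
Your single-step strategy (use the exact functor ${}^p\Phi^\sM_f$ as a quasi-inverse of $i^\sM_*$) is in the spirit of the paper, but the two reductions that carry the real weight of the theorem both have genuine gaps. The central one is your claim that a Yoneda class in $\Ext^n_{\sM(X)}(i^\sM_*A_0,i^\sM_*B_0)$ is "automatically" represented by an extension whose intermediate terms lie in $\sM_{X_\sigma}(X)=\Ker j^*_\sM$. This is true for $n=1$ (a Serre subcategory is closed under extensions), but false for $n\geqslant 2$: applying the exact functor $j^*_\sM$ to $0\ra i^\sM_*B_0\ra E_n\ra\cdots\ra E_1\ra i^\sM_*A_0\ra 0$ only shows that the $j^*_\sM E_r$ form an acyclic complex, not that they vanish. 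Whether every class can be represented by an extension inside $\Ker j^*_\sM$ is exactly the assertion that $\Db(\sM(Z))\ra\Db_Z(\sM(X))$ is fully faithful, so your argument assumes what it must prove. The paper gets around this by invoking the criterion of \cite[Lemma 1.4]{MR923133}: it suffices to \emph{efface} each class $u\in\Ext^i_{\sM(X)}(A,B)$, $i\geqslant 1$, by a monomorphism $B\hookrightarrow B'$ in $\sM_Z(X)$, and this effaceability is what is actually verified. (A workable variant of your idea does exist in the single-step case: since ${}^p\Omega^\sM_f$ and its two natural transformations towards $\Id$ and towards $i^\sM_*{}^p\Phi^\sM_f$ are defined on all of $\sM(X)$, they extend to $\Db(\sM(X))$ and become isomorphisms on $\Db_Z(\sM(X))$, giving $i^\sM_*{}^p\Phi^\sM_f\simeq\Id$ there; note that your $\alpha$, defined only on $\Ker j^*_\sM$, does not extend as stated, because a complex in $\Db_Z(\sM(X))$ need not have terms in $\Ker j^*_\sM$.)

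The second gap is the reduction to $X$ affine. \propositionref{prop:stack} is a statement about the abelian categories $\sM(-)$ (descent of objects and of Hom's), and full faithfulness of $i^\sM_*$ on $\Db$ is not a "cohomology-pointwise" statement: the obstruction lives in the higher Ext groups $\Ext^n_{\sM(X)}(A_0,B_0)$, $n\geqslant 2$, between objects of the heart, and these neither truncate away nor glue via the stack property. The paper's reduction is genuinely different: it chooses an affine open cover $j_r:U_r\hookrightarrow X$, uses the monomorphism $B\hookrightarrow\bigoplus_r (j_r)^\sM_*(j_r)^*_\sM B$ and the adjunction isomorphism $\Ext^i_{\sM(X)}(A,j^\sM_*B)\cong\Ext^i_{\sM(V)}(j^*_\sM A,B)$ coming from \propositionref{prop:AdjEtaleAffine}, and thereby transports the effaceability problem to the affine opens, where the factorization of $i$ through complements of principal opens and the functors ${}^p\Phi^\sM_f$ finish the proof. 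Without an argument of this kind (or some substitute handling higher Ext under localization), both your affine reduction and your full-faithfulness step remain unjustified.
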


\begin{proof}
 We know that the
essential image of $i^\sM_*:\Db(\sM(Z))\ra\Db(\sM(X))$ is contained
in $\Db_Z(\sM(X))$ by \ref{theo:support}. We now want to prove that
the functor 
$i^\sM_*:\Db(\sM(Z))\ra\Db_Z(\sM(X))$ is an equivalence of categories.
Note that the obvious $t$-structure on $\Db(\sM(X))$ induces a $t$-structure
on $\Db_Z(\sM(X))$, whose heart is the thick abelian subcategory $\sM_Z(X)$
of $\sM(X)$.
By \ref{theo:support},
the functor $i^\sM_*:\sM(Z)\ra\sM(X)$ induces an equivalence of categories
$\sM(Z)\ra\sM_Z(X)$. So, by \cite[Lemma 1.4]{MR923133}, the functor
$i^\sM_*:\Db(\sM(Z))\ra\Db_Z(\sM(X))$ is an equivalence of categories if and only
if, for any $A,B$ in $\sM_Z(X)$ and
$i\geqslant 1$,  and any class
$u\in\Ext^i_{\sM(X)}(A,B)$, there exists a monomorphism $B\hookrightarrow B'$
in $\sM_Z(X)$ such that the image of $u$ in $\Ext^i_{\sM(X)}(A,B')$ is $0$.

Suppose that $j:V\hookrightarrow X$ is an affine open immersion, that $A$ is
an object of $\sM(X)$ and that $B$ is an object of $\sM(V)$.
Let $i\geqslant 1$. Then, we have \[\Ext^i_{\sM(X)}(A,j^\sM_*B)=\Ext^i_{\sM(V)}(j^*_\sM A,B)\]
by \ref{prop:AdjEtaleAffine}, and, if $u\in\Ext^i_{\sM(V)}(j^*_\sM A,B)$ and
$B\hookrightarrow B'$ is a monomorphism of $\sM(V)$ such that
the image of $u$ in $\Ext^i_{\sM(V)}(j^*_\sM A,B')$ is $0$, then, the image
in $\Ext^i_{\sM(X)}(A,j^\sM_*B')$ of the element of $\Ext^i_{\sM(X)}(A,j^\sM_*B)$
corresponding to $u$ is also $0$. Applying this to an open cover
$j_1:U_1\hookrightarrow X$, \ldots, $j_n:U_n\hookrightarrow X$ of $X$ by
affine subsets and using the fact the canonical map
$B\ra\bigoplus_{r=1}^n (j_r)^\sM_*(j_r)^*_\sM B$ given by \ref{prop:AdjEtaleAffine}
is a monomorphism for every 
object $B$ of $\sM(X)$, we reduce to the case where $X$ is affine.

If $X$ is affine, then, as in the proof of \ref{theo:support},
we write $i=i_r\circ\dots\circ i_1$, where $Z_1=Z$, $Z_{r+1}=X$, and, for
every $k\in\{1,\dots,r\}$,
$i_k:Z_k\ra Z_{k+1}$ is the immersion of the complement of an open
set of the form $D(f)$, with $f\in\Osheaf(Z_{k+1})$. It suffices to show
that each $(i_{k})^\sM_*:\Db(\sM(Z_k))\ra\Db_{Z_k}(\sM(Z_{k+1}))$ is an
equivalence of categories. So we may assume that there exists
$f\in\Osheaf(X)$ such that $i$ is the immersion of the complement
of $D(f)$. In that case, we showed in the proof of \ref{lemm:support}
that the trivial derived functor of the exact functor
${}^p\Phi^\sM_f:\sM(X)\ra\sM(Z)$ induces a quasi-inverse of
$i^\sM_*:\Db(\sM(Z))\ra\Db_Z(\sM(X))$.
\end{proof}

\begin{prop}\label{prop:ImmExistenceAdjoint}
Let $i:Z\hookrightarrow X$ be a closed immersion. Then, the functor \[i^\sM_*:\Db(\sM(Z))\ra\Db(\sM(X))\] admits a left adjoint.
\end{prop}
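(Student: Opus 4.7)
The plan is to recast the problem using \theoremref{theo:SupportDb}, reduce to the case of a principal closed immersion, and construct the left adjoint explicitly from the adjunction for affine étale open immersions provided by \propositionref{prop:AdjEtaleAffine}.

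By \theoremref{theo:SupportDb}, $i^\sM_*$ identifies $\Db(\sM(Z))$ with the full subcategory $\Db_Z(\sM(X))$, so exhibiting a left adjoint of $i^\sM_*$ is equivalent to exhibiting a left adjoint $L$ of the inclusion $\iota:\Db_Z(\sM(X))\hookrightarrow\Db(\sM(X))$; the functor $i^*_\sM:=(i^\sM_*)^{-1}\circ L$, for a chosen quasi-inverse of $i^\sM_*$, will then be a left adjoint of $i^\sM_*$. First I would use \propositionref{prop:stack} to reduce to the case where $X$ is affine, so that $Z=V(f_1,\ldots,f_r)$ for some $f_1,\ldots,f_r\in\Osheaf(X)$. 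Following the proof of \theoremref{theo:SupportDb}, factor $i=i_r\circ\cdots\circ i_1$, where each $i_k:Z_k\hookrightarrow Z_{k+1}$ is the closed complement of the principal affine open $D(f_k)$ inside the affine variety $Z_{k+1}$. Since a composition of left adjoints is a left adjoint, it is enough to treat the case in which the open complement $j:U\hookrightarrow X$ of $i$ is a principal affine open.

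In this principal case $j$ is affine étale, so \propositionref{prop:AdjEtaleAffine} provides an exact left adjoint $j^\sM_!$ of $j^*_\sM$; the natural transformation $j^*_\sM j^\sM_!\to\Id$ is an isomorphism, which follows from the faithfulness of $\rat^\sM$ and the corresponding statement for perverse sheaves. The counit $\varepsilon:j^\sM_! j^*_\sM\to\Id_{\sM(X)}$ is a natural transformation of exact endofunctors of $\sM(X)$. Extending $j^\sM_!$ and $j^*_\sM$ termwise to bounded complexes (which is legitimate since they are exact, so they preserve quasi-isomorphisms) and taking the chain-level mapping cone of $\varepsilon$ yields a triangulated endofunctor $L:\Db(\sM(X))\to\Db(\sM(X))$ together with a natural distinguished triangle
\[j^\sM_! j^*_\sM A\xrightarrow{\varepsilon_A} A\to L(A)\xrightarrow{+1}.\]
Applying $j^*_\sM$ to this triangle and using $j^*_\sM j^\sM_!\simeq\Id$, I see that $j^*_\sM\varepsilon_A$ is an isomorphism, hence $j^*_\sM L(A)=0$ and $L(A)\in\Db_Z(\sM(X))$. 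For any $B\in\Db_Z(\sM(X))$, the adjunction $(j^\sM_!,j^*_\sM)$ together with $j^*_\sM B=0$ gives $\Hom(j^\sM_! j^*_\sM A,B)=\Hom(j^*_\sM A,j^*_\sM B)=0$, and the long exact sequence of Homs attached to the triangle produces a canonical isomorphism $\Hom(L(A),B)\iso\Hom(A,B)$. This shows that $L$ is left adjoint to $\iota$, and therefore $(i^\sM_*)^{-1}\circ L$ is left adjoint to $i^\sM_*$.

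The main technical point is the functoriality of the cone $L$: cones in an abstract triangulated category are not functorial in the input morphism, but here this is circumvented by working at the level of chain complexes, which is possible precisely because $j^\sM_!$ and $j^*_\sM$ are \emph{exact}. A secondary subtlety is the étale-local reduction to the principal case, which relies on the stack structure of \propositionref{prop:stack} together with the factorization argument already used in the proof of \theoremref{theo:SupportDb}.
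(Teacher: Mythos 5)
Your construction in the case where the open complement is affine is exactly the paper's: take the chain-level cone of the counit $j^\sM_! j^*_\sM A\ra A$ from \propositionref{prop:AdjEtaleAffine} (functorial because both functors are exact), check via $j^*_\sM j^\sM_!\simeq\Id$ that the cone lies in $\Db_Z(\sM(X))$, and get the adjunction from the vanishing $\Hom(j^\sM_! j^*_\sM A,B[n])=\Hom(j^*_\sM A,j^*_\sM B[n])=0$. The genuine gap is your first reduction: you cannot invoke \propositionref{prop:stack} to ``assume $X$ is affine''. That proposition only says that the abelian categories $\sM(-)$ form a stack for the affine \'etale topology, i.e.\ it gives descent for objects and morphisms of the hearts. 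The existence of a left adjoint of $i^\sM_*$ (equivalently, of the inclusion $\Db_Z(\sM(X))\ra\Db(\sM(X))$) is the existence of a functor on the derived category attached to the global $X$, and this is not a property that can be checked locally: adjoints constructed over the members of a cover would have to be glued into a functor on $\Db(\sM(X))$, and no descent statement for the triangulated categories $\Db(\sM(-))$ is available (or proved) in the paper. This is precisely why the affine reduction is legitimate in the proof of \theoremref{theo:support} --- full faithfulness and the description of the essential image are statements about objects and morphisms of the stack $\sM$ --- but not here, and why the paper's own proof of this proposition does not perform it.

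The good news is that your second step never needs $X$ to be affine, so the gap closes easily and your route remains genuinely different from the paper's. Choose a finite cover $U=U_1\cup\dots\cup U_r$ of the open complement by affine open subsets of $X$ (possible since $X$ is a quasi-projective variety), set $Z_{r+1}=X$ and $Z_k=Z_{k+1}\setminus U_k$, so that $Z_1=Z$ and each $Z_k$ is closed in $X$. Since $X$ is separated, each inclusion $U_k\cap Z_{k+1}\hookrightarrow Z_{k+1}$ is an affine \'etale open immersion, and $U_k\cap Z_{k+1}$ is affine (it is closed in $U_k$); hence your cone construction applies to each $i_k\colon Z_k\hookrightarrow Z_{k+1}$, and the left adjoint of $i^\sM_*=(i_r)^\sM_*\circ\cdots\circ(i_1)^\sM_*$ is the composite of the left adjoints so obtained, using \theoremref{theo:SupportDb} for each $i_k$ to pass between an adjoint of $(i_k)^\sM_*$ and an adjoint of the inclusion of $\Db_{Z_k}(\sM(Z_{k+1}))$. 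The paper globalizes differently: it keeps $i$ fixed, covers $U$ by affine opens, and defines the adjoint in one step as the total complex of the \v{C}ech-type complex $C^\bullet(A)$ with $C^{-m}(A)=\bigoplus_{|J|=m}(j_J)^\sM_!(j_J)^*_\sM A$. Your version trades that explicit complex for an iteration of the one-open case, at the harmless cost of choosing an ordering of the cover; both rest on the same input, \propositionref{prop:AdjEtaleAffine}.
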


\begin{proof}
By \ref{theo:SupportDb}, it suffices to show that the inclusion functor
\begin{equation}\label{eq:inclfunc}
\Db_Z(\sM(X))\ra\Db(\sM(X))
\end{equation}
admits a left adjoint $C^\bullet$.  Let $j:U\hookrightarrow X$ be the open immersion of the complement of $Z$ in $X$. Let us first assume that $U$ is affine. In that case, given $A$ in $\Cb(\sM(X))$, we define $C^\bullet(A)$ as the mapping cone of the canonical morphism $j^\sM_!j_\sM^*A\ra A$ given by \ref{prop:AdjEtaleAffine}. This construction induces a triangulated functor $C^\bullet:\Db(\sM(X))\ra\Db(\sM(X))$ and there is a canonical exact triangle
$j^\sM_!j_\sM^*A\ra  A\ra C^\bullet(A)\ra j^\sM_!j_\sM^*A[1]$,
which shows that $C^\bullet$ takes its values in the full subcategory $\Db_Z(\sM(X))$. Let $B\in\Db_Z(\sM(X))$. Using the long exact sequence associated with this triangle and \ref{prop:AdjEtaleAffine} which ensures that 
\[\Hom_{\Db(\sM(X))}(j^\sM_!j_\sM^*A,B[n])=\Hom_{\Db(\sM(U))}(j_\sM^*A,j_\sM^*B[n])=0,\]
we get a functorial isomorphism
\[\Hom_{\Db(\sM(X))}(C^\bullet(A),B)\xra{\simeq} \Hom_{\Db(\sM(X))}(A,B)\]
as desired. 

In the general case, the adjoint $C^\bullet$ can be constructed by considering a finite set $I$ and an affine open covering  $\mathscr U=(j_i:U_i\ra U)_{i\in I}$. For every $J\subseteq I$, let $j_J$ be the inclusion
$\bigcap_{i\in J}U_i\hookrightarrow X$.
We define an exact functor $C^\bullet:\sM(X)\ra \Cb(\sM(X))$ in the following
way. Let $A$ be an object of $\sM(X)$. We set:
\[C^i(A)=\left\{\begin{array}{ll}0 & \mbox{ if }i\geqslant 1 \\
A & \mbox{ if }i=0 \\
\bigoplus_{I\subset\{1,\dots,r\},|J|=-i}(j_J)^\sM_!(j_J)^*_\sM A & \mbox{ if }i\leqslant -1.
\end{array}\right.\]
The differential of $C^\bullet(A)$ is an alternating sum of maps given by
\ref{prop:AdjEtaleAffine}. Then, the left adjoint of $\Db_Z(\sM(X))\ra\Db(\sM(X))$ is
the functor sending $A^\bullet$ to the total complex of $C^\bullet(A^\bullet)$.
\end{proof}

Let $Z$ be a closed immersion such that the open immersion $j:U\hookrightarrow X$ of the complement of $Z$ in $X$  is affine. It follows from the proof of \ref{prop:ImmExistenceAdjoint} that we have a canonical exact triangle
\[j^\sM_!j^*_\sM\ra\Id\ra i^\sM_*i^*_\sM\xra{+1}.\]
Moreover the diagram
\begin{equation}\label{eq:MorDTLocRat}
\xymatrix{{j^\sP_!j^*_\sP\rat^\sM_X}\ar[r]\ar[d]^-{\theta^\sM_j} &{\rat^\sM_X}\ar[r]\ar@{=}[d] &{i^\sP_*i^*_\sP\rat^\sM_X}\ar[d]^-{\theta_i^\sM}\ar[r]^-{+1} &{}\\
{j^\sP_!\rat^\sM_U j^*_\sM}\ar[r]\ar[d]^{\rho^\sM_j} &{\rat^\sM_X}\ar[r]\ar@{=}[d] &{i^\sP_*\rat^\sM_Z i^*_\sM}\ar[r]^-{+1}\ar[d]^{(\gamma_i^\sM)^{-1}} &{}\\
{\rat^\sM_Xj^\sM_!j^*_\sM}\ar[r] &{\rat^\sM_X}\ar[r] &{\rat^\sM_X i^\sM_*i^*_\sM}\ar[r]^-{+1} &{}}
\end{equation}
is commutative (the morphisms in the second row are those obtained by adjunction from $\theta_i^\sM$ and the inverse of $\theta^\sM_j$). 

\begin{lemm}
Let $i:Z\ra X$ be a closed immersion. Then, the natural transformation 
\[\theta^\sM_i:i^*_\sP\circ\rat^\sM_X\ra\rat^\sM_Z\circ i^*_\sM\]
is invertible.
\label{lemma_theta_i}
\end{lemm}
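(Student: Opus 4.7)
The strategy is to reduce to the case where the open complement of $Z$ in $X$ is affine, and then run a five-lemma argument comparing the localization triangle in $\Db(\sM(X))$ with its perverse analogue, combined with the compatibility diagram \eqref{eq:MorDTLocRat}.

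First, as in the proof of \theoremref{theo:support}, I would Zariski-locally decompose $i = i_r \circ \cdots \circ i_1$ as a composition of closed immersions, each having a principal affine open complement. By the $2$-functoriality of $\theta^\sM$ from \subsectionref{subsec:2func} and the compatibility recorded in \remarkref{rema:compthetaexchange}, $\theta^\sM_i$ factors, up to canonical connexion $2$-isomorphisms, as a composite of the $\theta^\sM_{i_k}$, so it suffices to prove the statement for each $i_k$ individually. Hence I may assume that the open immersion $j: U := X \smallsetminus Z \hookrightarrow X$ is affine.

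In that setting the construction carried out in the proof of \propositionref{prop:ImmExistenceAdjoint} produces a functorial distinguished triangle $j^\sM_! j^*_\sM \to \Id \to i^\sM_* i^*_\sM \xrightarrow{+1}$ in $\Db(\sM(X))$. Applying $\rat^\sM_X$ and invoking \eqref{eq:MorDTLocRat}, one obtains a morphism of distinguished triangles from the perverse localization triangle $j^\sP_! j^*_\sP \rat^\sM_X \to \rat^\sM_X \to i^\sP_* i^*_\sP \rat^\sM_X \xrightarrow{+1}$ to the $\rat^\sM_X$-image $\rat^\sM_X j^\sM_! j^*_\sM \to \rat^\sM_X \to \rat^\sM_X i^\sM_* i^*_\sM \xrightarrow{+1}$ of the motivic localization triangle, in which the middle vertical arrow is the identity and the rightmost vertical arrow is, modulo the invertible $\gamma^\sM_i$ and the full faithfulness of $i^\sP_*$, precisely the transformation $\theta^\sM_i$ whose invertibility we wish to establish. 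By the five lemma for triangulated categories, it therefore suffices to check that the leftmost vertical arrow is invertible; that arrow factors as the whiskerings of $\theta^\sM_j$ (which is invertible because $j$ is smooth of relative dimension $0$, see \subsectionref{subsec:InvImageLiss}) and of the natural transformation $\rho^\sM_j : j^\sP_! \rat^\sM_U \to \rat^\sM_X j^\sM_!$ coming from \propositionref{prop:AdjEtaleAffine}.

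The main obstacle is thus to verify that $\rho^\sM_j$ is invertible for an affine étale morphism $j$: this is not formal from the invertibility of $\theta^\sM_j$, since mates of isomorphisms under adjunctions need not be isomorphisms. I would prove it by invoking the uniqueness clause of property \textbf{P2}, comparing the lifted adjunction $(j^\sM_!, j^*_\sM)$ with the corresponding adjunction on constructible étale motives, where the analogous natural transformation $j^\sP_! \circ \Bti^*_U \to \Bti^*_X \circ j_!$ is invertible by \cite[Th\'eor\`eme 3.19]{AyoubBetti}. The universal factorization then forces $\rho^\sM_j$ itself to be invertible, and the five-lemma argument closes the proof.
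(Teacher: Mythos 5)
Your proposal follows essentially the same route as the paper's proof: reduce, via the compatibility of $\theta^\sM$ with composition of closed immersions, to the case where the complementary open immersion $j$ is an affine morphism, and then conclude from the morphism of localization triangles \eqref{eq:MorDTLocRat} together with the conservativity of $i^\sP_*$. The only point where you add work is the invertibility of $\rho^\sM_j$, which in the paper requires no separate argument: $j^\sM_!$ is itself produced by property {\bf P1} from Ayoub's invertible comparison $\rho_j$ (using the $t$-exactness of $j^\sP_!$ for $j$ quasi-finite and affine), so the transformation $\rho^\sM_j$ is invertible by construction, in line with (but more directly than) your {\bf P2}-based justification.
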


\begin{proof}
The statement is local on $X$, so we may assume that $X$ is affine.
Then, as in the proof of \ref{theo:SupportDb}, we can write
$i=i_r\circ\ldots\circ i_1$, where each $i_s$ is a closed immersion
with affine complement.
Using the compatibility of the $2$-morphisms $\theta^\sM_i$ with the composition of morphisms in ${}^{\mathrm{Imm}}(\Sch/k)$ we may assume that the open immersion $j:U\hookrightarrow X$ of the complement of $Z$ in $X$  is affine. Then, our assertion follows from \eqref{eq:MorDTLocRat} and the conservativity of the functor $i_*^\sP$.
\end{proof}

\subsection{Gluing of the pullback $2$-functors}
Let us fix a global left adjoint ${}^{\mathrm{Imm}}\mathsf{H}^*_\sM$ of ${}^{\mathrm{Imm}}\mathsf{H}_*^\sM$. 
To be able to glue the $2$-functors ${}^{\mathrm{Imm}}\mathsf{H}^*_\sM$ and ${}^{\mathrm{Liss}}\mathsf{H}^*_\sM$  using \cite[Th\'eor\`eme 1.3.1]{AyoubI}, it suffices to construct, for every commutative square
\begin{equation}\label{eq:GlueSquare}
\xymatrix{{X'}\ar[r]^-{i'}\ar[d]^-{f'} &{Y'}\ar[d]^-{f}\\
{X}\ar[r]^i &{Y}}
\end{equation}
such that $i,i'$ are closed immersions and $f,f'$ smooth morphisms, a $2$-isomorphism
\begin{equation}\label{eq:Glue2iso}
i'^*_\sM\circ f^*_\sM\xra{\simeq}f'^*_\sM\circ i^*_\sM 
\end{equation}
and prove that these $2$-isomorphisms define an exchange structure, that is, they are compatible with the horizontal and vertical composition of commutative squares (see \cite[D\'efinition 1.2.1]{AyoubI}).

\subsubsection*{dg-enhancements}
For the general theory of dg categories we refer to Drinfeld's~\cite{MR2028075}, Keller's~\cite{MR1258406,MR2275593} or To\"en's \cite{MR2762557}.
Let $\eusm A$ be an abelian category. We denote by $\eusmCb(\eusm A)$ the dg category of bounded complexes of objects of $\eusm A$ and by $\eusmDb(\eusm A)$ the dg quotient of $\eusmCb(\eusm A)$ by the subcategory of acyclic bounded complexes (for a simple construction of the dg quotient see \cite[\S3.1]{MR2028075}). The bounded derived category $\Db(\eusm A)$ of $\eusm A$ is the homotopy category of the dg category $\eusmDb(\eusm A)$. We let $\mathrm{rep}(\eusmDb(\eusm A),\eusmDb(\eusm B))$
 be the category of dg quasi-functors from $\eusmDb(\eusm A) $ to $\eusmDb(\eusm B) $ (this category is denoted by $\mathcal{T}(\eusmDb(\eusm A),\eusmDb(\eusm B))$ in Vologodsky's paper~\cite{MR2729639}). Let us recall the following particular case of \cite[Theorem 1]{MR2729639}.

\begin{prop}\label{prop:Vologodsky}
Let $\eusm A,\eusm B$ be abelian categories and $F,G\in r(\eusm A,\eusm B)$ be dg quasi-functors. Assume that the induced triangulated functors $F,G:\Db(\eusm A)\ra\Db(\eusm B)$ are $t$-exact for the classical $t$-structures.
Then $F,G$ are respectively canonically isomorphic to the functors induced by the exact functors $H^0F:\eusm A\ra\eusm B,H^0G:\eusm A\ra\eusm B$ and the canonical map
\[\Hom_{\mathrm{rep}(\eusmDb(\eusm A),\eusmDb(\eusm B))}(F,G)\ra\Hom_{\mathrm{Fct}(\eusm A,\eusm B)}(H^0F,H^0G)\]
is an isomorphism.
\end{prop}

A triangulated functor $\Db(\eusm A)\ra\Db(\eusm B)$ is said to be dg enhanced if it is induced by some dg quasi-functor in $\mathrm{rep}(\eusmDb(\eusm A),\eusmDb(\eusm B))$. Note that a composition of dg enhanced functors is also dg enhanced. 

\begin{rema}\label{rema:dgenhanced}
Let $i:Z\hookrightarrow X$ be a closed immersion and $f:X\ra Y$ be a smooth morphism of quasi-projective $k$-varieties. By construction the triangulated functors $i^\sM_*$ and  $f^*_\sM$  are dg enhanced. This is also the case of the triangulated functor 
 \[i^*_\sM:\Db(\sM(X))\ra\Db(\sM(Z)).\] 
 Indeed, let $j:U\hookrightarrow X$ be the open immersion of the complement of $Z$ in $X$ and fix a finite open covering of $U$ by affine open subsets. Let $\eusm D^{\mathrm{b}}_{\mathrm{dg},Z}(\sM(X)) $ be the dg full subcategory of $\eusmDb(\sM(X)) $ formed by the complexes that belongs to $\Db_Z(\sM(X))$. We have then dg-functors
\begin{equation}\label{eq:dgenhanced}
\xymatrix{{\eusmDb(\sM(Z))}\ar[r]^-{i^\sM_*} & {\eusm D^{\mathrm{b}}_{\mathrm{dg},Z}(\sM(X))} & {\eusmDb(\sM(X))}\ar[l]_-{C^\bullet}}
\end{equation}
where $C^\bullet$ is the dg functor constructed (using the given open covering of $U$ by affine open subsets) in the proof of \ref{prop:ImmExistenceAdjoint}. 
Since the dg-functor on the left is a quasi-equivalence, the diagram \eqref{eq:dgenhanced} defines a quasi-functor from $\eusmDb(\sM(X)) $ to $\eusmDb(\sM(Z)) $ that induces the triangulated functor $i^*_\sM$.
\end{rema}

\subsubsection*{Gluing of the $2$-functors}
Let us now start with the construction of the $2$-isomorphisms \eqref{eq:Glue2iso}.\par\medskip
{\underline{Step 1:}} When the square \eqref{eq:GlueSquare} is cartesian the $2$-isomorphism \eqref{eq:Glue2iso} is obtained 
by considering the exchange structure ${}^\sM Ex^*_*$ on the pair $({}^{\mathrm{Imm}}\mathsf{H}_*^\sM,{}^{\mathrm{Liss}}\mathsf{H}^*_\sM)$
obtained in \ref{prop:ExStructure} (in this exchange structure, all squares are cartesian). By applying \cite[Proposition 1.2.5]{AyoubI}, we get an exchange structure ${}^\sM Ex^{**}$ on the pair $({}^{\mathrm{Imm}}\mathsf{H}^*_\sM,{}^{\mathrm{Liss}}\mathsf{H}^*_\sM)$ for the class of cartesian squares \eqref{eq:GlueSquare}. The uniqueness in loc.cit. implies that this exchange structure lifts the trivial exchange structure on $({}^{\mathrm{Imm}}\mathsf{H}^*_\sP,{}^{\mathrm{Liss}}\mathsf{H}^*_\sP)$ given by the connection $2$-isomorphisms of the $2$-functor $\mathsf{H}^*_\sP$. In particular, the conservativity of the functors $\rat^\sM_X:\Db(\sM(X))\ra\Db(\sP(X))$ implies that ${}^\sM Ex^{**}$ is an iso-exchange.

\par\medskip
\underline{Step 2:} Let us consider a commutative triangle
\begin{equation}\label{eq:diapure}
\xymatrix{{X}\ar[r]^-{i}\ar[rd]_-{g} & {Y}\ar[d]^-{f}\\
{} & {S}}
\end{equation} 
in which $i$ is a closed immersion and $f,g$ are smooth morphisms. As preparation for the construction of the $2$-isomorphism \eqref{eq:Glue2iso},  we first construct a $2$-isomorphism
\begin{equation}\label{eq:Step3}
i^*_\sM\circ f^*_\sM\ra g^*_\sM.
\end{equation}
To do this, observe that, if $d$ is the relative dimension of $g$, then the triangulated functors $i^*_\sM\circ f^*_\sM[d]$ and $g^*_\sM[d] $ are $t$-exact for the classical $t$-structures. This is a vanishing statement that can be checked after application of the functor $\rat^\sM_X$ and, for perverse sheaves, it follows from \cite[4.2.4]{BBD} since $g^*_\sP$ and $i^*_\sP\circ f^*_\sP$ are isomorphic. Moreover both functors are dg enhanced by \ref{rema:dgenhanced}.

By \ref{prop:Vologodsky}, to construct \eqref{eq:Step3}, it is enough to construct a $2$-isomorphism 
\begin{equation}\label{eq:nu}
i^*_\sM\circ f^*_\sM[d]\ra g^*_\sM[d]
\end{equation}
where both functors are exact functors from $\sM(S)$ to $\sM(X)$. Therefore, it suffices to prove the following proposition.

\begin{prop} 
Consider the commutative diagram
\eqref{eq:diapure}.
 Let $A$ be an object in $\sM(S)$ and let $K$ be its underlying perverse sheaf. Then, the canonical morphism of perverse sheaves  $i^*_\sP\circ f^*_\sP[d](K)\ra g^*_\sP[d](K)$
lies in the image of the injective morphism
\begin{equation}\label{eq:morphismcannu}
\Hom_{\sM(X)}(i^*_\sM\circ f^*_\sM[d](A),g^*_\sM[d](A))\ra \Hom_{\sP(X)}(i^*_\sP\circ f^*_\sP[d](K),g^*_\sP[d](K)).
\end{equation}

\end{prop}

\begin{rema}
Note that the map \eqref{eq:morphismcannu} is obtained via the functor $\rat^\sM_X$ using the invertible natural transformations $f^*_\sP\circ\rat^\sM_S\ra \rat^\sM_Y\circ f^*_\sM$, $g^*_\sP \circ\rat^\sM_S\ra\rat^\sM_X\circ g^*_\sM $ and $i^*_\sP\circ \rat^\sM_Y\ra\rat^\sM_X\circ i^*_\sM$ which have been previously constructed.
\end{rema}

\begin{proof}
\emph{Step (a).} Consider a commutative diagram
\[\xymatrix{{X'}\ar[r]^-{i'}\ar@/_4em/[rdd]_-{g'}\ar@{}[rd]|{\square}\ar[d]^-{v} & {Y'}\ar[d]^-{u}\ar@/^2em/[dd]^-{f'}\\
{X}\ar[r]^i\ar[rd]_g & {Y}\ar[d]^-{f}\\
{} & {S}}\]
where $i$ is a closed immersion, $f,g$ are smooth morphisms and $u$ is an \'etale morphism. By step 1, we have a natural transformation $i'^*_\sM\circ u^*_\sM\ra v^*_\sM\circ i^*_\sM$ that lifts the corresponding natural transformation in the derived category of perverse sheaves. Assume the proposition true for the diagram \eqref{eq:diapure}. Then, the morphism $i^*_\sP\circ f^*_\sP[d](K)\ra g^*_\sP[d](K)$ lifts to a morphism $i^*_\sM\circ f^*_\sM[d](A)\ra g^*_\sM[d](A)$. By applying $v^*_\sM$ to this lift we obtain a morphism 
$i'^*_\sM\circ f'^*_\sM[d](A)\ra g'^*_\sM[d](A) $ 
that lifts the morphism  $i'^*_\sP\circ f'^*_\sP[d](K)\ra g'^*_\sM[d](K)$. 
This shows, in particular, that if the proposition is true for the diagram \eqref{eq:diapure} then it is also true for the diagram
\[\xymatrix{{X'}\ar[r]^-{i'}\ar[rd]_-{g'} & {Y'}\ar[d]^-{f'}\\
{} & {S.}}\]
\par\smallskip 
\emph{Step (b).}  Let $\mathscr Y=(Y_\alpha)_{\alpha\in I}$ be a finite Zariski open covering of $Y$ and consider for every $\alpha\in I$ the commutative diagram
\[\xymatrix{{X_\alpha}\ar[r]^{i_\alpha}\ar@/_4em/[rdd]_-{g_\alpha}\ar@{}[rd]|{\square}\ar[d]^-{v_\alpha} & {Y_\alpha}\ar[d]^-{u_\alpha}\ar@/^2em/[dd]^-{f_\alpha}\\
{X}\ar[r]^i\ar[rd]_g & {Y}\ar[d]^-{f}\\
{} & {S}}\]
where $u_\alpha$ is the open immersion of $Y_\alpha$ in $Y$. Note that the canonical morphism of perverse sheaves $i^*_\sP\circ f^*_\sP[d](K)\ra g^*_\sP[d](K)$ is obtained by gluing the morphism $i^*_{\alpha,\sP}\circ f^*_{\alpha,\sP}[d](K)\ra g^*_{\alpha,\sP}[d](K)$ along the Zariski open covering $\mathscr X=(X_\alpha)_{\alpha\in I}$
of $X$. Hence it follows from step (a) and \ref{prop:stack} that the proposition is true for the diagram \eqref{eq:diapure} if and only if it is true for the diagrams
\[\xymatrix{{X_\alpha}\ar[r]^-{i_\alpha}\ar[rd]_{g_\alpha} & {Y_\alpha}\ar[d]^-{f_\alpha}\\
{} & {S.}}\]

\par\smallskip
\emph{Step (c).} By step (b) the problem is local on $Y$ for the Zariski topology. Since both $Y$ and $X$ are smooth over $S$, we may assume that there exists a cartesian square
\[\xymatrix{{X}\ar[r]^-{i}\ar@/_4em/[rdd]_-{g}\ar@{}[rd]|{\square}\ar[d]^-{v} & {Y}\ar[d]^-{u}\ar@/^2em/[dd]^-{f}\\
{\bbA^d_S}\ar[r]\ar[rd] & {\bbA^{d+c}_S}\ar[d]\\
{} & {S}}\]
where $u$ is an \'etale morphism. Using step (a) and induction, we are reduced to proving the proposition in the case 
\[\xymatrix{{\bbA^d_S}\ar[rd]_-{\pi}\ar[r]^{s} & {\bbA^{d+1}_S}\ar[d]^-{p}\\
{} & {S}} \]
where $p$ and $\pi$ are the projections and $s$ is the zero section. By considering the factorization
\[\xymatrix{{\bbA^d_S}\ar@{=}[rd]\ar[r]^{s}\ar[rdd]_-{\pi} & {\bbA^{d+1}_S}\ar@/^2em/[dd]^-{p}\ar[d]\\
{} & {\bbA^d_S}\ar[d]^-{\pi}\\
{} & {S}} \]
and observing that the functors $\pi^*_\sM[d]$, $\pi^*_\sP[d]$ are exact, we may further assume $d=0$.
\par\smallskip
\emph{Step (d).}  
It remains to prove the proposition in the case of the diagram
\[\xymatrix{{S}\ar[r]^-{s}\ar@{=}[rd] & {\bbA^1_S}\ar[d]^p\\
{} & {S}}\]
where $s$ is the zero section and $p$ is the projection.
Let  $f:\bbA^1_S=\bbA^1\times S\ra \bbA^1$ be the first projection, $a:\bbG_m\times S\ra \bbA^1\times S$ be the inclusion. We set $q=p\circ a$. Given a motive $B\in\sM(\bbA^1_S)$, consider the connecting morphism  $B\ra s^\sM_*{}^p\mathsf{Log}^\sM_f(a^*_\sM(B))[1]$
in $\Db(\sM(\bbA^1_S))$ obtained from the exact sequence \eqref{eq:shortexNC}. By adjunction, we get a morphism $s^*_\sM(B)\ra {}^p\mathsf{Log}^\sM_f(a^*_\sM B)[1]$
 in the category $\Db(\sM(S))$. Taking $B$ to be the perverse motive $B=p^*_\sM[1] A$, we get after a shift a morphism
 \[s^*_\sM\circ p^*_\sM(A)\ra {}^p\mathsf{Log}^\sM_f(q^*_\sM[1](A))\]
in $\Db(\sM(S))$. As both objects are concentrated in degree zero, the above morphism is actually a morphism in the abelian category $\sM(S)$. Moreover, it is an isomorphism since it is on the underlying perverse sheaves. Moreover, we know that the square 
\[\xymatrix{{\Hom_{\sM(S)}({}^p\mathsf{Log}^\sM_f(q^*_\sM[1](A)),A)}\ar[r]\ar[d]^-{\simeq} & {\Hom_{\sP(S)}({}^p\mathsf{Log}^\sP_f(q^*_\sP[1](K)),K)}\ar[d]^-{\simeq}\\
{\Hom_{\sM(S)}(s^*_\sM\circ p^*_\sM(A),A)}\ar[r] & {\Hom_{\sP(S)}(s^*_\sP\circ p^*_\sP(K),K)}}\]
is commutative. Hence, to conclude, it suffices to show that the canonical morphism of perverse sheaves
\begin{equation}\label{eq:nattransfNC}
{}^p\mathsf{Log}^\sP_f(q^*_\sP[1](K))\ra K
\end{equation}
lifts to a morphism ${}^p\mathsf{Log}^\sM_f(q^*_\sM[1](A))\ra A $ in the abelian category $\sM(S)$.  By construction of the exact functors ${}^p\mathsf{Log}_f^\sM$ and $q^*_\sM[1]$, this is an application of the property \textbf{P2}, since \eqref{eq:nattransfNC} is the Betti realization of a natural transformation
\[\mathsf{Log}_f(q^*(-))\ra\Id\]
 in the triangulated category of \'etale motives on $S$. 
\end{proof}

\begin{lemm}\label{lemm:CompositionTriangle}
Consider a commutative diagram
\[\xymatrix{{X}\ar[r]^-{i}\ar[rrd]_-{h} &{Y}\ar[r]^-{s}\ar[rd]^-{g} &{Z}\ar[d]^-{f}\\
{} &{} &{S}}\]
in which $i,s$ are closed immersions and $f,g,h$ are smooth morphisms. Then, the diagram
\[\xymatrix{{i^*_\sM\circ s^*_\sM\circ f^*_\sM}\ar[r]\ar[d]_-{\simeq} & {i^*_\sM\circ g^*_\sM}\ar[r] &{h^*_\sM}\\
{(s\circ i)^*_\sM \circ f^*_\sM}\ar@/_2em/[rru] &{} &{}}\]
is commutative.
\end{lemm}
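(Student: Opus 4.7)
The plan is to reduce the claim to the analogous statement for perverse sheaves, exactly as in every other compatibility proof in this section, using the faithfulness of $\rat^\sM_X$ and the uniqueness clause in property {\bf P2}.

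First I would invoke \lemmaref{lemm:LemmTrSituation} in its generalized form: if $d$ denotes the relative dimension of $h$, then each of the four functors $i^*_\sM\circ s^*_\sM\circ f^*_\sM[d]$, $i^*_\sM\circ g^*_\sM[d]$, $(s\circ i)^*_\sM\circ f^*_\sM[d]$ and $h^*_\sM[d]$ from $\Db(\sM(S))$ to $\Db(\sM(X))$ is $t$-exact for the classical $t$-structures, because this vanishing property can be checked after $\rat^\sM_X$ and is known for perverse sheaves. Hence by \cite[Theorem 1]{MR2729639} each of them is isomorphic to the derived functor of its restriction to the hearts, and it suffices to show commutativity of the corresponding diagram of exact functors $\sM(S)\ra\sM(X)$.

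Next, by the construction just following \lemmaref{lemm:LemmTrSituation}, the 2-isomorphisms $s^*_\sM\circ f^*_\sM[d]\ra g^*_\sM[d]$, $i^*_\sM\circ g^*_\sM[d]\ra h^*_\sM[d]$ and $(s\circ i)^*_\sM\circ f^*_\sM[d]\ra h^*_\sM[d]$ are the unique lifts, via property {\bf P2}, of the corresponding canonical 2-isomorphisms on perverse sheaves. The remaining edge $i^*_\sM\circ s^*_\sM\ra (s\circ i)^*_\sM$ is the connection 2-isomorphism of the 2-functor ${}^{\mathrm{Imm}}\mathsf{H}^*_\sM$; by uniqueness of global left adjoints it is also compatible with $\rat^\sM$ in the sense that it lifts the connection 2-isomorphism $i^*_\sP\circ s^*_\sP\ra(s\circ i)^*_\sP$ of $\mathsf{H}^*_\sP$.

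The perverse analogue of the diagram in question commutes by inspection: there, all four 2-isomorphisms (after the shift $[d]$) are connection 2-isomorphisms of the 2-functor $\mathsf{H}^*_\sP$, and their compatibility is nothing but the cocycle condition for the composition $X\xra{i}Y\xra{s}Z\xra{f}S$. Applying the faithful exact functor $\rat^\sM_X$ to both paths around the motivic diagram therefore yields equal 2-morphisms in $\sP(X)$, and the faithfulness of $\rat^\sM_X$ (or equivalently the uniqueness assertion in {\bf P2} applied to the whole composite) forces the two paths to coincide in $\sM(X)$. The only real bookkeeping point is to check that all shifts match up, which is immediate since $d_h=d_g$ whenever $i$ is a closed immersion between smooth $S$-schemes of the same relative dimension, and otherwise one argues componentwise on the connected components of $X$.
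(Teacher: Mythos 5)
Your proof is correct and takes essentially the same route as the paper's: shift the whole diagram by the relative dimension $d$ of $h$, use $t$-exactness of the four composite functors together with \cite[Theorem 1]{MR2729639} to reduce to a diagram of exact functors $\sM(S)\ra\sM(X)$, and conclude by checking on the underlying perverse sheaves via the faithfulness of $\rat^\sM_X$ (the perverse diagram being the cocycle identity for $\mathsf{H}^*_\sP$). Only your closing remark is off: $d_h=d_g$ fails whenever the closed immersion $i$ has positive codimension, but this is harmless because no such matching is needed — the entire diagram is shifted by the single integer $d_h$ (componentwise on the connected components of $X$ if necessary), so dropping that sentence leaves the argument intact.
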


\begin{proof} The lemma follows from the analogous statement for perverse sheaves. Indeed,  let $d$ be the relative dimension of $h$. It suffices to show that the diagram 
\[\xymatrix{{i^*_\sM\circ s^*_\sM\circ f^*_\sM[d]}\ar[r]\ar[d]_-{\simeq} & {i^*_\sM\circ g^*_\sM[d]}\ar[r] &{h^*_\sM[d]}\\
{(s\circ i)^*_\sM \circ f^*_\sM[d]}\ar@/_2em/[rru] &{} &{}}\] is commutative. Since all functors in this diagram are dg enhanced and $t$-exact for the classical $t$-structures, by \ref{prop:Vologodsky} it suffices to check the commutativity of the diagram induced on the hearts. This can be checked on the underlying perverse sheaves.
\end{proof}

{\underline{Step 3:}} To construct the $2$-isomorphisms \eqref{eq:Glue2iso} in the general case, we can decompose the commutative square \eqref{eq:GlueSquare} as follows
\[\xymatrix{{X'}\ar[r]^-{i'''}\ar[rd]_-{f'}\ar@/^2em/[rr]^-{i'} & {X\times_YY'}\ar[r]^{i''}\ar[d]^-{f''}\ar@{}[rd]|{\square} &{Y'}\ar[d]^-{f}\\
{} &{X}\ar[r]^-{i} & {Y}}\]
where $i'',i'''$ are closed immersions and $f''$ is a smooth morphism. Then, using the iso-exchange constructed in step 1, the $2$-isomorphism of step 2 and the connection $2$-isomor\-phisms of the $2$-functor ${}^{\mathrm{Imm}}\mathsf{H}^*_\sM$ we get \eqref{eq:Glue2iso} as the composition
\[i'^*_\sM\circ f^*_\sM\xra{\simeq} i'''^*_\sM\circ i''^*_\sM\circ f^*_\sM\xra{\simeq}i'''^*_\sM\circ f''^*_\sM\circ i^*_\sM\xra{\simeq} f'^*_\sM\circ i^*_\sM.\]

\begin{lemm}\label{lemm:SquareTriangleA}
 Let
\[\xymatrix{{X'}\ar[r]^-{i'}\ar[d]_-{f'}\ar@{}[rd]|{\square} &{Y'}\ar[r]^-{s}\ar[d]^-{f} &{Z}\ar[ld]^-{g}\\
{X}\ar[r]^-i &{Y} &{}}\]
be a commutative diagram of morphisms of $k$-varieties in which $g,f$ are smooth and $i,s$ are closed immersions. Consider the commutative diagram
\[\xymatrix{{X'}\ar@/^2em/[rr]^-{s\circ i'}\ar[r]^-{s'}\ar[rd]_-{f'} &{X\times_YZ}\ar[r]^-{i''}\ar[d]^-{g'}\ar@{}[rd]|{\square} &{Z}\ar[d]^-{g}\\
{} &{X}\ar[r]^-{i} &{Y.}}\]
Then, the following diagram is commutative
\[\xymatrix{{(s\circ i')^*_\sM\circ g^*_\sM}\ar@{=}[d]\ar[r]^-{\simeq} &{i'^*_\sM\circ s^*_\sM\circ g^*_\sM}\ar[r] &{i'^*_\sM\circ f^*_\sM}\ar[r] &{f'^*_\sM\circ i^*_\sM}\ar@{=}[d]\\
{(i''\circ s')^*_\sM\circ g^*_\sM}\ar[r]^-{\simeq} &{s'^*_\sM \circ i''^*_\sM\circ g^*_\sM}\ar[r] &{s'^*_\sM\circ g'^*_\sM\circ i^*_\sM}\ar[r] &{f'^*_\sM\circ i^*_\sM.}}\]
\end{lemm}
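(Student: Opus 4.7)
The plan is to reduce the claimed commutativity to its analogue for perverse sheaves, exactly as in the proof of \lemmaref{lemm:CompositionTriangle}. Let $d$ denote the common relative dimension of the smooth morphisms $g$, $f$, $g'$, $f'$. By \lemmaref{lemm:LemmTrSituation}, after shifting by $[d]$ every functor appearing in the diagram is induced from an exact functor between the abelian categories of perverse motives, so it is enough to verify the commutativity of the corresponding diagram of 2-isomorphisms of exact functors $\sM(Y)\ra\sM(X')$.

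Each 2-isomorphism occurring in the diagram was constructed as the unique lift, via property {\bf P2}, of the corresponding 2-isomorphism for perverse sheaves. Specifically, the connexion 2-isomorphisms $(s\circ i')^*_\sM\simeq i'^*_\sM\circ s^*_\sM$ and $(i''\circ s')^*_\sM\simeq s'^*_\sM\circ i''^*_\sM$ come from the $2$-functor ${}^{\mathrm{Imm}}\mathsf{H}^*_\sM$; the exchange 2-isomorphisms $i'^*_\sM\circ f^*_\sM\simeq f'^*_\sM\circ i^*_\sM$ (implicit in the top row) and $i''^*_\sM\circ g^*_\sM\simeq g'^*_\sM\circ i^*_\sM$ (in the bottom row) are the iso-exchanges of Step~1 for the respective cartesian squares; and the triangle 2-isomorphisms $s^*_\sM\circ g^*_\sM\ra f^*_\sM$ and $s'^*_\sM\circ g'^*_\sM\ra f'^*_\sM$ are the Step~2 lifts associated with the triangles $Y'\hookrightarrow Z\ra Y$ and $X'\hookrightarrow X\times_YZ\ra X$ respectively. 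Applying the faithful exact functor $\rat^\sM_{X'}$ to either composite, and inserting the appropriate $\theta^\sM$ and $\gamma^\sM$ (and their inverses), therefore converts the diagram into its direct analogue for perverse sheaves.

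The commutativity of the latter diagram is a standard compatibility within the six-functor formalism on $\Dbc(-,\bbQ)$: both composites realize the pseudo-functorial identification $(s\circ i')^*_\sP\circ g^*_\sP\simeq f'^*_\sP\circ i^*_\sP$ obtained by decomposing the triangle $X'\to Z\to Y$ through either $Y'$ or $X\times_YZ$, and their agreement amounts to the compatibility between pseudo-functoriality of inverse image and base change, which is built into the construction of the six operations in \cite{AyoubI,AyoubII} and is implicit throughout \cite{BBD}. Since $\rat^\sM_{X'}$ is faithful exact, the commutativity descends from $\sP(X')$ to $\sM(X')$, and hence by \lemmaref{lemm:LemmTrSituation} to the level of derived categories, as required.

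The main (and mild) obstacle is purely bookkeeping: one must unpack the Step~1 and Step~2 constructions carefully in order to check that the various 2-isomorphisms really do postcompose with $\rat^\sM_{X'}$ to the expected perverse counterparts through the fixed $\theta^\sM$ and $\gamma^\sM$. Once this is verified, an application of property {\bf P2} (equivalently, the faithfulness of $\rat^\sM_{X'}$) closes the argument, as in the proofs of \propositionref{prop:ExStructure} and \lemmaref{lemm:CompositionTriangle}.
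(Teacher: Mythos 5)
There is a genuine gap in your reduction step. You invoke \lemmaref{lemm:LemmTrSituation} to claim that, after a shift by $[d]$, every functor in the diagram is induced by an exact functor on the hearts; but that lemma only covers compositions $i^*_\sM\circ f^*_\sM$ in a triangle whose \emph{composite} $f\circ i$ is smooth. Here the total composite $X'\ra Y$ equals $i\circ f'$, which is not smooth, and indeed the functors in your diagram realize to $f'^*_\sP\circ i^*_\sP$ (pullback along a closed immersion composed with a smooth pullback), which is \emph{not} $t$-exact up to any shift — $i^*_\sP$ has nontrivial perverse amplitude. (Also, $f$ and $g$ do not have the same relative dimension in general, since $f=g\circ s$ with $s$ a closed immersion, so there is no ``common'' $d$ for all four smooth maps.) Because the reduction to exact functors between abelian categories fails, your final step also fails: at the level of the derived categories the realization $\Db(\sM(X'))\ra\Db(\sP(X'))$ is only conservative, not faithful, so equality of two $2$-morphisms between triangulated functors cannot be checked there; faithfulness of $\rat^\sM$ is only available on the hearts.

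The paper's proof supplies exactly the missing move: it first transposes the whole diagram by adjunction (using that $i^*_\sM$, $i'^*_\sM$, $i''^*_\sM$ are left adjoints of the pushforwards, \propositionref{prop:ImmExistenceAdjoint}), obtaining an equivalent diagram in $\Db(\sM(Y'))$ built from $i^\sM_*$, $i'^\sM_*$, $i''^\sM_*$ and the smooth pullbacks, e.g.\ $s^*_\sM g^*_\sM i^\sM_*$, $f^*_\sM i^\sM_*$, $i'^\sM_* f'^*_\sM$, $i'^\sM_* s'^*_\sM g'^*_\sM$. These \emph{are} $t$-exact up to the single shift $[d_f]$, so \cite[Theorem 1]{MR2729639} reduces the commutativity to objects $A\in\sM(X)$, where it becomes a diagram of perverse motives on $Y'$ and can be checked on underlying perverse sheaves by faithfulness of $\rat^\sM_{Y'}$. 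Your appeal to the six-functor compatibilities for perverse sheaves at the end is fine, but without the adjunction step the reduction that is supposed to get you there does not go through.
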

\begin{proof}
By adjunction, it is enough to show that the diagram
\[\xymatrix{{s^*_\sM\circ g^*_\sM\circ i_*^\sM A}\ar[r]\ar@/_2em/[rd] & {f^*_\sM\circ  i^\sM_*A}\ar[r] & {i'^\sM_*\circ f'^*_\sM A}\\
{}& {s^*_\sM \circ i''^\sM_*\circ g'^*_\sM A}\ar[r] &{i'^\sM_*\circ s'^*_\sM \circ g'^*_\sM A}\ar[u]}\]
is commutative for every object $A$ in $\Db(\sM(X))$. Since all the entries of the above diagram are dg enhanced and $t$-exact functors up to a shift by the relative dimension $d$ of $f$, by \ref{prop:Vologodsky} it suffices to check the commutativity of the diagram induced on the hearts. This can be checked on the underlying perverse sheaves.
\end{proof}

\begin{lemm}\label{lemm:SquareTriangleB}
Consider a commutative diagram
\[\xymatrix{{X'}\ar[d]^-{f'}\ar[r]^-{i'}\ar@{}[rd]|{\square}\ar@/_5em/[rdd]_-{h'} & {Y'}\ar[d]^-f\ar@/^2em/[dd]^-h\\
{X}\ar[r]^i\ar[rd]_-{g'} &{Y}\ar[d]^-g\\
{} & {S}}\]
in which $i,i'$ are closed immersions and all other morphisms are smooth.
Then, the diagram
\[\xymatrix{{i'^*_\sM\circ f^*_\sM\circ g^*_\sM}\ar[r]\ar[d]^-{\simeq} &{f'^*_\sM\circ i^*_\sM\circ g^*_\sM}\ar[r] &{f'^*_\sM\circ g'^*_\sM}\ar[d]^-{\simeq}\\
{i'^*_\sM\circ h^*_\sM}\ar[rr] &{} &{h'^*_\sM}}\]
is commutative.
\end{lemm}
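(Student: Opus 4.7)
The plan is to reduce the commutativity of the hexagonal diagram to the analogous statement for perverse sheaves, following the same pattern as in \lemmaref{lemm:SquareTriangleA}. Let $d_f$ and $d_g$ denote the relative dimensions of $f$ and $g$, so that $h$ has relative dimension $d:=d_f+d_g$ and $f'$, $g'$, $h'$ have the same respective relative dimensions as $f$, $g$, $h$ (the relative dimension is stable under base change, and $i$, $i'$ are closed immersions hence contribute nothing). First I would shift every entry of the diagram by $[d]$. By \lemmaref{lemm:LemmTrSituation}~(1) (applied separately to the triangles $(i,f,g')$ and $(i',h,h')$) together with the $t$-exactness of $f^*_\sM[d_f]$ on $\sM(Y)$ and of $i^*_\sP[0]$, $i'^*_\sP[0]$, one checks that each of the six composed functors appearing in the diagram becomes $t$-exact for the standard $t$-structures on $\Db(\sM(S))$ and $\Db(\sM(X'))$.

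Next I would invoke \cite[Theorem~1]{MR2729639} to reduce the commutativity of the diagram of $t$-exact functors $\Db(\sM(S))\ra\Db(\sM(X'))$ to the commutativity of the corresponding diagram of exact functors $\sM(S)\ra\sM(X')$, evaluated on an arbitrary object $A\in\sM(S)$. At this point we are facing a diagram of $2$-isomorphisms in the abelian category $\sM(X')$, whose commutativity can be tested after applying the faithful exact functor $\rat^\sM_{X'}:\sM(X')\ra\sP(X')$. Each of the four $2$-isomorphisms in the diagram has been constructed from a corresponding $2$-isomorphism on the level of perverse sheaves by means of property~{\bf P2} (compatibility with $\theta^\sM$, $\gamma^\sM$ and the connexion $2$-isomorphisms of ${}^{\mathrm{Liss}}\mathsf{H}^*_\sM$ and ${}^{\mathrm{Imm}}\mathsf{H}^*_\sM$); so translating along the $\theta^\sM$'s and the exchange $2$-isomorphisms of \propositionref{prop:ExStructure} yields the commutativity of the diagram on $\sM(X')$ as soon as we know that the analogous hexagonal diagram
\[\xymatrix@C=1.4cm{{i'^*_\sP\circ f^*_\sP\circ g^*_\sP}\ar[r]\ar[d]^-{\simeq} &{f'^*_\sP\circ i^*_\sP\circ g^*_\sP}\ar[r] &{f'^*_\sP\circ g'^*_\sP}\ar[d]^-{\simeq}\\
{i'^*_\sP\circ h^*_\sP}\ar[rr] &{} &{h'^*_\sP}}\]
is commutative on the derived category $\Db(\sP(S))=\Dbc(S,\bbQ)$.

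Finally, this last commutativity is a standard compatibility of the Grothendieck six functor formalism for $\Dbc(-,\bbQ)$: the exchange $2$-isomorphism $i'^*_\sP f^*_\sP\iso f'^*_\sP i^*_\sP$ attached to the cartesian square and the connexion isomorphisms $f^*_\sP g^*_\sP\iso h^*_\sP$, $f'^*_\sP g'^*_\sP\iso h'^*_\sP$, $i^*_\sP g^*_\sP\iso g'^*_\sP$, $i'^*_\sP h^*_\sP\iso h'^*_\sP$ all come from the structure of the contravariant $2$-functor $\mathsf{H}^*_\sP:(\Sch/k)\ra\mathfrak{TR}$ and its restriction to the distinguished subcategories, and the claimed hexagon is a direct instance of the cocycle condition for this $2$-functor (it expresses that $(g\circ f)\circ i' = g\circ(i\circ f') = g'\circ f'$, reflecting the factorization $h\circ i' = g\circ f\circ i' = g\circ i\circ f' = g'\circ f'$). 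The main (very mild) obstacle is purely bookkeeping: one has to check that the $2$-isomorphisms we labelled "$\simeq$" on the vertical arrows really come from the connexion isomorphisms of the $2$-functor ${}^{\mathrm{Liss}}\mathsf{H}^*_\sM$, rather than from some ad hoc recipe using Step~2 of the glueing construction, so that the perverse-sheaf analogue is indeed the cocycle condition of $\mathsf{H}^*_\sP$; this is ensured by the uniqueness clause in property~{\bf P2}, exactly as in the proof of \lemmaref{lemm:SquareTriangleA}.
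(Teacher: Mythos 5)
Your overall strategy is the paper's: shift so that all the composites become exact functors $\sM(S)\ra\sM(X')$, reduce to natural transformations between functors on the hearts via \cite[Theorem 1]{MR2729639}, and then test commutativity after applying the faithful functor $\rat^\sM_{X'}$, where it becomes a standard compatibility (cocycle condition) for the $2$-functor $\mathsf{H}^*_\sP$ on $\Dbc(-,\bbQ)$. However, there is one concrete error in the step that sets this up: you shift by $d=d_f+d_g$, the relative dimension of $h$, justified by the claim that $g'$ and $h'$ have the same relative dimensions as $g$ and $h$ because the closed immersions ``contribute nothing''. This is false: $g'=g\circ i$ is not a base change of $g$ but its restriction to the closed subscheme $X\subseteq Y$, and its relative dimension is in general strictly smaller. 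For instance, take $S=\Spec(k)$, $Y=\bbA^2$, $X=\bbA^1$ a line in $Y$, and $f:Y'=\bbA^3\ra Y$ the projection; then $X'=\bbA^2$, so $d_h=3$ while $d_{h'}=2$. The formula $d_{g\circ f}=d_g+d_f$ from \cite[(17.10.3)]{MR0238860} only applies to composites of smooth morphisms. With your shift $[d_h]$ the six composites are $t$-exact only up to the extra shift $[d_h-d_{h'}]$, so they do not land in the heart and the reduction via \cite[Theorem 1]{MR2729639} (and \lemmaref{lemm:LemmTrSituation}) breaks down.

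The repair is exactly the choice made in the paper: let $d$ be the relative dimension of the composite smooth morphism $h'=h\circ i'=g'\circ f'$ (equivalently $d=d_{g'}+d_{f'}$), which is also the shift with respect to which the Step 2 isomorphisms $i^*_\sM g^*_\sM\ra g'^*_\sM$ and $i'^*_\sM h^*_\sM\ra h'^*_\sM$ were constructed. Since all six composites realize to functors isomorphic to $h'^*_\sP[d]$, they are $t$-exact after this shift, and the rest of your argument — restriction to exact functors on $\sM(S)$, faithfulness of $\rat^\sM_{X'}$, and the perverse-sheaf hexagon as an instance of the connexion/exchange compatibilities of $\mathsf{H}^*_\sP$ — goes through as written and coincides with the paper's proof. (Also note the triangle you invoke should be $(i,g,g')$ rather than $(i,f,g')$, since \lemmaref{lemm:LemmTrSituation} requires the composite to be smooth.)
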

\begin{proof}
Let $d$ be the relative dimension of $h'$. It is enough to check that the diagram
\[\xymatrix{{i'^*_\sM\circ f^*_\sM\circ g^*_\sM[d]}\ar[r]\ar[d]^-{\simeq} &{f'^*_\sM\circ i^*_\sM\circ g^*_\sM[d]}\ar[r] &{f'^*_\sM\circ g'^*_\sM[d]}\ar[d]^-{\simeq}\\
{i'^*_\sM\circ h^*_\sM[d]}\ar[rr] &{} &{h'^*_\sM[d]}}\]
is commutative. Since all functors in this diagram are dg enhanced and $t$-exact for the classical $t$-structures, by \ref{prop:Vologodsky} it suffices to check the commutativity of the diagram induced on the hearts. This can be checked on the underlying perverse sheaves.\end{proof}

\begin{prop}
The $2$-isomorphisms \eqref{eq:Glue2iso} define an exchange structure i.e., they are compatible with the horizontal and vertical compositions of commutative squares.
\end{prop}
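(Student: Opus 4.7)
The strategy is a diagram chase reducing each compatibility to the analogous assertion for perverse sheaves via the universal property \textbf{P2} and the faithfulness of the realization functors $\rat^\sM_X$. Since each inverse image functor is $t$-exact up to a shift by the relative dimension, \cite[Theorem 1]{MR2729639} allows us to compare them via their restrictions to the abelian hearts, where faithfulness of $\rat^\sM$ applies. Hence any 2-morphism between two pullback functors is determined by what it does on perverse sheaves, and any commutative diagram of such 2-morphisms may be verified on perverse sheaves, where it is a consequence of the fact that $(-)^*_\sP$ is pseudo-functorial and that the exchange isomorphism for smooth-vs-closed-immersion squares is already known to satisfy the exchange axioms.

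For vertical composition, given a tower of two squares whose smooth vertical maps $f$ and $g$ compose, I would factor each square into a cartesian square plus a triangular piece (as in the three-step construction of \eqref{eq:Glue2iso}), and likewise factor the composite square through the appropriate fiber product. By the exchange axiom for ${}^\sM Ex^{**}$, which is inherited from \cite[Proposition 1.2.5]{AyoubI}, the cartesian parts compose correctly. The cocycle for the triangle 2-isomorphism \eqref{eq:Step3} is supplied by \lemmaref{lemm:CompositionTriangle}, and the interaction between a triangle 2-isomorphism and a cartesian square—which is exactly what arises when reassembling the decomposed composite—is controlled by \lemmaref{lemm:SquareTriangleB}. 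Combining these three ingredients yields the desired equality of 2-isomorphisms.

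For horizontal composition, a pair of squares whose closed immersions $i$ and $i'$ compose is treated analogously: the cartesian parts compose by the exchange axiom of Step 1, the triangle part is handled by \lemmaref{lemm:SquareTriangleA}, and the connection 2-isomorphisms of the 2-functor ${}^{\mathrm{Imm}}\mathsf{H}^*_\sM$ encode functoriality along closed immersions. The case where $i$ is composed on the other side follows by the same argument after exchanging the roles.

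The main obstacle is purely bookkeeping: the three-step construction of \eqref{eq:Glue2iso} is a composition of several 2-isomorphisms, so each compatibility produces a sizeable diagram whose faces must be identified one by one with the cartesian exchange axiom, with one of \lemmaref{lemm:CompositionTriangle}, \lemmaref{lemm:SquareTriangleA}, \lemmaref{lemm:SquareTriangleB}, or with a naturality square. No genuinely new input is required beyond these lemmas; the uniqueness clause in \textbf{P2} combined with the commutativity on perverse sheaves produces the proposition.
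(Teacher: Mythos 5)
Your proposal is correct and follows essentially the same route as the paper: decompose the composite squares through the relevant fiber products into cartesian squares and triangular pieces, then conclude from the cartesian exchange structure of \propositionref{prop:ExStructure} together with \lemmaref{lemm:CompositionTriangle}, \lemmaref{lemm:SquareTriangleA} (horizontal case) and \lemmaref{lemm:SquareTriangleB} (vertical case). The reduction to perverse sheaves via \textbf{P2}, faithfulness of $\rat^\sM$ and the $t$-exactness argument of \cite[Theorem 1]{MR2729639} is exactly how those lemmas are established in the paper, so no new input is missing.
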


\begin{proof}

$\bullet$ \underline{Horizontal composition of squares}. Consider a commutative diagram
\begin{equation}\label{eq:HorDia}
\xymatrix{{Z'}\ar[r]^-{s'}\ar[d]^-{f''} &{X'}\ar[r]^-{i'}\ar[d]^-{f'} &{Y'}\ar[d]^-f\\
{Z}\ar[r]^-s &{X}\ar[r]^-{i} &{Y}}
\end{equation}
in which $i,s,i',s'$ are closed immersions and $f,f',f''$ are smooth morphisms. We have to prove that the diagram
\[\xymatrix{{(i'\circ s')^*_\sM\circ f^*_\sM}\ar[d]^-{\simeq}\ar[rr] &{} &{f''^*_\sM\circ (i\circ s)^*_\sM}\ar[d]^-{\simeq}\\
{s'^*_\sM\circ i'^*_\sM\circ f^*_\sM}\ar[r] &{s'^*_\sM\circ f'^*_\sM\circ i^*_\sM}\ar[r] &{f''^*_\sM\circ s^*_\sM\circ i^*_\sM}}\]
is commutative. Let us decompose \eqref{eq:HorDia} the following ways:
\begin{equation}\label{eq:DiaHorComp}
\xymatrix{{Z'}\ar[r]\ar[rd] &{Z\times_XX'}\ar[r]\ar[d]\ar@{}[rd]|{\square} &{X'}\ar[r]\ar[d] &{X\times_YY'}\ar[r]\ar[ld]\ar@{}[d]|{\square} &{Y'}\ar[ld]\\
{} &{Z}\ar[r] &{X}\ar[r] &{Y} &{}}
\end{equation}
and 
\[\xymatrix{{Z'}\ar[r]\ar[rd] &{Z\times_YY'}\ar[d]\ar[r]\ar@{}[rd]|{\square} &{Y'}\ar[d]\\
{} &{Z}\ar[r] &{Y.}}\]
Since $Z\times_X(X\times_YY')=Z\times_YY'$ we can rewrite the portion
\[\xymatrix{{Z\times_XX'}\ar[r]\ar[d]\ar@{}[rd]|{\square} &{X'}\ar[r]\ar[d] &{X\times_YY'}\ar[ld]\\
{Z}\ar[r] &{X} &{}}\]
of the diagram \eqref{eq:DiaHorComp} as
\[\xymatrix{{Z\times_XX'}\ar[r]\ar[rd] &{Z\times_YY'}\ar[r]\ar[d]\ar@{}[rd]|{\square} &{X\times_YY'}\ar[d]\\
{} &{Z}\ar[r] &{X.}}\]
Therefore the desired compatibility is a consequence of \ref{prop:ExStructure}, \ref{lemm:SquareTriangleA} and \ref{lemm:CompositionTriangle}.

\par\bigskip

$\bullet$ \underline{Vertical composition of squares}. Consider a commutative diagram
\begin{equation}\label{eq:DiaVerComp}
\xymatrix{{X''}\ar[r]^-{i''}\ar[d]^-{g'} &{Y''}\ar[d]^-g\\
{X'}\ar[r]^-{i'}\ar[d]^-{f'} &{Y'}\ar[d]^-f\\
{X}\ar[r]^i &{Y}}
\end{equation}
in which $i,i',i''$ are closed immersions and $f,g,f',g'$ are smooth morphisms. We have to prove that the diagram
\[\xymatrix{{i''^*_\sM\circ (f\circ g)^*_\sM}\ar[rr]\ar[d] & & {(f'\circ g')^*_\sM\circ i^*_\sM}\ar[d]^-{\simeq}\\
{i''^*_\sM\circ g^*_\sM\circ f^*_\sM}\ar[r] &{g'^*_\sM\circ i'^*_\sM\circ f^*_\sM}\ar[r] &{g'^*_\sM\circ f'^*_\sM\circ i^*_\sM}}\]
is commutative. We can refine \eqref{eq:DiaVerComp} into the following commutative diagrams
\[\xymatrix{{X''}\ar[r]\ar@/_2em/[rd] &{X'\times_{Y'}Y''}\ar[r]\ar[d]\ar@{}[rd]|{\square} &{X\times_YY''}\ar[r]\ar[d]\ar@{}[rd]|{\square} &{Y''}\ar[d]\\
{} &{X'}\ar[r]\ar@/_2em/[rd] &{X\times_YY'}\ar[r]\ar[d]\ar@{}[rd]|{\square} &{Y'}\ar[d]\\
{} &{} &{X}\ar[r] &{Y}}\]
or 
\[\xymatrix{{X''}\ar[r]\ar@/_2em/[rd] &{X\times_YY''}\ar[r]\ar[d]\ar@{}[rd]|{\square} &{Y''}\ar[d]\\
{} &{X}\ar[r] &{Y.}}\]
The desired compatibility is now a consequence of \ref{prop:ExStructure}, \ref{lemm:SquareTriangleB} and \ref{lemm:CompositionTriangle}.
\end{proof}

\section{Main theorem}\label{sec:MainTheo}
In \ref{subsec:AppPerverseMotive}, we have shown that the unipotent nearby and vanishing cycles functors can be defined at the level of perverse Nori motives.

Our goal is to prove that the four operations \eqref{Eq:FourFunDbc} can be lifted to the derived categories of perverse Nori motives.
To obtain these various functors
\begin{equation}\label{eq:FuncDMFormalism}
\xymatrix{{\Db(\sM(X))}\ar@<-1ex>[r]_-{f^{\sM}_*} & {\Db(\sM(Y))}\ar@<-1ex>[l]_-{f_{\sM}^*}\ar@<1ex>[r]^-{f_{\sM}^!} & {\Db(\sM(X))}\ar@<1ex>[l]^-{f_!^{\sM}} }
\end{equation}
(and their compatibility relations) with the least amount of effort, we have chosen to follow Ayoub's approach developed in \cite{AyoubI} around the notion 
of stable homotopical $2$-functor, which encompasses in a small package all the ingredients needed to build the rest of the formalism.

\subsection{Statement of the theorem}
\label{subsec:statement_main_theorem}

As before, $(\Sch/k)$ denotes the category of quasi-projective $k$-varieties.  Recall that a contravariant $2$-functor
\[\mathsf{H}^*:(\Sch/k)\ra \mathfrak{TR}\]
is a called a stable homotopical $2$-functor (see \cite[D\'efinition 1.4.1]{AyoubI}) when the following six properties are satisfied. 
\begin{enumerate}
\item $\mathsf{H}(\varnothing)=0$ (that is, $\mathsf{H}(\varnothing)$ is the trivial triangulated category).
\item For every morphism $f:X\ra Y$ in $(\Sch/k)$, the functor $f^*:\mathsf{H}(Y)\ra\mathsf{H}(X)$ admits a right adjoint. Furthermore for every immersion $i$ the counit $i^*i_*\ra\Id$ is invertible. 
\item  For every smooth morphism $f:X\ra Y$ in $(\Sch/k)$, the functor $f^*:\mathsf{H}(Y)\ra\mathsf{H}(X)$ admits a left adjoint $f_\sharp$. Furthermore, for every cartesian square
\[\xymatrix{{X'}\ar[r]^-{g'}\ar[d]^-{f'} & {X}\ar[d]^-{f}\\
{Y'}\ar[r]^-{g} & {Y}}\]
with $f$ smooth, the exchange $2$-morphism $f'_\sharp g'^*\ra g^*f_\sharp$ is invertible. 
\item If $j:U\ra X$ is an open immersion in $(\Sch/k)$ and $i:Z\ra X$ is the closed immersion of the complement, then the pair $(j^*,i^*)$ is conservative.
\item If $p:\bbA^1_X\ra X$ is the canonical projection, then the unit morphism $\Id\ra p_*p^*$ is invertible.
\item If $s$ is the zero section of the canonical projection $p:\bbA^1_X\ra X$, then $p_\sharp s_*:\mathsf{H}(X)\ra\mathsf{H}(X)$ is an equivalence of categories.
\end{enumerate}

The main theorem of \cite{AyoubI} says that these data can be expanded into a complete formalism of the four operations (see \cite[Scholie 1.4.2]{AyoubI}).

\begin{theo}\label{theo:maintheo}
The contravariant $2$-functor $\mathsf{H}^*_\sM$ constructed in \ref{sec:pullback}
is a stable homotopical $2$-functor is the sense of \cite[D\'efinition 1.4.1]{AyoubI}, and $(\rat^\sM,\theta^\sM)$ is a morphism of stable homotopical $2$-functors.
\end{theo}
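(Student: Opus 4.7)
The plan is to verify, one by one, the six axioms (1)--(6) of a stable homotopical 2-functor for $\mathsf{H}^*_\sM$, exploiting systematically the universal property {\bf P2} and the conservativity of the realization $\rat^\sM_X:\Db(\sM(X))\ra \Db(\sP(X))$ (itself a consequence of the faithful exactness of $\rat^\sM_X$ on hearts, applied to cohomology objects of bounded complexes). Axiom (1) is immediate since $\DA_\ct(\varnothing)=0$, so $\sM(\varnothing)=0$. Axiom (4) follows from the classical perverse-sheaf version together with \lemmaref{lemma_theta_i} and the isomorphism $\gamma^\sM_j$, which identify $j^*_\sP\circ\rat^\sM$ with $\rat^\sM\circ j^*_\sM$ (and similarly for $i^*$), so conservativity of $(j^*_\sP,i^*_\sP)$ transports to $(j^*_\sM,i^*_\sM)$.

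The substance of the theorem lies in axioms (2) and (3), the construction of right and left adjoints to $f^*_\sM$ for every quasi-projective morphism $f$. By \cite[Proposition 1.1.17]{AyoubI} it suffices to exhibit adjoints for one morphism at a time. A quasi-projective morphism factors as a locally closed immersion into a projective space bundle followed by a smooth proper projection, and further into: closed immersions, handled by \propositionref{prop:ImmExistenceAdjoint} (with invertibility of $i^*_\sM i^\sM_*\ra\Id$ given by \theoremref{theo:SupportDb}); affine \'etale morphisms, covered by \propositionref{prop:AdjEtaleAffine} and glued via \propositionref{prop:stack} to arbitrary \'etale morphisms; and smooth morphisms. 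A smooth morphism is Zariski-locally the composition of an \'etale morphism with the projection of an affine space, so by iteration everything reduces to the single case $p:\bbA^1_X\ra X$: the right adjoint $p^\sM_*$ in that case is \propositionref{prop:rightadjoint}, and $p^\sM_\sharp$ is obtained dually using \propositionref{prop:duality}. The smooth base change isomorphism of axiom (3) then lifts from perverse sheaves via {\bf P2} applied to each cartesian square, using the iso-exchange built in \propositionref{prop:ExStructure} and extended in \sectionref{sec:pullback}.

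For axioms (5) and (6), once the adjoints have been constructed and their realizations identified, via $\rat^\sM$ and the natural transformations furnished by {\bf P2}, with the perverse-sheaf functors $p^\sP_*, p^\sP_\sharp, s^\sP_*$, both assertions reduce to classical facts: $\Id\ra p^\sP_* p^*_\sP$ is an isomorphism (homotopy invariance, from the contractibility of $\bbA^1(\bbC)$) and $p^\sP_\sharp s^\sP_*$ is an equivalence of categories (the Thom equivalence for the trivial line bundle). Conservativity of $\rat^\sM$ then transports these conclusions to $\mathsf{H}^*_\sM$. Finally, the statement that $(r^\sM,\theta^\sM)$ is a morphism of stable homotopical 2-functors amounts exactly to the compatibilities with $\rat^\sM$ that have been built into the construction at every step by means of {\bf P2}.

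The genuinely difficult step is \propositionref{prop:rightadjoint}. Unlike the adjoints for immersions or affine \'etale maps, $p^\sM_*$ along $p:\bbA^1_X\ra X$ corresponds to a functor that is not $t$-exact on perverse sheaves, so it cannot be obtained by direct lifting of an exact functor via {\bf P1}. Its construction requires the full Be\u{\i}linson-style glueing developed in \sectionref{sec:nearbycycles}, namely the motivic maximal extension functor $\Xi_f$ and the unipotent nearby and vanishing cycles functors for perverse motives, combined with the pullback-along-closed-immersions formalism of \sectionref{sec:pullback}; this is where the motivic input beyond mere \emph{ad hoc} lifting really enters.
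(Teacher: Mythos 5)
Your overall architecture tracks the paper's: axioms (1), (4), (5) via the compatibilities furnished by {\bf P2} and the conservativity of $\rat^\sM$, $f^\sM_\sharp$ for smooth $f$ by Verdier duality (\propositionref{prop:duality}), and the projection $\bbA^1_Y\ra Y$ as the crux. But two steps, as written, would fail. First, your reduction of axiom (2): you factor a quasi-projective morphism through immersions and smooth morphisms, treat non-affine \'etale maps by "glueing via \propositionref{prop:stack}", and claim that since a smooth morphism is Zariski-locally \'etale over affine space "everything reduces to $p:\bbA^1_X\ra X$". Right adjoints do not glue Zariski- or \'etale-locally at the level of the triangulated categories $\Db(\sM(-))$, and \propositionref{prop:stack} is a statement about the abelian categories (descent of objects and of Hom-sheaves), not a construction of derived direct images; in particular your scheme produces no $f^\sM_*$ for a projective-bundle projection or for a non-affine open immersion. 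The paper instead treats affine morphisms directly (a closed immersion into $\bbA^n_Y$ followed by iterated $\bbA^1$-projections, Steps 1--3 of \propositionref{prop:rightadjoint}) and then extends the direct-image $2$-functor to arbitrary morphisms by Be{\u\i}linson's \v{C}ech-type construction, invoking \cite[Proposition 7.1.1]{Morel-surQ} and \cite[Theorem 8.10]{Shulman} to handle the coherence issues, before verifying adjointness via the factorization $f=p\circ j\circ i$. Second, for axiom (6) you argue that $p^\sP_\sharp s^\sP_*$ is an equivalence and that "conservativity of $\rat^\sM$ transports this conclusion"; conservativity transports invertibility of natural transformations, not the property of a functor being an equivalence. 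The paper proves (6) by lifting the isomorphism $\bbD^\sP_{\bbA^1_X}s^\sP_*\simeq s^\sP_*\bbD^\sP_X$ from $\DA_\ct$ (using compatibility of the realization with the operations and the universal property) and computing $p^\sM_\sharp s^\sM_*\simeq\Id(-1)[-2]$ from the duality definition of $p^\sM_\sharp$.

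Beyond these, the genuinely hard content --- the construction of $p^\sM_*$ along $p:\bbA^1_Y\ra Y$ --- is deferred in your write-up to \propositionref{prop:rightadjoint}, which is itself the heart of the paper's proof of this theorem; you correctly identify that it cannot come from {\bf P1} (non-$t$-exactness) and that it needs the machinery of \sectionref{sec:nearbycycles}, but a proof must carry it out. The paper does so by expressing $p^\sP_*$ as $s^*_\sP$ applied to the fiber of a map between two $t$-exact endofunctors $\mathsf{F}_\sP=\Id$ and $\mathsf{G}_\sP=q^\sP_{1*}j^\sP_!j^*_\sP q_{2\sP}^*[1]$ (the $t$-exactness resting on \cite[4.1.10, 4.2.6.2]{BBD}), lifting these to $\sM$ by the universal property, and then constructing the unit and counit explicitly, the counit requiring the exact sequences of \subsectionref{subsec:AppPerverseMotive} and the functor ${}^p\mathsf{Log}^\sM$; the maximal extension functor $\Xi_f$ plays no direct role there. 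So while your outline names the right ingredients, it neither supplies the extension to non-affine morphisms nor the adjunction argument for the $\bbA^1$-projection, and its treatment of axiom (6) is logically invalid as stated.
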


In particular, we can apply \cite[Scholie 1.4.2]{AyoubI} to get the functors \eqref{eq:FuncDMFormalism}. The next subsection is devoted to the proof of \ref{theo:maintheo}, and the reader will find some applications of the main theorem in \ref{subsec:consequence}.

\subsection{Proof of the main theorem (\ref{theo:maintheo})}

We start by showing the existence of the direct image functor. The most important step is the proof of the existence of the direct image by the projection of the affine line $\bbA^1_Y$ onto its base $Y$.

\begin{prop}\label{prop:rightadjoint}
For every morphism $f:X\ra Y$ in $(\Sch/k)$, the functor 
\[f^*_\sM:\Db(\sM(Y))\ra\Db(\sM(X))\]
admits a right adjoint $f_*^\sM$. Moreover
\begin{enumerate}
\item if $i:Z\ra X$ is a closed immersion, the counit of the adjunction $i^*_\sM i_*^\sM\ra \Id$ is invertible;
\item the natural transformation
\[\gamma_f^\sM:\rat^\sM_Y f^\sM_*\ra f^\sP_*\rat^\sM_X,\]
obtained from $\theta^\sM_f$ by adjunction, is invertible;
\item if $p:\bbA^1_X\ra X$ is the canonical projection, then the unit morphism $\Id\ra p^\sM_*p^*_\sM$ is invertible.
\end{enumerate}
\end{prop}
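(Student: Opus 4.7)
The strategy is to factor any morphism in $(\Sch/k)$ and construct the right adjoint piece by piece, with the decisive case being the affine line projection $p: \bbA^1_X \to X$. Given $f: X \to Y$ quasi-projective, I would choose a factorization $X \xrightarrow{i} \bbP^N_Y \xrightarrow{q} Y$ with $i$ a locally closed immersion and $q$ the structural smooth projective projection, and further decompose $i$ as an open immersion composed with a closed immersion. Closed immersions are essentially already done: $i^\sM_*$ was constructed in \subsectionref{subsec:DirImageImm}, by \propositionref{prop:ImmExistenceAdjoint} it is right adjoint to $i^*_\sM$, its full faithfulness (\theoremref{theo:SupportDb}) yields (1), and \lemmaref{lemma_theta_i} yields (2).

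For open immersions one uses \propositionref{prop:AdjEtaleAffine} for the affine case and descends along an affine open cover by the stack property \propositionref{prop:stack}; the faithfulness of $\rat^\sM$ reduces the descent condition to the known one for perverse sheaves. For $q: \bbP^N_Y \to Y$, I would cover $\bbP^N$ by its standard affine charts $U_i \simeq \bbA^N$ and build $q^\sM_*$ as a Čech total complex, in the spirit of the proof of \propositionref{prop:ImmExistenceAdjoint}, with terms $(q\circ j_I)^\sM_*(j_I)^*_\sM(-)$ where $j_I$ ranges over the intersections of the $U_i \times Y$. Since each $U_I \times Y \to Y$ is an iterated projection of affine lines, everything reduces to constructing $p^\sM_*$ for $p:\bbA^1_X \to X$ and to verifying (3) in that case.

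To construct $p^\sM_*$ for the affine line projection, I would compactify: let $\bar p: \bbP^1_X \to X$ with affine open immersion $j: \bbA^1_X \hookrightarrow \bbP^1_X$ and complementary section $i_\infty: X \hookrightarrow \bbP^1_X$. The functor $j^\sM_*$ is given by \propositionref{prop:AdjEtaleAffine}, so it suffices to build $\bar p^\sM_*$ and set $p^\sM_* := \bar p^\sM_* \circ j^\sM_*$. The Beilinson-type glueing functors ${}^p\Xi^\sM$, ${}^p\Omega^\sM$, ${}^p\Phi^\sM$ at the point at infinity, together with the four exact sequences of \subsectionref{subsec:AppPerverseMotive}, present $\sM(\bbP^1_X)$ as an extension of $\sM(\bbA^1_X)$ and $\sM(X)$ with the appropriate monodromy data, and thus allow me to build $\bar p^\sM_*$ stratum by stratum on a bounded complex. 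The adjunction identities and the invertibility of $\gamma^\sM_p$ claimed in (2) are then lifted from their counterparts in $\Dbc(-,\bbQ)$ by property \textbf{P2} and the faithfulness of $\rat^\sM$; by construction, $f^\sM_* := q^\sM_* \circ i^\sM_*$ glues into a coherent 2-functor, the uniqueness in \textbf{P2} taking care of independence of the factorization.

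The principal obstacle is (3), the invertibility of the unit $\Id \to p^\sM_* p^*_\sM$ for the affine line projection. On the sheaf side this is the classical homotopy invariance $p^\sP_* p^*_\sP K \simeq K$, a consequence of $p^\sP_* \bbQ_{\bbA^1_X} \simeq \bbQ_X$ combined with the projection formula. Because the construction just sketched is manifestly compatible with $p^\sP_*$ through an invertible $\gamma^\sM_p$, and $\rat^\sM$ is conservative (being faithful exact), the motivic unit is invertible because its image under $\rat^\sM$ is. With (3) in hand, property (2) for general $f$ follows by reduction along the same factorization, using \textbf{P2} at each stage, and (1) for general closed immersions is once more \theoremref{theo:SupportDb}.
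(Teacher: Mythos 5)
Your reduction scheme is reasonable in outline (closed immersions, affine open immersions, then iterated $\bbA^1$-projections; the paper itself handles affine morphisms this way and then extends to arbitrary morphisms by a Be\u{\i}linson-type gluing of the $2$-functor rather than by a \v{C}ech complex on $\bbP^N_Y$), and your remark that (3) follows from (2) by conservativity of $\rat^\sM$ agrees with the paper. But the decisive step --- actually constructing $p^\sM_*$ for $p:\bbA^1_X\ra X$ together with its adjunction data --- is where your plan has a genuine gap. The compactification $\bar p:\bbP^1_X\ra X$ does not help: to build $\bar p^\sM_*$ ``stratum by stratum'' from the glueing description of $\sM(\bbP^1_X)$ you would need to push forward from the open stratum $\bbA^1_X$ down to $X$, which is precisely the functor $p^\sM_*$ (equivalently $\bar p^\sM_*\circ j^\sM_*$) you are trying to construct; the glueing exact sequences of \subsectionref{subsec:AppPerverseMotive} describe objects on $\bbP^1_X$, they do not compute their cohomology along the fibres of $\bar p$, so the argument is circular at its key point. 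The paper's solution is a different device: one writes $p^\sP_*\simeq s^*_\sP q^\sP_{1*}q^*_{2\sP}$ (with $q_1,q_2:\bbA^1\times\bbA^1\times Y\ra\bbA^1\times Y$ the two projections, $i$ the diagonal and $j$ its open complement), proves that the endofunctors $\mathsf{F}_\sP=\Id$ and $\mathsf{G}_\sP=q^\sP_{1*}j^\sP_!j^*_\sP q^*_{2\sP}(-)[1]$ of $\sP(\bbA^1\times Y)$ are $t$-exact and fit into an exact triangle $q^\sP_{1*}q^*_{2\sP}\ra\mathsf{F}_\sP\ra\mathsf{G}_\sP\xra{+1}$; since these composites already exist in $\DA_\ct(-)$ and are compatible with the Betti realization, the universal property lifts them to a morphism of exact functors $\mathsf{F}_\sM\ra\mathsf{G}_\sM$ on $\sM(\bbA^1\times Y)$, and $p^\sM_*$ is defined as $s^*_\sM$ applied to its mapping fibre. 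This trick --- lifting $t$-exact composites of the six operations from $\DA_\ct$ instead of trying to assemble $p_*$ from the glueing data --- is the missing idea.

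Second, the adjunction data cannot simply be ``lifted by {\bf P2} and faithfulness of $\rat^\sM$'': property {\bf P2} produces natural transformations between exact functors on the abelian categories that are lifted from the $\DA_\ct$ level, whereas $p^*_\sM$ and the sought $p^\sM_*$ are triangulated functors on $\Db(\sM(-))$ which are not $t$-exact. In the paper the unit and counit are constructed by hand (via further lifted exact functors, a lifted base-change isomorphism, and the functor ${}^p\mathsf{Log}^\sM$), and the triangle identities are verified by reducing to the $\mathrm{H}^0$'s and checking on the underlying perverse sheaves. Conservativity of $\rat^\sM$ only guarantees that such unit and counit, once constructed and shown to realize to $\eta_\sP$ and $\delta_\sP$, are what you need; it neither manufactures them nor yields the triangle identities for free.
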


\begin{proof}
In the proof, all products are fiber products over the base field $k$ and $\bbA^1$ is the affine line over $k$.
\bigskip

\underline{Step 1}:
Suppose first that $f$ is a closed immersion. Then
$f^*_\sM$ admits $f_*^\sM$ as a right adjoint by construction of $f^*_\sM$,
we know point (2) by \ref{lemma_theta_i},
and point (1) is true by (2) and by conservativity of
$\rat_X^\sM$.

\bigskip

\underline{Step 2}:  
Now we consider the case where $f$ is the
projection morphism $p:X:=\bbA^1_Y\ra Y$. As before, if we can
prove that $p^*_\sM$ admits a right adjoint satisfying (2),
then point
(3) will follow automatically.

We consider the following commutative diagram:
\[\xymatrix{\bbA^1\times Y\ar[d]_p & \bbA^1\times \bbA^1\times Y 
\ar[l]_-{q_2}\ar[d]^(.3){q_1}& U\times Y\ar[l]^-j \\
Y & \bbA^1\times Y\ar[l]^-p & \bbA^1\times Y\ar[l]^-{\Id}\ar[lu]_-{i}\ar[llu]^
(.7){\Id}|!{[l];[lu]}\hole}\]
where $q_1=\Id_{\bbA^1}\times p$, $q_2$ is the product of the projection $\bbA^1\ra\Spec k$  and of $\Id_{\bbA^1\times Y}$,
$i$ is the product of the diagonal morphism of $\bbA^1$ and of
$\Id_Y$, and $j$ is the complementary open inclusion.
We also denote by $s:Y\ra\bbA^1\times Y$ the zero section of $p$.
By the smooth base change theorem (or a direct calculation), the base
change map $p^*_\sP p_*^\sP\ra q_{1*}^\sP q_{2\sP}^*$ is an isomorphism,
so we get a functorial isomorphism $p_*^\sP\simeq s^*_\sP
p^*_\sP p_*^\sP\ra s_\sP^* q_{1*}^\sP q_{2\sP}^*$.

Let $K$ be a perverse sheaf on $Y$. Then $L:=q_{2\sP}^*K[1]$ is perverse, and we
have $i_\sP^*L=K[1]$, so we get an exact sequence of perverse sheaves on
$\bbA^1\times Y$:
\[0\ra i^\sP_*K\ra j^\sP_!j_\sP^* L\ra L\ra 0.\]
Applying the functor $q_{1*}^\sP$ and using the fact that $q_1\circ i=\Id_{\bbA^1\times Y}$, we get an exact triangle:
\[q_{1*}^\sP q_{2\sP}^* K\ra K\ra
q_{1*}^\sP j_!^\sP j^*_\sP L\xrightarrow{+1}.\]
We claim that $q_{1*}^\sP j_!^\sP j^*_\sP L$ is perverse. Indeed, this
complex is concentrated in perverse degrees $-1$ and $0$ by
\cite[4.1.1 \& 4.2.4]{BBD}. So we just need to prove
that  $M:=\pHc^{-1}
q_{1*}^\sP j_!^\sP j^*_\sP L$ is equal to $0$. 
By \cite[4.2.6.2]{BBD}, the adjunction
morphism $q_{1\sP}^*M[1]\ra j_!^\sP j^*_\sP L$ is injective; we denote its quotient
by $N$. Then, as $q_1\circ i=\Id_{\bbA^1\times Y}$ and $i^*_\sP j_!^\sP=0$,
we have $i^*_\sP N=M[2]$. But $i$ is the complement of an open affine
embedding, so $i^*_\sP$ is of perverse cohomological amplitude
$[-1,0]$ by \cite[4.1.10]{BBD}, hence $M=0$.

Finally, we get an exact sequence of perverse sheaves on $\bbA^1\times Y$:
\[0\ra \pHc^0 q_{1*}^\sP q_{2\sP}^* K\ra K\ra
q_{1*}^\sP j_!^\sP j^*_\sP q_{2\sP}^*K[1]\ra \pHc^1 q_{1*}^\sP q_{2\sP}^* K\ra 0.\]

Consider the functors $\mathsf{F}_\sP,\mathsf{G}_\sP:
\sP(\bbA^1\times Y)\ra\Db(\sP(\bbA^1\times Y))$ defined by
 \[\mathsf{F}_\sP(K):=K\]
and 
\[\mathsf{G}_\sP(K):=q_{1*}^\sP j_!^\sP j^*_\sP q_{2\sP}^*K[1].\]
We have just proved that these functors are $t$-exact (of course, this is
obvious for the first one) and that there is a functorial exact
triangle
\[q_{1*}^\sP q_{2\sP}^*\ra\mathsf{F}_\sP\ra\mathsf{G}_\sP\xrightarrow{+1}.\]
The functors $\mathsf{F}_\sP$ and $\mathsf{G}_\sP$ are defined in terms of the four operations. The existence of these operations in the categories 
$\DA_\ct(-)$ and the compatibility of the Betti realization with the four operations (see \cite[Th\'eor\`eme 3.19]{AyoubBetti}), imply by the universal property of the categories of perverse motives that there exist:
 \begin{itemize}
\item two exact functors
 \[\mathsf{F}_\sM,\mathsf{G}_\sM:\sM(\bbA^1\times Y)\ra\sM(\bbA^1\times Y),\]
\item a  natural transformation $\mathsf{F}_\sM\ra\mathsf{G}_\sM$, and
\item two invertible natural transformations $\rat^\sM_{\bbA^1\times
Y}\circ \mathsf{F}_\sM\ra \mathsf{F}_\sP\circ \rat^\sM_{\bbA^1\times Y}$
and $\rat^\sM_{\bbA^1\times Y}\circ \mathsf{G}_\sM\ra \mathsf{G}_\sP\circ 
\rat^\sM_{\bbA^1\times Y} $
\end{itemize}
 such that the diagram 
 \[\xymatrix{{\rat^\sM_{\bbA^1\times Y}\circ \mathsf{F}_\sM}\ar[r]\ar[d] & 
{\rat^\sM_{\bbA^1\times Y}\circ \mathsf{G}_\sM}\ar[d]\\
 {\mathsf{F}_\sP\circ\rat^\sM_{\bbA^1\times Y}}\ar[r] & 
{\mathsf{G}_\sP\circ\rat^\sM_{\bbA^1\times Y}}}\]
 is commutative.

 Given a complex $M^\bullet$ of perverse motives on $X=\bbA^1\times Y$, let
$\mathsf{H}_\sM(M^\bullet)$ be the mapping fiber of the morphism
$\mathsf{F}_\sM(M^\bullet)\ra\mathsf{G}_\sM(M^\bullet)$ of complexes 
of perverse motives on $X$. We get a triangulated functor
 \[\mathsf{H}_\sM:\Db(\sM(\bbA^1\times Y))\ra\Db(\sM(\bbA^1\times Y)),\] 
and
the Betti realization of $\mathsf{H}_\sM$ is isomorphic to
$q_{1*}^\sP q_2^{*\sP}$.

We now define a functor 
\[p^\sM_\bullet:=s^*_\sM\mathsf{H}_\sM(-):\Db(\sM(\bbA^1\times Y))
\ra\Db(\sM(Y)).\]
By construction of $p^\sM_\bullet$, we have an invertible natural
transformation 
\[\rat^\sM_{\bbA^1\times Y} p_\bullet^\sM\ra p_*^\sP
\rat^\sM_{\bbA^1\times Y}.\]

Note also the following useful fact. We denote by $f:\bbA^1\times Y\ra
\bbA^1$ the first projection and by $a:\bbG_m\times Y\ra
\bbA^1\times Y$ the inclusion. Then applying $s^*_\sM$ to the
connecting map in the exact sequence \eqref{eq:shortexNC}
in \ref{subsec:AppPerverseMotive}, we get a natural transformation
\[s^*_\sM\ra{}^p\mathsf{Log}^{\sM}_f a^*_\sM[1],\]
whose composition with the functor $\mathsf{H}_\sM$ is invertible. Indeed,
we can check this last statement after applying the functors
$\rat^\sM_Y$, and then this follows from the exact triangle
$q_{1*}^\sP q_2^{*\sP}\ra\mathsf{F}_\sP\ra\mathsf{G}_\sP\xrightarrow{+1}$
and the fact that the composition of the natural transformation
$s^*_\sP\ra{}^p\mathsf{Log}^{\sP}_f a^*_\sP[1]$ and of the functor
$q_{1*}^\sP q_2^{*\sP}\simeq p^*_\sP p_*^\sP\simeq\bbQ_{\bbA^1}
\boxtimes p_*^\sP$ is invertible. As the functor
${}^p\mathsf{Log}^{\sM}_f a^*_\sM$ is exact, we get an
isomorphism from $p_\bullet^\sM$
to the mapping cone of the morphism of exact
functors ${}^p\mathsf{Log}^{\sM}_f a^*_\sM\circ\mathsf{F}_\sM\ra
{}^p\mathsf{Log}^{\sM}_f a^*_\sM\circ\mathsf{G}_\sM$.

Let us prove that the functor $p_\bullet^\sM$
is right adjoint to the functor $p^*_\sM$.
Let $\eta_\sP:\Id\ra p^\sP_*p_\sP^*$ and $\delta_\sP: p_\sP^* p_*^\sP\ra
\Id$
be the unit and the counit
of the adjunction between $p^*_\sP$ and $p_\sP^*$. 
It suffices to lift $\eta_\sP$ and $\delta_\sP$ to natural
transformations
$\eta_\sM:\Id\ra p^\sM_\bullet p_\sM^*$ and $\delta_\sM: p_\sM^* p_\bullet
^\sM\ra\Id$ such that the two natural transformations
\[p^*_\sM\xra{p^*_\sM\eta_\sM}p^*_\sM p_\bullet^\sM p^*_\sM
\xra{\delta_\sM p^*_\sM}p^*_\sM\]
and
\[p_\bullet^\sM\xra{\eta_\sM p_\bullet^\sM} p_\bullet^\sM p^*_\sM p_\bullet^\sM
\xra{p_\bullet^\sM\delta_\sM}
p_\bullet^\sM\]
are isomorphisms and the first one is the identity (see Section 3.1 of M. Saito's~\cite{SaitoMixedSheaves}).
Note that the fact that these natural transformations are isomorphisms
will follow automatically from the conservativity of the functors
$\rat_X^\sM$.

We first construct $\eta_\sM$. Let us first show that $\mathsf{G}_\sM\circ p^*_\sM=0$.
As the functors $\rat_X^\sM$ are conservative, it suffices to prove that
$\mathsf{G}_\sP\circ p^*_\sP=0$. Let $k:U\ra\bbA^1\times\bbA^1$ be the open immersion
(remember that $U$ is the complement of the diagonal in $\bbA^1\times
\bbA^1$), so that $j=k\times\Id_Y$, and let $\pi:\bbA^1\times\bbA^1\ra
\bbA^1$ be the first projection, so that $q_1=\pi\times\Id_Y$. Then
\[\mathsf{G}_\sP\circ p^*_\sP=q_{1*}^\sP j_!^\sP j^*_\sP q_{2\sP}^*p^*_\sM[1]\simeq
q_{1*}^\sP((k_!^\sP\bbQ_{U})\boxtimes -)[1]
\simeq(\pi^\sP_*k^\sP_!\bbQ_{U})\boxtimes (-)[1],\]
so it suffices to show that
\[\pi_*^\sP k_!^\sP\bbQ_{U}=0.\]
Let $\Delta:\bbA^1\ra\bbA^1\times\bbA^1$ be the diagonal embedding. Then we
have an exact triangle
\[k_!\bbQ_U\ra\bbQ_{\bbA^1\times\bbA^1}\ra
\Delta_*\bbQ_{\bbA^1}\xrightarrow{+1},\]
so, applying $\pi_*^\sP$, we get an exact triangle
\[\pi_*^\sP k_!^\sP\bbQ_{U}\ra\bbQ_{\bbA^1}\xrightarrow
{\Id}
\bbQ_{\bbA^1}\xrightarrow{+1},\]
and this implies the desired result.

Now that we know that $\mathsf{G}_\sP\circ p^*_\sM=0$, we get $\mathsf{H}_\sM\circ p^*_\sM=
p^*_\sM$, hence $p_\bullet^\sM p^*_\sM=s^*_\sM\circ \mathsf{H}_\sM\circ p^*_\sM=
s^*_\sM p^*_\sM$, and we take for $\eta_\sM:\Id\ra p_\bullet^\sM p^*_\sM$ the
inverse of the
connection isomorphism $s^*_\sM p^*_\sM\xra{\sim}\Id$.

Next we construct $\delta_\sM$. First we define a functor
$q_{1\bullet}^\sM:\Db(\sM(\bbA^1\times Y))\ra\Db(\sM(\bbA^1\times\bbA^1
\times Y))$ in the same way as $p_\bullet^\sM$. That is, we consider the
commutative diagram
\[\xymatrix{\bbA^1\times\bbA^1\times Y\ar[d]_{q_1} & \bbA^1\times\bbA^1
\times \bbA^1\times Y 
\ar[l]_-{r_2}\ar[d]^(.3){r_1}& \bbA^1\times U\times Y\ar[l]^-J \\
\bbA^1\times Y & \bbA^1\times\bbA^1\times Y\ar[l]^-{q_1} & \bbA^1\times
\bbA^1\times Y\ar[l]^-{\Id}\ar[lu]_-{I}\ar[llu]^
(.7){\Id}|!{[l];[lu]}\hole}\]
where $r_1=\Id_{\bbA^1}\times q_1$, $r_2=\Id_{\bbA^1}\times q_2$,
$I=\Id_{\bbA^1}\times i$ and $J=\Id_{\bbA^1}\times j$, and
we set $t=\Id_{\bbA^1}\times s:\bbA^1\times Y\ra\bbA^1\times\bbA^1\times Y$.
Then the functors $\mathsf{F}'_\sP,\mathsf{G}'_\sP$ from $\Db(\sP(\bbA^1\times\bbA^1\times Y))$ to
itself defined by $\mathsf{F}'_\sP=\Id$ and $\mathsf{G}'_\sP=r_{1*}^\sP J_!^\sP J^*_\sP
r_{2\sP}^*[1]$ are $t$-exact and
we have a natural transformation $\mathsf{F}'_\sP\ra \mathsf{G}'_\sP$. As before, we can lift
these functors and transformation to endofunctors $\mathsf{F}'_\sM\ra \mathsf{G}'_\sM$
of $\Db(\sM(\bbA^1\times\bbA^1\times Y))$. We denote by $\mathsf{H}'_\sM$ the
mapping fiber of $\mathsf{F}'_\sM\ra \mathsf{G}'_\sM$, and we set $q_{1\bullet}^\sM=t^*_\sM\circ
\mathsf{H}'_\sM$.
Also, if we denote by $f':\bbA^1\times\bbA^1\times Y\ra\bbA^1$ the second
projection and by $a'$ the injection of
$\bbA^1\times\bbG_m\times Y$ into $\bbA^1\times\bbA^1\times Y$, we get as
above an invertible natural transformation from $q_{1\bullet}^\sM$
to the mapping cone of the morphism of exact functors
${}^p\mathsf{Log}^{\sM}_{f'} (a')^*_\sM\circ\mathsf{F}'_\sM\ra
{}^p\mathsf{Log}^{\sM}_{f'} (a')^*_\sM\circ\mathsf{G}'_\sM$.

Let's show that the base change isomorphism $p^*_\sP p_*^\sP\xra{\sim} q_{1*}
^\sP q_{2\sP}^*$ lifts to a morphism 
$p^*_\sM p_\bullet^\sM\ra q_{1\bullet}
^\sM q_{2\sM}^*$ (which will automatically be an isomorphism).
We have invertible natural transformations 
$\mathsf{F}'_\sP\circ q_{2\sP}^*\simeq q_{2\sP}^*\circ \mathsf{F}_\sP$ and
$\mathsf{G}'_\sP\circ q_{2\sP}^*\simeq q_{2\sP}^*\circ \mathsf{G}_\sP$. As all the functors
involved are $t$-exact up to the same shift, the transformations lift to
natural transformations $\mathsf{F}'_\sM\circ q_{2\sM}^*\simeq q_{2\sM}^*\circ\mathsf{F_\sM}$ and
$\mathsf{G}'_\sM\circ q_{2\sM}^*\simeq q_{2\sM}^*\circ\mathsf{G}_\sM$, and induce an
invertible natural transformation $\mathsf{H}'_\sM\circ q_{2\sM}^*\simeq q_{2\sM}^*\circ
\mathsf{H}_\sM$. Composing on the left with $t_\sM^*$ and using the connection
isomorphism $t_\sM^* q_{2\sM}^*\simeq p_\sM^* s_\sM^*$, we get
the desired isomorphism $p^*_\sM p_\bullet^\sM\xra{\sim} q_{1\bullet}
^\sM q_{2\sM}^*$.

Composing this isomorphism with the unit of the adjunction
$(i^*_\sM,i_*^\sM)$ and using the connection isomorphism $i^*_\sM q_{2\sM}^*
\simeq\Id$ gives a natural transformation $p^*_\sM p_\bullet^\sM\ra
q_{1\bullet}^\sM i_*^\sM$. It remains to show that the isomorphism
$q_{1*}^\sP i_*^\sP\simeq\Id$ lifts to a natural transformation
$q_{1\bullet}^\sM i_*^\sM\ra\Id$.
First we note that the functors
\begin{equation}\label{eq:funcH''}
{}^p\mathsf{Log}^{\mathsf{B}}_{f'}(a')^*_\sP r_{1*}^\sP r_2^{*\sP}i_*^\sP[1]
\end{equation}
and
\[{}^p\mathsf{Log}^{\mathsf{B}}_{f'}(a')^*_\sP r_{1*}^\sP J_!^\sP J^*_\sP
 r_2^{*\sP}i_*^\sP[1]\]
are $t$-exact and the counit of the adjunction $(J_!^\sP,J^*_\sP)$ induces a natural transformation from the second one to the first one. Hence, the functor \eqref{eq:funcH''}
 induces an exact
endofunctor $\mathsf{H}''_\sM$of $\sM(\bbA^1\times Y)$,
together with a natural transformation 
${}^p\mathsf{Log}^\sM (a')^*_\sM
\circ \mathsf{G}'_\sM\circ i_*^\sM\ra \mathsf{H}''_\sM$. 
But we also have an invertible natural transformation
of $t$-exact functors
\begin{align*}
\Id_{\Db(\sP(\bbA^1\times Y))} & \simeq t^*_\sP q_1^{*\sP}q_{1*}^\sP i_*^\sP
\mbox{ (connection isomorphisms)} \\
&\xra{\sim} t^*_\sP r_{1*}^\sP r_{2\sP}^* i_*^\sP\mbox{ (base change)}\\
& \xra{\sim} {}^p\mathsf{Log}^{\mathsf{B}}_{f'}(a')^*_\sP r_{1*}^\sP r_{2\sP}^*
i_*^\sP[1]\mbox{ (by \ref{subsec:AppPerverseMotive} as above)}
\end{align*}
and all the maps in it are defined in the categories 
$\DA_\ct(-)$, so it induces an invertible natural transformation
$\Id_{\sM(\bbA^1\times Y)}\xra{\sim}\mathsf{H}''_\sM$. Composing it with
${}^p\mathsf{Log}^\sM (a')^*_\sM
\circ\mathsf{G}'_\sM\circ i_*^\sM\ra\mathsf{H}''_\sM$
and using the isomorphism from $q_{1\bullet}^\sM$ to the mapping fiber of
${}^p\mathsf{Log}^\sM (a')^*_\sM\circ\mathsf{F}'_\sM\ra
{}^p\mathsf{Log}^\sM (a')^*_\sM\circ\mathsf{G}'_\sM$,
we finally get the desired natural
transformation $\delta_\sM$:
\[p^*_\sM p_\bullet^\sM\xra{\sim}q_{1\bullet}^\sM q_2^{*\sM}
\ra q_{1\bullet}^\sM i_*^\sM\ra {}^p\mathsf{Log}^\sM (a')^*_\sM
\circ\mathsf{G}'_\sM\circ i_*^\sM\ra\mathsf{H}''_\sM\simeq\Id_{\Db(\sM(\bbA^1\times Y))}.\]

Finally, we check that the natural transformation
\[p^*_\sM\xra{p^*_\sM\eta_\sM}p^*_\sM p_\bullet^\sM p^*_\sM
\xra{\delta_\sM p^*_\sM}p^*_\sM\]
is the identity.
The two functors $p^*_\sM p_\bullet^\sM p^*_\sM[1]$ and $p^*_\sM[1]$ are
exact and equal to the derived functor of their $\mathbf{H}^0$, and the
natural transformations 
\[(p^*_\sM\eta_\sM)^{-1},\delta_\sM p^*_\sM:
p^*_\sM p_\bullet^\sM p^*_\sM[1]\ra p^*_\sM[1]\] 
are also defined by
extending their action on the $\mathbf{H}^0$'s, so it suffices to
check that they are equal on these $\mathbf{H}^0$'s. But this follows
from the analogous result for the category of perverse sheaves.

\bigskip

\underline{Step 3}: We can now use the Brown Representability Theorem to see that the proposition is true more generally if $f$ is the projection $p:X:=E\ra Y$ of a vector bundle $E$ on $Y$. 
Indeed, given a $k$-variety $S$, let $\Ind(\mathscr M(S))$ be the abelian category of $\Ind$-objects of $\mathscr M(S)$ and consider the bounded derived category $\Db(\mathscr M(S))$ as a full subcategory of the unbounded derived category $\D(\Ind(\mathscr M(S)))$ (see e.g. Theorem~15.3.1 of Kashiwara--Schapira's book~\cite{MR2182076}). As the morphism $p:E\ra Y$ is smooth, the functor $p^*_\mathscr M$ extends to a triangulated functor $\mathsf{L}:\D(\Ind(\mathscr M(Y)))\ra\D(\Ind(\mathscr M(E)))$. By the Brown Representability Theorem (see e.g. Theorem~4.1 of Neeman's article~\cite{MR1308405} or the 
book~\cite{MR1812507} by the same author), the functor $\mathsf{L}$ admits a right adjoint $\mathsf{R}:\D(\Ind(\mathscr M(E))\ra\D(\Ind(\mathscr M(Y)))$. 

To prove that $p^*_\mathscr M$ admits a right adjoint $p^\mathscr M_*$, it suffices to check that, given $M\in\Db(\mathscr M(E))$, the object $\mathsf{R}(M)$ belongs to the subcategory $\Db(\mathscr M(Y))$. This can be checked on a finite Zariski open covering of $Y$ that trivializes $E$ and thus follows from the case of a projection $\bbA^1_Y\ra Y$ proved in step 2. That $p^\mathscr M_*$ satisfies (2) can again be checked on a finite Zariski open covering of $Y$ that trivializes $E$ and we conclude using step 2.

\bigskip

\underline{Step 4}: 
By steps 1 and 3, the proposition
is true if $f$ is an affine
morphism. Indeed, if $f$ is affine, then we can write $f=p\circ i$, where $i$ is
a closed immersion and $p:E\ra Y$ is a vector bundle on $Y$.
\bigskip

\underline{Step 5}:
We now consider the case of an arbitrary morphism $f:X\ra Y$ in
$(\Sch/k)$. By Jouanolou's trick (cf. Jouanolou's paper~\cite{Jouanolou}),
there exists a vector bundle $E\ra X$ and an affine
$E$-torsor $p:\widetilde{X}\ra X$. As $p$ is affine, we know the
proposition for $p$ by step 3. Moreover, the unit 
$\Id\ra p_*^\sM p^*_\sM$ is an isomorphism; indeed, it suffices to show
this after restricting to an open covering of $X$, so we may assume that
the morphism $p$ is isomorphic to the second projection $\bbA^n\times X\ra
X$, and then the result follows from point (3) of the proposition.
As the unit of the adjunction $(p^*_\sM,p_*^\sM)$ is an isomorphism, the
left adjoint $p^*_\sM$ is fully faithful. 

Let $g=f\circ p$. As
$\widetilde{X}$ is affine, the morphism $g$ is affine. Also, we show
as before that the unit $\Id\ra p_*^\sP p^*_\sP$ is an isomorphism, so
we get an isomorphism $f_*^\sP\iso f_*^\sP p_*^\sP p^*_\sP\simeq
g_*^\sP p^*_\sP$. We set $f_*^\sM=g_*^\sM p^*_\sM$; by the calculation we 
just did, this satisfies condition (2). It remains to show that
$f_*^\sM$ is right adjoint to $f^*_\sM$. Let $K\in\Ob\Db\sM(Y)$ and
$L\in\Ob\Db\sM(X)$. Then we have isomorphisms
\begin{align*}
\Hom_{\Db\sM(Y)}(K,f_*^\sM L)=\Hom_{\Db\sM(Y)}(K,g_*^\sM p^*_\sM L)
&\simeq\Hom_{\Db\sM(\widetilde{X})}(g^*_\sM K,p^*_\sM L)\\
&\simeq\Hom_{\Db\sM(\widetilde{X})}(p^*_\sM f^*_\sM K,p^*_\sM L),
\end{align*}
and the last group is isomorphic to $\Hom_{\Db\sM(X)}(f^*_\sM K,L)$ by the
full faithfulness of $p^*_\sM$.
\end{proof}

\begin{prop}\label{prop:leftadjoint}
For every smooth morphism $f:X\ra Y$ in $(\Sch/k)$ the functor 
\[f^*_\sM:\Db(\sM(Y))\ra\Db(\sM(X))\]
admits a left adjoint $f_\sharp$. Moreover 
\begin{enumerate}
\item  the natural transformation
\[f^\sP_\sharp\rat^\sM_X \ra \rat^\sM_Yf^\sM_\sharp\]
obtained from $\theta^\sM_f$ by adjunction, is invertible;
\item for every cartesian square
\[\xymatrix{{X'}\ar[r]^-{g'}\ar[d]^-{f'} & {X}\ar[d]^-{f}\\
{Y'}\ar[r]^-{g} & {Y}}\]
with $f$ smooth, the exchange $2$-morphism $f'^\sM_\sharp g'^*_\sM\ra g^*_\sM f^\sM_\sharp$ is invertible. 
\end{enumerate}
\end{prop}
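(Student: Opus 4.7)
The plan is to construct $f^\sM_\sharp$ via Verdier duality from the right adjoint $f^\sM_*$ of \propositionref{prop:rightadjoint} and the duality functor of \propositionref{prop:duality}. Let $d$ denote the relative dimension of $f$ and set
\[f^\sM_\sharp(M) := \bbD^\sM_Y\bigl(f^\sM_*(\bbD^\sM_X(M))\bigr)(d)[2d].\]
This mirrors the formula $f^\sP_\sharp(-) \simeq \bbD^\sP_Y f^\sP_*\bigl(\bbD^\sP_X(-)\bigr)(d)[2d]$ valid for perverse sheaves, which itself encodes $f^\sP_\sharp(-) \simeq f^\sP_!((-)(d)[2d])$ (for smooth $f$) together with $\bbD f^\sP_! \bbD \simeq f^\sP_*$.

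To verify that $f^\sM_\sharp$ is left adjoint to $f^*_\sM$, I would combine the adjunction $(f^*_\sM, f^\sM_*)$ of \propositionref{prop:rightadjoint}, the involution $\Id \simeq \bbD^\sM \circ \bbD^\sM$ of \propositionref{prop:duality}(1), and the reformulation $\bbD^\sM_X f^*_\sM(-) \simeq f^*_\sM(\bbD^\sM_Y(-))(d)[2d]$ of the $2$-isomorphism $\varepsilon^\sM_f$ from \propositionref{prop:duality}(2). The unit and counit of $(f^\sM_\sharp, f^*_\sM)$ are obtained by transporting those of $(f^*_\sM, f^\sM_*)$ across these isomorphisms, and the triangle identities reduce to a formal verification. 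Claim (1) is then immediate from the defining formula: the $2$-isomorphism $\nu^\sM$ of \propositionref{prop:duality}(1) intertwines $\rat^\sM$ with $\bbD$, and the $2$-isomorphism $\gamma^\sM_f$ of \propositionref{prop:rightadjoint}(2) intertwines $\rat^\sM$ with $f^\sM_*$, so one obtains invertible $2$-isomorphisms $\rat^\sM_Y \circ f^\sM_\sharp \simeq \bbD^\sP_Y f^\sP_* \bbD^\sP_X(-)(d)[2d] \circ \rat^\sM_X \simeq f^\sP_\sharp \circ \rat^\sM_X$ whose composite is the $2$-morphism obtained from $\theta^\sM_f$ by adjunction.

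For claim (2), given a cartesian square with $f$ (and hence $f'$) smooth of relative dimension $d$, the exchange $2$-morphism $f'^\sM_\sharp g'^*_\sM \to g^*_\sM f^\sM_\sharp$ is constructed formally from the counit of $(f'^\sM_\sharp, f'^*_\sM)$, the unit of $(f^\sM_\sharp, f^*_\sM)$, and the connection $2$-isomorphism $g'^*_\sM f^*_\sM \simeq f'^*_\sM g^*_\sM$ of the $2$-functor $\mathsf{H}^*_\sM$ constructed in \sectionref{sec:pullback}. To check invertibility, apply $\rat^\sM_{Y'}$: using claim (1) and the compatibility of $\rat^\sM$ with $g^*_\sM$ and $g'^*_\sM$ established in \subsectionref{subsec:2func}, the transported morphism coincides with the classical smooth base change isomorphism $f'^\sP_\sharp g'^*_\sP \simeq g^*_\sP f^\sP_\sharp$ on perverse sheaves. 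Since $\rat^\sM_{Y'}$ is a faithful exact functor between triangulated categories, it is conservative, and invertibility descends to perverse motives.

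The main obstacle is bookkeeping: one must verify that the exchange $2$-morphism constructed on motives actually transports under $\rat^\sM$ to the classical exchange on perverse sheaves, so that invertibility on the target forces invertibility on the source. This is a matter of chasing the compatibility $2$-isomorphisms $\theta^\sM$, $\gamma^\sM$, $\nu^\sM$ and the connection data, and in each instance reduces via property \textbf{P2} to a known commutativity for perverse sheaves.
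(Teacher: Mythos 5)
Your proposal follows essentially the same route as the paper: define $f^\sM_\sharp:=\bbD^\sM_Y f^\sM_*(-d)[-2d]\bbD^\sM_X$, obtain the adjunction from \propositionref{prop:rightadjoint} together with \propositionref{prop:duality}, identify $\rat^\sM_Y f^\sM_\sharp$ with $f^\sP_\sharp\rat^\sM_X$ via $\nu^\sM$ and $\gamma^\sM_f$ (the paper checks, using the compatibility diagrams of \propositionref{prop:duality}, that this composite really is the adjoint transpose of $\theta^\sM_f$, which is the one point you assert rather than verify), and deduce (2) from (1) by conservativity of $\rat^\sM_{Y'}$. This is correct and matches the paper's argument.
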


\begin{proof}
The assertion (2) is an immediate consequence of (1) since the functor $\rat^\sM_Y$ is conservative. 

We deduce the proposition from \ref{prop:rightadjoint} using Verdier duality. Let $f:X\ra Y$ be a smooth morphisms of relative dimension $d$.  Note that $f^*_\sP$ has a left adjoint  given by 
\[f_\sharp^\sP:=\bbD^\sP_Y f^\sP_*(-d)[-2d]\bbD^\sP_X.\] 
Therefore we similarly set $f_\sharp^\sM:=\bbD^\sM_Y f^\sM_* (-d)[-2d]\bbD^\sM_X$.

Let $A$ be an object in $\Db(\sM(X))$ and $B$ be an object in $\Db(\sM(Y))$. Then, \ref{prop:rightadjoint} and \ref{prop:duality} provide isomorphisms
\begin{align*}
\Hom(f_\sharp^\sM A,B)&\simeq\Hom(\bbD^\sM_Y B,f^\sM_*(-d)[-2d]\bbD^\sM_X A)\simeq\Hom(f^*_\sM(d)[2d]\bbD^\sM_YB,\bbD_X^\sM A)\\
& \simeq\Hom(A,\bbD_X^\sM f^*_\sM(d)[2d] \bbD^\sM_YB)\simeq\Hom(A,f^*_\sM B).
\end{align*}
This shows that $(f_\sharp^\sM,f_\sM^*)$ form a pair of adjoint functors.
Note that the counit ${}^\sM\delta^*_\sharp$ of the adjunction is given by the composition
\begin{equation}\label{eq:counitstarsharp}
f^\sM_\sharp f^*_\sM\xra{\varepsilon_f^\sM} \bbD_Y^\sM f^\sM_*f^*_\sM\bbD_Y^\sM\xra{{}^\sM\eta^*_*}(\bbD^\sM_Y)^2\xra{(\varepsilon_Y^\sM)^{-1}} \Id
\end{equation}
and the unit ${}^\sM\eta^*_\sharp$ by the composition
\begin{equation}\label{eq:unitstarsharp}
\Id\xra{\varepsilon^\sM_X}(\bbD^\sM_X)^2\xra{{}^\sM\delta^*_*} \bbD^\sM_X f^*_\sM f^\sM_*\bbD^\sM_X \xra{\varepsilon_f^\sM}f^*_\sM f_\sharp^\sM.
\end{equation}
To show that the morphism
\[f^\sP_\sharp\rat^\sM_X \xra{{}^\sM\eta^*_\sharp}f^\sP_\sharp\rat^\sM_Xf^*_\sM f^\sM_\sharp\xra{(\theta^\sM_f)^{-1}}f^\sP_\sharp f^*_\sP\rat^\sM_Y f^\sM_\sharp\xra{{}^\sP\delta^*_\sharp} \rat^\sM_Yf^\sM_\sharp\]
is invertible, it is enough to check that it is equal to the morphism
\[\xymatrix{{f^\sP_\sharp\rat^\sM_X}\ar[r]^-{(\nu^\sM_X)^{-1}} & {\bbD^\sP_Yf^\sP_*(-d)[-2d]\rat^\sM_X \bbD^\sM_X}\ar[d]^-{\gamma_f^\sM} & {}\\
{} &{\bbD^\sP_Y\rat^\sM_Y f^\sM_*(-d)[-2d] \bbD^\sM_X}\ar[r]^-{\nu_Y^\sM} &{\rat^\sM_Yf^\sM_\sharp}}\]
where $\gamma_f^\sM$ is the invertible natural transformation of \ref{prop:rightadjoint}. Using the expressions of ${}^\sM\delta^*_\sharp $ and ${}^\sM\eta^*_\sharp $ given in \eqref{eq:counitstarsharp} and \eqref{eq:unitstarsharp}, this follows directly from \ref{prop:duality} (2) and \ref{prop:duality} (1), which ensure that the diagram
\[\xymatrix{{\bbD_X^\sP f^*_\sP[d]\bbD_Y^\sP\rat^\sM_Y}\ar[rr]^-{\varepsilon_f^\sP}\ar[rd]^-{\varepsilon_f^\sM} & {} &{(\bbD^\sP_X)^2 f_\sP^*(d)[d]\rat^\sM_Y}\ar[d]^-{(\varepsilon^\sP_X)^{-1}}\\
{\bbD^\sP_X f_\sP^*[d]\rat^\sM_Y\bbD^\sM_Y}\ar[d]^-{\varepsilon_f^\sP}\ar[u]_-{\nu_Y^\sM} & {f^*_\sP(d)[d](\bbD^\sP_Y)^2\rat^\sM_Y}\ar[ld]_-{\nu^\sM_Y}\ar[r]^-{(\varepsilon^\sP_Y)^{-1}} &{f_\sP^*(d)[d]\rat^\sM_Y}\\
{f_\sP^*(d)[d]\bbD^\sP_Y\rat^\sM_Y\bbD^\sM_Y}\ar[rr]^-{\nu^\sM_Y} &{} & {f_\sP^*(d)[d]\rat^\sM_Y(\bbD^\sM_Y)^2}\ar[u]_-{(\varepsilon^\sM_Y)^{-1}}}\]
is commutative.

\end{proof}

The pair $(j^*_\sM, i^*_\sM)$ is conservative, since so is the pair $(j^*_\sP, i^*_\sP)$. This follows from the existence of the isomorphisms $\theta^\sM_j$, $\theta^\sM_i$ and the fact that $\rat^\sM_X$ is a conservative functor.

To finish the proof of \ref{theo:maintheo}, it remains to check that, if $s$ is the zero section of  $p:\bbA^1_X\ra X$, then $p^\sM_\sharp s^\sM_*$ is an equivalence of categories. By construction
\[p^\sM_\sharp s^\sM_*=\bbD_{X}^\sM p_*^\sM(-1)[-2]\bbD^\sM_{\bbA^1_X} s^\sM_*.\]
Note that the isomorphism $\bbD^\sP_{\bbA^1_X}s^\sP_*\simeq s^\sP_*\bbD^\sP_X$ exists in the category of (constructible) \'etale motives. Therefore,  
 the compatibility of the Betti realization with the four operations (see \cite[Th\'eor\`eme 3.19]{AyoubBetti}) implies by the universal property of the categories of perverse motives that this isomorphism lifts to an isomorphism $\bbD^\sM_{\bbA^1_X}s^\sM_*\simeq s^\sM_*\bbD^\sM_X$. As a consequence, we get an isomorphism
\[p^\sM_\sharp s^\sM_*\simeq\bbD_{X}^\sM p_*^\sM s^\sM_*(-1)[-2]\bbD^\sM_{\bbA^1_X}\simeq \Id(-1)[-2].\]
This shows that $p^\sM_\sharp s^\sM_*$ is an equivalence of categories and concludes the proof of \ref{theo:maintheo}.

\subsection{Complement to the main theorem}

The following proposition complements \ref{theo:maintheo}.

\begin{prop}
Let $f:X\ra Y$ be a morphism of quasi-projective $k$-varieties. Then, the natural transformations 
\[\xi^\sM_f:\rat^\sM_X f^!_\sM\ra f^!_\sP\rat^\sM_Y\qquad \rho^\sM_f:f^\sP_!\rat^\sM_X\ra \rat^\sM_Y f^\sM_!\]
are invertible. 
\end{prop}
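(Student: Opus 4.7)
The approach will be to mimic Ayoub's argument for the Betti realization in \cite[Th\'eor\`eme 3.19]{AyoubBetti}, since that argument is entirely formal: it only uses that one is given a morphism of stable homotopical $2$-functors whose underlying $*$-transformation is invertible. In our setting both properties are in place: \theoremref{theo:maintheo} asserts that $\mathsf{H}^*_\sM$ is a stable homotopical $2$-functor and that $(\rat^\sM,\theta^\sM)$ is a morphism of such $2$-functors, and \lemmaref{lemma_theta_i} together with the construction of $f^*_\sM$ in \sectionref{sec:pullback} ensures that $\theta^\sM_f$ is invertible for every morphism $f$.

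I would first factor $f$ as $p\circ j$ with $j:X\hookrightarrow \overline X$ an open immersion and $p:\overline X\ra Y$ projective (possible since $f$ is quasi-projective). The formalism of \cite[Scholie 1.4.2]{AyoubI} supplies canonical isomorphisms $j_!\simeq j_\sharp$ (open immersion case) and $p_!\simeq p_*$ (proper case), both in $\mathsf{H}^*_\sM$ and in $\mathsf{H}^*_\sP$. The exchange $2$-morphisms $\rho^\sM_f$ and $\xi^\sM_f$ then decompose as horizontal composites of the corresponding transformations for $j$ and $p$, so it suffices to handle these two cases separately. For the open immersion $j$, invertibility of $\rho^\sM_j$ is \propositionref{prop:leftadjoint}(1) applied to the smooth morphism $j$ of relative dimension $0$, while $\xi^\sM_j$ is identified with $(\theta^\sM_j)^{-1}$ under $j^!=j^*$ and is thus invertible by \lemmaref{lemma_theta_i}. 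For the projective morphism $p$, $\rho^\sM_p$ is identified with $\gamma^\sM_p$ under $p_!\simeq p_*$ and is invertible by \propositionref{prop:rightadjoint}(2); finally, $\xi^\sM_p$ is the mate of $\rho^\sM_p$ under the adjunctions $(p^\sM_!,p^!_\sM)$ and $(p^\sP_!,p^!_\sP)$ provided by \theoremref{theo:maintheo}, and the mate of an invertible $2$-morphism between adjoint functors is invertible.

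The main obstacle will be the bookkeeping needed to check that the identifications $j_!\simeq j_\sharp$ and $p_!\simeq p_*$ on the motivic side correspond to their perverse counterparts under $\rat^\sM$, so that the decomposition of $\rho^\sM_f$ and $\xi^\sM_f$ as horizontal composites is well-defined; this is purely formal and follows from the uniqueness built into Ayoub's construction, since those isomorphisms are characterized by compatibilities with the units and counits of adjunctions already known to intertwine with $\rat^\sM$ through $\theta^\sM$, $\gamma^\sM$, and \propositionref{prop:leftadjoint}(1). Once this compatibility is recorded, the elementary case analysis above completes the proof.
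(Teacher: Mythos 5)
Your handling of $\rho^\sM_f$, and of $\xi^\sM_j$ for an open immersion $j$, is sound, and the compatibility bookkeeping you describe is indeed formal. The gap is in the very last step: the claim that the mate of an invertible $2$-morphism under the adjunctions $(p^\sM_!,p^!_\sM)$ and $(p^\sP_!,p^!_\sP)$ is again invertible is false in general. Passing to mates (exchange transformations obtained by adjunction) does not preserve invertibility -- if it did, the invertibility of $\theta^\sM_f$ would immediately yield that of its mate $\gamma^\sM_f$ with no work, making \propositionref{prop:rightadjoint} (and Ayoub's Th\'eor\`eme~3.19, which genuinely needs constructibility) contentless. So for a proper morphism $p$ the invertibility of $\xi^\sM_p$ does not follow from that of $\rho^\sM_p$; this is precisely where the content of the proposition sits, and your argument supplies nothing for it.

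What is needed, and what the paper does, is the following. By the formal result \cite[Th\'eor\`eme 3.4]{AyoubBetti} on morphisms of stable homotopical $2$-functors (equivalently, in your factorization, write the projective $p$ as a closed immersion into $\bbP^n_Y$ followed by the smooth proper projection, the smooth case being disposed of by relative purity $p^!\simeq p^*(d)[2d]$ and $\theta^\sM_p$), everything reduces to showing that $\xi^\sM_i$ is invertible for a closed immersion $i:Z\hookrightarrow X$. For such an $i$ one cannot express $i^!$ through $i^*$; instead one uses the localization triangle $i_*i^!\ra\Id\ra j_*j^*\xra{+1}$, where $j$ is the complementary open immersion, on both the motivic and the perverse side. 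Comparing the two triangles through $\rat^\sM_X$ via the already established invertible transformations $\gamma^\sM_i$, $\gamma^\sM_j$ and $\theta^\sM_j$ (see \eqref{eq:MorDTLocRat} and \lemmaref{lemma_theta_i}), one gets a morphism of exact triangles in which every map except $i^\sP_*\xi^\sM_i$ is known to be invertible; hence $i^\sP_*\xi^\sM_i$ is invertible, and the conservativity of $i^\sP_*$ gives the invertibility of $\xi^\sM_i$. Your proposal needs this (or an equivalent duality argument identifying $p^!_\sM$ with $\bbD^\sM p^*_\sM \bbD^\sM$ compatibly with $\rat^\sM$) in place of the mate-of-an-isomorphism claim.
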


\begin{proof}
By \cite[Th\'eor\`eme 3.4]{AyoubBetti}, it just remains to check that $\xi^\sM_i$ is invertible if $i:Z\hookrightarrow X$ is a closed immersion. Let $j:U\hookrightarrow X$ be the open immersion of the complement of $Z$ in $X$. Then, we have a commutative diagram
\[
\xymatrix{{i^\sP_*i^!_\sP\rat^\sM_X}\ar[r] &{\rat^\sM_X}\ar[r]\ar@{=}[d] &{j^\sP_*j^*_\sP\rat^\sM_X}\ar[r]^-{+1} &{}\\
{i^\sP_*\rat^\sM_Z i^!_\sM}\ar[r]\ar[u]_-{\xi^\sM_i} &{\rat^\sM_X}\ar[r]\ar@{=}[d] &{j^\sP_*\rat^\sM_U j^*_\sM}\ar[r]^-{+1}\ar[u]_-{(\theta_j^\sM)^{-1}} &{}\\
{\rat^\sM_Xi^\sM_*i^!_\sM}\ar[r]\ar[u]_-{\gamma^\sM_i} &{\rat^\sM_X}\ar[r] &{\rat^\sM_X j^\sM_*j^*_\sM}\ar[r]^-{+1}\ar[u]_-{\gamma_j^\sM} &{}}
\]
which implies that the image of $\xi^\sM_i$ under $i^\sM_*$ is invertible since all the other morphisms are. Therefore $\xi^\sM_i$ is also invertible.
\end{proof}

\subsection{Some consequences}\label{subsec:consequence}

In this subsection, we draw some immediate consequences of the main theorem (\ref{theo:maintheo}). 

\subsubsection*{Geometric local systems are motivic} 

A $\bbQ$-local system $\mathscr L$ on a quasi-projective $k$-variety $X$ will be called geometric if there exists a smooth proper morphism $g:Z\ra X$ such that $\mathscr L=R^ig_*\bbQ$ for some integer $i\in\bbZ$. We will say that $\mathscr L$ is motivic if there exists an object $L$ in $\Db(\mathscr M(X))$ such that $\mathscr L$ and $\rat^{\sM}_X(L)$ are isomorphic in the category $\Db(\Perv(X))$.

\begin{coro}
A geometric $\bbQ$-local system $\mathscr L$ on a quasi-projective $k$-variety $X$ is motivic. 
\end{coro}

\begin{proof}
If the local system $\mathscr L$ is geometric, there exists a smooth proper morphism $g:Z\ra X$ such that $\mathscr L=R^ig_*\bbQ_Z$ for some integer $i\in\bbZ$. Then $\mathscr L$ is the image under the functor $\rat^\sM_X$ of the perverse motive ${}^\mathrm{c}\mathrm{H}^i(g^\sM_*\bbQ^\sM_Z)$, where ${}^{\mathrm{c}}\mathrm{H}^i$ is the cohomological functor associated with the constructible $t$-structure (see below).
\end{proof}

\begin{rema}
In this remark, we  denote by $\mathrm{H}^i$ the standard cohomology functors on the category $\Db(\sM(X))$. 
Let $\mathscr L$ be a geometric $\bbQ$-local system on a smooth quasi-projective variety of (pure) dimension $d$ and choose a smooth proper morphism $g:Z\ra X$, an integer $j\in\bbZ$ such that $\mathscr L=R^j g_*\bbQ_Z$. 
As $Z$ is smooth and $g$ is proper and smooth, the constructible sheaves
$R^i g_*\bbQ_Z$ are all $\bbQ$-local systems on $X$. Hence the complexes
$(R^i g_*\bbQ_Z)[d]$ are perverse sheaves and therefore
$(R^i g_*\bbQ_Z)[d]=\pH^{d+i}g_*\bbQ_Z$ for every $i\in\bbZ$. In particular, $\mathscr L[d]=\pH^{j+d}g_*\bbQ_Z$ and it follows that  $\mathscr L[d]$ is the image under $\rat^\sM_X$ of the perverse motivic sheaf $A:=\mathrm{H}^{j+d}(g_*^\sM\bbQ^\sM_Z)$. 
\end{rema}

\subsubsection*{Intersection cohomology} 

The four operations formalism allows the definition of a motivic avatar of intersection complexes. In particular, intersection cohomology groups with coefficients in geometric systems are motivic. More precisely:

\begin{coro}\label{coro:IC}
Let $X$ be an irreducible quasi-projective $k$-variety and $\mathscr L$ be a $\bbQ$-local system on a smooth dense open subscheme of $X$. If $\mathscr L$ is motivic (in particular if $\mathscr L$ is geometric), then the intersection cohomology group $IH^i(X,\mathscr L)$, for $i\in\bbZ$, is canonically the Betti realization of a Nori motive over $k$.
\end{coro}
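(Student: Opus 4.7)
The plan is to exhibit, inside the category $\sM(X)$ of perverse motives, a lift $IC_X^\sM(\scr L)$ of the intersection complex $IC_X(\scr L) = j_{!*}(\scr L[\dim X])$, push it forward to $\Spec k$ using the direct image supplied by \theoremref{theo:maintheo}, and take cohomology; the resulting object of $\sM(k) \simeq \HM(k)$ (\propositionref{prop:CompNori}) provides the required Nori motive.

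The first step is to lift $\scr L[\dim X]$ to an object $\sN \in \sM(U)$, where $U\subseteq X$ is the smooth dense open carrying $\scr L$. By hypothesis $\scr L \simeq R^i g_*\bbQ$ for some smooth proper $g:Z\to U$ of relative dimension $e$. Using the inverse image of \theoremref{theo:maintheo} together with \propositionref{prop:CompNori}, I form $\bbQ^\sM_Z := (a_U \circ g)^*_\sM \bbQ^\sM_k \in \Db(\sM(Z))$, whose Betti realization is $\bbQ_Z$; then $g^\sM_*\bbQ^\sM_Z[\dim Z] \in \Db(\sM(U))$ has Betti realization $g^\sP_*\bbQ_Z[\dim Z]$ by \propositionref{prop:rightadjoint}. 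For smooth proper $g$, the classical decomposition $g_*\bbQ_Z \simeq \bigoplus_j R^j g_*\bbQ[-j]$ yields $\pH^{e-i}\bigl(g^\sP_*\bbQ_Z[\dim Z]\bigr) \simeq \scr L[\dim X]$. Since $\rat^\sM_U$ is exact and the standard $t$-structure on $\Db(\sM(U))$ corresponds through $\rat^\sM_U$ and $\real$ to the perverse $t$-structure on $\Dbc(U,\bbQ)$, taking $\sN := H^{e-i}(g^\sM_*\bbQ^\sM_Z[\dim Z])$ gives a perverse motive with $\rat^\sM_U \sN \simeq \scr L[\dim X]$.

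The second step is the motivic intermediate extension along the open immersion $j:U\hookrightarrow X$. The four operations of \theoremref{theo:maintheo} furnish $j^\sM_!$ and $j^\sM_*$ together with a canonical natural transformation $j^\sM_! \to j^\sM_*$. Exactness and faithfulness of $\rat^\sM_X$, combined with the right (resp. left) $t$-exactness of $j^\sP_!$ (resp. $j^\sP_*$) for the perverse $t$-structure, imply the corresponding $t$-exactness of $j^\sM_!$ and $j^\sM_*$. Hence $H^0 j^\sM_!\sN$ and $H^0 j^\sM_*\sN$ lie in $\sM(X)$, and I set
\[
IC_X^\sM(\scr L) := \Img\bigl(H^0 j^\sM_!\sN \longrightarrow H^0 j^\sM_*\sN\bigr) \in \sM(X).
\]
Exactness of $\rat^\sM_X$ identifies its Betti realization canonically with $\Img\bigl(\pH^0 j^\sP_!\scr L[\dim X] \to \pH^0 j^\sP_*\scr L[\dim X]\bigr) = j_{!*}(\scr L[\dim X]) = IC_X(\scr L)$.

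For the final step, let $a:X\to \Spec k$ be the structural morphism; applying $a^\sM_*$ to $IC_X^\sM(\scr L)$ and extracting the $(i-\dim X)$-th cohomology produces an object of $\sM(k) = \HM(k)$ whose Betti realization is $\bH^{i-\dim X}(X,IC_X(\scr L)) = IH^i(X,\scr L)$, by compatibility of $a^\sM_*$ with $a^\sP_*$ under $\rat^\sM$ (\propositionref{prop:rightadjoint}). I do not foresee a single hard step: once \theoremref{theo:maintheo} is in hand, every compatibility reduces via the faithful exact functor $\rat^\sM$ to a well-known statement for perverse sheaves. The only delicate bookkeeping concerns the identification of $t$-structures and the fact that the image in the abelian category $\sM(X)$ realizes, under $\rat^\sM_X$, to the intermediate extension $j_{!*}$; both reduce to \cite{BBD,MR923133} and the main theorem.
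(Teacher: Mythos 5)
Your proposal is correct and follows essentially the same route as the paper's own proof: lift $\scr L[d]$ to a perverse motive as a cohomology object of $g^\sM_*\bbQ^\sM_Z$, define the motivic intermediate extension as $\Img(\mathrm{H}^0 j^\sM_!\to \mathrm{H}^0 j^\sM_*)$, push forward to $\Spec(k)$ and take cohomology, using faithful exactness of $\rat^\sM$ throughout. The only slip is the index in your first step: with $\scr L=R^ig_*\bbQ$ and the shift by $\dim Z=d+e$, the summand $R^ig_*\bbQ[d]$ sits in perverse degree $i-e$, so you should take $\mathrm{H}^{i-e}\bigl(g^\sM_*\bbQ^\sM_Z[\dim Z]\bigr)$ rather than $\mathrm{H}^{e-i}$ (the paper sidesteps this by taking $\mathrm{H}^{i+d}(g^\sM_*\bbQ^\sM_Z)$ with no shift).
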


\begin{proof}
Let $d$ be the dimension of $X$ and $\mathscr L$ be a $\bbQ$-local system on a smooth dense open subscheme $U$ of $X$. Since $\mathscr L$ is motivic, there exists an object $L\in\Db(\mathscr M(U))$ such that $\mathscr L$ is isomorphic to $\rat^\mathscr M_U(L)$. Since $\mathscr L[d]$ is a perverse sheaf on $U$ and $\rat^{\mathscr M}_U$ is conservative, the complex $L[d]$ is a perverse motivic sheaf on $U$ that is belongs to $\mathscr M(U)$. Then, with the notation of \ref{defi:IC}, the intersection complex
\[\mathrm{IC}_X(\mathscr L):=\Img(\pH^0j^\sP_!\mathscr L[d]\ra\pH^0j_*^\sP\mathscr L[d])\]
is canonically isomorphic to the image under $\rat^\sM_X$ of the perverse motivic sheaf $j^{\mathscr M}_{!*}L[d]:=\Img(\mathrm{H}^0(j^\sM_! L[d])\ra \mathrm{H}^0(j^\sM_*L[d]))$. This implies that 
$IH^i(X,\mathscr L):={\mathbf H}^{i-d}(X,\mathrm{IC}_X(\mathscr L))$ is the Betti realization of the Nori motive $\mathrm{H}^{i-d}(\pi^\sM_*j^{\mathscr M}_{!*}L[d])$ where $\pi:X\ra\Spec k$ is the structural morphism.

\end{proof}

This shows, in particular, that intersection cohomology groups carry a natural Hodge structure. If $X$ is a smooth projective curve, and $\mathscr L$ underlies a polarizable variation of Hodge structure, then the Hodge structure on the intersection cohomology groups was constructed by Zucker in \cite[(7.2) Theorem, (11.6) Theorem]{MR534758}. In general, it follows from Saito's work on mixed Hodge modules \cite{MHMII} and a different proof has been given by de Cataldo in~\cite{MR2877437}. We consider the weights in the next section (see \ref{theo:weightIC} and \ref{coro:weightIC}).

\subsubsection*{Leray spectral sequences}

Let $f:X\ra Y$ be a morphism of quasi-projective $k$-varieties and $\mathscr L$ be a $\bbQ$-local system on $X$. Then, we can associate with it two Leray spectral sequences in Betti cohomology: the classical one
\[{\mathbf{H}}^r(Y, R^sf_*\mathscr L)\Longrightarrow H^{r+s}(X,\mathscr L)\]
and the perverse one
\[{\mathbf{H}}^r(Y, \pH^sf_*\mathscr L)\Longrightarrow H^{r+s}(X,\mathscr L).\]
The main theorem of Arapura's~\cite{MR2178703} shows that, if $\mathscr L=\bbQ_X$ is the constant local system on $X$ and the morphism $f$ is projective, then  the classical Leray sequence is motivic, that is, it is the realization of a spectral sequence in the abelian category of Nori motives over $k$ (see precisely \cite[Theorem 3.1]{MR2178703}).

This property is still true without the projectivity assumption and also more generally if the local system $\mathscr L$ is geometric:

\begin{coro}
If the local system $\mathscr L$ is motivic (in particular if it is geometric), then the classical Leray spectral sequence and the perverse Leray spectral sequence are spectral sequences of Nori motives over $k$.
\end{coro}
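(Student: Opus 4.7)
Since $\scr L$ is geometric, we have $\scr L = R^i g_*\bbQ_Z$ for some smooth proper morphism $g: Z \to X$. Assuming $X$ smooth of pure dimension $d$ (the general case being analogous), the perverse sheaf $\scr L[d]$ is the image under $\rat^\sM_X$ of the perverse motive $A := \mathrm{H}^{i+d}(g^\sM_*\bbQ^\sM_Z) \in \sM(X)$, exactly as in the proof of \corollaryref{coro:IC}. The key new input provided by \theoremref{theo:maintheo} is the direct image functor $f^\sM_*: \Db(\sM(X)) \to \Db(\sM(Y))$.

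For the perverse Leray spectral sequence, the argument is then formal. The standard cohomology objects $\mathrm{H}^q(f^\sM_*A) \in \sM(Y)$ realize, via $\rat^\sM_Y$, to the perverse cohomology sheaves $\pH^q(f_*\scr L[d])$. Composing with the structural morphism $a: Y \to \Spec k$, the 2-functoriality of $(-)^\sM_*$ together with the identification $\sM(k) \simeq \HM(k)$ of \propositionref{prop:CompNori} yields $(a\circ f)^\sM_* A \in \Db(\HM(k))$. The standard hypercohomology spectral sequence in $\HM(k)$,
\[
E_2^{p,q} = \mathrm{H}^p\bigl(a^\sM_*\, \mathrm{H}^q(f^\sM_*A)\bigr) \Longrightarrow \mathrm{H}^{p+q}\bigl((a \circ f)^\sM_* A\bigr),
\]
then realizes, up to the shift by $d$, to the perverse Leray spectral sequence.

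The classical Leray spectral sequence is the main obstacle, since $\Db(\sM(Y))$ carries no classical $t$-structure a priori, so the classical truncations $\tau_{\le s}(f_*\scr L)$ cannot be formed directly in $\Db(\sM(Y))$. The plan is to produce motivic lifts of these truncations by hand. I would choose a finite stratification $Y = \bigsqcup_\alpha Y_\alpha$ by smooth locally closed subvarieties such that each restriction $R^sf_*\scr L|_{Y_\alpha}$ is a geometric local system on $Y_\alpha$, which by the argument of the first paragraph lifts to a perverse motive on $Y_\alpha$ (up to the appropriate shift). Using the glueing triangles $j^\sM_!j_\sM^* \to \id \to i^\sM_*i_\sM^* \to {+1}$ furnished by \propositionref{prop:ImmExistenceAdjoint} and \theoremref{theo:maintheo}, together with induction on the stratification, one builds motivic lifts of the classical truncations $\tau_{\le s}(f_*\scr L)$ and hence a filtered object in $\Db(\sM(Y))$. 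Applying $a^\sM_*$ and taking the associated spectral sequence then yields the classical Leray as a spectral sequence of Nori motives.

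The hardest step is this stratification-based construction of the classical truncations, which relies crucially on the geometricity hypothesis (providing perverse motivic lifts of all the local systems involved on strata) and on the full six-operation formalism developed in the paper.
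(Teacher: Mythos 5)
Your treatment of the perverse Leray spectral sequence is essentially the paper's argument and is correct: lift the local system to $\Db(\sM(X))$ via $g^\sM_*\bbQ^\sM_Z$, apply $f^\sM_*$ and $a^\sM_*$, and use the standard $t$-structure on $\Db(\sM(k))\simeq\Db(\HM(k))$. For the classical Leray spectral sequence, however, there is a genuine gap, and the missing idea is exactly the tool the paper uses: the constructible $t$-structure on $\Db(\sM(Y))$, defined at the end of \subsectionref{subsec:consequence} by the conditions $H^k(x^*_\sM A)=0$ for $k>0$ (resp. $k<0$), whose existence is checked as in Saito and Arapura. With it, the classical truncations of $f^\sM_*A$ exist motivically by definition, ${}^c\mathrm{H}^s(f^\sM_*A)$ realizes to $R^sf_*\scr L$, and the corollary follows from functoriality of direct images, exactly as the paper says. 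The paper also lifts $\scr L$ itself as ${}^c\mathrm{H}^i(g^\sM_*\bbQ^\sM_Z)$ rather than as $\mathrm{H}^{i+d}(g^\sM_*\bbQ^\sM_Z)$, which removes your unstated smoothness hypothesis on $X$ (the corollary does not assume $X$ smooth, and on a singular $X$ the complex $\scr L[d]$ need not be perverse, so ``the general case is analogous'' is not innocuous without the constructible $t$-structure).

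Your substitute construction does not work as sketched. First, you assert that a stratification can be chosen so that each $R^sf_*\scr L|_{Y_\alpha}$ is geometric in the paper's restrictive sense, i.e.\ of the form $R^jh_*\bbQ$ for $h$ smooth proper; this is unjustified (higher direct images of a geometric local system under an arbitrary morphism $f$ have no reason to be literally of this form), and it is also not what is needed. Second, and more seriously, producing motivic lifts of the individual local systems $R^sf_*\scr L|_{Y_\alpha}$ on the strata does not yield the truncation triangles of the object $f^\sM_*A$ itself: the spectral sequence requires a filtered (Postnikov) object in $\Db(\sM(Y))$ lifting the classical truncation tower of $f_*\scr L$, including the connecting maps, and your sketch provides only candidate graded pieces. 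To assemble the tower one needs genuine truncation functors, obtained by gluing the (suitably shifted) standard $t$-structures on the strata via recollement --- but that is precisely the construction of the constructible $t$-structure the paper invokes. So your plan, once repaired, amounts to re-proving the existence of that $t$-structure rather than bypassing it; as written, the step ``one builds motivic lifts of the classical truncations'' is the unproved core of the statement.
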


\begin{proof}
The result follows from the functoriality of the direct image functors. 
\end{proof}

 In particular, the Leray spectral sequences are  spectral sequences of (polarizable) mixed Hodge structures. The compatibility of the classical Leray spectral sequence result in Hodge theory was already proved by Zucker in~\cite{MR534758} when $X$ is a curve and more generally, for both spectral sequences, by  Saito if $\mathscr L$ underlies an admissible variation of mixed Hodge structures (see \cite{MHMII}). This result has been recovered by de Cataldo and Migliorini with different techniques in~\cite{MR2680404}.

\subsubsection*{Nearby cycles} The theory developed here also shows that nearby cycles functors applied to perverse motives produce Nori motives.

\begin{coro} Let $X$ be a quasi-projective $k$-variety, $f:X\ra \bbA^1_k$ a flat morphism with smooth generic fiber $X_\eta$ and $\mathscr L$ be a $\bbQ$-local system on $X_\eta$. If 
$\mathscr L$ is motivic (in particular if it is geometric), then, for every point $x\in X_\sigma(k)$ and every integer $i\in\bbZ$, the Betti cohomology $H^i(\Psi_f(\mathscr L)_x)$ of the nearby fiber is canonically a Nori motive over $k$.
\end{coro}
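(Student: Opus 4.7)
The plan is to lift $\scr L$ (up to a perverse shift) to a perverse motive on $X_\eta$, apply the motivic total nearby cycles from \theoremref{theo:maintheo}, restrict to $x$ via the motivic inverse image of \sectionref{sec:pullback}, and extract the appropriate cohomology in $\sM(\Spec k) \simeq \HM(k)$ using \propositionref{prop:CompNori}.

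After decomposing $X_\eta$ into its connected components I may assume it is pure of dimension $d_\eta$. Using the geometric hypothesis, fix a smooth proper morphism $g \colon Z \to X_\eta$ of relative dimension $d_g$ with $\scr L \cong R^j g_* \bbQ$. By Deligne's decomposition for smooth proper direct images one has $\scr L[d_\eta] \simeq \pH^{j - d_g}(g^\sP_* \bbQ^\sP_Z[d_Z])$. Since $\rat^\sM$ is exact and commutes with cohomology, the object
\[
M := \mathrm{H}^{j - d_g}\bigl(g^\sM_* \bbQ^\sM_Z[d_Z]\bigr) \in \sM(X_\eta),
\]
the cohomology in the abelian category $\sM(X_\eta)$ of the motivic direct image provided by \theoremref{theo:maintheo}, is a perverse motive whose Betti realization is canonically $\scr L[d_\eta]$.

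Applying the motivic total nearby cycles $\psi^\sM_f \colon \sM(X_\eta) \to \sM(X_\sigma)$ promised by \theoremref{theo:maintheo} and then the motivic closed-immersion inverse image $i_x^{*,\sM}$ along $i_x \colon \Spec k \hookrightarrow X_\sigma$ produces an object of $\Db(\HM(k))$. Using the compatibilities of these functors with the Betti realization together with the convention $\psi_f = \Psi_f[-1]$, classical cohomology in degree $i - d_\eta + 1$ yields the Nori motive
\[
N := \mathrm{H}^{i - d_\eta + 1}\bigl(i_x^{*,\sM} \psi^\sM_f(M)\bigr) \in \HM(k),
\]
whose Betti realization is canonically isomorphic to $H^i(\Psi_f(\scr L)_x)$. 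Canonicality follows from the functoriality of the motivic operations and the faithfulness of $\rat^\sM$ on $\sM(\Spec k)$.

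The principal obstacle is the availability of the \emph{total} motivic nearby cycles $\psi^\sM_f$. The excerpt constructs only the unipotent part ${}^p\mathsf{Log}^\sM_f$ in \sectionref{sec:nearbycycles}; extending to the total case requires reducing, by Kummer base changes $\bbA^1 \to \bbA^1$, $t \mapsto t^n$, to the situation where the monodromy of $\scr L$ is unipotent. This reduction relies on Grothendieck's local monodromy theorem (which guarantees that the monodromy of a geometric local system is quasi-unipotent) together with the compatibility of Kummer pullbacks with nearby cycles in $\DA_\ct$. Once $\psi^\sM_f$ is in hand, all remaining ingredients—the lift $M$, the motivic inverse image $i_x^{*,\sM}$, and the equivalence $\sM(\Spec k) \simeq \HM(k)$—are supplied directly by the excerpt.
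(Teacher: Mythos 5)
Your lifting of $\scr L$ to the perverse motive $M=\mathrm{H}^{j+d_\eta}(g^\sM_*\bbQ^\sM_Z)$, the restriction along $x$, and the final degree bookkeeping all match the paper's argument. The problem is the step you yourself flag as ``the principal obstacle'': you never actually produce the total motivic nearby cycles functor, and the Kummer-cover reduction you sketch (quasi-unipotence of the monodromy, base change $t\mapsto t^n$, then gluing the unipotent pieces) is left entirely unexecuted. As written, the central new ingredient of the corollary --- an exact functor $\psi^\sM_f:\sM(X_\eta)\ra\sM(X_\sigma)$ lifting $\psi_f=\Psi_f[-1]$ --- is not established, so the proof is incomplete; moreover carrying out your reduction would require nontrivial extra work (descent along the finite covers, compatibility of the unipotent functors with the Galois action) that is nowhere supplied.

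The gap is also unnecessary, and this is where your reading of the paper goes wrong: the constructions of Section~3 (the unipotent $\mathsf{Log}_f$, $\Phi_f$, $\Xi_f$, $\Omega_f$) are auxiliary tools for the inverse-image and gluing arguments, not the only nearby cycles available. Ayoub's \emph{total} nearby cycles specialization system $\Psi_f$ already exists on $\DA_\ct$ by \cite{AyoubII}, and its compatibility with the Betti realization is \cite[Proposition 4.9]{AyoubBetti}. Since $\psi_f=\Psi_f[-1]$ is $t$-exact for the perverse $t$-structure (Gabber), the universal property {\bf P1} of $\sM(-)$ applied to the square formed by the motivic $\psi_f$, the perverse $\psi_f$, and this compatibility isomorphism immediately yields the exact functor $\psi^\sM_f:\sM(X_\eta)\ra\sM(X_\sigma)$ together with $\rat^\sM_{X_\sigma}\psi^\sM_f\simeq\psi_f\rat^\sM_{X_\eta}$; no Kummer reduction or local monodromy theorem is needed. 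With this functor in hand, the rest of your argument goes through exactly as in the paper, producing the Nori motive $\mathrm{H}^{i+1-d_\eta}(x^*_\sM\psi^\sM_f M)$ realizing $H^i(\Psi_f(\scr L)_x)$.
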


\begin{proof}
The nearby cycles functor $\psi_f:=\Psi_f[-1]$ is $t$-exact for the perverse $t$-structure. Since it exists in the triangulated category of constructible \'etale motives (see \cite{AyoubII}) and the Betti realization is compatible with the nearby cycles functor by \cite[Proposition 4.9]{AyoubBetti}, the universal property ensures the existence of  an exact functor $\psi_f^\sM:\sM(X_\eta)\ra\sM(X_\sigma)$ and an invertible natural transformation $\rat^\sM_{X_\sigma}\psi_f^\sM\simeq \psi_f\rat^\sM_{X_\eta}$.

Let $d$ be the dimension of the generic fiber $X_\eta$. Since $\mathscr L$ is motivic, there exists an object $L$ in $\Db(\mathscr M(X_\eta))$ such that $\mathscr L$ and $\rat^{\sM}_{X_\eta}(L)$ are isomorphic. As $\mathscr L[d]$ is perverse and $\rat^{\sM}_{X_\eta}$ is conservative, the complex $L[d]$ belongs to $\mathscr M(X_\eta)$. So we conclude that $H^i(\Psi_f(\mathscr L)_x)$ is the Betti realization of the Nori motive $\mathrm{H}^{i+1-d}(x^*\psi_f^\sM L[d])$.

\end{proof}

\subsubsection*{Exponential motives}

The perverse motives introduced in the present paper and their stability under the four operations could be used also in the study of exponential motives as introduced in Fres\'an---Jossen's book~\cite{expo-mot}. Indeed, recall that Kontsevich and Soibelman define an exponential mixed Hodge structure as a mixed Hodge module $A$ on the complex affine line $\bbA^1_\bbC$ such that $p_*A=0$, where $p:\bbA^1_\bbC\ra\Spec(\bbC)$ is the projection (see the paper~\cite{MR2851153} of Kontsevich and Soibelman). Their definition can be mimicked in the motivic context and the abelian category of exponential Nori motives can be defined as the full subcategory of $\sM(\bbA^1_k)$ formed by the objects which have no global cohomology.

\subsubsection*{Constructible $t$-structure}

Let us conclude by a possible comparison with Arapura's construction from~\cite{MR2995668}. Let $X$ be a $k$-variety and consider the following full subcategories of $\Db(\sM(X))$
\begin{align*}
{}^c\mathrm{D}^{\leqslant 0}&:=\{A\in\Db(\sM(X)):H^k(x^*_\sM A)=0,\; \forall\; x\in X, \forall\; k>0\},\\
{}^c\mathrm{D}^{\geqslant 0}&:=\{A\in\Db(\sM(X)):H^k(x^*_\sM A)=0,\; \forall\; x\in X, \forall\; k<0\}.
\end{align*}
As in 4.6. Remarks of M. Saito's~\cite{MHMII} (see also \cite[Theorem C.0.12]{MR2995668}), we can check that these categories define a $t$-structure on $\Db(\sM(X))$. 

Let $\sMct(X)$ be the heart of this $t$-structure. Then, the functor $\rat^\sM_X$ induces a faithful exact functor from $\sMct(X)$ into the abelian category of constructible sheaves of $\bbQ$-vector spaces on $X$. Then, using the universal property of the category of constructible motives $\mathcal{M}(X,\bbQ)$ constructed by Arapura in \cite{MR2995668}, we get a faithful exact functor $\mathcal{M}(X,\bbQ)\ra\sMct(X)$. Is this functor an equivalence? If $X=\Spec k$, then
both categories are equivalent to the abelian category of Nori motives, so this functor is an equivalence.

\section{Weights}\label{sec:weights}

In this section, we will use results on motives and weight structures from Bondarko's and H\'ebert's papers~\cite{Bondarko, Hebert}. 
To apply these references directly in our context, we will make use of the fact that, if
$S$ is a Noetherian scheme of finite dimension, then Ayoub's
category $\DA_\ct(S)$ is canonically equivalent to the
category of constructible Be\u{\i}linson motives studied in Cisinski and
D\'eglise's book~\cite{CD}.
This follows from  \cite[Theorem 16.2.18]{CD} and will
henceforth be used without further comment. (Note also that, though the authors of \cite{Bondarko,Hebert} have chosen to use Be\u{\i}linson's motives, \'etale motives could have been used.)

\subsection{Continuity of the abelian hull}

Remember that, in chapter 5 of Neeman's book~\cite{MR1812507}, there are
four constructions of the abelian hull of a triangulated category.
The first one gives a lax $2$-functor from the $2$-category of triangulated
categories to that of abelian categories, but the other three constructions
give strict $2$-functors. If we use the fourth
construction, which Neeman calls $D(\matheusm S)$ (see \cite[Definition~5.2.1]{MR1812507}), then the following proposition is immediate.

\begin{prop} Let $\matheusm S$ be a triangulated category, and
suppose that we have an equivalence of triangulated categories
$\matheusm S\iso 2-\varinjlim_{i\in I}
\Scat_i$, where $I$ is a small filtered category.

Then the canonical functor $\Ab^\tr(\Scat)\ra 2-\varinjlim_{i\in I}\Ab^\tr
(\Scat_i)$ is an equivalence of abelian categories.

\label{prop_continuity_abelian_hull}
\end{prop}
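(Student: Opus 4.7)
The strategy is to exploit the hint that Neeman's fourth construction $D(\matheusm S)$ of the abelian hull (\cite[Definition~5.2.1]{MR1812507}) yields a \emph{strict} $2$-functor from the $2$-category of triangulated categories to that of abelian categories, whereas the other constructions are only lax. Strictness is what allows one to write down an honest functor $\varinjlim_i D(\Scat_i)\ra D(\matheusm S)$ from the data of an equivalence $\matheusm S\simeq\varinjlim_{i\in I}\Scat_i$ without having to choose coherent comparison $2$-cells.

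The main point to observe is that in Neeman's $D(-)$, both the objects and the Hom-sets are described entirely in terms of finite diagrams in the input triangulated category: objects are represented by morphisms of $\matheusm S$, morphisms between two such representatives are equivalence classes of compatible squares in $\matheusm S$, and the defining equivalence relation is again generated by finite diagrams (candidate triangles, etc.). Once this is recalled, the plan is to check essential surjectivity, fullness and faithfulness of the canonical comparison functor directly.

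First, for essential surjectivity, an object of $D(\matheusm S)$ is a single arrow $s\ra t$ in $\matheusm S$; since $\matheusm S\simeq\varinjlim_i\Scat_i$ is computed at the level of objects and morphisms as a filtered colimit, this arrow is isomorphic to the image of an arrow in some $\Scat_{i_0}$, hence lies in the essential image. For fullness, a morphism in $D(\matheusm S)$ is represented by finitely many arrows in $\matheusm S$ fitting into a fixed finite commutative pattern; by filteredness one lifts all of them simultaneously to some $\Scat_{i_1}$. For faithfulness, two such representatives define the same morphism in $D(\matheusm S)$ iff a finite auxiliary diagram witnessing the equivalence relation exists in $\matheusm S$; again by filteredness this witness is already present in some $\Scat_{i_2}$.

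The only real work is bookkeeping: each of the three checks reduces to the elementary fact that filtered colimits of sets commute with finite limits, combined with the explicit finitary description of $D(\matheusm S)$. There is no conceptual obstacle and no heavy computation; this is why the author can write that, with Neeman's fourth construction in hand, the proposition is ``immediate''. The only subtlety worth flagging is that one must be careful to use the \emph{strict} $2$-functor $D(-)$ rather than $\Ab^\tr(-)$ as defined through $\AR(-)$, since the latter involves implicit choices (of presentations) that would otherwise obstruct turning the $2$-universal property into a literal equivalence of categories.
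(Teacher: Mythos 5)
Your proposal is correct and is essentially the paper's own argument: the paper offers no details beyond remarking that with Neeman's fourth, finitary construction $D(\matheusm S)$ the statement is ``immediate'', and your check of essential surjectivity, fullness and faithfulness by lifting the finite diagrams defining objects, morphisms and the equivalence relation along the filtered colimit is exactly the bookkeeping that remark leaves implicit. One small slip: the paper states that only Neeman's \emph{first} construction is merely lax while the other three are strict $2$-functors, so your parenthetical claim that the remaining constructions are ``only lax'' is inaccurate, though it does not affect the argument.
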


\subsection{\'Etale realization and $\ell$-adic
perverse Nori motives}
\label{subsec:l-adic_perverse_motives}

Let $S$ be a Noetherian excellent scheme finite-dimensional scheme,
let $\ell$ be a prime
number invertible over $S$; we assume 
that $S$ is a $\bbQ$-scheme. 
(By Expos\'e XVIII-A of \cite{GTG}, the hypotheses above imply
Hypothesis~5.1 in Ayoub's paper \cite{AyoubEtale}.)
Under this hypothesis,
Ayoub has constructed an \'etale $\ell$-adic realization functor
on $\DA_\ct(S)$, compatible with pullbacks.

\begin{theo}(See \cite{AyoubEtale} sections 5 and 10.) 
Denote by $\Dbc(S,\bbQ_\ell)$ the category of constructible
$\ell$-adic complexes on $S$. 
Then we have a triangulated functor $\Ret_S:\DA_\ct(S)\ra
\Dbc(S,\bbQ_\ell)$ for every $S$
and, for every morphism $f:S\ra S'$, with $S'$ satisfying the same
hypotheses as $S$,
we have an
invertible natural transformation
\[\theta_f:f^*\circ\Ret_Y\ra \Ret_X\circ f^*.\]

\end{theo}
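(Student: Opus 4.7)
The statement is essentially a citation of Ayoub's construction from \cite{AyoubEtale}, so my plan is to explain which pieces of that construction deliver the functor and its pullback compatibility. The strategy is to first realize $\DA_\ct(S)$ as a localization of a presentable stable model category of étale motivic spectra (with rational coefficients), then construct the target side $\Dbc(S,\bbQ_\ell)$ as modules over a suitable $\ell$-adic algebra object, and finally define $\Ret_S$ as the derived functor induced by a monoidal Quillen functor between these models.

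The key steps I would carry out are as follows. First, identify the source $\DA_\ct(S)$ with the constructible objects in $\DA_\et(S,\bbQ)$ (using rigidity for étale motives with $\bbQ$-coefficients), so that we may replace $\DA_\ct(S)$ by an étale variant. Next, for every $n\geqslant 1$, construct a motivic "continuous cochain" object $\sH^\et_\ell\in\DA_\et(S,\bbZ/\ell^n)$ which represents $\ell$-adic étale cohomology on smooth $S$-schemes, and pass to the inverse limit along $n$, then invert $\ell$, to obtain a commutative monoid whose modules provide $\Dbc(S,\bbQ_\ell)$ after a suitable constructibility restriction. The realization $\Ret_S$ is then the composition of the extension–of–scalars along this monoid with the equivalence between its module category and the $\ell$-adic derived category; one checks it preserves constructibility via resolutions by smooth schemes and Artin vanishing.

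For the pullback compatibility $\theta_f\colon f^*\circ \Ret_Y\to \Ret_X\circ f^*$, the plan is to exploit the fact that both the motivic algebra object and its $\ell$-adic counterpart are, by construction, stable under pullback: the representing object on the motivic side is obtained from a functorial construction on $\Sm/S$, and its image on the $\ell$-adic side is the constant sheaf $\bbQ_\ell$, which is trivially pullback-stable. The exchange $2$-morphism is thus produced from the universal map $f^*\sH_{\ell,Y}\to \sH_{\ell,X}$ (an isomorphism by construction) and extended to all of $\DA_\ct$ by triangulated compatibility, while invertibility reduces, via localization and continuity, to the case of a smooth $Y$-scheme where it is the classical comparison of pullback on $\ell$-adic cohomology.

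The hard part in Ayoub's argument, and the main obstacle I would anticipate, is the construction and functoriality of the $\ell$-adic algebra object representing continuous étale cohomology: one must pass from mod-$\ell^n$ coefficients to the $\ell$-adic limit in a way compatible with the six operations, and simultaneously verify that the induced realization preserves the subcategory of constructible objects. This is precisely what occupies Sections~5 and~10 of \cite{AyoubEtale}, and in a self-contained proof plan I would simply invoke those results rather than reconstruct them.
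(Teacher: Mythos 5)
The paper offers no proof of this statement at all: it is quoted verbatim as a result of Ayoub, with the citation to sections 5 and 10 of \cite{AyoubEtale} standing in for the argument, so your decision to ultimately invoke those sections rather than reconstruct them is exactly what the paper does, and in that sense your proposal is fine. The only point worth flagging is that your sketch of what happens inside Ayoub's proof does not quite describe his route: he does not build a motivic algebra object representing continuous $\ell$-adic cochains and pass to its module category (that picture is closer in spirit to the Cisinski--D\'eglise construction of realizations); instead he proves a rigidity theorem for \emph{torsion} coefficients, giving equivalences $\DA^{\mathrm{\acute et}}(S,\bbZ/\ell^n)\simeq \mathrm{D}(S_{\mathrm{\acute et}},\bbZ/\ell^n)$, and then obtains $\Ret_S$ on $\DA_\ct(S)$ by an $\ell$-adic limit over $n$ followed by inverting $\ell$ on constructible objects. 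In particular your parenthetical ``rigidity for \'etale motives with $\bbQ$-coefficients'' is a slip --- rigidity is a torsion-coefficient phenomenon, and with rational coefficients the change of coefficients to $\bbZ/\ell^n$ would kill everything, which is precisely why the completion/limit step is the delicate part. The compatibility $\theta_f$ and its invertibility then come from the fact that each finite-level equivalence and the limit procedure are compatible with $f^*$, which matches the last part of your plan; since both you and the paper defer the substance to \cite{AyoubEtale}, none of this affects the correctness of your proposal as a citation-level argument.
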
 

Using results of Gabber (see \cite{Gabber_note}, and also sections 4 and 5
of Fargues's article \cite{Fargues}), we can construct an abelian category 
$\sP(S,\bbQ_\ell)$ of $\ell$-adic
perverse sheaves on $S$, satisfying
all the usual properties. In particular, we get a perverse cohomology
functor $\pH^0_\ell:=\pH^0\circ\Ret_S:\DA_\ct(S)\ra\sP(S,\bbQ_\ell)$.

\begin{defi}\label{defi:perversel} Let $S$ be as above.
The Abelian category of $\ell$-adic perverse motives on $S$
is the Abelian category
\[\sM(S)_\ell:=\Ab^\tr(\DA_\ct(S),\pH^0_\ell).\]
\end{defi}

By construction, the functor $\pH^0_\ell$ has a factorization
\[\DA_\ct(S)\xra{\pH^0_\sM}\sM(S)_\ell\xra{\rat^\sM_{S,\ell}}\sP(S,\bbQ_\ell)\]
where $\rat^\sM_{S,\ell}$ is a faithful exact functor
and $\pH^0_\sM$ is a homological functor.

By the universal property of $\sM(S)_\ell$, we also get pullback
functors between these categories as soon as the pullback functor
between the categories of $\ell$-adic complexes preserves the category
of perverse sheaves.

We will use the following important fact:
If we fix a base field $k$ of characteristic $0$
and only consider schemes
that are quasi-projective over $k$, then the main theorem (stated in
 \ref{subsec:statement_main_theorem}) stays true for the
categories $\sM(S)_\ell$. 
Of course,
we have to replace $\Db_\ct(S)$ and the Betti realization functor by
$\Dbc(S,\bbQ_\ell)$ and the \'etale realization functor
in all the statements. Indeed, the proof of the main theorem, of of the statements that
it uses, still work if we use the $\ell$-adic \'etale realization instead of the Betti realization.
The only result that requires a slightly different proof is \ref{lemm:compNCBetti}: we have to
show that the $\ell$-adic realization of Ayoub's logarithmic motive $\Log^\vee_n$ is the local system
used in Beilison's construction of the unipotent nearby cycle functor (see 1.1 and 1.2 of
Beilinson's~\cite{Beilinson-gluing} or Definition 5.2.1 of \cite{Morel-surQ}). As in the proof of 
\ref{lemm:compNCBetti}, it suffices to check this for $n=1$, and then it follows from
Lemma 11.22 of Ayoub's~\cite{AyoubEtale}.

\subsection{Mixed horizontal perverse sheaves}

Let $k$ be a field and $S$ be a $k$-scheme of finite type. 
Suppose that $k$ is finitely generated over its prime field.
We also fix
a prime number $\ell$ invertible over $S$.
The category 
$\Db_m(S,\bbQ_\ell)$ of mixed horizontal
$\bbQ_\ell$-complexes
and its perverse t-structure 
with heart $\sP_m(S,\bbQ_\ell)$ (the category of mixed horizontal
$\ell$-adic perverse sheaves on $S$)
were constructed
in Huber's article \cite{Huber-perverse} (see also  \cite[section 2]{Morel-surQ}). We recall the definition quickly and refer to \cite{Huber-perverse} and
\cite{Morel-surQ} for the details. First we consider the category
$\Dbh(S,\bbQ_\ell)$ of horizontal complexes on $S$, which is
by definition the $2$-colimit of the categories $\Dbc(\mathscr{X},\bbQ_\ell)$,
where $\mathscr{X}$ runs over all flat finite type models of $X$ over
regular subalgebras $A$ of $k$ that are of finite type over $\bbZ$ and
have $k$ as their fraction field. There is an obvious functor
$\eta^*:\Dbh(S,\bbQ_\ell)\ra\Dbc(S,\bbQ_\ell)$, which is triangulated and
conservative, and a perverse $t$-structure on $\Dbh(S,\bbQ_\ell)$
that is characterized by the fact that $\eta^*$ is $t$-exact. Also,
the functor $\eta^*$ is fully faithful on the heart of this $t$-structure
(\cite[Proposition~2.6.2]{Morel-surQ}).

We say that an object of $\Dbh(S,\bbQ_\ell)$ if it extends to a complex
$K$ on a model $\mathscr{X}$ of $X$ as before such that all the
(ordinary) cohomology sheaves of $K$ are successive extensions of
punctually pure sheaves in the sense of Deligne's~\cite{WeilII}.
The category $\Dbm(S,\bbQ_\ell)$ of mixed horizontal complexes is the
full subcategory of $\Dbh(S,\bbQ_\ell)$ whose objects are mixed complexes.
The perverse $t$-structure on $\Dbh(S,\bbQ_\ell)$ restricts to a
$t$-structure on $\Dbm(S,\bbQ_\ell)$, whose heart is the category
$\sPm(S,\bbQ_\ell)$ of mixed horizontal perverse sheaves; this last
category is a full subcategory of the heart of the perverse $t$-structure
on $\Dbh(S,\bbQ_\ell)$, so $\eta^*$ induces a fully faithful functor
$\sPm(S,\bbQ_\ell)\ra\sP(S,\bbQ_\ell)$.

Now we want to show that the realization functor $\rat^\sM_{S,\ell}:\sM(S)_\ell\ra\sP(S,\bbQ_\ell)$ factors
through the fully faithful functor $\sP_m(S,\bbQ_\ell)\ra\sP(S,\bbQ_\ell)$.

We have a continuity theorem for the categories of \'etale motives, proved by Ayoub 
in \cite[Corollaire 1.A.3]{AyoubRigid} and \cite[Corollaire 3.22]{AyoubEtale} and by Cisinski--D\'eglise in \cite[Proposition~15.1.6]{CD}).

\begin{theo}\label{theo:cont} 
Let $S$ be a Noetherian scheme of finite dimension.
Suppose that we have $S=\varprojlim_{i\in I} S_i$, where all the $S_i$
are finite-dimensional Noetherian schemes and all the transition maps
$S_i\ra S_j$ are affine. Then the canonical functor
$2-\varinjlim_{i\in I}\DA_\ct(S_i)\ra\DA_\ct(S)$ is an equivalence of
monoidal triangulated categories.

\end{theo}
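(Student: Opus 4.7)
The plan is to reduce the statement to a question about compact generators and then to exploit the explicit model-categorical presentation of $\DA(-)$. For any Noetherian finite-dimensional scheme $T$, the category $\DA_\ct(T)$ is the subcategory of compact objects in the compactly generated triangulated category $\DA(T)$, and a set of compact generators is given by Tate twists $M_T(X)(n)$ of motives of smooth $T$-schemes $X$ of finite type. Since pullback along the affine transition maps sends such generators to generators of the same form, the canonical $2$-functor $i\mapsto\DA_\ct(S_i)$ does indeed factor through $\DA_\ct(S)$, and one has to establish essential surjectivity, fully faithfulness, and compatibility with the tensor structure.

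For essential surjectivity, I would invoke the standard limit results for finitely presented morphisms (EGA~IV, \S8.8--8.10): any smooth $S$-scheme $X$ of finite type descends to a smooth $S_i$-scheme $X_i$ of finite type for some $i\in I$, and then $M_S(X)$ is canonically the image of $M_{S_i}(X_i)$. Since the essential image of $2\text{-}\varinjlim_{i\in I}\DA_\ct(S_i)\ra\DA_\ct(S)$ is a thick triangulated subcategory stable under Tate twists and containing all the generators $M_S(X)(n)$, it must exhaust $\DA_\ct(S)$.

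For fully faithfulness, I would reduce to computing $\Hom$-groups between generators of the form $M_{S_j}(X_j)(n)$, after which the statement amounts to an assertion about the Bousfield localization defining $\DA$. Using Ayoub's presentation of $\DA(S)$ as the homotopy category of the $\bbA^1$-stable model structure on $T$-spectra of presheaves of complexes on smooth finite-type $S$-schemes, the proof breaks into checking: (i) continuity of the underlying presheaf category, which is formal from the corresponding property of the sites; (ii) stability of (Nisnevich or) \'etale-local equivalences under the filtered $2$-colimit; (iii) compatibility with $\bbA^1$-localization and with $T$-spectrification on compact objects. Compatibility with the monoidal structure is then automatic, since the tensor product of generators corresponds to the fiber product of smooth schemes, and fiber products commute with cofiltered affine limits of the base.

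The main obstacle is in steps (ii) and (iii), since the $2$-colimit of fibrant spectra need not be fibrant, so one must work harder to identify $\Hom$-groups in the colimit with $\Hom$-groups in $\DA(S)$. Ayoub resolves this by constructing cofinal systems of resolutions built from generators whose formation commutes with base change, thereby computing $\Hom$-groups via a cohomological functor that is manifestly continuous. For this reason I would simply carry out the reductions above and then cite the detailed technical arguments of \cite[Corollaire~1.A.3]{AyoubRigid} and \cite[Corollaire~3.22]{AyoubEtale} (or \cite[Proposition~15.1.6]{CD}), noting finally that the resulting equivalence is symmetric monoidal because the comparison functors are symmetric monoidal already at the level of the underlying models.
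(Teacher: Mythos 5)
Your proposal is correct and ends up in the same place as the paper, which does not prove this statement at all but simply quotes it from \cite[Corollaire 1.A.3]{AyoubRigid}, \cite[Corollaire 3.22]{AyoubEtale} and \cite[Proposition 15.1.6]{CD}; your preliminary reductions (compact generation of $\DA_\ct$, spreading out smooth schemes via EGA IV \S 8, continuity of Hom-groups between generators) are a reasonable sketch of what those references establish, but the technical core is deferred to exactly the citations the paper itself relies on.
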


Using the definition of mixed horizontal $\ell$-adic complexes, we immediately
get the following corollary.

\begin{coro} Let $S$ and $\ell$ be as in the beginning of this subsection.
Then the \'etale realization functor $\DA_\ct(S)\ra\Dbc(S,\bbQ_\ell)$ factors
through a functor $\DA_\ct(S)\ra\Dbh(S,\bbQ_\ell)$.

\end{coro}

\begin{coro}\label{coro_mixed_horizontal_real}
With the notation of the previous corollary, the essential image of the functor
$\DA_\ct(S)\ra\Dbh(S,\bbQ_\ell)$ is contained in the full subcategory
$\Dbm(S,\bbQ_\ell)$.
In particular, the perverse cohomology functor
$\pH^0_\ell:\DA_\ct(S)\ra\sP(S,\bbQ_\ell)$ factors through the subcategory $\sP_m(S,\bbQ_\ell)$.

\end{coro}

\begin{proof}
This follows from the facts that $\DA_\ct(S)$ is generated by the Tate
twists of motives of smooth $S$-schemes (see Definition~15.1.1 and
Proposition~15.1.4 of \cite{CD}) and that mixed horizontal complexes
are preserved by direct images and Tate twists (see
\cite[Proposition~3.2]{Huber-perverse} for direct images, the stability
by Tate twists is easy).

\end{proof}

\begin{coro}
The essential image of the
realization functor $\rat^\sM_{S,\ell}:\sM(S)_\ell\ra\sP(S,\bbQ_\ell)$ is contained in the
subcategory  $\sP_m(S,\bbQ_\ell)$.

\end{coro}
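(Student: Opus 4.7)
The plan is to use the continuity results of this section to reduce to the case where $k$ is finitely generated over its prime field, and then to apply the universal property of the abelian hull construction $\sM(S)_\ell=\Ab^\tr(\DA_\ct(S),\pH^0_\ell)$ directly. The key input in the finitely generated case is that the canonical exact functor $i:\sP_m(S,\bbQ_\ell)\ra\sP(S,\bbQ_\ell)$ is then fully faithful, as recalled at the start of this subsection.

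In the finitely generated case, the preceding corollary already yields a factorization of $\pH^0_\ell$ as $\DA_\ct(S)\xra{H^0_m}\sP_m(S,\bbQ_\ell)\xra{i}\sP(S,\bbQ_\ell)$, with $H^0_m$ homological and $i$ faithful exact. By the universal property of $\sM(S)_\ell$, this produces a unique exact functor $\sM(S)_\ell\ra\sP_m(S,\bbQ_\ell)$ whose composition with $i$ recovers $\rat^\sM_{S,\ell}$, giving the corollary in this case.

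For general $k$, fix a finitely generated subfield $k_0\subset k$ and a model $S_0/k_0$ of $S$, and set $S_{k_0'}:=S_0\times_{k_0}k_0'$ for intermediate finitely generated subfields $k_0\subset k_0'\subset k$. By \theoremref{theo:cont}, $\DA_\ct(S)$ is the filtered $2$-colimit of the $\DA_\ct(S_{k_0'})$, and by the very definition of $\Db_m(S,\bbQ_\ell)$ and its perverse $t$-structure, the category $\sP_m(S,\bbQ_\ell)$ is the $2$-colimit of the $\sP_m(S_{k_0'},\bbQ_\ell)$. Combining this with \propositionref{prop_continuity_abelian_hull} gives $\Ab^\tr(\DA_\ct(S))\simeq\varinjlim\Ab^\tr(\DA_\ct(S_{k_0'}))$. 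Provided the formation of the quotient by the kernel of the realization to $\sP(-,\bbQ_\ell)$ is compatible with this filtered colimit, one obtains $\sM(S)_\ell\simeq\varinjlim\sM(S_{k_0'})_\ell$ with $\rat^\sM_{S,\ell}$ identified as the corresponding colimit functor; passing to the $2$-colimit of the factorizations $\sM(S_{k_0'})_\ell\ra\sP_m(S_{k_0'},\bbQ_\ell)$ produced in the finitely generated case then yields the desired factorization of $\rat^\sM_{S,\ell}$ through $\sP_m(S,\bbQ_\ell)$.

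The main technical obstacle is the compatibility of the quotient construction $\Ab^\tr(\DA_\ct(-))/\ker(\rat^\sM)$ with the filtered $2$-colimit over finitely generated models. Concretely, one needs that a morphism in some $\Ab^\tr(\DA_\ct(S_{k_0'}))$ whose image in $\sP(S,\bbQ_\ell)$ vanishes already has vanishing image in $\sP(S_{k_0''},\bbQ_\ell)$ for some finitely generated $k_0''$ between $k_0'$ and $k$. This is a standard spreading-out argument for morphisms of $\ell$-adic perverse sheaves on quasi-projective schemes, together with the fact that quotients by thick ideals commute with filtered colimits of abelian categories; granting this compatibility, the rest of the argument is purely formal.
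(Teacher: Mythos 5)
Your route can be made to work, but it is much heavier than necessary, and as written its second half rests on an assumption you do not prove. The reduction to finitely generated subfields is superfluous: the preceding corollary is stated, and obtained via \theoremref{theo:cont} and the very definition of $\Db_m(S,\bbQ_\ell)$, over an arbitrary field $k$, so you already know that $\pH^0_\ell:\DA_\ct(S)\ra\sP(S,\bbQ_\ell)$ factors, as a functor, through $\sP_m(S,\bbQ_\ell)$ for the given $S$ itself. Full faithfulness of the forgetful functor $\sP_m(S,\bbQ_\ell)\ra\sP(S,\bbQ_\ell)$ --- the only reason you pass to finitely generated fields --- is never needed; its exactness suffices. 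Indeed, every object of $\sM(S)_\ell=\Ab^\tr(\DA_\ct(S),\pH^0_\ell)$ comes from a finitely presented functor, so by Yoneda its realization is the cokernel in $\sP(S,\bbQ_\ell)$ of $\pH^0_\ell(N)\ra\pH^0_\ell(M)$ for some morphism $N\ra M$ in $\DA_\ct(S)$; this morphism lifts to $\sP_m(S,\bbQ_\ell)$ by the preceding corollary, and exactness of the forgetful functor then places the cokernel in the essential image of $\sP_m(S,\bbQ_\ell)$. This is the one-line argument intended here (the same mechanism is used again in the proof of \propositionref{prop:independancel}). Alternatively, your universal-property argument also works directly over $k$: it only needs the forgetful functor to be faithful exact, and faithfulness does hold in general, since pullback from a model to $S$ is exact and conservative on perverse sheaves; it is fullness that fails.

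Concerning the colimit half: the compatibility you defer --- that passing to the quotient by $\Ker\rho_\ell$ commutes with the filtered $2$-colimit over models, so that $\sM(S)_\ell\simeq 2\textrm{-}\varinjlim\sM(S_{k_0'})_\ell$ compatibly with the realizations --- is exactly where the substance of your argument lies, and leaving it at ``granting this compatibility'' is a genuine gap in the write-up. The statement is true: it is essentially \propositionref{prop_continuity_sM}, proved just after this corollary from \propositionref{prop_continuity_abelian_hull} and \theoremref{theo:cont}, and the vanishing detection you call spreading out is simply the faithfulness of pullback along $S\ra S_{k_0'}$ noted above. So your argument can be completed, but it amounts to reproving a later continuity result in order to obtain a corollary that follows immediately from the one before it.
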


We will also denote the resulting faithful exact functor $\sM(S)\ra\sP_m(S,\bbQ_\ell)$ by
$\rat^\sM_{S,\ell}$.

\begin{rema}
Suppose that $k$ is not necessarily finitely generated
over its prime field. We define $\Dbm(S,\bbQ_\ell)$ as the $2$-colimit
of the categories $\Dbm(S',\bbQ_\ell)$, for $S'$ a model
of $S$ over a finitely generated subfield of $k$. This category inherits
a perverse $t$-structure from the perverse $t$-structures on the
$\Dbm(S',\bbQ_\ell)$, whose heart we denote by $\sPm(S,\bbQ_\ell)$.
The obvious functor $\sPm(S,\bbQ_\ell)\ra\sP(S,\bbQ_\ell)$ is only
exact faithful in general (not necessarily fully faithful), but the 
the perverse cohomology functor
$\pH^0_\ell:\DA_\ct(S)\ra\sP(S,\bbQ_\ell)$ 
still factors
through this functor as in Corollary~\ref{coro_mixed_horizontal_real}
(by Theorem~\ref{theo:cont}), so we get a faithful exact
realization functor $\sM(S)_\ell\ra\sPm(S,\bbQ_\ell)$.

\end{rema}

\subsection{Continuity for perverse Nori motives}

Like the triangulated category of motives, the category of
perverse Nori motives satisfies a continuity property.

\begin{prop} Let $S$ be a scheme and $\ell$ be a prime number
satisfying the conditions of 
\ref{subsec:l-adic_perverse_motives}. We assume that
$S=\varprojlim_{i\in I}S_i$, where
$(S_i)_{i\in I}$ is a directed projective system of schemes
satisfying the same conditions as $S$, and in which the transition maps
are affine. We also assume that the pullback by any transition map
$S_i\ra S_j$ preserves the category of perverse sheaves,
and that there exists $a\in\bbZ$ such that, if
$f_i:S\ra S_i$ is the canonical map, then $f^*_i[a]$ 
preserves the category of perverse sheaves for every $i\in I$.
Under these hypotheses, the functors $f_i^*[a]$ induce a functor
\[2-\varinjlim_{i\in I}\sM(S_i)_\ell\ra\sM(S)_\ell,\]
and this functor is full and essentially surjective.

If moreover
the canonical exact functor $2-\varinjlim_{i\in I}
\sP(S_i,\bbQ_\ell)\ra\sP(S,\bbQ_\ell)$ 
induced by the $f_i^*[a]$ is faithful,
then the canonical functor
\[2-\varinjlim_{i\in I}\sM(S_i)_\ell\ra\sM(S)_\ell\]
is an equivalence of abelian categories.

\label{prop_continuity_sM}
\end{prop}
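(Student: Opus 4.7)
The strategy is to reduce the proposition to a compatibility between filtered 2-colimits of abelian categories and Serre (Gabriel) quotients, using the continuity results already established for both $\DA_\ct$ and for the abelian hull construction $\Ab^\tr$. First I would combine \theoremref{theo:cont} with \propositionref{prop_continuity_abelian_hull} to obtain an equivalence of abelian categories
\[\Ab^\tr(\DA_\ct(S))\simeq 2-\varinjlim_{i\in I}\Ab^\tr(\DA_\ct(S_i)).\]
Write $\mathcal{K}_i\subseteq\Ab^\tr(\DA_\ct(S_i))$ and $\mathcal{K}\subseteq\Ab^\tr(\DA_\ct(S))$ for the thick subcategories defined as the kernels of the universal exact functors $\rho_{S_i,\ell}$ and $\rho_{S,\ell}$ to the corresponding categories of $\ell$-adic perverse sheaves, so that by construction $\sM(S_i)_\ell=\Ab^\tr(\DA_\ct(S_i))/\mathcal{K}_i$ and $\sM(S)_\ell=\Ab^\tr(\DA_\ct(S))/\mathcal{K}$. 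Since each $f_i^*[a]$ is exact on perverse sheaves and preserves the zero object, the essential image in $\Ab^\tr(\DA_\ct(S))$ of any $\mathcal{K}_i$ lies in $\mathcal{K}$, and the union of these images forms a Serre subcategory $\mathcal{K}_\infty\subseteq\mathcal{K}$; the Serre axioms are verified by descending finite diagrams to a common index $j$ via the continuity equivalence above, and by using that an object already belonging to some $\mathcal{K}_j$ remains in $\mathcal{K}_{j'}$ at every larger index $j'$.

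The key step is to identify
\[2-\varinjlim_{i\in I}\sM(S_i)_\ell\simeq\Ab^\tr(\DA_\ct(S))/\mathcal{K}_\infty\]
via the universal property of Serre quotients: an exact functor from either side into a test abelian category $\mathcal{E}$ is the same data as a compatible system of exact functors $\Ab^\tr(\DA_\ct(S_i))\ra\mathcal{E}$ each annihilating $\mathcal{K}_i$. The canonical comparison functor $\Phi:2-\varinjlim_{i\in I}\sM(S_i)_\ell\ra\sM(S)_\ell$ is thereby exhibited as the Gabriel quotient functor $\Ab^\tr(\DA_\ct(S))/\mathcal{K}_\infty\twoheadrightarrow\Ab^\tr(\DA_\ct(S))/\mathcal{K}$; it is essentially surjective (both sides have the same objects, namely those of $\Ab^\tr(\DA_\ct(S))$) and full in the Gabriel sense, because every morphism of a Serre quotient is represented, via the colimit formula $\Hom(X,Y)=\varinjlim\Hom(X',Y/Y'')$ over subquotients from the quotiented subcategory, by a genuine morphism of the source category.

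For the second assertion, it suffices to show that $\mathcal{K}_\infty=\mathcal{K}$ under the faithfulness hypothesis. Given $X\in\mathcal{K}$, lift $X$ (using the continuity equivalence) to some $X_i\in\Ab^\tr(\DA_\ct(S_i))$. The object $\rho_{S_i,\ell}(X_i)\in\sP(S_i,\bbQ_\ell)$ has vanishing pullback in $\sP(S,\bbQ_\ell)$; since a faithful exact functor between abelian categories detects the zero object (the identity of a zero object is the zero morphism, and a faithful functor detects the zero morphism), the image of $\rho_{S_i,\ell}(X_i)$ in $2-\varinjlim_{i\in I}\sP(S_i,\bbQ_\ell)$ already vanishes. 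Hence there exists $j\geqslant i$ such that $\rho_{S_j,\ell}(X_j)=0$, i.e., $X_j\in\mathcal{K}_j$ and thus $X\in\mathcal{K}_\infty$.

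The main technical obstacle is the careful compatibility of 2-filtered colimits with Serre quotients, namely the identification $2-\varinjlim_{i\in I}\sM(S_i)_\ell\simeq\Ab^\tr(\DA_\ct(S))/\mathcal{K}_\infty$ together with the verification that $\mathcal{K}_\infty$ is a genuine Serre subcategory. Both rely on systematically descending subobjects, quotients, and extensions appearing in the definitions to a common index using the continuity of $\Ab^\tr$. Once this categorical interchange lemma is secured, the two statements of the proposition become, respectively, the general structure of Gabriel quotient functors and a short zero-detection argument using the faithfulness hypothesis.
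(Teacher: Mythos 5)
Your overall architecture is a reasonable expansion of the paper's own proof, which is a two-reference one-liner (continuity of $\DA_\ct$, \theoremref{theo:cont}, plus continuity of the abelian hull, \propositionref{prop_continuity_abelian_hull}): the verification that $\mathcal{K}_\infty$ is a Serre subcategory by descending finite diagrams, the identification $2-\varinjlim_i\sM(S_i)_\ell\simeq\Ab^\tr(\DA_\ct(S))/\mathcal{K}_\infty$ via universal properties, and your treatment of the second assertion are all sound; in particular, faithfulness of $2-\varinjlim_i\sP(S_i,\bbQ_\ell)\ra\sP(S,\bbQ_\ell)$ detects zero objects, vanishing in the $2$-colimit is witnessed at a finite level, so $\mathcal{K}_\infty=\mathcal{K}$ and the comparison functor is an equivalence.

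The genuine gap is in the first assertion, at the fullness of the induced functor $\mathcal{A}/\mathcal{K}_\infty\ra\mathcal{A}/\mathcal{K}$, where $\mathcal{A}=\Ab^\tr(\DA_\ct(S))$. Your justification --- that every morphism of a Serre quotient is represented by a genuine morphism of the source category --- is not what the colimit formula says: a morphism $\overline{X}\ra\overline{Y}$ in $\mathcal{A}/\mathcal{K}$ is represented by a morphism $X'\ra Y/Y''$ with $X/X'\in\mathcal{K}$ and $Y''\in\mathcal{K}$, and to lift it to $\mathcal{A}/\mathcal{K}_\infty$ you must invert $\overline{X'}\hookrightarrow\overline{X}$ and $\overline{Y}\twoheadrightarrow\overline{Y/Y''}$ there, which requires $X/X'$ and $Y''$ to lie in $\mathcal{K}_\infty$, not merely in $\mathcal{K}$. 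A functor between nested Serre quotients is not full in general: with $\mathcal{A}=\mathbf{Ab}$, $\mathcal{K}_\infty=0$ and $\mathcal{K}$ the torsion groups, one has $\Hom_{\mathcal{A}}(\bbZ,\bbZ)=\bbZ$ while $\Hom_{\mathcal{A}/\mathcal{K}}(\bbZ,\bbZ)=\bbQ$. So fullness needs an input specific to the situation: you must show that the defects $X/X'$ and $Y''$ of a representing roof can be chosen so that their realizations vanish already at some finite level $S_j$ --- equivalently, that an object of $\sP(S_j,\bbQ_\ell)$ arising as the realization of an object of $\Ab^\tr(\DA_\ct(S_j))$ whose pullback to $S$ vanishes has vanishing pullback to some $S_{j'}$ with $j'\geqslant j$. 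That is exactly the zero-detection statement which the faithfulness hypothesis of the second part provides, but it is not available in the first part as you have set things up; as it stands, the fullness claim --- the part of the statement the paper actually exploits later, e.g. in \corollaryref{cor_motifs_point_generique} and in the semisimplicity argument --- is unproved in your write-up.
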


\begin{proof} This follows from \ref{prop_continuity_abelian_hull} and \ref{theo:cont}.

\end{proof}

\begin{coro} Let $S$ and $\ell$
be as above, and suppose also that $S$ is integral.
Then, if
$\eta$ is the generic point of $S$, the canonical
exact functor
\[2-\varinjlim_U \sM(U)_\ell\ra\sM(\eta)_\ell,\]
where the colimit is taken over all nonempty affine open subschemes of $S$
and where the image of $K_U\in\Ob\sM(U)_\ell$ is $K_{U,\eta}[-\dim S]$,
is an equivalence of categories.

\label{cor_motifs_point_generique}
\end{coro}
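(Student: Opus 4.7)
The plan is to deduce this corollary directly from \propositionref{prop_continuity_sM} applied to the filtered projective system $(U)$ of nonempty affine open subschemes of $S$, whose limit (with affine transition maps, namely open immersions between affine schemes) is exactly the generic point $\eta = \varprojlim_U U$.

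First I would verify the hypotheses of the proposition with shift $a=0$. The transition maps in the system are open immersions $V \hookrightarrow U$ between affine integral schemes; these are affine and their pullback functors $j^*_{\sP}$ on $\ell$-adic complexes are $t$-exact for the perverse $t$-structure (no shift needed), hence preserve $\sP(-,\bbQ_\ell)$. Similarly, for each nonempty affine open $U \subseteq S$, the canonical morphism $f_U: \eta \to U$ is a cofiltered limit of open immersions into $U$, and by continuity of the $\ell$-adic formalism (see \cite{AyoubEtale}), the pullback $f_U^*$ on constructible $\ell$-adic complexes preserves the perverse $t$-structure with no shift.

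With these hypotheses checked, \propositionref{prop_continuity_sM} immediately gives that the canonical functor
\[2\text{-}\varinjlim_U \sM(U)_\ell \longrightarrow \sM(\eta)_\ell\]
is full and essentially surjective, which is the statement of the corollary.

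The only subtle point (and therefore the main thing to double-check) is the $t$-exactness claim for $f_U^*$ at the limit. This is where one genuinely uses excellence and the continuity theorem (\theoremref{theo:cont}): perversity at $\eta$ is determined by support and cohomology conditions that pass well to cofiltered limits of affine open immersions, so no dimension shift is needed. Once this is granted, the corollary is a formal consequence of \propositionref{prop_continuity_sM}; we do not need to verify the additional faithfulness hypothesis since the corollary only asserts fullness and essential surjectivity, not an equivalence.
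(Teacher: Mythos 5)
Your overall strategy is exactly the intended one: the corollary is meant to follow by applying \propositionref{prop_continuity_sM} to the filtered system of nonempty affine opens $U$ of $S$, with $\eta=\varprojlim_U U$, and you are right that only the first half of the proposition is needed (fullness and essential surjectivity), so the extra faithfulness hypothesis can be ignored. The verification for the transition maps is also fine: open immersions between affine opens are affine morphisms, and pullback along an open immersion is perverse $t$-exact.

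However, your verification of the hypothesis for the maps $f_U:\eta\ra U$ is wrong, and this is a genuine gap as written. The pullback $f_U^*$ does \emph{not} preserve perversity with shift $a=0$: if $U$ is smooth of dimension $d=\dim S>0$, then $\bbQ_\ell[d]$ is perverse on $U$, but its restriction to $\eta$ is a complex concentrated in degree $-d\neq 0$, hence not perverse on the point $\eta$. More generally, a perverse sheaf on $U$ is generically a local system placed in degree $-\dim S$, so the correct choice in \propositionref{prop_continuity_sM} is $a=-\dim S$ (uniformly in $U$, since every nonempty open of the integral scheme $S$ contains $\eta$), and the resulting functor sends $K$ to $K_\eta[-\dim S]$ --- this is precisely the shift the paper records in the remark following the corollary, where it is stated that the shift is necessary to preserve perversity. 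Your closing paragraph misdiagnoses the issue: it has nothing to do with excellence or the continuity theorem for $\DA_\ct$, it is elementary perversity bookkeeping at the generic point. With $a=-\dim S$ substituted, the rest of your argument goes through and coincides with the paper's (implicit) proof.
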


\begin{proof}
By \ref{prop_continuity_sM}, it suffices to check
that the similar functor
\[2-\varinjlim_U \sP(U,\bbQ_\ell)\ra\sP(\eta,\bbQ_\ell)\]
is faithful. Let $K$ be an object of
$2-\varinjlim_U \sP(U,\bbQ_\ell)$ whose image in $\sP(\eta,\bbQ_\ell)$ is
$0$, and let $U$ be a nonempty open affine subscheme of $S$ such that
$K$ comes from an object $K'$ of $\sP(U,\bbQ_\ell)$. After shrinking
$U$ (which does not change $K$), we may assume that
$K'[-\dim S]$ is a local system. Then the condition $K'_\eta[-\dim S]=0$
implies that this local system is zero, hence that $K=0$.

\end{proof}

\subsection{Comparison of the different categories
of perverse Nori motives}

In the next proposition, we compare the $\ell$-adic definition of perverse motives with the one used previously and obtained via the Betti realization.

\begin{prop}\label{prop:independancel} Suppose that $k$ is a field 
of characteristic $0$ and that $S$ is quasi-projective over $k$.

We write
$\rho_\ell$ for the canonical exact functor
$\Ab^\tr(\DA_\ct(S))\ra\sP(S,\bbQ_\ell)$ induced by $\pH^0_\ell$.
If
$\sigma$ is an embedding of $k$ into $\bbC$, then
we also have an exact functor
$\rho_\sigma$ from $\Ab^\tr(\DA_\ct(S))$ to $\sP(S)$ induced by $\pH^0$.
Then:
\begin{enumerate}
\item If $\ell,\ell'$ are two 
prime numbers,
then $\Ker\rho_\ell=\Ker\rho_{\ell'}$. 
In particular, we get
a canonical equivalence of abelian categories $\sM(S)_\ell=\sM(S)_{\ell'}$.

\item If $\sigma:k\ra\bbC$ is
an embedding, then $\Ker\rho_\ell=\Ker\rho_\sigma$.
In particular, we get
a canonical equivalence of abelian categories $\sM(S)_\ell=\sM(S)$.

\end{enumerate}

\end{prop}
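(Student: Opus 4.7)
The plan is to reduce both parts to a single key comparison: for every $M\in\DA_\ct(S)$, one has $\rho_\sigma(M)=0$ if and only if $\rho_\ell(M)=0$. Granting this, (ii) is immediate, and (i) follows by transitivity, $\Ker\rho_\ell=\Ker\rho_\sigma=\Ker\rho_{\ell'}$, provided a single embedding $\sigma$ can be chosen. If $k$ itself does not embed into $\bbC$, one first reduces to a finitely generated subfield $k_0\subset k$ over which the objects of $\Ab^\tr(\DA_\ct(S))$ in question are defined, using the continuity results \theoremref{theo:cont}, \propositionref{prop_continuity_abelian_hull} and \propositionref{prop_continuity_sM}; such a $k_0$ always embeds into $\bbC$, and the reduction is permissible because objects of $\Ab^\tr(\DA_\ct(S))$ are finite presentations built from finitely many objects of $\DA_\ct(S)$.

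To establish the key comparison, let $\pi:S_\bbC\ra S$ be the base change along $\sigma$. Three ingredients are required. First, by construction of the Betti realization via analytification of $S_\bbC$, there is a canonical isomorphism $\Bti^*_S\simeq \Bti^*_{S_\bbC}\circ\pi^*$. Second, the $\ell$-adic \'etale realization of \cite[Sections 5 and 10]{AyoubEtale} commutes with pullback, giving a canonical isomorphism $\pi^*_\ell\circ\Ret_S\simeq\Ret_{S_\bbC}\circ\pi^*$. Third, the Artin comparison theorem yields an equivalence $\Dbc(S_\bbC^{\anc},\bbQ_\ell)\simeq\Dbc(S_\bbC,\bbQ_\ell)$ which is $t$-exact for the perverse $t$-structures and under which $\Ret_{S_\bbC}(N)$ corresponds to $\Bti^*_{S_\bbC}(N)\otimes_\bbQ\bbQ_\ell$ for every $N\in\DA_\ct(S_\bbC)$.

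Combining these ingredients with the faithfulness and exactness of the scalar extension $\otimes_\bbQ\bbQ_\ell$ on perverse sheaves, together with the faithful perverse $t$-exactness (up to a shift) of $\pi^*_\ell$ on $\ell$-adic perverse sheaves, yields the chain of equivalences
\[\rho_\ell(M)=0\iff \pi^*_\ell\rho_\ell(M)=0\iff \pH^0\bigl(\Ret_{S_\bbC}(\pi^*M)\bigr)=0\iff \pH^0\bigl(\Bti^*_{S_\bbC}(\pi^*M)\bigr)=0\iff \rho_\sigma(M)=0,\]
which is the desired comparison.

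The main obstacle is that $\pi:S_\bbC\ra S$ is not of finite type, so neither the commutation of $\Ret$ with $\pi^*$, nor the faithful $t$-exactness of $\pi^*_\ell$ on $\ell$-adic perverse sheaves, is covered by standard six-functor-formalism results. Both points are handled by expressing $\pi$ as a cofiltered limit of smooth morphisms of finite type between models over finitely generated subfields, then invoking the continuity of \'etale motives (\theoremref{theo:cont}) and the analogous continuity for constructible $\ell$-adic sheaves; all the required compatibilities and the Artin comparison itself then reduce to classical statements for finite-type complex varieties.
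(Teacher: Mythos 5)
Your route is genuinely different from the paper's, but as written it has two real gaps. First, the reduction step. The kernels in question live in $\Ab^\tr(\DA_\ct(S))$: a typical object there is the cokernel of a map $[N]\ra[M]$ with $N\ra M$ in $\DA_\ct(S)$, and $\rho_\ell$ kills it exactly when $\pH^0_\ell(N)\ra\pH^0_\ell(M)$ is surjective. An object-wise statement of the form \leftguill $\pH^0_\ell(M)=0$ iff $\pH^0(M)=0$ for $M\in\DA_\ct(S)$\rightguill\ does not imply $\Ker\rho_\ell=\Ker\rho_\sigma$, because surjectivity of $\pH^0(u)$ is not the vanishing of any single perverse cohomology object (in a triangle $N\ra M\ra C\xra{+1}$ it amounts to injectivity of $\pH^0(C)\ra\pH^1(N)$). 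What you actually need --- and what your three ingredients would deliver if they were established \emph{naturally in the motive} --- is an intertwining of the two homological functors: faithful exact functors out of $\sP(S,\bbQ_\ell)$ and $\sP(S)$ into a common abelian category (here $\ell$-adic perverse sheaves on $S_\bbC$) together with a natural isomorphism between the two composites on $\DA_\ct(S)$; kernel equality then follows. The sentence \leftguill granting this, (ii) is immediate\rightguill\ skips exactly this point.

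Second, your key third ingredient is asserted, not proved: Artin's comparison gives an equivalence $\Dbc(S_\bbC^{\anc},\bbQ_\ell)\simeq\Dbc(S_\bbC,\bbQ_\ell)$ of \emph{target} categories, but the claim that under it $\Ret_{S_\bbC}(N)$ corresponds to $\Bti^*_{S_\bbC}(N)\otimes_\bbQ\bbQ_\ell$ functorially in $N$ is a comparison of two differently constructed realization functors on $\DA_\ct(S_\bbC)$ (those of \cite{AyoubEtale} and \cite{AyoubBetti}) and needs an argument or a citation (e.g.\ checking it on the generators $f_\sharp\bbQ_Y$ compatibly with the operations). This relative compatibility is precisely what the paper's proof is engineered to avoid: it runs a Noetherian induction on $\dim S$, reduces (after Zariski/\'etale localization and shrinking to a generically smooth open set where the relevant perverse sheaves are shifted local systems) the surjectivity statement via Be\u{\i}linson's gluing \cite{Beilinson-gluing} and vanishing cycles to the same statement over a point, and there invokes Huber's mixed realizations \cite{Huber-real}, which package exactly the Betti/$\ell$-adic compatibility you are assuming in relative form; the non-embeddable case and part (i) are then handled by continuity, much as you propose. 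Your treatment of the pro-smooth base change $\pi:S_\bbC\ra S$ by limit arguments is plausible (note that pullback along a field extension is perverse $t$-exact with no shift, and conservative), but until ingredient three is actually established the argument does not close.
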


\begin{proof} 
We first treat the case $S=\Spec k$.
If $k$ can be embedded in $\bbC$, then (2) follows from Huber's
construction of mixed realizations in Huber's~\cite{Huber-real}, and (1) follows
from (2). In the general case, 
(1) follows from the case where
$k$ can be embedded in $\bbC$ and from \ref{prop_continuity_sM},
applied to the family of 
subfields of $k$ that can be embedded in $\bbC$.

Now we treat the case of a general $k$-scheme $S$.
As in the first case, (1) follows from (2)
and from \ref{prop_continuity_sM}. So suppose that we have
an embedding $\sigma:k\ra\bbC$. 
We prove the desired result by induction on the dimension of $S$.
The case $\dim S=0$ has already been treated, so we may assume that $\dim S>0$
and that the result is known for all the schemes of lower dimension.
We denote by $M\mapsto[M]$ the canonical functor $\DA_\ct(S)\ra\Ab^\tr(\DA_\ct(S))$;
as $\DA_\ct(S)$ is a triangulated category, this is a fully faithful
functor. Let $X$ be an object of $\Ab^\tr(\DA_\ct(S))$. 
By construction of $\Ab^\tr(\DA_\ct(S))$,
there
exists a morphism $N\ra M$ in $\DA_\ct(S)$ such that $X$ is the
cokernel of $[N]\ra[M]$. Then $\rho_\ell(X)$ is the cokernel
of $\pH^0_\ell(N)\ra\pH^0_\ell(M)$, so $\rho_\ell(X)=0$ if and only
$\pH^0_\ell(N)\ra\pH^0_\ell(M)$ is surjective. Similarly, $\rho_\sigma(X)=0$
if and only if $\pH^0(N)\ra\pH^0(M)$ is surjective. We can check these
conditions on a Zariski open covering of $S$, so we may assume that
$S$ is affine.
Choose a nonempty
smooth open subset $U$ of $S$ such that the restrictions to $U$ of 
$\rho_\ell(M)$, $\rho_\ell(N)$,
$\rho_\sigma(M)$ and
$\rho_\sigma(N)$ are all locally constant sheaves placed in degree
$-\dim S$.
As $S$ is affine,
after shrinking $U$, we may assume that $U$ is the complement
of the vanishing set of a nonzero function $f\in\mathcal{O}(S)$.
By Proposition~3.1 of Beilinson's~\cite{Beilinson-gluing}, we have
that $\rho_\ell(N)\ra\rho_\ell(M)$ is surjective
if and only if both 
$\rho_\ell(N)_{\mid U}\ra\rho_\ell(M)_{\mid U}$ 
and ${}^p\Phi_f^\sM\rho_\ell(N)\ra{}^p\Phi_f^\sM\rho_\ell(M)$ are, 
which is equivalent to the surjectivity of
$\rho_\ell(N_{\mid U})\ra\rho_\ell(M_{\mid U})$ 
and $\rho_\ell(\Phi_f N)\ra\rho_\ell(\Phi_f M)$. We have a similar statement
for $\rho_\sigma$. 
As $\dim(S-U)<\dim(S)$, we can use the induction hypothesis
to reduce to the case $S=U$.
It suffices to check the result on an \'etale cover of $S$, so we
may assume that $S$ has a rational point $x$. 
Let $i:x\ra S$ be the obvious inclusion.
As $\rho_\ell(N)[-\dim S]$ and
$\rho_\ell(M)[-\dim S]$ are locally constant sheaves on $S$, the morphism
$\rho_\ell(N)\ra\rho_\ell(M)$ is surjective if and only if
$\rho_\ell(i^* N[-\dim S])\ra\rho_\ell(i^* M[-\dim S])$ is, and similarly
for $\rho_\sigma$. So we are reduced to the result on 
the scheme $x$, which we have
already treated.
\end{proof}

\begin{coro}\label{coro:allfourop} Let $k$ be a field of characteristic $0$ and $S$ a quasi-projective
scheme over $k$. We have a canonical $\bbQ$-linear
abelian category of perverse Nori motives
$\sM(S)$, together with a cohomological functor $\pH^0_\sM:\DA_\ct(S)\ra
\sM(S)$, with a $\ell$-adic realization functor
$\rat^\sM_{S,\ell}:\sM(S)\ra\sP(S,\bbQ_\ell)$ for every prime number
$\ell$, with a Betti realization functor
$\rat^\sM_{S,\sigma}:\sM(S)\ra\sP(S)$ for every embedding
$\sigma:k\ra\bbC$, and it has a formalism of the 4 operations, duality, 
unipotent nearby and vanishing cycles compatible with all these operations.

\end{coro}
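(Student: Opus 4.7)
The proof is essentially an assembly of results already established in the paper: the core content has been done, and what remains is to check that everything fits together canonically.

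First, I would define $\sM(S)$ as the quotient of the Freyd category $\Ab^{\tr}(\DA_\ct(S))$ by the thick subcategory $\Ker\rho$, where $\Ker\rho$ denotes the common kernel shared by $\rho_\ell$ (for any odd prime $\ell$) and $\rho_\sigma$ (for any embedding $\sigma:k\hookrightarrow\bbC$). \propositionref{prop:independancel} guarantees that this kernel is unambiguous, so $\sM(S)$ is canonical and we obtain compatible equivalences $\sM(S)\simeq\sM(S)_\ell\simeq\sM(S)_\sigma$ for all admissible $\ell$ and $\sigma$. The cohomological functor $\pH^0_\sM$ is the composition of the Yoneda-type embedding $\DA_\ct(S)\ra\Ab^{\tr}(\DA_\ct(S))$ with the quotient functor, and the realization functors $\rat^\sM_{S,\ell}$, $\rat^\sM_{S,\sigma}$ arise by construction as the faithful exact factorizations of $\pH^0_\ell$ and $\pH^0_\sigma$ respectively.

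Next, I would transport the four operations, duality, and nearby/vanishing cycles functors to this canonical $\sM(S)$. \theoremref{theo:maintheo} provides the stable homotopical $2$-functor $\mathsf{H}^*_\sM$ (Betti version) and, combined with \cite[Scholie 1.4.2]{AyoubI}, yields $f^*_\sM, f_*^\sM, f_!^\sM, f^!_\sM$, duality $\bbD^\sM$, and the unipotent $\psi^\sM_f, \phi^\sM_f$ from \subsectionref{subsec:AppPerverseMotive}. The analogous construction using the $\ell$-adic realization $\Ret_S$ in place of $\Bti^*_S$ goes through verbatim (as noted after \definitionref{defi:perversel}), giving the same operations computed via $\sM(S)_\ell$. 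Under the canonical equivalence of the first step, these two constructions of each operation agree: the uniqueness clause in property \textbf{P2} of \subsectionref{subsec:defPerverseMotives} forces any exact functor on $\sM(S)$ lifting a given triangulated operation on $\DA_\ct(S)$ to be determined (up to unique $2$-isomorphism) by the compatibility with \emph{any} faithful exact realization. Applying this uniqueness to $\rat^\sM_{S,\sigma}$ and $\rat^\sM_{S,\ell}$ simultaneously shows the Betti-constructed and $\ell$-adically constructed operations coincide on $\sM(S)$, and each realization functor intertwines them with the corresponding classical operations.

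The main potential obstacle is handling the case $\ell=2$, since Ayoub's $\ell$-adic realization (and hence \propositionref{prop:independancel}\,(i)) is stated only for odd primes. I would resolve this by invoking the standard comparison between Betti and $\ell$-adic perverse sheaves after base change along an embedding $\sigma:k\hookrightarrow\bbC$ (with a suitable passage to an algebraic closure), together with the continuity of \propositionref{prop_continuity_sM}, to define and verify the $2$-adic realization a posteriori from the Betti one; once the functor is in place, the same universal-property argument as above propagates compatibility with the six operations. A secondary, routine obstacle is bookkeeping the coherence of all the $2$-isomorphisms $\gamma, \theta, \rho, \xi, \nu, \varepsilon$ across the various realizations, but as everywhere in the paper this is forced by the faithfulness of the realization functors and the uniqueness half of property \textbf{P2}, so no new idea is required.
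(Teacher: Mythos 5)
Your proposal follows essentially the same route as the paper: the corollary is there a summary statement whose implicit proof is exactly \propositionref{prop:independancel} (canonical identification of the kernels, hence of $\sM(S)$, $\sM(S)_\ell$) combined with \theoremref{theo:maintheo}, the remark in \subsectionref{subsec:l-adic_perverse_motives} that the main theorem holds verbatim for the $\ell$-adic categories, and the faithfulness/uniqueness from property \textbf{P2} to identify the operations across realizations. The only point where you go beyond the paper is the $\ell=2$ discussion (the paper states the realization for every prime but never constructs it for $\ell=2$); be aware that your proposed patch is not quite complete as sketched, since the Betti--\'etale comparison only yields $\bbQ_2$-perverse sheaves after base change to $\bbC$ (or $\bar k$), whereas $\sP(S,\bbQ_2)$ over the non-closed field $k$ requires the Galois/descent structure, so some further argument (e.g.\ an \'etale realization valid at $2$) would be needed there.
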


We fix a field $k$ of characteristic zero and a quasi-projective
scheme $S$ over $k$. We first define weights via the $\ell$-adic realizations.

\begin{defi}\label{defi:weights} Let $w\in\bbZ$.
Let $K$ be an object of $\sM(S)$. We say that $K$ is of weight $\leq w$
(resp. $\geq w$) if $\rat^\sM_{S,\ell}(K)\in\Ob(\sP_m(S,\bbQ_\ell))$ 
is of weight $\leq w$ (resp.
$\geq w$) for every 
prime number $\ell$. We say that $K$ is pure of
weight $w$ if it is both of weight $\leq w$ and of weight $\geq w$.
\end{defi}

In \ref{prop:weightsaremotivic}, we will a more intrinsic definition of weights that does not use
the realization functors.

\begin{defi} A \emph{weight filtration} on an object $K$ of $\sM(S)$ is
an increasing filtration $W_\bullet K$ on $K$ such that $W_i K=0$ for
$i$ small enough, $W_i K=K$ for $i$ big enough, and $W_{i}K/W_{i-1}K$ is
pure of weight $i$ for every $i\in\bbZ$.

\end{defi}

The next result follows immediately from the similar result in
the categories of mixed horizontal perverse sheaves (see Proposition 3.4 and Lemma 3.8 of
Huber's~\cite{Huber-perverse}).

\begin{prop} Let $K,L$ be objects of $\sM(S)$, and let $w\in\bbZ$.
\begin{enumerate}
\item If $K$ is of weight $\leq w$ (resp. $\geq w$), so is every
subquotient of $K$.
\item If $K$ is of weight $\leq w$ and $L$ is of weight $\geq w+1$,
then $\Hom_{\sM(S)}(K,L)=0$.

\end{enumerate}
\end{prop}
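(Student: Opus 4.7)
The plan is to deduce both statements directly from the corresponding assertions for mixed horizontal $\ell$-adic perverse sheaves, via the faithful exact $\ell$-adic realization functor $\rat^\sM_{S,\ell}\colon \sM(S)\to\sP_m(S,\bbQ_\ell)$ built in the preceding subsections. The key point is that, by \definitionref{defi:weights}, being of weight $\leq w$ (resp.\ $\geq w$) in $\sM(S)$ is tested after applying $\rat^\sM_{S,\ell}$ for every odd prime $\ell$, so everything can be transported to the $\ell$-adic side where Huber's Proposition~3.4 and Lemma~3.8 of \cite{Huber-perverse} are available.

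For (i), I would argue as follows. Let $K'$ be a subquotient of $K$ in $\sM(S)$, written as $K' = A/B$ with $B\subseteq A\subseteq K$. Since $\rat^\sM_{S,\ell}$ is exact, $\rat^\sM_{S,\ell}(K')$ is a subquotient of $\rat^\sM_{S,\ell}(K)$ in $\sP_m(S,\bbQ_\ell)$. By \cite[Proposition~3.4]{Huber-perverse}, subquotients preserve the conditions of weight $\leq w$ (resp.\ $\geq w$); running this for every odd $\ell$ then gives the claim by the very definition of weights in $\sM(S)$.

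For (ii), let $f\colon K\to L$ be a morphism in $\sM(S)$ with $K$ of weight $\leq w$ and $L$ of weight $\geq w+1$. Applying $\rat^\sM_{S,\ell}$ yields a morphism of mixed horizontal perverse sheaves between objects of weights $\leq w$ and $\geq w+1$ respectively, which is zero by \cite[Lemma~3.8]{Huber-perverse}. Since $\rat^\sM_{S,\ell}$ is \emph{faithful} (by construction of $\sM(S)$ as a universal factorization via a faithful exact functor), this forces $f=0$.

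There is essentially no obstacle here: the entire content is that the realization functor is simultaneously exact (so subquotients are preserved) and faithful (so vanishing of the realization forces vanishing of the morphism), and the corresponding statements on the $\ell$-adic side are already recorded in \cite{Huber-perverse}. The only mild subtlety worth flagging is that we must check these properties for \emph{every} odd $\ell$, which is automatic since \definitionref{defi:weights} is phrased precisely as such a condition and \propositionref{prop:independancel} guarantees that all the different $\ell$-adic realizations factor through a common category $\sM(S)$.
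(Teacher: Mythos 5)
Your proof is correct and follows exactly the route the paper takes: the paper's own argument is precisely that the statement "follows immediately from the similar result in the categories of mixed horizontal perverse sheaves" (Proposition~3.4 and Lemma~3.8 of Huber), transported through the faithful exact realization $\rat^\sM_{S,\ell}$, which is what you spell out. No gaps; your remark that faithfulness for a single odd $\ell$ already suffices in (ii) is a harmless refinement.
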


Recall that, if $A$ and $B$ are objects of an abelian category
endowed with increasing filtrations $(F_i A)_{i\in\bbZ}$ and
$(F_i B)_{i\in\bbZ}$, then a morphism $u:A\ra B$ is called
\emph{compatible} (resp. \emph{strictly compatible})
with the filtrations if, for every $i\in\bbZ$, we have
$u(F_i A)\subset F_i B$ (resp. $u(F_i A)=u(A)\cap F_i B$).

\begin{coro}\label{coro:strictness} A weight filtration on an object of $\sM(S)$ is unique
if it exists, and morphisms of $\sM(S)$ are strictly compatible with
weights filtrations.
In particular, if an object of $\sM(S)$ has a weight filtration, then so do
all its subquotients.

\end{coro}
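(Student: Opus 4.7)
The plan is to deduce all three assertions from parts (i) and (ii) of the preceding proposition, together with the (standard) fact that the subcategories of weight $\leq w$ and weight $\geq w$ are closed under extensions in $\sM(S)$. This extension-closedness transfers from the analogous property in $\sP_m(S,\bbQ_\ell)$, which is a classical consequence of the long exact sequence of cohomology and the Weil conjectures, via the faithful exact realization functor $\rat^\sM_{S,\ell}$.

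For uniqueness, given two weight filtrations $W_\bullet K$ and $W'_\bullet K$ on the same object $K$, I would show $W_w K \subseteq W'_w K$ for every $w$ and then conclude by the symmetric inclusion. Since the graded pieces $W_i K / W_{i-1} K$ for $i \leq w$ are pure of weight $i\leq w$, the subobject $W_w K$ is of weight $\leq w$ by iterated extension; likewise $K/W'_w K$ is of weight $\geq w+1$. Property (ii) applied to the composite $W_w K \hookrightarrow K \twoheadrightarrow K/W'_w K$ then forces this composite to be zero, giving the desired inclusion.

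For strictness, let $f: K \to L$ be a morphism of objects endowed with weight filtrations. The same argument shows $f(W_w K) \subseteq W_w L$. For the nontrivial inclusion $\operatorname{Im}(f) \cap W_w L \subseteq f(W_w K)$, I would set $N := (\operatorname{Im}(f) \cap W_w L)/f(W_w K)$ and exhibit $N$ as simultaneously a subobject of a weight-$\leq w$ object (namely the quotient $W_w L/f(W_w K)$) and a subobject of a weight-$\geq w+1$ object (namely $\operatorname{Im}(f)/f(W_w K)$, itself a quotient of $K/W_w K$). By (i) applied twice, $N$ is both of weight $\leq w$ and of weight $\geq w+1$, and then (ii) applied to $\id_N$ yields $N=0$.

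Finally, for the subquotient statement, given a subobject $K'\subseteq K$ of a weight-filtered $K$, I would set $W_w K' := K' \cap W_w K$ and invoke strictness of the inclusion (which has just been proved for any morphism, in particular for the identity of $K$, giving the compatibility needed) to identify $W_w K'/W_{w-1}K'$ with a subobject of $\operatorname{gr}^W_w K$, hence pure of weight $w$ by (i). Quotients are handled symmetrically by setting $W_w(K/K'):=(W_w K + K')/K'$, whose graded pieces are quotients of $\operatorname{gr}^W_w K$; a general subquotient is a quotient of a subobject, reducing to these two cases. There is no genuine obstacle here: the whole argument is a mechanical application of (i), (ii), and extension-closedness, exactly as in the classical theory of mixed $\ell$-adic perverse sheaves or mixed Hodge modules.
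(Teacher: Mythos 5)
Your proof is correct and follows exactly the route the paper intends: the corollary is stated there without proof as an immediate consequence of the preceding proposition, and your deduction of uniqueness, strictness, and the subquotient statement from (i), (ii), and extension-stability of weights (transferred from $\sP_m(S,\bbQ_\ell)$ through the faithful exact realization) is the standard argument being left implicit. Nothing is missing.
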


\subsection{Application of Bondarko's weight structures}

Let $S$ be as in the previous subsection. We will now make use of Bondarko's
Chow weight structure on $\DA_\ct(S)$. 
Let $\Chow(S)$ be the full subcategory of $\DA_\ct(S)$ whose objects
are direct factors of finite direct sums of objects of the form
$f_!\bbQ_X(d)[2d]$, with $f:X\ra S$ a proper morphism from a smooth $k$-scheme
$X$ to $S$ and $d\in\bbZ$. Then, as shown by H\'ebert in \cite[Theorem~3.3]{Hebert}, and also by Bondarko in \cite[Theorem~2.1]{Bondarko}, there exists a unique weight structure
on $\DA_\ct(S)$ with heart $\Chow(S)$
(see \cite[Definition~1.5]{Hebert} or \cite[Definition~1.5]{Bondarko} for the definition of a weight structure).

In particular, for every object $K$ of $\DA_\ct(S)$, there exists an
exact triangle $A\ra K\ra B\xra{+1}$ (not unique) such that
$A$ (resp. $B$) is a direct factor of a successive extension of objects of
$\Chow(S)[i]$ with $i\leq 0$ (resp. $i\geq 1$).

\begin{prop} Every object of $\sM(S)$ has a weight filtration. Moreover,
if $S=\Spec k$ and $\sigma$ is an embedding of $k$ in $\bbC$, then the
notion of weights of \ref{defi:weights}
coincides with that of Section~10.2.2 of the book~\cite{BookNori} of Huber and M\"uller-Stach.

\label{cor_weight_filtration}
\end{prop}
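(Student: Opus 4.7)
The plan rests on three ingredients: Bondarko's Chow weight structure on $\DA_\ct(S)$ recalled above, the purity of the perverse cohomology of Chow motives, and the hereditary behavior of weight filtrations under subquotients in $\sM(S)$ established in \corollaryref{coro:strictness}. By the construction of $\sM(S)$ as an abelian hull, every $K \in \sM(S)$ is a quotient of some $\pH^0_\sM(M)$ with $M \in \DA_\ct(S)$, so by \corollaryref{coro:strictness} it suffices to produce a weight filtration on $\pH^0_\sM(M)$ for every constructible motive $M$.

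The key intermediate assertion is that, for every $C \in \Chow(S)$ and every $n \in \bbZ$, the perverse motive $\pH^n_\sM(C)$ is pure of weight $n$ in the sense of \definitionref{defi:weights}. Since $C$ is a direct factor of $f_{!}\bbQ_X(d)[2d]$ with $f \colon X \to S$ proper and $X$ smooth, its $\ell$-adic realization is $f_{*}\bbQ_{X,\ell}(d)[2d]$. I would establish the purity of this object in $\Db_m(S,\bbQ_\ell)$ by spreading $f$ out to a model over a finitely generated ring, applying Deligne's Weil~II \cite[Théorème~1]{WeilII} over a scheme of finite type over a finite field, and transporting the conclusion through the continuity of horizontal perverse sheaves built into the definition of $\Db_m(S,\bbQ_\ell)$. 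Preservation of purity under perverse truncations (as in \cite[5.1.14]{BBD}, valid in the horizontal setting) then gives that $\pH^n_\ell(C)$ is pure of weight $n$, which is the claim.

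Granted this, the Chow weight structure provides a finite tower of exact triangles
\[ w_{\le n-1} M \longrightarrow w_{\le n} M \longrightarrow \mathrm{Gr}^W_n M \xra{+1} \]
with $\mathrm{Gr}^W_n M \in \Chow(S)[n]$, and such that $w_{\le n}M \simeq 0$ for $n \ll 0$ and $w_{\le n} M \simeq M$ for $n \gg 0$. Setting
\[ F_n := \mathrm{Im}\bigl(\pH^0_\sM(w_{\le n} M) \to \pH^0_\sM(M)\bigr) \]
produces an exhaustive increasing filtration on $\pH^0_\sM(M)$. The long exact sequences obtained by applying $\pH^0_\sM$ to the triangles above identify each $F_n/F_{n-1}$ as a subquotient of $\pH^n_\sM(C_n)$ for some $C_n \in \Chow(S)$, hence pure of weight $n$ by the intermediate assertion. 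Thus $F_\bullet$ is a weight filtration on $\pH^0_\sM(M)$, and by \corollaryref{coro:strictness} one descends this to $K$.

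For the comparison in the case $S = \Spec k$, both the filtration constructed here and Nori's from \cite[\S10.2.2]{BookNori} are characterized by the purity of their graded pieces, purity in either category being detected by the $\ell$-adic realization (this is essentially André's characterization of pure Nori motives via motivated cycles, \cite[Theorem~6.4.1]{MR2995668}). Since the equivalence $\sM(k) \simeq \HM(k)$ of \propositionref{prop:CompNori} identifies these realizations, the two filtrations agree by the uniqueness statement of \corollaryref{coro:strictness}. The main obstacle in this program is the purity assertion for $\pH^n_\ell(C)$ over a general base, since the Weil conjectures only directly yield it over a finite field: the spreading-out and specialization argument is routine in principle, but relies crucially on working within the category of horizontal perverse sheaves to keep perversity and weights intact along the limit.
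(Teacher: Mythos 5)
Your argument for the existence of the weight filtration is essentially the paper's: reduce to objects $\pH^0_\sM(M)$ (every object of $\sM(S)$ is a quotient of such, and \corollaryref{coro:strictness} passes weight filtrations to subquotients), invoke the Bondarko--H\'ebert Chow weight structure on $\DA_\ct(S)$, prove that the perverse cohomology of a Chow motive is pure via Weil~II inside the horizontal categories, and take images under $\pH^0_\sM$ of a weight decomposition. The only packaging difference there is that you build the whole filtration at once from a chosen weight Postnikov tower, whereas the paper defines $W_wK$ for each $w$ from a single decomposition $A\ra M\ra B$; these are equivalent, and your spreading-out/Weil~II step is exactly what the paper obtains by reducing, through the very definition of weights in $\sP_m(S,\bbQ_\ell)$, to a finitely generated base field and quoting \cite[5.1.14]{BBD}. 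Where you genuinely diverge is the comparison with Nori's weights: the paper checks directly, from \cite[Definition~10.2.4]{BookNori} and the motivic Chow lemma, that $\pH^0_\sM$ of a Chow motive is pure of weight $0$ in Nori's sense too, so the constructed filtration is simultaneously a weight filtration for both notions and the coincidence is immediate. Your route---uniqueness of weight filtrations plus the claim that purity in $\HM(k)$ is detected by the $\ell$-adic realization---can be made to work, but the detection statement is not what \cite[Theorem~6.4.1]{MR2995668} says: the direction ``realization pure $\Rightarrow$ Nori pure'' requires the existence and uniqueness of the weight filtration on Nori motives (\cite[Theorem~10.2.5]{BookNori}), purity of the realizations of Nori-pure objects (Weil~II again), faithful exactness of the realization, and an $\ell$-adic realization on $\HM(k)$ compatible with the equivalence of \propositionref{prop:CompNori} (available via \propositionref{prop:independancel}). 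So your comparison is repairable, but it is less direct than the paper's, which never needs to detect Nori purity through a realization.
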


\begin{proof} We first prove that, if $M$ is an object of $\Chow(S)$,
then $\pH^0_\sM(M)$ is pure of weight $0$ in our sense, and also in the sense
of \cite[Section~10.2.2]{BookNori} if $S=\Spec k$ with $k$ embeddable in
$\bbC$. The second statement is actually an immediate consequence of
\cite[Definition~10.2.4]{BookNori} and of the motivic Chow's lemma
(see for example \cite[Lemma~3.1]{Hebert}). To prove the first statement,
by definition of the weights on $\sP_m(S,\bbQ_\ell)$,
we may assume that $k$ is finitely generated
over $\bbQ$; then the statement follows immediately from \cite[5.1.14]{BBD}
 (see the remark on page~116 of \cite{Huber-perverse}).

Then we note that every object of $\sM(S)$ is a quotient of an
object of the form $\pH^0_\sM(M)$, for $M\in\Ob(\DA_\ct(S))$ (because
this is true for objects of $\Ab^\tr(\DA_\ct(S))$). So it suffices
to prove the result for objects in the essential image of
$\pH^0_\sM$. Let $M\in\Ob(\DA_\ct(S))$, and let $K=\pH^0_\sM(M)$.
Let $w\in\bbZ$.
By the first part of the proof,
if $M$ 
is a direct factor of a successive extension of objects
of $\Chow(S)[i]$ with $i\leq w$ (resp $i\geq w+1$), then $\pH^0_\sM(M)$
is of weight $\leq w$ (resp. $\geq w+1$) in our sense, and also in the
sense of \cite{BookNori} when this applies. In general, using the Chow weight structure of Bondarko, we can find an exact triangle
$A\ra M\ra B\xra{+1}$ such that $A$ (resp. $B$)
is a direct factor of a successive extension of objects
of $\Chow(S)[i]$ with $i\leq w$ (resp $i\geq w+1$). Applying $\pH^0_\sM$,
we get an exact sequence $\pH^0_\sM(A)\ra K\ra\pH^0_\sM(B)$,
with $\pH^0_\sM(A)$ of weight $\leq w$ and $\pH^0_\sM(B)$ of weight $\geq w+1$.
If we set $W_w K=\Img(\pH^0_\sM(A)\ra K)$, then $W_w K$ is of weight $\leq w$
and $K/W_w K$ is of weight $\geq w+1$. This defines a weight filtration
on $K$.
\end{proof}

Weights and the related weight filtration so far have been defined and constructed for perverse motives via 
the $\ell$-adic realizations. As we shall see now, we can also define weights more directly.
Let $\DA_\ct(S)_{w\leq i}$ be the full subcategory of $\DA_\ct(S)$ whose objects are direct factors of successive extensions of objects of
$\Chow(S)[w]$ with $w\leq i$ and consider the Abelian category 
\[\sM(S)_{w\leq i}:=\Ab^\ad(\DA_\ct(S)_{w\leq i},\pH^0_\ell),\]
for some 
prime number $\ell$.
It follows from \ref{prop:independancel} that this category, up to an equivalence, does not depend on $\ell$. Indeed, the universal property provides a commutative diagram (up to isomorphisms of functors)
\[\xymatrix{{\DA_\ct(S)_{w\leq i}}\ar[d]^-I\ar[r] &{\Ab^\ad(\DA_\ct(S)_{w\leq i})}\ar[d]^-{J}\ar[rd]^-{\varrho_\ell} & {}\\
{\DA_\ct(S)}\ar[r]\ar@/_2em/[rr]_-{\pH^0_\ell} &{\Ab^\tr(\DA_\ct(S))}\ar[r]^-{\rho_\ell} & {\sP_m(S,\bbQ_\ell)}}\] 
where $I$ is the inclusion and $J,\varrho_\ell$ are exact functors. As by construction $\sM(S)_{w\leq i}:=\Ab^\ad(\DA_\ct(S)_{w\leq i})/\Ker\varrho_\ell$ it suffices to show that $\Ker\varrho_\ell$ is independent on $\ell$. Let $A$ be an object in $\Ab^\ad(\DA_\ct(S)_{w\leq i})$. Since $A$ belongs to $\Ker\varrho_\ell$ if and only if $J(A)$ belongs to $\Ker\rho_\ell$ our claim follows from \ref{prop:independancel}.

The inclusion $\DA_\ct(S)_{w\leq i}\subseteq \DA_\ct(S)$ induces a faithful exact functor 
\[u_i:\sM(S)_{w\leq i}\ra \sM(S).\]
Let $K$ be an object in $\sM(S)$. Given an object $(L,\alpha:u_i(L)\ra K)$ in the slice category $\sM(S)_{w\leq i}/K $ we can consider the subobject $\Img\alpha$ of $K$ and define $\eusm W_iK$ to be the union of all such subobjects in $K$, that is, we set
\[\eusm W_iK:=\colim_{(L,\alpha)\in\sM(S)_{w\leq i}/K}\Img\alpha.\]
This construction is functorial in $K$ (and moreover using the inclusion of $\DA_\ct(S)_{w\leq i} $ in $\DA_\ct(S)_{w\leq i+1} $ it is easy to see that it defines a filtration on $K$).

\begin{prop}\label{prop:weightsaremotivic}
Let $K\in\sM(S)$. Then, $W_iK=\eusm W_i K$, for every integer $i\in\bbZ$.
\end{prop}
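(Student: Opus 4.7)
The plan is to prove the two inclusions $W_iK\subseteq \eusm W_iK$ and $\eusm W_iK\subseteq W_iK$ separately, after establishing a preliminary factorization.

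The preliminary step is to show that the additive composition $\DA_\ct(S)_{w\leq i}\hookrightarrow \DA_\ct(S)\xra{\pH^0_\sM}\sM(S)$ extends, by the universal property of $\Ab^\ad$, to an exact functor $\Ab^\ad(\DA_\ct(S)_{w\leq i})\to\sM(S)$ which in fact descends to $u_i:\sM(S)_{w\leq i}\to\sM(S)$. The key point is that, post-composing with $\rat^\sM_{S,\ell}$, one recovers (up to the inclusion $\sP_m(S,\bbQ_\ell)\hookrightarrow\sP(S,\bbQ_\ell)$) the functor $\varrho_\ell$ used to define $\sM(S)_{w\leq i}$. Since $\rat^\sM_{S,\ell}$ is faithful, the kernel of the exact functor $\Ab^\ad(\DA_\ct(S)_{w\leq i})\to\sM(S)$ agrees with $\Ker\varrho_\ell$, which is exactly the thick subcategory by which we quotient to obtain $\sM(S)_{w\leq i}$.

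For the inclusion $W_iK\subseteq\eusm W_iK$, I would first reduce to the case $K=\pH^0_\sM(M)$ for some $M\in\DA_\ct(S)$. Indeed, a general $K$ is a quotient $q:\pH^0_\sM(M)\surj K$; strictness (\corollaryref{coro:strictness}) gives $W_iK=q(W_i\pH^0_\sM(M))$, while $\eusm W_i$ is evidently preserved by any morphism, in particular by $q$. In the reduced case, the construction in the proof of \propositionref{cor_weight_filtration} provides an exact triangle $A\to M\to B\xra{+1}$ with $A\in\DA_\ct(S)_{w\leq i}$ such that $W_iK$ is the image of $\pH^0_\sM(A)\to K$. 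By the preliminary step, $\pH^0_\sM(A)=u_i(L_A)$ for $L_A$ the image of $A$ in $\sM(S)_{w\leq i}$, so this image lies in $\eusm W_iK$ by definition.

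For the reverse inclusion $\eusm W_iK\subseteq W_iK$, the crucial point is that $u_i(L)$ has weight $\leq i$ for every $L\in\sM(S)_{w\leq i}$. For $A\in\DA_\ct(S)_{w\leq i}$, the proof of \propositionref{cor_weight_filtration} already shows that $\pH^0_\sM(A)$ has weight $\leq i$. The full subcategory of objects of weight $\leq i$ in $\sM(S)$ is stable under subquotients and extensions (checked via $\rat^\sM_{S,\ell}$ in $\sP_m(S,\bbQ_\ell)$), and the functor $u_i$ is exact, so a short induction on the construction of $\Ab^\ad=\AL\AR$ shows that $u_i(L)$ has weight $\leq i$ for every $L$. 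Then strict compatibility of morphisms with the weight filtration (\corollaryref{coro:strictness}) gives $\Img\alpha\subseteq W_iK$ for any $\alpha:u_i(L)\to K$, and passing to the colimit yields $\eusm W_iK\subseteq W_iK$.

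The main obstacle is the bookkeeping for the preliminary factorization, which requires unraveling the chain of universal properties defining $\sM(S)$ and $\sM(S)_{w\leq i}$ and using faithfulness of the $\ell$-adic realization (together with \propositionref{prop:independancel}) to identify the relevant kernels. Once this is set up, both inclusions follow cleanly from the results already established on weight filtrations and strictness.
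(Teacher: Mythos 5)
Your proof is correct and follows essentially the same route as the paper's: the inclusion $\eusm W_iK\subseteq W_iK$ via the fact that $u_i$ lands in objects of weight $\leq i$ (the paper leaves the subquotient argument implicit), and the reverse inclusion by writing $K$ as a quotient of some $\pH^0_\sM(M)$, using the Bondarko triangle $A\ra M\ra B\xra{+1}$ and strictness of the weight filtration together with functoriality of $\eusm W_i$. Your "preliminary factorization" merely re-derives the construction of $u_i$ that the paper carries out in the text just before the proposition, so there is no substantive difference.
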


\begin{proof}
As observed in the proof of \ref{cor_weight_filtration}, if $M$ belongs to $\DA_\ct(S)_{w\leq i}$, then $\pH^0_\sM(M)$ is of weight $\leq i$. Hence, the functor $u_i$ takes its values in the Abelian subcategory of $\sM(S)$ formed by the objects of weight $\leq i$. As a consequence, for $(L,\alpha)$ in the slice category $\sM(S)_{w\leq i}/K$, the subobject $\Img\alpha$ of $K$ is of weight $\leq i$ and therefore $\eusm W_i K\subseteq W_iK$.

Conversely, there exists an epimorphism $e:\pH^0_\sM(M)\twoheadrightarrow K$ where $M$ belongs to $\DA_\ct(S)$. By construction 
\[W_i\pH^0_\sM(M):=\Img\big(\pH^0_\sM(A)\ra\pH^0_\sM(M)\big)\subseteq \eusm W_i\pH^0_\sM(M)\]
where $A$ is an object of $\DA_\ct(S)_{w\leq i}$ that fits in an exact triangle $A\ra M\ra B\xra{+1}$ such that $B$
is a direct factor of a successive extension of objects
of $\Chow(S)[w]$ with $w\geq i+1$.
Therefore, since the weight filtration on $K$ is the induced filtration (see \ref{coro:strictness}), we get
\[W_iK=e(W_i\pH^0_\sM(M))\subseteq e(\eusm W_i\pH^0_\sM(M))\subseteq \eusm W_i K.\]
This concludes the proof.
\end{proof}

\subsection{The intermediate extension functor}
Recall the definition of the intermediate extension functor, that already appeared in the proof of \ref{coro:IC}.
\begin{defi}\label{defi:IC} Let $j:S\ra T$ be a quasi-finite morphism between
quasi-projective $k$-schemes. We define a functor
$j_{!*}^\sM:\sM(S)\ra\sM(T)$
by
\[j_{!*}^\sM(K)=\Img(\mathrm{H}^0(j_!^\sM K)\ra\mathrm{H}^0(j_*^\sM K)).\]

\end{defi}

Note that, as $j$ is quasi-finite, the functor $j_!^\sM$ is right exact
and the functor $j_*^\sM$ is left exact. In particular, the functor
$j_{!*}^\sM$ preserves injective and surjective morphisms, but it is not
exact in general.

\begin{prop}\label{prop:purityintermediate} Let $j:S\ra T$ be an open immersion, and let
$w\in\bbZ$.
Then, if $K\in
\Ob\sM(S)$ is of weight $\leq w$ (resp. of weight $\geq w$, resp.
pure of weight $w$), so is $j_{!*}^\sM K$.

Also, the functor $j_{!*}^\sM$ is exact on the full abelian subcategory
of objects that are pure of weight $w$.

\end{prop}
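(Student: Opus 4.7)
The plan is to reduce every assertion to the classical analogue for mixed horizontal $\ell$-adic perverse sheaves, via the faithful exact realization $\rat^\sM_{S,\ell}:\sM(S)\to\sP_m(S,\bbQ_\ell)$ of the previous subsection. By the $\ell$-adic analogue of \corollaryref{coro:allfourop} (discussed after \propositionref{prop:independancel}), this functor commutes with $j_!^\sM$, $j_*^\sM$ and the standard cohomology functors $\mathrm{H}^i$ on the derived categories; being exact, it preserves images and therefore commutes with $j_{!*}^\sM$ as well.

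First I would handle the weight preservation assertions. By \definitionref{defi:IC}, the object $j_{!*}^\sM K$ is at once a quotient of $\mathrm{H}^0(j_!^\sM K)$ and a subobject of $\mathrm{H}^0(j_*^\sM K)$. Since weights in $\sM(S)$ are by definition detected via $\rat^\sM_{S,\ell}$ for every odd prime $\ell$ (\definitionref{defi:weights}), the classical results \cite[5.1.14]{BBD}---valid in the horizontal setting of \cite{Huber-perverse}---imply that $\mathrm{H}^0(j_!^\sM K)$ is of weight $\leq w$ and $\mathrm{H}^0(j_*^\sM K)$ is of weight $\geq w$ whenever $K$ satisfies the corresponding bound. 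Invoking stability of weight bounds under subquotients (the proposition preceding \corollaryref{coro:strictness}) then yields the claims for $j_{!*}^\sM K$, and the purity statement follows by combining the two bounds.

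For exactness of $j_{!*}^\sM$ on the pure subcategory of weight $w$, given a short exact sequence $0\to K'\to K\to K''\to 0$ with each term pure of weight $w$, the sequence $0\to j_{!*}^\sM K'\to j_{!*}^\sM K\to j_{!*}^\sM K''\to 0$ is automatically a complex, and is exact at the outer terms because $j_{!*}^\sM$ preserves monomorphisms and epimorphisms (as noted after \definitionref{defi:IC}). Exactness in the middle I would deduce by applying $\rat^\sM_{S,\ell}$: commutation with $j_{!*}$ turns it into the analogous short sequence of mixed horizontal $\ell$-adic perverse sheaves, which is exact by the classical statement (the horizontal analogue of \cite[5.3.4]{BBD}). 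Faithful exactness of $\rat^\sM_{S,\ell}$ then reflects this exactness back to $\sM(S)$.

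The principal technical point is the commutation of $\rat^\sM_{S,\ell}$ with $j_{!*}^\sM$, which rests on the commutation with $j_!^\sM$, $j_*^\sM$ and $\mathrm{H}^0$ from \corollaryref{coro:allfourop} together with the fact that an exact functor preserves images. A secondary concern is the appeal to the classical exactness of $j_{!*}$ on pure perverse sheaves in the horizontal framework of \cite{Huber-perverse}; since BBD's proof is essentially a weight-chasing argument using only the Hom vanishing between objects of distinct pure weights---both of which are available in Huber's setting---this extension is routine.
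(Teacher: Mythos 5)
Your overall route is the same as the paper's: reduce everything through the faithful exact realization $\rat^\sM_{S,\ell}$ (which commutes with $j^\sM_!$, $j^\sM_*$, $\mathrm{H}^0$ and hence with $j^\sM_{!*}$) to mixed horizontal $\ell$-adic perverse sheaves, and for the weight bounds use that $j^\sM_{!*}K$ is a quotient of $\mathrm{H}^0(j^\sM_!K)$ and a subobject of $\mathrm{H}^0(j^\sM_*K)$ together with stability of weight bounds under subquotients; this is exactly how the paper handles the first statement (citing \cite[5.1.14, 5.3.1, 5.3.2]{BBD} in Huber's horizontal setting). The reflection of exactness along a faithful exact functor at the end is also fine.

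The gap is in your justification of exactness in the middle for the second statement. The exactness of $j_{!*}$ on pure perverse sheaves of a fixed weight is not a result of \cite{BBD} (in particular 5.3.4 of op.\ cit.\ is not such a statement), and it is not ``a routine weight chase using only Hom vanishing between distinct pure weights.'' What one actually needs is the refined bound that, for $K$ pure of weight $w$, the kernel of $\pH^0(j_!K)\to j_{!*}K$ has weight $\leqslant w-1$ (equivalently, $\pH^0(j_*K)/j_{!*}K$ has weight $\geqslant w+1$), i.e.\ the identification of $j_{!*}K$ with the weight truncations of $\pH^0(j_!K)$ and $\pH^0(j_*K)$. The stabilities of \cite[5.1.14]{BBD} only give weight $\leqslant w$ resp.\ $\geqslant w$, and Hom vanishing between distinct weights does not by itself upgrade them; one needs the weight filtration plus the decomposition by strict support of pure objects (hence, in the horizontal setting, ultimately an input of Weil II type via reduction to finitely generated fields), or Morel's weight-truncation formalism. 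This is precisely why the paper cites \cite[Corollary~9.4]{Morel-surQ} for the second statement, the same result it invokes again in the proof of \propositionref{prop_dec_support}. Note also that you cannot instead run a strict-support argument directly in $\sM(S)$ here, since in this paper the motivic decomposition by strict support is proved \emph{after} and using the present proposition's classical input, so that route would be circular. Your argument becomes correct once the appeal to \cite{BBD} is replaced by the reference to \cite[Corollary~9.4]{Morel-surQ} (extended to arbitrary characteristic-zero base fields as in the paper's treatment of horizontal perverse sheaves); the remaining steps — preservation of monomorphisms and epimorphisms for exactness at the outer terms, commutation of the realization with $j_{!*}$, and faithful exactness — are sound.
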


\begin{proof} It suffices to show these statement for mixed $\ell$-adic
perverse sheaves. The first statement follows from \cite[5.3.2]{BBD}
(more precisely, if $j$ is not affine, 
it follows from \cite[5.1.14 and
5.3.1]{BBD} ). The second statement follows from \cite[Corollary~9.4]{Morel-surQ}.
\end{proof}

\subsection{Pure objects}

Let us start with the definition of objects with strict support on a given closed subscheme.
\begin{defi} Let $Z$ be a closed integral subscheme of $S$, and denote
the immersion $Z\ra S$ by $i$.
We say that an object $K$ of $\sM(S)$ has strict support $Z$ if
$K_{\mid S-Z}=0$ and if, for every nonempty open subset $j:U\ra Z$, the adjunction
morphism $K\ra (ij)_*^\sM (ij)_\sM^* K$ is injective and induces an
isomorphism between $K$ and
$(ij)_{!*}^\sM (ij)_\sM^* K$.

\end{defi}

\begin{rema}
For example, if $K_{\mid S-Z}=0$ and
if there exists a smooth dense open subset $j:U\ra Z$ such that
$\rat^\sM_U(K_{\mid U})[-\dim U]$ (or any $\rat^\sM_{U,\ell}(K_{\mid U})[-\dim U]$ for
some prime number $\ell$) is locally constant and $K_Z=j_{!*}^\sM(K_{\mid U})$, 
then $K$ has strict support $Z$.
Indeed, this follows from the similar result for perverse sheaves, which
follows from \cite[4.3.2]{BBD}  (note that the proof of this result does
not use the hypothesis that $\mathrm{L}$ is irreducible).

\end{rema}

\begin{prop} (Compare with \cite[5.3.8]{BBD}.) Let $K$ be an object of
$\sM(S)$, and suppose that $K$ is pure of some weight. Then we can write
$K=\bigoplus_Z K_Z$, where the sum is over all integral closed subschemes
$Z$ of $S$, each $K_Z$ is an object of $\sM(S)$ with strict support $Z$, and
$K_Z=0$ for all but finitely many $Z$. 

\label{prop_dec_support}
\end{prop}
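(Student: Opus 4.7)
The plan is to proceed by noetherian induction on $\supp K$, lifting the corresponding decomposition for mixed $\ell$-adic perverse sheaves (\cite[5.3.8]{BBD}) to $\sM(S)$ via the faithful exact realization $\rat^\sM_{S,\ell}$ together with the weight formalism of the previous subsections. At each step one fixes a maximal (for inclusion) irreducible component $Z$ of $\supp K$ and aims to split off a direct summand $K_Z\subseteq K$ of strict support $Z$: the complement then has strictly smaller support and the induction hypothesis applies.

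Concretely, choose an open immersion $j:V\hookrightarrow S$ with $U:=V\cap\supp K=V\cap Z$ open dense in $Z$ and disjoint from the other components of $\supp K$, and set $\bar j:U\hookrightarrow Z$, $i_Z:Z\hookrightarrow S$, $i_U:U\hookrightarrow V$. By \theoremref{theo:support}, $K_{|V}\simeq (i_U)_*^\sM L$ for a unique $L\in\sM(U)$; since weights in $\sM$ are defined via realization, $L$ must be pure of the same weight $w$ as $K$. Define $K_Z:=(i_Z)_*^\sM\bar j_{!*}^\sM L$, which is pure of weight $w$ by \propositionref{prop:purityintermediate} and has strict support $Z$ by construction. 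The heart of the argument is the splitting. Using $j\circ i_U=i_Z\circ\bar j$ and the isomorphism $i_!\simeq i_*$ for closed immersions, the counit $j_!^\sM j^*_\sM K\to K$ and unit $K\to j_*^\sM j^*_\sM K$ produce, after applying $\mathrm{H}^0$, morphisms in $\sM(S)$
\[(i_Z)_*^\sM\mathrm{H}^0(\bar j_!^\sM L)\longrightarrow K\longrightarrow (i_Z)_*^\sM\mathrm{H}^0(\bar j_*^\sM L)\]
whose restriction to $V$ is the identity of $L_V$. The left morphism factors through the canonical surjection $\mathrm{H}^0(\bar j_!^\sM L)\twoheadrightarrow\bar j_{!*}^\sM L$ because its kernel is pure of weight $\leq w-1$, hence admits no non-zero map to the pure-of-weight-$w$ object $K$ by the strictness of weight filtrations (\corollaryref{coro:strictness}); dually, the right morphism factors through the injection $\bar j_{!*}^\sM L\hookrightarrow \mathrm{H}^0(\bar j_*^\sM L)$, whose cokernel is pure of weight $\geq w+1$. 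Combining, one obtains morphisms $K_Z\to K\to K_Z$ whose composition restricts on $V$ to the identity of $L_V$; by the adjunction-induced injection
\[\End_{\sM(Z)}(\bar j_{!*}^\sM L)\hookrightarrow \Hom_{\sM(Z)}(\bar j_{!*}^\sM L,\mathrm{H}^0(\bar j_*^\sM L))\simeq \End_{\sM(U)}(L)\]
(using that $\bar j_*^\sM L$ is concentrated in non-negative degrees), this composition must equal $\id_{K_Z}$. Hence $K\simeq K_Z\oplus K'$ with $K':=\Ker(K\twoheadrightarrow K_Z)$, which is pure of weight $w$ by \corollaryref{coro:strictness} and satisfies $K'_{|V}=0$, so that $\supp K'\subsetneq\supp K$; the induction hypothesis applied to $K'$ concludes the proof.

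The main obstacle is justifying the two weight bounds that drive the factorization step, namely the identifications $W_w\mathrm{H}^0(\bar j_*^\sM L)=\bar j_{!*}^\sM L$ and dually $\bar j_{!*}^\sM L$ as the maximal weight-$\geq w$ quotient of $\mathrm{H}^0(\bar j_!^\sM L)$. These are the motivic incarnations of the classical estimates of \cite[5.1.14, 5.3.5]{BBD} and should be obtained by transporting the corresponding statements across the faithful exact realization $\rat^\sM_{Z,\ell}$, using the compatibility of the four operations and of $\bar j_{!*}$ with the realization, the existence of weight filtrations in $\sM(Z)$ (\propositionref{cor_weight_filtration}), and the fact that a faithful exact functor between abelian categories is conservative and hence reflects the lattice of subobjects, so that the subobject identification in $\sP_m(Z,\bbQ_\ell)$ transfers uniquely to $\sM(Z)$.
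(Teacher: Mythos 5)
Your strategy is essentially the one the paper uses: Noetherian induction, factor both adjunction morphisms through the intermediate extension by means of the weight estimates, observe that the resulting composite $K_Z\ra K\ra K_Z$ restricts to the identity over the open set and hence is the identity, split off the summand, and feed the complement (whose support is strictly smaller) to the induction hypothesis. Moreover, the weight bounds you single out as the "main obstacle" are in fact available and are exactly what the paper invokes: since weights on $\sM$ are \emph{defined} through the realization $\rat^\sM_{-,\ell}$ landing in the category $\sP_m(-,\bbQ_\ell)$ of mixed \emph{horizontal} perverse sheaves, the statements that $\mathrm{H}^0(\bar j_*^\sM L)/\bar j_{!*}^\sM L$ has weights $\geqslant w+1$ and that the kernel of $\mathrm{H}^0(\bar j_!^\sM L)\ra\bar j_{!*}^\sM L$ has weights $\leqslant w-1$ transfer directly from \cite[Corollary~9.4]{Morel-surQ} (cf.\ \cite{Huber-perverse}); note that \cite[5.1.14, 5.3.5]{BBD} are results over finite fields, so it is the horizontal theory, not the plain category $\sP(S,\bbQ_\ell)$ (which carries no weights over a general field), through which you must transport. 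Two small points: the vanishing $\Hom(\Ker,K)=0$ should be quoted from the orthogonality statement for weights (the proposition preceding \corollaryref{coro:strictness}) rather than from \corollaryref{coro:strictness} itself, and all the compatibilities of $j_!$, $j_*$, $(i_Z)_*$ with composition that you use are indeed supplied by \corollaryref{coro:allfourop}.

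The genuine gap is the assertion that $K_Z:=(i_Z)_*^\sM\bar j_{!*}^\sM L$ "has strict support $Z$ by construction". For an arbitrary pure object $L$ of $\sM(U)$ this is false: if $L=L_1\oplus (i_T)_*^\sM M$ with $T\subsetneq U$ closed and $M\neq 0$, then $\bar j_{!*}^\sM L$ contains the nonzero direct summand $\bar j_{!*}^\sM((i_T)_*^\sM M)$, which vanishes on the nonempty open subset $Z\setminus\overline{T}$ of $Z$, so $K_Z$ fails the defining property of strict support (intermediate extension only excludes subobjects and quotients supported on $Z\setminus U$, not those supported on proper closed subsets meeting $U$). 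Since $\supp K_Z=Z$, which equals $\supp K$ when $\supp K$ is irreducible, your Noetherian induction on the support gives you no way to decompose $K_Z$ further, so the argument does not close as written. The missing step is precisely the paper's shrinking: before defining $K_Z$, replace $V$ by a smaller open set so that $U$ lies in the smooth locus of $Z$ and $\rat^\sM_{U,\ell}(L)[-\dim U]$ is locally constant; then the remark following the definition of strict support (itself obtained from \cite[4.3.2]{BBD} through the faithful exact realization) shows that $\bar j_{!*}^\sM L$, hence $K_Z$, has strict support $Z$, and the rest of your proof goes through. With this shrinking in hand, the detour through a maximal irreducible component of $\supp K$ becomes unnecessary: one may simply split off $j_{!*}^\sM j_\sM^* K$ along a suitable open subset of $S$, which is what the paper does.
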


\begin{proof} We prove the result by Noetherian induction on $S$. If
$\dim S=0$, there is nothing to prove. Suppose that $\dim S\geq 1$,
and let $j:U\ra S$ be a nonempty open affine subset of $S$. 
After shrinking $U$, we may assume that $U$ is smooth and that $\rat^\sM_S
(K)[-\dim U]$ is a locally constant sheaf on $U$. 
Let $w$ be the
weight of $K$. Then  \cite[Corollary~9.4]{Morel-surQ} implies that
$j_*^\sM j^*_\sM K/j_{!*}^\sM j^*_\sM K$ is of weight $\geq w+1$, so
the adjunction morphism 
$K\ra j_*^\sM j^*_\sM K$ factors through a morphism
$K\ra j_{!*}^\sM j^*_\sM K$. Similarly, the adjunction morphism
$j_!^\sM j^*_\sM K\ra K$ factors through a morphism
$j_{!*}^\sM j^*_\sM\ra K$. By definition of $j_{!*}^\sM$, the composition
$j_{!*}^\sM j^*_\sM K\ra K\ra j_{!*}^\sM j^*_\sM K$ is the identity of
$j_{!*}^\sM j^*_\sM K$. So we have $K=j_{!*}^\sM j^*_\sM K\oplus L$, with
$j^*_\sM L=0$. The first summand has strict support $\overline{U}$ by the remark
above, and $L_{\mid U}=0$, so the conclusion follows from the induction
hypothesis applied to $L_{\mid S-U}$.
\end{proof}

\begin{theo}  Let $S$ be as before, and let $w\in\bbZ$. Let $\sM(S)_{w}$ be
the full abelian subcategory of $\sM(S)$ whose objects
are motives that are pure of weight $w$.

Then $\sM(S)_{w}$ is semi-simple.
\end{theo}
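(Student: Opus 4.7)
The plan is to reduce the claim to the known semisimplicity of pure Nori motives over a field, via strict support decomposition and the generic point.

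\emph{Step 1 (decomposition by support).} By \propositionref{prop_dec_support}, every object of $\sM(S)_{\leq w}$ decomposes as a finite direct sum $\bigoplus_Z K_Z$ with $K_Z$ of strict support $Z$, where $Z$ runs over integral closed subschemes of $S$. Using the defining property of strict support (together with \corollaryref{coro:strictness}, which ensures that morphisms between pure objects of weight $w$ are strict and that kernels/images of such morphisms are again pure of weight $w$), one checks that the category splits as a product of its full subcategories on objects of fixed strict support. It therefore suffices to show semisimplicity for each fixed $Z$.

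\emph{Step 2 (reduction to a smooth open subset).} For a fixed integral closed subscheme $Z \subseteq S$, every pure object of strict support $Z$ is, by definition, of the form $j_{!*}^\sM(K_U)$ for some smooth dense open $j:U\hookrightarrow Z\hookrightarrow S$ and some $K_U \in \sM(U)_{\leq w}$ whose realization is (up to shift by $\dim Z$) locally constant; conversely, such intermediate extensions recover $K$ after restriction and shift. By \propositionref{prop:purityintermediate}, the functor $j_{!*}^\sM$ is exact on the full subcategory of pure weight $w$ objects, and it preserves monomorphisms and epimorphisms. Thus $j_{!*}^\sM$ and $j_\sM^*[-\dim Z]$ induce quasi-inverse equivalences between the category of pure weight $w$ objects on $S$ with strict support $Z$ and the colimit, over smooth dense opens $U \subseteq Z$, of the category of ``smooth'' pure weight $w$ objects on $U$.

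\emph{Step 3 (passage to the generic point).} By \corollaryref{cor_motifs_point_generique}, taking the $2$-colimit of $\sM(U)$ as $U$ runs over nonempty affine opens of $Z$, one obtains a full, essentially surjective exact functor to $\sM(\eta)$, where $\eta$ is the generic point of $Z$. Since this functor is compatible with the weight structure (weights are defined via $\ell$-adic realizations, which themselves are compatible with pullback), the problem reduces to showing that $\sM(\eta)_{\leq w}$ is semisimple. Here $\eta = \Spec K$ for a field $K$ of characteristic zero.

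\emph{Step 4 (the case of a field).} Using \propositionref{prop_continuity_sM} to express $\sM(\eta)$ as a $2$-colimit of $\sM(\Spec K_0)$ over finitely generated subfields $K_0 \subseteq K$, and invoking \propositionref{prop:CompNori} together with the independence results of \propositionref{prop:independancel}, we identify each $\sM(\Spec K_0)$ with the category $\HM(K_0)$ of Nori motives over $K_0$. Each finitely generated $K_0$ admits an embedding into $\bbC$, and by Arapura's theorem (recalled in the introduction) the pure part of $\HM(K_0)$ is semisimple. Since semisimplicity is preserved by the filtered $2$-colimit of essentially surjective exact functors, $\sM(\eta)_{\leq w}$ is semisimple, completing the proof. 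The main obstacle lies in Step 4: one must justify the identification $\sM(\Spec K) \simeq \HM(K)$ for an arbitrary characteristic zero field $K$ (going beyond the statement of \propositionref{prop:CompNori}, which is formulated for the base field) and carefully verify that the semisimplicity of pure objects is inherited by the $2$-colimit, since a simple object in $\HM(K_0)$ need not remain simple after extension of scalars, only a direct sum of simples.
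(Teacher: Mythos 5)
Your overall route is the same as the paper's (strict support decomposition, restriction to the generic point, continuity down to finitely generated subfields, then Arapura's semisimplicity theorem for pure Nori motives over a field embeddable in $\bbC$), but Step 3 as written has a genuine logical gap. From \corollaryref{cor_motifs_point_generique} you only get a functor that is full, essentially surjective and exact, and such a functor does \emph{not} allow you to transfer semisimplicity from the target back to the source: given a short exact sequence in $\sM(S)$ with strict support $S$, splitting its image in $\sM(\eta)_w$ and lifting the section by fullness produces an endomorphism whose realization is the identity, but without faithfulness you cannot conclude it is the identity (or even an isomorphism). What is actually needed is that the restriction functor $\sM(S)^0_w\ra\sM(\eta)_w$ on objects with strict support $S$ is fully faithful; the paper proves it is an equivalence, obtaining fullness and essential surjectivity exactly as you do (via the colimit over opens and the fact that $j^*_\sM$ is an equivalence on pure objects of strict support $S$, quasi-inverse $j_{!*}^\sM$), and then checking faithfulness by reducing to the corresponding statement for $\ell$-adic perverse sheaves, where a morphism between objects with strict support $S$ vanishing at the generic point is zero. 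Without this faithfulness step your reduction to $\sM(\eta)_{\leq w}$ does not go through.

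Concerning Step 4, the difficulties you single out are not the real ones and are avoidable exactly as in the paper: one does not need an identification $\sM(\Spec K)\simeq\HM(K)$ for an arbitrary characteristic-zero field $K$, because \propositionref{prop_continuity_sM} writes $\sM(K)_w$ as a filtered $2$-colimit of the categories $\sM(K_0)_w$ over subfields $K_0$ finitely generated over $\bbQ$, each of which embeds into $\bbC$, so that the semisimplicity of the pure part is exactly the cited theorem of Arapura. Moreover the passage of semisimplicity to the $2$-colimit does not require any control of simple objects under extension of scalars: since the transition functors are exact, every short exact sequence in the colimit is realized at some finite level, where it splits, and the splitting maps to the colimit. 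So the substance missing from your proposal is the faithfulness (equivalence) statement at the generic point, not the issues you flag at the end.
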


\begin{proof} By \ref{prop_dec_support},
we may assume that $S$ is integral,
and it suffices to prove the result for the full subcategory $\sM(S)_w^0$ of
objects in $\sM(S)_w$ with strict support $S$ itself.

Let $\eta$ be the generic point of $S$. By
\ref{cor_motifs_point_generique},
we have a full and essentially surjective exact functor
(given by the restriction morphisms) $2-\varinjlim_U\sM(U)\ra\sM(\eta)$,
where the limit is over the projective system of nonempty affine open
subsets $U$ of $S$. For such a $U$, we denote by
$\sM(U)_w^0$ the full subcategory of $\sM(U)$ whose objects are
motives that are pure of weight $w$ and have strict support $U$. 
By Proposition~\ref{cor_weight_filtration}, the functor above induces a
full and essentially surjective functor
$2-\varinjlim_U\sM(U)_w\ra\sM(\eta)_w$, and,
by \ref{prop_dec_support}, this is turn gives a 
full and essentially surjective functor
$2-\varinjlim_U\sM(U)^0_w\ra\sM(\eta)_w$.
Moreover, if $j:U\ra S$ is a nonempty open subset, then the exact functor
$j^*_\sM:\sM(S)^0_w\ra\sM(U)_w^0$ is an equivalence of categories, because
it has a quasi-inverse, given by $j_{!*}^\sM$. So we deduce that the
restriction functor $\sM(S)_w^0\ra\sM(\eta)_w$ is full and essentially
surjective. But this functor is also faithful, because the analogous functor
on categories of $\ell$-adic perverse sheaves is faithful. So
$\sM(S)^0_w\ra\sM(\eta)_w$ is an equivalence of categories, which means that
we just need to show the theorem in the case $S=\eta$, i.e. if $S$ is the
spectrum of a field.

Now suppose that $S=\Spec k$. Then, by \ref{prop_continuity_sM},
$\sM(k)_w=2-\varinjlim_{k'}\sM(k')_w$,
where the limit is over all the subfields $k'$ of $k$ that are
finitely generated over $\bbQ$. So it suffices to show the theorem for
$k$ finitely generated over $\bbQ$. But then we can embed $k$ into $\bbC$,
and the conclusion follows from \cite[Theorem~10.2.7]{BookNori}.
\end{proof}

\begin{defi}
Let $K$ be an object of $\Db\sM(X)$ and $w\in\bbZ$. We say that $K$ is
of weight $\leq w$ (resp. of weight $\geq w$, resp. pure of weight
$w$) if, for every $i\in\bbZ$, the perverse motive $\Hc^i K$ is
of weight $\leq w+i$ (resp. of weight $\geq w+i$, resp. pure of weight
$w+i$).

\end{defi}

\begin{coro}\label{cor_vanishing_ext}
Let $K,L$ be objects of $\sM(S)$. If $K$ and $L$ are pure
of respective weights $i$ and $j$, then $\Ext^r_{\sM(S)}(A,B)=0$
if $i<j+r$.

\end{coro}

\begin{proof}
By Lemma~4.5 of M. Saito's~\cite{Saito-HCMMII},
this follows from the existence of the weight filtration and the
fact that it is strictly compatible with morphisms of $\sM(S)$, and
from the semi-simplicity of pure objects of $\sM(S)$.

\end{proof}

\begin{coro}
\begin{enumerate}
\item There exists a unique weight structure (see \cite[Definition~1.5]{Bondarko}) on $\Db\sM(S)$ whose heart is the full subcategory of complexes
of weight $0$. 

\item Let $K,L$ be objects of $\Db\sM(S)$ and $w\in\bbZ$. 
If $K$ is of
weight $\leq w$ and $L$ is of weight $>w$, then
$\Hom_{\Db\sM(S)}(K,L)=0$.

\item The weight structure of (1) is transversal to the
canonical $t$-structure on $\Db\sM(S)$ in the sense of
Definition~1.2.2 of Bondarko's paper~\cite{Bondarko3}.

\item If $K\in\Ob\Db\sM(S)$ is pure of some weight, then
$K\simeq\bigoplus_{i\in\bbZ}\Hc^i K[-i]$.

\end{enumerate}
\end{coro}

\begin{proof}
To prove (1), we apply part II of Theorem~4.3.2 of~\cite{Bondarko2} to
the triangulated category $\Db\sM(S)$ and the full subcategory $\sA$ of
complexes of weight $0$. This subcategory is stable by finite coproducts
and direct summands, and it generates $\Db\sM(S)$. Indeed, to prove
the second statement, it suffices to show that the triangulated
subcategory generated by $\sA$ contains $\sP(S)$; but every perverse motives
is a successive extension of pure perverse motives (by the existence of
the weight filtration), and, if $K$ is a pure perverse motives, then
some shift of $K$ is an $\sA$. By Theorem~4.3.2 of~\cite{Bondarko2}, 
there exists a weight structure on $\Db\sM(S)$ with heart $\sA$ if and only if,
for every objects $K,L$ of $\sA$ and every integer $n>0$, we have
$\Hom_{\Db\sM(S)}(K,L)=0$. As the functor $\Hom$ is cohomological in
each variable, we may assume that $K$ and $L$ are concentrated in
one degree, so that there exist objects $A$ and $B$ that are pure
of respective weights $i$ and $j$ such that $K=A[-i]$ and
$L=B[-j]$. Then $\Hom_{\Db\sM(S)}(K,L[n])=\Ext^{n+i-j}_{\sM(S)}(A,B)$
is zero by \ref{cor_vanishing_ext}.

We prove (2). We have $\Hom_{\Db\sM(S)}(K,L)=\Hom_{\Db\sM(S)}(K[-w],L[-w])$.
As $K[-w]$ is of weight $\leq 0$ and $L[-w]$ is of weight $\geq 1$,
the statement follows from Proposition~1.3.3(1) of~\cite{Bondarko2}.

Point (3) follows immediately from the existence of the weight filtration on objects of $\sM(S)$.

We prove(4). Let $w$ be the weight of $K$.
Let $i\in\bbZ$. Then $\tau_{\leq i}K$ and $\tau_{>i}K$ are pure
of weight $w$, so $\Hom_{\Db\sM(S)}(\tau_{>i}K,\tau_{\leq i}K[1])=0$ by
(3), so the exact triangle $\tau_{\leq i}K\ra K\ra\tau_{>i}K\xra{+1}$
splits. This implies the statement.
\end{proof}

\begin{theo}\label{theo:weightIC}
Let $f:X\ra S$ be a proper morphism of quasi-projective $k$-varieties with $X$ irreducible. 
Let $j:U\ra X$ be an open immersion, and
 $K$ be a perverse motive on $U$. If $K$ is pure of weight $w$, then
$\mathrm{H}^i(f_*^\sM j^\sM_{!*}K)$ is a motivic perverse sheaf that is pure of weight $w+i$.
\end{theo}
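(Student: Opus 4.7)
The plan is to reduce the purity statement to the analogous assertion for mixed horizontal $\ell$-adic perverse sheaves, where it becomes Deligne's stability of purity under proper pushforward imported into the horizontal framework as in \sectionref{sec:weights}.

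First, by \definitionref{defi:weights} it suffices to verify, for every odd prime $\ell$, that $\rat^\sM_{S,\ell}\bigl(\mathrm{H}^i(f^\sM_* j^\sM_{!*}K)\bigr)$ is pure of weight $w+i$ in $\sP_m(S,\bbQ_\ell)$. The functor $\rat^\sM_{S,\ell}$ is faithful exact and $t$-exact, hence commutes with the standard cohomology functor $\mathrm{H}^i$. By \corollaryref{coro:allfourop} it also commutes with $f^\sM_*$ (up to canonical isomorphism). Finally, since $j^\sM_{!*}K$ is defined as the image of $\mathrm{H}^0(j^\sM_!K)\to\mathrm{H}^0(j^\sM_*K)$ and $\rat^\sM$ is exact and intertwines $j^\sM_!$, $j^\sM_*$ with $j^\sP_!$, $j^\sP_*$, the realization also commutes with $j^\sM_{!*}$. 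Setting $L:=\rat^\sM_{U,\ell}(K)$, I thus obtain a canonical isomorphism
\[\rat^\sM_{S,\ell}\bigl(\mathrm{H}^i(f^\sM_* j^\sM_{!*}K)\bigr)\;\simeq\;\pHc^i\bigl(f^\sP_*\, j^\sP_{!*}\, L\bigr)\quad\text{in }\sP_m(S,\bbQ_\ell).\]

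Next I would invoke weight estimates in the horizontal $\ell$-adic setting. The hypothesis that $K$ is pure of weight $w$ is, by definition, exactly that $L$ is pure of weight $w$ for every $\ell$. By \propositionref{prop:purityintermediate} (or equivalently the horizontal version of \cite[5.3.2]{BBD} that is available in $\sP_m$ by \sectionref{sec:weights}), the object $j^\sP_{!*}L$ is again pure of weight $w$. Since $f$ is proper, Deligne's purity theorem (\cite[Th\'eor\`eme 2]{WeilII}, equivalently \cite[5.4.10]{BBD}) then yields that $\pHc^i(f^\sP_* j^\sP_{!*} L)$ is pure of weight $w+i$. As this holds for every odd prime $\ell$, \definitionref{defi:weights} concludes the proof.

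The main obstacle is not the motivic bookkeeping---all the required compatibilities are already encoded in \theoremref{theo:maintheo} and \corollaryref{coro:allfourop}---but the availability of Deligne's purity theorem for proper direct images in the mixed horizontal framework over an arbitrary characteristic-zero base field $k$. This is dealt with in the spirit of \sectionref{sec:weights}: using the continuity statement \propositionref{prop_continuity_sM} together with its $\ell$-adic counterpart for horizontal perverse sheaves, one reduces to the case where $k$ is finitely generated over $\bbQ$, and then, via spreading out over a finitely generated $\bbZ$-algebra and specialization to closed points, to the classical Weil II statement over finite fields, which is the content of \cite[Th\'eor\`eme 2]{WeilII}.
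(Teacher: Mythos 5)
Your proposal is correct and follows essentially the same route as the paper: pass to the mixed horizontal $\ell$-adic realization (using the compatibilities of \theoremref{theo:maintheo} and \corollaryref{coro:allfourop}), apply the purity of the intermediate extension (\propositionref{prop:purityintermediate}), and conclude by Deligne's Weil II stability of purity under proper pushforward as imported into Huber's horizontal framework. The only cosmetic difference is that the paper invokes the intermediate-extension purity motivically before realizing, while you realize first; since \propositionref{prop:purityintermediate} is itself proved on realizations, the two orderings are interchangeable.
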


\begin{proof}
Let us say that $L\in\Db(\sM(S))$ is pure of weight $w$ if $\mathrm{H}^iL$ is pure of weight $w+i$ for every $i\in\bbZ$. For such an $L$, by \ref{coro:allfourop}, it follows from the Weil conjectures proved by Deligne in \cite{WeilII} that $f^\sM_*L$ is pure of weight $w$ (see the remark after \cite[Definition 3.3]{Huber-perverse}). Hence, \ref{prop:purityintermediate} ensures that $f_*^\sM j^\sM_{!*}K$ is pure of weight $w$. This gives the conclusion.
\end{proof}

In particular, this provides (for geometric variations of Hodge structures) an arithmetic proof of Zucker's theorem \cite[Theorem p.416]{MR534758} via reduction to positive characteristic and to the Weil conjectures \cite[Th\'eor\`eme 2]{WeilII}. More generally, in higher dimension:

\begin{coro}\label{coro:weightIC} Let $k$ be a field embedded into $\bbC$.
Let $X$ be an irreducible proper $k$-variety and $\mathscr L$ be a $\bbQ$-local system on a smooth dense open subscheme $U$ of $X$ of the form $\mathscr L=R^wg_*\bbQ_V$ where $g:V\ra U$ is a  smooth proper morphism  and $w\in\bbZ$ is an integer. Then, the intersection cohomology group $IH^i(X,\mathscr L)$, for $i\in\bbZ$, is canonically the Betti realization of a Nori motive over $k$ which is pure of weight $i+w$. In particular, $IH^i(X,\mathscr L)$ carries a canonical pure Hodge structure of weight $i+w$.
\end{coro}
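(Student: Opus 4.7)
The plan is to identify $IH^i(X,\scr L)$ with the Betti realization of a Nori motive obtained by applying the formalism of \sectionref{sec:pullback} to the smooth proper family $g:V\ra U$, and then to derive purity from \theoremref{theo:weightIC}.

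Set $d:=\dim X=\dim U$. Following the proof of \corollaryref{coro:IC}, since $g$ is smooth and proper and $\scr L=R^wg_*\bbQ_V$ is a local system on the smooth variety $U$, the perverse sheaf $\scr L[d]$ equals $\pH^{w+d}(g_*\bbQ_V)$ and is canonically the realization of the perverse motive
\[A:=\mathrm{H}^{w+d}(g^\sM_*\bbQ^\sM_V)\in\sM(U).\]
The new point is that $A$ is pure of weight $w+d$. Since $V$ is smooth of dimension $d_V$, the perverse motive $\bbQ^\sM_V[d_V]$ is pure of weight $d_V$, so the complex $\bbQ^\sM_V\in\Db(\sM(V))$ is pure of weight $0$ in the sense used in the proof of \theoremref{theo:weightIC}. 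Since $g$ is proper, the same invocation of the Weil conjectures via the $\ell$-adic realization (as in the proof of \theoremref{theo:weightIC}) shows that $g^\sM_*\bbQ^\sM_V$ remains pure of weight $0$ as a complex, so its standard cohomology $A=\mathrm{H}^{w+d}(g^\sM_*\bbQ^\sM_V)$ is pure of weight $w+d$.

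Let $\pi:X\ra\Spec k$ be the structural morphism, which is proper by hypothesis. Apply \theoremref{theo:weightIC} to $\pi$, the open immersion $j:U\hookrightarrow X$, and the pure motive $K=A$ of weight $w+d$: the Nori motive $\mathrm{H}^{i-d}(\pi^\sM_* j^\sM_{!*}A)\in\sM(k)$ is pure of weight $(w+d)+(i-d)=w+i$. By the compatibility of the Betti realization with the four operations and with taking cohomology, $\rat^\sM_X(j^\sM_{!*}A)$ is canonically $\mathrm{IC}_X(\scr L)$, so this Nori motive realizes to $\mathbf{H}^{i-d}(X,\mathrm{IC}_X(\scr L))=IH^i(X,\scr L)$. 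Finally, the equivalence $\sM(k)\simeq\HM(k)$ of \propositionref{prop:CompNori} together with the Hodge realization on Nori motives endows $IH^i(X,\scr L)$ with a pure $\bbQ$-Hodge structure of weight $w+i$. The only nontrivial step is the purity of $A$, which itself reduces to the weight-preservation statement for proper pushforward already used in the proof of \theoremref{theo:weightIC}; no further obstacle arises.
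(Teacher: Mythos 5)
Your proof is correct and takes essentially the same route as the paper: identify $IH^i(X,\scr L)$ with the Betti realization of $\mathrm{H}^{i-d}(\pi^\sM_*j^\sM_{!*}A)$ for $A=\mathrm{H}^{w+d}(g^\sM_*\bbQ^\sM_V)$ and conclude by \theoremref{theo:weightIC}. The only difference is that you spell out the purity of $A$ of weight $w+d$ (via smoothness of $V$ and properness of $g$, i.e.\ the same Weil II input), a step the paper leaves implicit when invoking the theorem.
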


\begin{proof}
Let $d$ be the dimension of $X$, $\pi:X\ra\Spec(k)$ be the structural morphism and $j$ be the inclusion
of $U$ in $X$.
As in \ref{coro:IC}, $IH^i(X,\mathscr L)$ is the Betti realization of the Nori motive $\mathrm{H}^{i-d}(\pi_*^\sM j^\sM_{!*}\mathrm{H}^{w+d}(g^\sM_*\bbQ^{\mathscr M}_V))$, which is pure of weight $w+i$ by \ref{theo:weightIC}.
\end{proof}


\bibliographystyle{alpha}
\bibliography{IvorraMorel}

\newcommand{\etalchar}[1]{$^{#1}$}
\begin{thebibliography}{CGAdS17}

\bibitem[AGV{\etalchar{+}}73]{MR0354654}
M.~Artin, A.~Grothendieck, J.~L. Verdier, P.~Deligne, and Bernard Saint-Donat,
  editors.
\newblock {\em S{\'e}minaire de g{\'e}om{\'e}trie alg{\'e}brique du
  {Bois}-{Marie} 1963--1964. {Th{\'e}orie} des topos et cohomologie {\'e}tale
  des sch{\'e}mas ({SGA} 4). {Un} s{\'e}minaire dirig{\'e} par {M}. {Artin},
  {A}. {Grothendieck}, {J}. {L}. {Verdier}. {Avec} la collaboration de {P}.
  {Deligne}, {B}. {Saint}-{Donat}. {Tome} 3. {Expos{\'e}s} {IX} {\`a} {XIX}},
  volume 305 of {\em Lect. Notes Math.}
\newblock Springer, Cham, 1973.

\bibitem[And96]{MR1423019}
Yves Andr\'e.
\newblock Pour une th\'eorie inconditionnelle des motifs.
\newblock {\em Inst. Hautes \'Etudes Sci. Publ. Math.}, (83):5--49, 1996.

\bibitem[Ara05]{MR2178703}
Donu Arapura.
\newblock The {L}eray spectral sequence is motivic.
\newblock {\em Invent. Math.}, 160(3):567--589, 2005.

\bibitem[Ara13]{MR2995668}
Donu Arapura.
\newblock An abelian category of motivic sheaves.
\newblock {\em Adv. Math.}, 233:135--195, 2013.

\bibitem[Ayo07a]{AyoubI}
Joseph Ayoub.
\newblock Les six op\'erations de {G}rothendieck et le formalisme des cycles
  \'evanescents dans le monde motivique. {I}.
\newblock {\em Ast\'erisque}, (314):x+466 pp. (2008), 2007.

\bibitem[Ayo07b]{AyoubII}
Joseph Ayoub.
\newblock Les six op\'erations de {G}rothendieck et le formalisme des cycles
  \'evanescents dans le monde motivique. {II}.
\newblock {\em Ast\'erisque}, (315):vi+364 pp. (2008), 2007.

\bibitem[Ayo10]{AyoubBetti}
Joseph Ayoub.
\newblock Note sur les op\'erations de {G}rothendieck et la r\'ealisation de
  {B}etti.
\newblock {\em J. Inst. Math. Jussieu}, 9(2):225--263, 2010.

\bibitem[Ayo14a]{AyoubEtale}
Joseph Ayoub.
\newblock La r\'ealisation \'etale et les op\'erations de {G}rothendieck.
\newblock {\em Ann. Sci. \'Ecole Norm. Sup.}, 47(1):1--141, 2014.

\bibitem[Ayo14b]{AyoubCrelleI}
Joseph Ayoub.
\newblock L'alg\`ebre de {H}opf et le groupe de {G}alois motiviques d'un corps
  de caract\'eristique nulle, {I}.
\newblock {\em J. Reine Angew. Math.}, 693:1--149, 2014.

\bibitem[Ayo14c]{AyoubCrelleII}
Joseph Ayoub.
\newblock L'alg\`ebre de {H}opf et le groupe de {G}alois motiviques d'un corps
  de caract\'eristique nulle, {II}.
\newblock {\em J. Reine Angew. Math.}, 693:151--226, 2014.

\bibitem[Ayo14d]{MR3202399}
Joseph Ayoub.
\newblock Periods and the conjectures of {G}rothendieck and
  {K}ontsevich-{Z}agier.
\newblock {\em Eur. Math. Soc. Newsl.}, (91):12--18, 2014.

\bibitem[Ayo15]{AyoubRigid}
Joseph Ayoub.
\newblock Motifs des vari\'{e}t\'{e}s analytiques rigides.
\newblock {\em M\'{e}m. Soc. Math. Fr. (N.S.)}, (140-141):vi+386, 2015.

\bibitem[BBD82]{BBD}
A.~A. Beilinson, J.~Bernstein, and P.~Deligne.
\newblock Faisceaux pervers.
\newblock In {\em Analysis and topology on singular spaces, {I} ({L}uminy,
  1981)}, volume 100 of {\em Ast\'erisque}, pages 5--171. Soc. Math. France,
  Paris, 1982.

\bibitem[Bei87a]{MR923134}
A.~A. Beilinson.
\newblock How to glue perverse sheaves.
\newblock In {\em {$K$}-theory, arithmetic and geometry ({M}oscow,
  1984--1986)}, volume 1289 of {\em Lecture Notes in Math.}, pages 42--51.
  Springer, Berlin, 1987.

\bibitem[Bei87b]{Beilinson-gluing}
A.~A. Beilinson.
\newblock How to glue perverse sheaves.
\newblock In {\em {$K$}-theory, arithmetic and geometry ({M}oscow,
  1984--1986)}, volume 1289 of {\em Lecture Notes in Math.}, pages 42--51.
  Springer, Berlin, 1987.

\bibitem[Bei87c]{MR923133}
A.~A. Beilinson.
\newblock On the derived category of perverse sheaves.
\newblock In {\em {$K$}-theory, arithmetic and geometry ({M}oscow,
  1984--1986)}, volume 1289 of {\em Lecture Notes in Math.}, pages 27--41.
  Springer, Berlin, 1987.

\bibitem[Bei12]{MR2953406}
A.~Beilinson.
\newblock Remarks on {G}rothendieck's standard conjectures.
\newblock In {\em Regulators}, volume 571 of {\em Contemp. Math.}, pages
  25--32. Amer. Math. Soc., Providence, RI, 2012.

\bibitem[Bon10]{Bondarko2}
M.~V. Bondarko.
\newblock Weight structures vs. {{\(t\)}}-structures; weight filtrations,
  spectral sequences, and complexes (for motives and in general).
\newblock {\em J. \(K\)-Theory}, 6(3):387--504, 2010.

\bibitem[Bon12]{Bondarko3}
Mikhail~V. Bondarko.
\newblock Weight structures and `weights' on the hearts of
  {{\(t\)}}-structures.
\newblock {\em Homology Homotopy Appl.}, 14(1):239--261, 2012.

\bibitem[Bon14]{Bondarko}
Mikhail~V. Bondarko.
\newblock Weights for relative motives: relation with mixed complexes of
  sheaves.
\newblock {\em Int. Math. Res. Not. IMRN}, (17):4715--4767, 2014.

\bibitem[BVHP20]{BVHP}
Luca Barbieri-Viale, Annette Huber, and Mike Prest.
\newblock Tensor structure for {N}ori motives.
\newblock {\em Pacific J. Math.}, 306(1):1--30, 2020.

\bibitem[BVP18]{BVP}
Luca Barbieri-Viale and Mike Prest.
\newblock Definable categories and {$\mathbb T$}-motives.
\newblock {\em Rend. Semin. Mat. Univ. Padova}, 139:205--224, 2018.

\bibitem[CD19]{CD}
Denis-Charles Cisinski and Fr{\'e}d{\'e}ric D{\'e}glise.
\newblock {\em Triangulated categories of mixed motives}.
\newblock Springer Monogr. Math. Cham: Springer, 2019.

\bibitem[CGAdS17]{MR3649230}
Utsav Choudhury and Martin Gallauer Alves~de Souza.
\newblock An isomorphism of motivic {G}alois groups.
\newblock {\em Adv. Math.}, 313:470--536, 2017.

\bibitem[CN08]{MR2382732}
Denis-Charles Cisinski and Amnon Neeman.
\newblock Additivity for derivator {$K$}-theory.
\newblock {\em Adv. Math.}, 217(4):1381--1475, 2008.

\bibitem[dC12]{MR2877437}
Mark Andrea~A. de~Cataldo.
\newblock The perverse filtration and the {L}efschetz hyperplane theorem, {II}.
\newblock {\em J. Algebraic Geom.}, 21(2):305--345, 2012.

\bibitem[dCM10]{MR2680404}
Mark Andrea~A. de~Cataldo and Luca Migliorini.
\newblock The perverse filtration and the {L}efschetz hyperplane theorem.
\newblock {\em Ann. of Math. (2)}, 171(3):2089--2113, 2010.

\bibitem[Del]{DeligneCross}
Pierre Deligne.
\newblock Voevodsky’s lectures on cross functors.
\newblock Motivic Homotopy Theory Program, IAS Princeton, Fall 2001.

\bibitem[Del80]{WeilII}
Pierre Deligne.
\newblock La conjecture de {W}eil. {II}.
\newblock {\em Inst. Hautes \'{E}tudes Sci. Publ. Math.}, (52):137--252, 1980.

\bibitem[Del94]{MR1265528}
Pierre Deligne.
\newblock \`{A} quoi servent les motifs?
\newblock In {\em Motives ({S}eattle, {WA}, 1991)}, volume~55 of {\em Proc.
  Sympos. Pure Math.}, pages 143--161. Amer. Math. Soc., Providence, RI, 1994.

\bibitem[DK73]{MR0354657}
P.~Deligne and N.~Katz, editors.
\newblock {\em Groupes de monodromie en g\'{e}om\'{e}trie alg\'{e}brique.
  {II}}, volume Vol. 340 of {\em Lecture Notes in Mathematics}.
\newblock Springer-Verlag, Berlin-New York, 1973.
\newblock S\'{e}minaire de G\'{e}om\'{e}trie Alg\'{e}brique du Bois-Marie
  1967--1969 (SGA 7 II), Dirig\'{e} par P. Deligne et N. Katz.

\bibitem[Dri04]{MR2028075}
Vladimir Drinfeld.
\newblock D{G} quotients of {DG} categories.
\newblock {\em J. Algebra}, 272(2):643--691, 2004.

\bibitem[Fak00]{Nori}
Najmuddin Fakhruddin.
\newblock Notes of {N}ori's {L}ectures on {M}ixed {M}otives.
\newblock TIFR, Mumbai, 2000.

\bibitem[Far09]{Fargues}
Laurent Fargues.
\newblock Filtration de monodromie et cycles \'{e}vanescents formels.
\newblock {\em Invent. Math.}, 177(2):281--305, 2009.

\bibitem[FJ18]{expo-mot}
Javier Fres\'an and Peter Jossen.
\newblock Exponential motives.
\newblock \url{http://javier.fresan.perso.math.cnrs.fr/expmot.pdf}, 2018.

\bibitem[Fre66]{MR0209333}
Peter Freyd.
\newblock Representations in abelian categories.
\newblock In {\em Proc. {C}onf. {C}ategorical {A}lgebra ({L}a {J}olla,
  {C}alif., 1965)}, pages 95--120. Springer, New York, 1966.

\bibitem[Gab04]{Gabber_note}
Ofer Gabber.
\newblock Notes on some {$t$}-structures.
\newblock In {\em Geometric aspects of {D}work theory. {V}ol. {I}, {II}}, pages
  711--734. Walter de Gruyter, Berlin, 2004.

\bibitem[Gro67]{MR0238860}
A.~Grothendieck.
\newblock {\'E}l\'ements de g\'eom\'etrie alg\'ebrique. {IV}. \'etude locale
  des sch\'emas et des morphismes de sch\'emas {IV}.
\newblock {\em Inst. Hautes \'Etudes Sci. Publ. Math.}, (32):361, 1967.

\bibitem[Gro91]{GrothDer}
Alexandre Grothendieck.
\newblock Les d\'erivateurs.
\newblock Edited by M. K\"unzer, J. Malgoire, G. Maltsiniotis, 1991.

\bibitem[Gro11]{GrothBook}
Moritz Groth.
\newblock {\em On the theory of derivators}.
\newblock Bonn: Univ. Bonn, Mathematisch-Naturwissenschaftliche Fakult{\"a}t
  (Diss.), 2011.

\bibitem[Gro13]{GrothDeriv}
Moritz Groth.
\newblock Derivators, pointed derivators and stable derivators.
\newblock {\em Algebr. Geom. Topol.}, 13(1):313--374, 2013.

\bibitem[H\'11]{Hebert}
David H\'{e}bert.
\newblock Structure de poids \`a la {B}ondarko sur les motifs de {B}eilinson.
\newblock {\em Compos. Math.}, 147(5):1447--1462, 2011.

\bibitem[HMS17]{BookNori}
Annette Huber and Stefan M\"uller-Stach.
\newblock {\em Periods and {N}ori motives}, volume~65 of {\em Ergebnisse der
  Mathematik und ihrer Grenzgebiete. 3. Folge. A Series of Modern Surveys in
  Mathematics [Results in Mathematics and Related Areas. 3rd Series. A Series
  of Modern Surveys in Mathematics]}.
\newblock Springer, Cham, 2017.
\newblock With contributions by Benjamin Friedrich and Jonas von Wangenheim.

\bibitem[Hub97]{Huber-perverse}
Annette Huber.
\newblock Mixed perverse sheaves for schemes over number fields.
\newblock {\em Compositio Math.}, 108(1):107--121, 1997.

\bibitem[Hub00]{Huber-real}
Annette Huber.
\newblock Realization of {V}oevodsky's motives.
\newblock {\em J. Algebraic Geom.}, 9(4):755--799, 2000.

\bibitem[ILO14]{GTG}
Luc Illusie, Yves Laszlo, and Fabrice Orgogozo, editors.
\newblock {\em Travaux de {G}abber sur l'uniformisation locale et la
  cohomologie \'{e}tale des sch\'{e}mas quasi-excellents}.
\newblock Soci\'{e}t\'{e} Math\'{e}matique de France, Paris, 2014.
\newblock S\'{e}minaire \`a l'\'{E}cole Polytechnique 2006--2008. [Seminar of
  the Polytechnic School 2006--2008], With the collaboration of
  Fr\'{e}d\'{e}ric D\'{e}glise, Alban Moreau, Vincent Pilloni, Michel Raynaud,
  Jo\"{e}l Riou, Beno\^{i}t Stroh, Michael Temkin and Weizhe Zheng,
  Ast\'{e}risque No. 363-364 (2014) (2014).

\bibitem[Ivo16]{IvorraHodge}
Florian Ivorra.
\newblock Perverse, {H}odge and motivic realizations of \'etale motives.
\newblock {\em Compos. Math.}, 152(6):1237--1285, 2016.

\bibitem[Ivo17]{IvorraPNM}
Florian Ivorra.
\newblock Perverse {N}ori motives.
\newblock {\em Math. Res. Lett.}, 24(4):1097--1131, 2017.

\bibitem[IY18]{IvorraYamazaki}
Florian Ivorra and Takao Yamazaki.
\newblock Nori motives of curves with modulus and {L}aumon 1-motives.
\newblock {\em Canad. J. Math.}, 70(4):868--897, 2018.

\bibitem[Jan94]{MR1265533}
Uwe Jannsen.
\newblock Motivic sheaves and filtrations on {C}how groups.
\newblock In {\em Motives ({S}eattle, {WA}, 1991)}, volume~55 of {\em Proc.
  Sympos. Pure Math.}, pages 245--302. Amer. Math. Soc., Providence, RI, 1994.

\bibitem[Jar00]{MR1787949}
J.~F. Jardine.
\newblock Motivic symmetric spectra.
\newblock {\em Doc. Math.}, 5:445--553 (electronic), 2000.

\bibitem[Jou73]{Jouanolou}
J.~P. Jouanolou.
\newblock Une suite exacte de {Mayer}-{Vietoris} en {{\(K\)}}-th{\'e}orie
  alg{\'e}brique.
\newblock Algebr. {{\(K\)}}-{Theory} {I}, {Proc}. {Conf}. {Battelle} {Inst}.
  1972, {Lect}. {Notes} {Math}. 341, 293-316 (1973)., 1973.

\bibitem[Kel94]{MR1258406}
Bernhard Keller.
\newblock Deriving {DG} categories.
\newblock {\em Ann. Sci. \'{E}cole Norm. Sup. (4)}, 27(1):63--102, 1994.

\bibitem[Kel06]{MR2275593}
Bernhard Keller.
\newblock On differential graded categories.
\newblock In {\em International {C}ongress of {M}athematicians. {V}ol. {II}},
  pages 151--190. Eur. Math. Soc., Z\"{u}rich, 2006.

\bibitem[Kra98]{MR1487973}
Henning Krause.
\newblock Functors on locally finitely presented additive categories.
\newblock {\em Colloq. Math.}, 75(1):105--132, 1998.

\bibitem[KS94]{MR1299726}
Masaki Kashiwara and Pierre Schapira.
\newblock {\em Sheaves on manifolds}, volume 292 of {\em Grundlehren der
  Mathematischen Wissenschaften [Fundamental Principles of Mathematical
  Sciences]}.
\newblock Springer-Verlag, Berlin, 1994.
\newblock With a chapter in French by Christian Houzel, Corrected reprint of
  the 1990 original.

\bibitem[KS06]{MR2182076}
Masaki Kashiwara and Pierre Schapira.
\newblock {\em Categories and sheaves}, volume 332 of {\em Grundlehren der
  mathematischen Wissenschaften [Fundamental Principles of Mathematical
  Sciences]}.
\newblock Springer-Verlag, Berlin, 2006.

\bibitem[KS11]{MR2851153}
Maxim Kontsevich and Yan Soibelman.
\newblock Cohomological {H}all algebra, exponential {H}odge structures and
  motivic {D}onaldson-{T}homas invariants.
\newblock {\em Commun. Number Theory Phys.}, 5(2):231--352, 2011.

\bibitem[Mal12]{MR2951712}
Georges Maltsiniotis.
\newblock Carr\'es exacts homotopiques et d\'erivateurs.
\newblock {\em Cah. Topol. G\'eom. Diff\'er. Cat\'eg.}, 53(1):3--63, 2012.

\bibitem[Mor18]{Morel-surQ}
Sophie Morel.
\newblock Mixed $\ell$-adic complexes for schemes over number fields.
\newblock \url{https://arxiv.org/pdf/1806.03096.pdf}, 2018.

\bibitem[MV99]{MR1813224}
Fabien Morel and Vladimir Voevodsky.
\newblock {${\mathbb A}^1$}-homotopy theory of schemes.
\newblock {\em Inst. Hautes \'Etudes Sci. Publ. Math.}, (90):45--143 (2001),
  1999.

\bibitem[Nee96]{MR1308405}
Amnon Neeman.
\newblock The {G}rothendieck duality theorem via {B}ousfield's techniques and
  {B}rown representability.
\newblock {\em J. Amer. Math. Soc.}, 9(1):205--236, 1996.

\bibitem[Nee01]{MR1812507}
Amnon Neeman.
\newblock {\em Triangulated categories}, volume 148 of {\em Annals of
  Mathematics Studies}.
\newblock Princeton University Press, Princeton, NJ, 2001.

\bibitem[Pre11]{MR2791358}
Mike Prest.
\newblock Definable additive categories: purity and model theory.
\newblock {\em Mem. Amer. Math. Soc.}, 210(987):vi+109, 2011.

\bibitem[Rei10]{MR2671769}
Ryan Reich.
\newblock Notes on {B}eilinson's ``{H}ow to glue perverse sheaves''.
\newblock {\em J. Singul.}, 1:94--115, 2010.

\bibitem[Sai89]{MR1045997}
Morihiko Saito.
\newblock Duality for vanishing cycle functors.
\newblock {\em Publ. Res. Inst. Math. Sci.}, 25(6):889--921, 1989.

\bibitem[Sai90a]{MR1047741}
Morihiko Saito.
\newblock Extension of mixed {H}odge modules.
\newblock {\em Compositio Math.}, 74(2):209--234, 1990.

\bibitem[Sai90b]{MHMII}
Morihiko Saito.
\newblock Mixed {H}odge modules.
\newblock {\em Publ. Res. Inst. Math. Sci.}, 26(2):221--333, 1990.

\bibitem[Sai91]{Saito-HCMMII}
Morihiko Saito.
\newblock Hodge conjecture and mixed motives. {II}.
\newblock In {\em Algebraic geometry. Proceedings of the US-USSR symposium,
  held in Chicago, IL, USA, June 20-July 14, 1989}, pages 196--215. Berlin
  etc.: Springer-Verlag, 1991.

\bibitem[Sai06]{SaitoMixedSheaves}
Morihiko Saito.
\newblock On the formalism of mixed sheaves.
\newblock \textsf{arXiv:math/0611597}, 2006.

\bibitem[{Sta}18]{stacks-project}
The {Stacks Project Authors}.
\newblock \textit{Stacks Project}.
\newblock \url{http://stacks.math.columbia.edu}, 2018.

\bibitem[To\11]{MR2762557}
Bertrand To\"{e}n.
\newblock Lectures on dg-categories.
\newblock In {\em Topics in algebraic and topological {$K$}-theory}, volume
  2008 of {\em Lecture Notes in Math.}, pages 243--302. Springer, Berlin, 2011.

\bibitem[Ver95]{MR1610971}
Jean-Louis Verdier.
\newblock Dualit\'e dans la cohomologie des espaces localement compacts.
\newblock In {\em S\'eminaire {B}ourbaki, {V}ol.\ 9}, pages Exp.\ No.\ 300,
  337--349. Soc. Math. France, Paris, 1995.

\bibitem[Ver96]{MR1453167}
Jean-Louis Verdier.
\newblock Des cat\'egories d\'eriv\'ees des cat\'egories ab\'eliennes.
\newblock {\em Ast\'erisque}, (239):xii+253 pp. (1997), 1996.
\newblock With a preface by Luc Illusie, Edited and with a note by Georges
  Maltsiniotis.

\bibitem[Voe98]{MR1648048}
Vladimir Voevodsky.
\newblock {$\mathbb{A}^1$}-homotopy theory.
\newblock In {\em Proceedings of the {I}nternational {C}ongress of
  {M}athematicians, {V}ol. {I} ({B}erlin, 1998)}, number Extra Vol. I, pages
  579--604 (electronic), 1998.

\bibitem[Voe02]{MR1883180}
Vladimir Voevodsky.
\newblock Motivic cohomology groups are isomorphic to higher {C}how groups in
  any characteristic.
\newblock {\em Int. Math. Res. Not.}, (7):351--355, 2002.

\bibitem[Vol10]{MR2729639}
Vadim Vologodsky.
\newblock On the derived {DG} functors.
\newblock {\em Math. Res. Lett.}, 17(6):1155--1170, 2010.

\bibitem[Zuc79]{MR534758}
Steven Zucker.
\newblock Hodge theory with degenerating coefficients. {$L_{2}$} cohomology in
  the {P}oincar\'{e} metric.
\newblock {\em Ann. of Math. (2)}, 109(3):415--476, 1979.

\end{thebibliography}

\end{document}